\documentclass{amsart}
\usepackage{amssymb,amscd,amsthm,amsmath}
\usepackage{comment}
\usepackage{fullpage}
\usepackage{epsfig}
\usepackage{graphicx}
\usepackage{xypic}
\usepackage{url}
\usepackage{mathtools}
\usepackage{hyperref}
\usepackage{quiver}
\usepackage{mathrsfs}

\numberwithin{equation}{section}

\makeatletter
\newcommand{\oset}[3][0ex]{%
 \mathrel{\mathop{#3}\limits^{
 \vbox to#1{\kern-2\ex@
 \hbox{\(\scriptstyle#2\)}\vss}}}}
\makeatother

\newcommand{\posmod}{\oset{+}{\rightarrow}}
\newcommand{\negmod}{\oset{-}{\rightarrow}}
\newcommand{\posmodalong}{\oset{+}{\leadsto}}

\newcommand{\biposmod}{\oset{+}{\leftrightarrow}}
\newcommand{\binegmod}{\oset{-}{\leftrightarrow}}

\newcommand{\Stype}[1]{S_{#1}}
\newcommand{\Utype}[3]{U_{#1}(#2,#3)}
\newcommand{\C}{\mathcal{C}}

\newcommand{\PP}{\mathbb{P}}
\newcommand{\OO}{\mathcal{O}}

\newcommand{\NN}{\mathcal{N}}

\newcommand{\Hom}{\textsf{Hom}}

\newcommand{\codim}{\operatorname{codim}}

\newcommand{\rk}{\operatorname{rk}}

\newcommand{\leqor}{\underset{{\scriptscriptstyle (}-{\scriptscriptstyle )}}{<}}

\newcommand{\defi}[1]{\textsf{#1}} 

\newtheorem{theorem}{Theorem}[section]
\newtheorem{lemma}[theorem]{Lemma}
\newtheorem{proposition}[theorem]{Proposition}
\newtheorem{corollary}[theorem]{Corollary}

\theoremstyle{definition}
\newtheorem{definition}[theorem]{Definition}

\newtheorem{remark}[theorem]{Remark}

\definecolor{ao(english)}{rgb}{0.0, 0.5, 0.0}

\input xy
\xyoption{all}

\makeatletter
\def\dasharrowfill@#1#2#3#4{%
 $\m@th
 \thickmuskip0mu
 \medmuskip\thickmuskip
 \thinmuskip\thickmuskip
 \relax
 #4#1\mkern2mu
 \xleaders\hbox{$#4\mkern2mu#2\mkern2mu$}\hfill
 \mkern2mu
 #3$%
}

\def\dashrightarrowfill@{\dasharrowfill@\relbar\relbar\rightarrow}

\providecommand*\xdashrightarrow[2][]{%
  \ext@arrow 0055{\dashrightarrowfill@}{#1}{#2}}
\makeatother

\title{Stability of normal bundles of Brill--Noether curves in \(\mathbb{P}^4\)}

\author{Izzet Coskun}
\address{Department of Mathematics, Statistics, and CS \\
University of Illinois Chicago, Chicago IL 60607}
\email{icoskun@uic.edu}
\author{Eric Jovinelly}
\address{Department of Mathematics \\
Brown University \\
Box 1917 \\
151 Thayer Street \\
Providence, RI, 02912}
\email{eric\_jovinelly@brown.edu}
\author{Eric Larson}
\address{Department of Mathematics \\
Brown University \\
Box 1917 \\
151 Thayer Street \\
Providence, RI, 02912}
\email{elarson3@gmail.com}

\keywords{Brill-Noether curves, normal bundles, stability}
\subjclass[2020]{14H60}

\date{}

\begin{document}

\begin{abstract}
We prove that a general Brill--Noether curve \(C\) of genus $g \geq 2$ and degree $d$ in $\PP^4$ has stable normal bundle $N_C$ if and only if $$(g, d) \notin \{(2,6), (3,7), (5,8), (6,9), (7,10)\}.$$ Moreover, \(N_C\) is strictly semistable if $(g, d) \in \{(3, 7), (5, 8)\}$, and is unstable if $(g, d) \in \{(2, 6), (6, 9), (7, 10)\}$.  Our results are valid in any characteristic. Along the way, we also generalize previous results of Larson--Vogt \cite{lv21} on interpolation for \(N_C(-1)\), from characteristic zero to arbitrary characteristic.
\end{abstract}

\maketitle

\section{Introduction}
Let $\rho(g,r,d) = g - (r+1)(g-d + r)$ be the Brill--Noether number.  By the Brill--Noether Theorem, a general curve of genus $g$ admits a nondegenerate map to $\PP^r$ of degree $d$ if and only if $\rho(g,r,d) \geq 0$ \cite{griffithsharris}. Moreover, in this case there is a unique component of the Hilbert scheme whose general point parameterizes nondegenerate (smooth if $r \geq 3$) curves of genus $g$ and degree $d$, and whose moduli map dominates the moduli space of genus $g$ curves $M_g$ \cite{fultonlazarsfeld, gieseker, eisenbudharris87} (for proofs in arbitrary characteristic see \cite{welters, jensenpayne, erichannahisabel}).  A curve in this component is called a \defi{Brill--Noether curve} or \defi{BN-curve} for short.  
In this paper, we characterize the pairs $(g,d)$ for which the general BN-curve of genus $g$ and degree $d$ in $\PP^4$ has (semi)stable normal bundle. We work over an algebraically closed field of arbitrary characteristic.

The normal bundle $N_C= N_{C/\PP^r}$ of a curve $C$ in $\PP^r$ controls the deformations of $C$ in $\PP^r$ and plays a crucial role in applications to arithmetic and moduli. Stability is among the most important properties of a vector bundle. To define it, recall that 
the slope $\mu$ of a vector bundle $E$ on a curve $C$ is defined by $\mu(E)= \frac{\deg(E)}{\rk(E)}.$ The bundle $E$ is \defi{(semi)stable} if every proper subbundle $F$ of $E$ satisfies $$\mu(F) \leqor \mu(E).$$ 
(Semi)stable vector bundles are the building blocks of all vector bundles under filtration, much in the same way that prime (powers) are the building blocks of all integers under multiplication. That is, every vector bundle on a curve admits a unique filtration, called the \defi{Harder--Narasimhan (HN) filtration}, with semistable quotients of decreasing slope,
and every semistable vector bundle admits a filtration, called a \defi{Jordan--H\"{o}lder (JH) filtration}, with stable quotients of the same slope, which are unique up to permutation.

Consequently, the stability of normal bundles is a central question in geometry.  This question was completely settled for general BN-curves in $\PP^3$ by \cite{clv22}.  In higher dimensions, the stability of normal bundles is only known in some cases \cite{br99, cs23, clv23, ran23}.  We refer the reader to \cite[\S 1.3]{cs23} for a more detailed history of the problem and further references. Our main theorem completely settles this fundamental question for general  BN-curves in $\PP^4$.

\begin{theorem}\label{thm-mainstab}
Let $C \subset \PP^4$ be a general BN-curve of genus $g\geq 2$ and degree $d$ over an algebraically closed field of arbitrary characteristic.  The normal bundle $N_C$ is stable if and only if 
$$(g,d) \notin \{(2,6),(3,7),(5,8),(6,9),(7,10)\}.$$
The normal bundle is strictly semistable if $(g,d) \in \{(3,7),(5,8)\}$ and unstable if $(g,d) \in \{(2,6),(6,9),(7,10)\}$.
\end{theorem}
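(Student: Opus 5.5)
We argue by degeneration, using that (semi)stability is an open condition in families: to show $N_C$ is stable for the general BN-curve of given $(g,d)$, it suffices to exhibit one BN-curve (possibly a reducible nodal curve in the BN-component) whose normal bundle satisfies a suitable limiting stability condition. For the reducible curves we use a notion of stability for bundles on nodal curves that tracks the gluing data at the nodes. The natural degenerations are $C = C' \cup L$, with $L$ a line or rational curve meeting $C'$ at one or two points, and $C = C' \cup D$ with $D$ a rational normal curve meeting $C'$ at several points. These relate $(g,d)$ to $(g,d-1)$, $(g+1,d+1)$, or similar smaller cases. In the limit, $N_C|_{C'}$ is an elementary modification $N_{C'}[p \posmod q]$ at the attachment points. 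By choosing the attachment points generally we keep track of the directions of these modifications. The induction then reduces stability for $(g,d)$ to stability, or a slightly stronger property such as interpolation for $N_{C'}(-1)$ twisted by modifications, for the smaller curve $C'$.

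The steps are as follows. First we reduce to the Brill--Noether range $\rho(g,4,d)\ge 0$, that is $d \ge g+4 - \lfloor g/5\rfloor$ roughly, and note $\deg N_C = 5d+2g-2$ and $\rk N_C = 3$. Second, we handle the nonspecial range $d \ge g+4$ by induction on $d$, attaching lines or secant curves. Here the key input is interpolation for $N_C(-1)$ in arbitrary characteristic, proved along the way. Interpolation bounds the degree of every subbundle through generic modifications, and with the right modifications these bounds yield strict slope inequalities. Third, we treat the special range $g+4 > d$ with $\rho\ge 0$ by degenerating to curves of the form $C' \cup$ (rational normal curve) or $C'\cup$ (elliptic normal curve), and we run a separate short induction. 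Finally, we handle finitely many small base cases directly: $d\le 10$ or so, and curves on surfaces such as quadric or cubic scrolls, del Pezzo, or K3-type complete intersections. For these we compute subbundles explicitly, using sequences like $0\to N_{C/S}\to N_C\to N_S|_C\to 0$.

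For the exceptions we exhibit explicit destabilizing or semistable-but-not-stable subbundles. For $(2,6)$, the curve lies on a cubic scroll or a quadric cone. For $(6,9)$ and $(7,10)$, the curve lies on a quadric hypersurface or a smooth complete intersection of quadrics, which is a del Pezzo quartic surface. The bundle $N_{C/S}$ or $N_{Q}|_C$ then has slope at least $\mu(N_C)$, and we compute the inequality. For $(3,7)$ and $(5,8)$, the analogous subbundle has slope exactly $\mu(N_C)$: $(5,8)$ is a canonical-type curve on a complete intersection of three quadrics, where $N_C=\OO(2)^3$ restricted is semistable but not stable. For these two cases we also prove semistability by degeneration, just as in the stable cases but with non-strict inequalities.

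We expect the main obstacle to be the inductive step: showing that the limiting modified bundle has no subbundle of slope $\geq\mu$ on the reducible curve. Subbundles can be ``distributed'' across components, and for rank $3$ there are both line subbundles and rank $2$ subbundles to rule out. Small numerical margins near the exceptional cases, such as $(g,d)=(4,8)$ and $(8,11)$, will require ad hoc degenerations. We would first try to reduce everything to interpolation-type statements for twists of $N_{C'}$ by general points, so that a single cohomological vanishing controls all subbundles at once.
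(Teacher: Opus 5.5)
Your outline has the right overall shape: degenerate, use openness, carry interpolation for \(N_C(-1)\) along as an auxiliary inductive hypothesis, and settle the exceptions and finitely many base cases. But it never supplies a mechanism for the inductive step you yourself flag as the main obstacle, and the mechanism you suggest fails.

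Interpolation cannot produce strict slope inequalities. If a rank-\(3\) bundle \(E\) satisfies interpolation, twisting down by a general divisor of degree \(\lceil \mu(E)+1-g\rceil\) only gives \(\mu(F) \leq \lceil \mu(E)\rceil\) for subbundles \(F \subset E\). That is compatible with instability: for rational curves of degree \(d \geq 4\), \(d \not\equiv 1 \pmod 3\), in characteristic \(\neq 2\), \(N_C(-1)\) is balanced and hence interpolates, yet \(N_C\) is unstable (Remark~\ref{rem-g0}).

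Attaching lines does not transport stability either. For a bisecant line one has \(N|_L \cong \OO(1)\oplus\OO(2)^{\oplus 2}\), which is unstable, so Lemma~\ref{lem: reducible stability} is unavailable. For a \(1\)-secant line, a line subbundle of \(N_{C'}\) meeting \([2u \posmod v]\) to second order gains \(2\) in degree while \(\mu\) rises only by \(2/3\).

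In the paper, interpolation plays a different role. It makes \(C \cap H\) a general set of points of a hyperplane \(H\), so that a BN space curve \(C' \subset H\) with \((g',d') \in \{(0,3),(1,4),(3,6),(4,7),(5,8),(6,9)\}\) can be attached through \(s=(d'+g'-1)/2\) of them. After modifying the smoothing family along \(C'\) toward \(N_{C'/H}\), the central fiber restricts to \(N_C(-1)(\Gamma)\) on \(C\). On \(C'\) it restricts to an extension of \(N_{C'/H}(-1)(-\Gamma)\) by \(\OO_{C'}(\Gamma)\), both semistable of slope exactly \(s\). Lemma~\ref{lem: reducible stability} and openness then transfer (semi)stability with no slope analysis at all (Proposition~\ref{prop: smoothing P3}). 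Your rational and elliptic normal curve attachments resemble the cases \((0,3)\) and \((1,4)\). Without the hyperplane, this choice of \(s\), and the twist along \(C'\), the restriction to \(C'\) is destabilized by \(N_{C'/H}\) and the argument does not close.

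The second gap is the base cases. The reduction leaves \(24\) cases (Theorem~\ref{thm: basecases}), reaching \((g,d)=(15,16)\) rather than \(d \leq 10\), and this is where the real difficulty lies. Surface sequences do not decide them. For instance, if a curve of type \((4,8)\) lies on a smooth quartic del Pezzo surface, the sequence \(0 \to K_C(1) \to N_C \to \OO_C(2)^{\oplus 2} \to 0\) has subbundle of slope \(14 < 46/3\) and quotient of slope \(16 > 46/3\). So stability hinges on the extension class, which your plan gives no way to control.

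The missing ideas are the paper's:
\begin{itemize}
\item degenerate to an elliptic curve together with bisecant lines, \(1\)-secant lines and \(5\)-secant twisted cubics;
\item use Lemma~\ref{lem:extra juice} to trade stability of the union for weaker near-stability bounds on \(N_C[x \biposmod y]\) and \(N_C[x\biposmod y][x\posmod 2y][y\posmod 2x]\);
\item restrict the possible HN types of two-cyclic modifications on the elliptic curve via translation by \(2\)-torsion (Lemma~\ref{lem:black magic}) and the transition analysis of Section~\ref{sect: types of bundles};
\item when that is insufficient, collide attachment points at some \(p \in C\) and project from \(p\), reducing to modifications of the quartic elliptic curve in \(\PP^3\) (Lemma~\ref{lem: deg 4 elliptic curve modifications}).
\end{itemize}

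Two smaller points on the exceptions. For \((7,10)\) the destabilizing subbundle is \(N_{C/Q}\), of rank \(2\) and slope \(21 > 62/3\); \(N_Q|_C \cong \OO_C(2)\) has slope \(20 < 62/3\) and is the quotient. For \((3,7)\) you should identify the splitting \(N_C \cong \bigoplus_i \OO_C(2)(-s_i)\), coming from the quadrics singular at the points \(s_i\) of the trisecant line. This gives semistability and non-stability at once.
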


\begin{remark}
For a smooth curve of genus $g$ and degree $d$ in $\PP^r$, a straightforward Chern class computation shows \(\deg(N_C) = (r + 1)d + 2g - 2\). We also have \(\rk(N_C) = r - 1\), so \(\mu(N_C) = ((r + 1)d + 2g - 2) / (r - 1)\).
\end{remark}

\begin{remark}\label{rem-g1}
The normal bundle of a general BN-curve $C$ of genus $1$ in $\PP^r$ is semistable \cite[Theorem~3.1]{cs23}. A semistable bundle on a genus $1$ curve is stable if and only if its degree and rank are coprime \cite{Atiyah}. Consequently, $N_C$ is stable if and only if $2d$ and $r-1$ are relatively prime. Otherwise, $N_C$ is strictly semistable. Hence, when $r=4$, the bundle $N_C$ is stable if and only if  $d \not\equiv 0$ mod $3$; the bundle $N_C$ is strictly semistable when $d \equiv 0$ mod~$3$. 
\end{remark}

\begin{remark}\label{rem-g0}
A vector bundle $E$ on a rational curve is (uniquely) isomorphic to a direct sum of line bundles $E \cong \bigoplus \OO_{\PP^1}(a_i)$. The vector bundle is called \defi{balanced} if $|a_i - a_j| \leq 1$ for all $i, j$. The normal bundle of a general rational curve $C$ of degree $d \geq r$ in $\PP^r$ is balanced except when the characteristic of the ground field is $2$; in characteristic $2$, the normal bundle is as balanced as possible subject to the condition that all $a_i \equiv d$ mod $2$ \cite[Remark~4.4]{CLVgrass}. A balanced vector bundle is semistable if and only if it has integer slope.
Hence, when $r=4$, the bundle $N_{C}$ is never stable, and is strictly semistable if and only if $d \equiv 1$ mod~$3$.
\end{remark}

Our second theorem concerns when curves can pass through certain configurations of points.  
The study of such questions goes all the way back to Euclid, who showed that circles can pass through three given points in the plane --- provided that those points are not collinear.

In the  context of modern Brill--Noether theory, Larson--Vogt \cite{lv22} determined the number of general points a BN-curve can pass through.
It is natural to further ask when  a BN-curve of genus $g$ and degree $d$ can pass through points as some subset of the points specialize onto  a hyperplane. By B\'ezout's theorem, we cannot specialize more than $d$ points onto a hyperplane. Since the normal bundle of a curve controls its deformation theory, this question is intimately linked to a cohomological property known as interpolation for the twist $N_{C}(-1)$.

This question was completely settled for BN-curves in \(\PP^3\) in characteristic zero by \cite{vogt, l21}, and in positive characteristic by \cite{l24}. In \(\PP^4\), an answer to this question was previously  only known in characteristic zero \cite{lv21}, and in even higher dimensions only partial results are known \cite{ballico}.
Along the way to proving our main theorem, we also completely settle this question for BN-curves in \(\PP^4\) in arbitrary characteristic.

\begin{definition}
A vector bundle $E$ on a curve $C$ \defi{satisfies interpolation} if $H^1(E) = 0$ and, for a general effective divisor $D$ of any nonnegative degree, either
$$H^0(E(-D)) = 0 \quad \text{or} \quad H^1(E(-D)) = 0.$$
\end{definition}

\begin{theorem}\label{thm-maininterpolation}
Let $C \subset \PP^4$ be a general BN-curve of degree $d$ and genus $g \geq 1$. Then $N_{C}(-1)$ satisfies interpolation unless 
$$(g,d) \in \{(2,6), (5,8), (6,9), (7,10)\}.$$
\end{theorem}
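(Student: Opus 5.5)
We plan to follow the degeneration strategy of Larson--Vogt \cite{lv21}, making every step characteristic-free. Interpolation is an open condition in families of curves with locally free twisted normal bundles, so it suffices to exhibit one BN-curve of each admissible $(g,d)$ (possibly a nodal limit that is still a limit of BN-curves) whose twisted normal bundle satisfies interpolation. We will use the standard reformulation: for a bundle $E$ of rank $n$ with $H^1(E)=0$, interpolation is equivalent to the vanishing $H^1(E(-D))=0$ for a general effective $D$ of degree $\lfloor \chi(E)/n \rfloor$, together with $H^0(E(-D'))=0$ for general $D'$ of degree $\lceil \chi(E)/n\rceil$. This is equivalent to asking that the curve pass through the appropriate number of general points with the remaining freedom controlled.

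The inductive engine will be the two basic degenerations. In the first, we attach a $1$-secant or $2$-secant line or rational normal curve $R$ to $C$, forming $C\cup R$. In the second, we specialize some points onto a hyperplane $H$. We will then use the standard exact sequences
\[0\to N_{C\cup R}(-1)|_R(-\text{stuff})\to N_{C\cup R}(-1)\to N_{C\cup R}(-1)|_C\to 0,\]
and $N_{C\cup R}|_C = N_C$ modified positively at the nodes toward $T_pR$. The relevant pointing-bundle modifications, and the exact sequence $0\to N_{C/H}\to N_C|\to \OO_C(1)$ for curves meeting a hyperplane, use only elementary transformations. Hence these inductive arguments hold in every characteristic, with one exception: the step where \cite{lv21} appealed to general-position or monodromy statements proved by characteristic-zero tools, for example the uniform position or balancedness of normal bundles of rational curves. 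We will replace those by the characteristic-free balancedness of \cite{CLVgrass}, as recalled in Remark~\ref{rem-g0}, together with the $\PP^3$ results of \cite{l24}. Using these degenerations we reduce the claim for $(g,d)$ to smaller $(g',d')$ with $\rho\ge 0$, and this yields an infinite family reduction valid once $d$ is large relative to $g$.

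The base cases are the finitely many $(g,d)$ with $\rho(g,4,d)\ge0$ and small $d$, together with genus $1$. For genus $1$ we will use the elliptic normal curve, where semistability from \cite{cs23} gives interpolation directly: a semistable bundle on an elliptic curve has $H^0$ or $H^1$ of $E(-D)$ vanishing for general $D$. For the remaining base cases we will argue on explicit degenerate curves, such as a curve on a rational normal scroll or a union $C'\cup L$ in a hyperplane, and compute directly.

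The main obstacle is characteristic $2$ (and possibly $3$). There the normal bundle of a rational curve need not be balanced, and the usual arguments break, for instance those via the Frobenius-sensitive map $N_C\to \OO(1)$, or via inflectionary behavior of projections. For these cases we will first try choosing the attached rational curves of degree with the right parity, so that the restricted modified bundle is still ``as balanced as possible'' in the sense of \cite{CLVgrass}, and then check that the resulting numerics suffice. We will also verify separately that the four exceptions fail, by exhibiting a destabilizing subbundle through a quadric or cubic scroll containing $C$. For example, for $(6,9)$ the curve lies on a cubic surface and $N_{C/S}(-1)$ has too many sections.
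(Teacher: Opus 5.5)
There is a genuine gap. Your outline never supplies a mechanism that actually propagates interpolation of $N(-1)$ across degenerations, nor any argument for the base cases.

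\textbf{The induction.} Attaching $1$- or $2$-secant lines or rational normal curves, and specializing points onto a hyperplane, does not by itself preserve interpolation. By your own description, the resulting induction only works when $d$ is large relative to $g$. That leaves infinitely many low-$\rho$ pairs untreated, e.g.\ those with $\rho(g,4,d)=0$ for every $g$.

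The missing idea is the hyperplane attachment from \cite{lv21}. Glue to $C$ a BN space curve $C' \subset H$ with $(g',d') \in \{(0,3),(1,4),(3,6),(4,7),(5,8),(6,9)\}$ through $s=(d'+g'-1)/2$ of the points $\Gamma \subset C\cap H$; these points are general in $H$ because $N_C(-1)$ interpolates. On a smoothing family with smooth total space, $\NN(-1)[C' \posmod N_{C'/H}]$ restricts to $C$ as $N_C(-1)(\Gamma)$. It restricts to $C'$ as an extension of $N_{C'/H}(-1)(-\Gamma)$ by $\OO_{C'}(\Gamma)$. These two bundles have exactly the same integral slope $s$ and both interpolate, so the extension has a twist with $H^0=H^1=0$, and \cite[Lemma~2.10]{lv21} glues.

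This drives the reduction as follows:
\begin{itemize}
\item attaching $(6,9)$-curves shifts $(g,d)$ by $(12,9)$ and disposes of $g\ge 16$;
\item attaching twisted cubics disposes of $d\ge d_{\min}(g)+3$;
\item further attachments, plus known results for $d \ge 2g$, leave the eight cases $(5,9),(6,11),(8,11),(8,12),(10,12),(10,13),(11,13),(12,14)$.
\end{itemize}
You also need, in arbitrary characteristic, that $C\cup_\Gamma C'$ is a strongly smoothable BN-curve; this is a characteristic-free version of \cite[Theorem~1.9]{l22}. Without it, openness of interpolation gives nothing, and it is one of the places where characteristic zero was originally used.

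\textbf{The base cases.} This is where the real work lies, and ``compute directly on a scroll or on $C'\cup L$'' is not an argument. The cases need specific degenerations to elliptic curves with secant lines and twisted cubics, analyzed via pointing-bundle exact sequences. Six of them follow \cite{lv21}; $(6,11)$ and $(8,12)$ need new arguments of this kind.

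\textbf{The characteristic issue.} Your diagnosis is off. For $g\ge 1$ there is no characteristic-$2$ obstruction, and \cite{CLVgrass} and \cite{l24} are not what is needed. The characteristic-zero inputs to remove are:
\begin{itemize}
\item \cite[Theorem~1.3 and Corollary~1.4]{aly}, which can be replaced by \cite[Theorem~1.4]{lv22};
\item \cite[Appendix~B]{aly}, used for modifications of the normal bundle of an elliptic quartic $D\subset\PP^3$. Replace it by noting that the relevant bisecant lines lie on distinct quadrics through $D$, so the modified bundle splits, e.g.\ $N_D[q_3+x \negmod z]\cong \OO_D(2)(-x)\oplus\OO_D(2)(-q_3)$.
\end{itemize}

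Your genus-$1$ argument via semistability is fine. However, the general $(6,9)$ curve does not lie on a cubic surface. It lies on the quartic del Pezzo surface $S$ cut out by a pencil of quadrics, and it is $N_{C/S}\cong K_C(1)\subset N_C$ that obstructs interpolation.
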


\begin{remark}\label{rem:char2 exceptions}
When $C$ has genus $0$, the bundle $N_{C}(-1)$ satisfies interpolation if and only if it is balanced. Hence, there are additional exceptions in characteristic $2$ as described in Remark \ref{rem-g0}.  However, these cases are not exceptions to the following corollary concerning a geometric version of interpolation.
\end{remark}

\begin{corollary}\label{cor:geom interpolation}
Let $g$ and $d$ be nonnegative integers satisfying $\rho(g, 4, d) \geq 0$, and let $\Gamma \subset \PP^4$ be a set of $n$ points, which is general subject to the condition that exactly $d$ of them lie on a hyperplane. Then there exists a smooth nondegenerate BN-curve of genus $g$ and degree $d$ passing through $\Gamma$ if and only if
$$n \leq \left\lfloor\frac{5d - g + 1}{3}\right\rfloor,$$
except in the following cases:
$$(g,d) \in \{(5,8), (6,9), (7,10)\}.$$
\end{corollary}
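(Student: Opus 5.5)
The plan is to deduce Corollary~\ref{cor:geom interpolation} from Theorem~\ref{thm-maininterpolation} through the standard dictionary between interpolation for $N_C(-1)$ and passing through points that have been specialized onto a hyperplane. Let $H$ be a hyperplane. A BN-curve $C$ through $\Gamma$ meets $H$ in exactly the $d$ points of $\Gamma \cap H$, so $D = C \cap H$ is a hyperplane section and $N_C(-D) \cong N_C(-1)$. The remaining $n-d$ points are general in $\PP^4$. Consider the incidence correspondence between BN-curves $C$ with $n-d$ marked points $p_i$ together with a hyperplane $H$. The condition is that the map to (configurations of $d$ points on $H$) $\times (\PP^4)^{n-d}$ be dominant. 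At a general point of the correspondence, its differential is dominant exactly when the restriction
$$H^0(N_C) \to \bigoplus_{q\in C\cap H} N_C|_q / T\text{-part along }H \;\oplus\; \bigoplus_i N_C|_{p_i}$$
is surjective, modulo the motions of $H$. The plan is to reduce this to $H^1(N_C(-1)(-p_1-\cdots-p_{n-d}))=0$, where the $p_i$ form a general effective divisor $E$ of degree $n-d$. The reduction uses the exact sequence $0\to N_C(-1)(-E)\to N_C \to N_C|_{D+E}\to 0$, together with the fact that moving $H$ accounts for the tangent directions along $H$.

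Next comes the numerics. $N_C(-1)$ has rank $3$ and degree $5d+2g-2-3d = 2d+2g-2$, so $\chi(N_C(-1)) = 2d+2g-2 + 3(1-g) = 2d - g + 1$. Interpolation says that $H^1(N_C(-1)(-E))=0$ for general $E$ of degree $m$ as long as $3m \le 2d-g+1$, and that $H^0(N_C(-1)(-E))=0$ once $3m \ge 2d-g+1$. Hence curves through $\Gamma$ exist iff $n-d \le \lfloor (2d-g+1)/3\rfloor$, i.e.\ $n \le \lfloor (5d-g+1)/3\rfloor$. For the converse ("only if") I would use a dimension count: if $n-d$ exceeds the bound, then $H^0(N_C(-1)(-E))=0$, so the fibers are finite and the source is too small. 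Concretely, the incidence has dimension $h^0(N_C)+(n-d)+4 = 5d+1-g+(n-d)+4$ (via $\chi(N_C)=5d+1-g$ and $H^1(N_C)=0$). The target has dimension $4+3d+4(n-d)$, and the required inequality is exactly $3(n-d)\le 2d-g+1$. This half therefore needs only $H^1(N_C)=0$, which holds for BN-curves.

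The cases to handle are these. For $g \ge 1$ outside the exceptional list, Theorem~\ref{thm-maininterpolation} applies directly. For $g=0$, Remark~\ref{rem:char2 exceptions} warns that interpolation fails in characteristic $2$, since $N_C(-1)$ is then not balanced. Here I would argue directly: the needed statement is only the vanishing $H^1(N_C(-1)(-E))=0$ for $3\deg E \le 2d+1$. With $N_C(-1)\cong \bigoplus \OO(a_i)$, all $a_i \equiv d \pmod 2$ and as balanced as possible, so $a_i \ge$ roughly $(2d-2)/3 - 1$. I would check that this still gives $a_i - \deg E \ge -1$ in the required range. If it does not, I would instead specialize $C$ to a reducible curve (a rational curve union a line) and use the gluing arguments underlying the theorem. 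For $(g,d)=(2,6)$, which is an exception to the theorem but not to the corollary, I would check the needed single vanishing (the one at $m=\lfloor 11/3\rfloor = 3$) directly. Presumably the paper's computation of the destabilizing subbundle shows that only the $H^0$-half fails there. For the three listed exceptions, one must show that the bound fails. I expect this to come from an explicit geometric obstruction already used for the theorem: these curves lie on special surfaces (e.g.\ on a quadric or cubic scroll), which forces an excess dimension in the fiber and prevents passing through the predicted number of points.

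The main obstacle is the precise identification of the dominance condition with the vanishing $H^1(N_C(-1)(-E))=0$. This involves the degree-$d$ divisor on $H$ and the $5$-dimensional (projective-linear) freedom of $H$ versus the $4$ moduli of $H$. The characteristic-$2$ rational case and $(2,6)$ are also delicate, since there interpolation itself fails and only the needed one-sided vanishing must be salvaged.
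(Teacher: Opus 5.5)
\textbf{The gap: genus $0$, $d \equiv 0 \pmod 3$, characteristic $2$.} This case is exactly where your dictionary ``dominant $\Leftrightarrow$ surjective differential at a general point'' breaks down.

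Write $d=3k$. By Remark~\ref{rem-g0}, the splitting degrees of $N_C(-1)$ are even and sum to $2d-2=6k-2$, so one of them is at most $2k-2$. The required twist has degree $m=\lfloor (2d+1)/3\rfloor=2k$. Hence $H^1(N_C(-1)(-E))\neq 0$ for \emph{every} $E$ of degree $2k$, so the vanishing you call ``the needed statement'' is false. (Your check does succeed for $d\equiv 2$, where the splitting $(2k,2k,2k+2)$ meets $m=2k+1$, and for $d\equiv 1$.)

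Your fallback, a rational curve union a line plus gluing arguments, cannot rescue this if the gluing produces an $H^1$-vanishing on the nodal limit. By semicontinuity that vanishing would pass to the general smooth curve, which we just saw is impossible. Concretely, the line must contain two points $q_1\in H$, $q_2\notin H$ of $\Gamma$, because a rational curve of degree $3k-1$ passes through at most $5k-2$ points of such a configuration. Then $N_D|_L(-q-q_1-q_2)\cong \OO(-2)^{\oplus 2}\oplus\OO(-1)$ has $h^1=2$. The incidence map is dominant but not generically smooth here, so a non-infinitesimal argument is required.

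\textbf{The missing idea.} The paper's argument here is geometric. It first shows that the rational curves of degree $d-1$ through $\Gamma\setminus\{q_1,q_2\}$ sweep out a divisor in $\PP^4$. To see this, degenerate such a curve to a rational curve $C_0$ union a line $L_p$ through one of the points $p$ on $H$. Deformations preserving the node and the incidences stay inside the cone over $C_0$ with vertex $p$, on which $C_0$ is a hyperplane section. A degree count on $N_{C_0/S}$ then shows that no smoothing through the points stays in the cone. Consequently $\overline{q_1q_2}$ meets a smooth member $C'$ of the family, and one smooths $C'\cup\overline{q_1q_2}$ through $\Gamma$.

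\textbf{The rest of your outline.}
\begin{itemize}
\item The dimension count for ``only if'' is a correct addition that the paper leaves implicit.
\item Your plan for $(2,6)$ works. In
\[
0\to\OO_C(1)(-E)\to N_C(-1)(-E)\to N_S|_C(-1)(-E)\to 0
\]
with $\deg E=3$, the sub has degree $3>2g-2$. The JH-factors $K_C^\vee(1)(-E)$ and $K_C^{\otimes 2}(-E)$ of the quotient are general of degree $1$. So $H^1$ vanishes; the paper instead quotes Larson--Vogt.
\item Your guess about the three exceptions is off. The curve lies on a net, a pencil, resp.\ one quadric, hence so does $C\cap H\subset H\cong\PP^3$. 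But $8$, $9$, $10$ general points of $\PP^3$ lie on only a pencil, one, resp.\ no quadric. So existence fails already at $n=d$, and the cubic scroll (which belongs to $(2,6)$) plays no role.
\end{itemize}
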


\subsection{The strictly semistable and unstable cases}\label{sec-unstablecases} Here we explain the geometry of the exceptional cases when the normal bundle of a general BN-curve $C$ of genus $g$ and degree $d$ is not stable.

\subsubsection{The case $(g, d)=(5, 8)$} Such curves are canonical curves of genus $5$. When $C$ is not hyperelliptic or trigonal, such a curve is a complete intersection of 3 quadrics in $\PP^4$. Consequently, $N_{C} \cong \OO_C(2)^{\oplus 3}$. Hence, the normal bundle is semistable but not stable, and \(N_C(-1)\) does not satisfy interpolation.
Moreover, since \(8\) general points in \(\PP^3\) do not lie on a net of quadrics, this case is also an exception to Corollary~\ref{cor:geom interpolation}.

\subsubsection{The case $(g,d)= (3,7)$} The general such curve lies on a three-dimensional vector space of quadrics, and is residual to a trisecant line $\ell$ in the intersection of these quadrics. Let $C \cap \ell = \Gamma = \{s_1, s_2, s_3\}$. For each $s_i \in \Gamma$, there exists a unique quadric $Q_i$ containing $C$ and singular at $s_i$. We conclude that $$N_{C} \cong \bigoplus_{i=1}^3 \OO_C(2)(-s_i).$$ Hence, $N_{C}$ is semistable but not stable. 

\subsubsection{The case $(g,d) = (2,6)$\label{ss:62}}  Since $C$ is hyperelliptic, it lies in a surface $S \subset \PP^4$ swept out by lines through the fibers of the $g_2^1$.  The surface $S$ is a cubic scroll and $C$ is the intersection of $S$ with a quadric hypersurface. Hence we have an exact sequence
\[0 \to N_{C/S} \simeq \OO_C(2) \to N_C \to N_S|_C \to 0.\]
Since $\mu(N_{C/S}) = 12$, which is strictly greater than 
$\mu(N_{C}) = 10\frac{2}{3}$,
the subbundle $N_{C/S}$ destabilizes $N_C$. Moreover, twisting down \(N_C(-1)\) by a general divisor $D$ of degree \(4\), we have
\[H^0(N_C(-1)(-D)) \supset H^0(N_{C/S}(-1)(-D)) \neq 0,\] and so \(N_C(-1)\) does not satisfy interpolation.

In fact, we claim \(N_S|_C\) is strictly semistable,
so in particular \(N_{C/S} \subset N_C\) is the HN-filtration.
Indeed, let \(Q\) be the unique quadric containing \(S\) which is singular along its directrix.
Write \(N_{S/Q}|_C \subset N_S|_C\) for the saturation of the corresponding bundle on the smooth locus of $Q$, and \(N_Q|_C\) for the quotient.
Then we have an exact sequence
\[0 \to N_{S/Q}|_C \to N_S|_C \to N_Q|_C \to 0.\]
A straightforward Chern class computation gives \(N_{S/Q}|_C \simeq K_C^\vee(2)\) and \(N_Q|_C \simeq K_C^{\otimes 2}(1)\), both of slope \(10\), so the above sequence gives a JH-filtration of \(N_S|_C\) and in particular \(N_S|_C\) is strictly semistable.

To complete this picture, we claim that \(N_S|_C\) is the (unique) nonsplit extension of \(K_C^{\otimes 2}(1)\) by \(K_C^\vee(2)\). Indeed, to establish this claim, it suffices to show \(\Hom(N_C, K_C^\vee(2)) = 0\). This can be done by degenerating \(C\) to a nodal union \(D \cup L\), where \(D\) is an elliptic curve of degree \(5\) and \(L\) is a \(2\)-secant line meeting \(D\) at \(\{x, y\}\).
Later on, in Lemma~\ref{lem:hartshornehirschowitz}, we will describe \(N_C|_D\) and \(N_C|_L\) as positive modifications of \(N_D\) and \(N_L\);
here it suffices to note that \(N_C|_D\) and \(N_C|_L\) are vector bundles containing
\(N_D\) and \(N_L\) as full-rank subsheaves.
Since \(N_D\) is stable of slope \(8\frac{1}{3}\) while \(K_C^\vee(2)|_D\) is of degree \(8\), every map \(N_D \to K_C^\vee(2)|_D\) is identically zero.
Moreover since \(N_L\) is semistable of slope \(1\) while \(K_C^\vee(2)|_L\) is of degree \(2\), every map \(N_L \to K_C^\vee(2)|_L\) that vanishes at \(\{x, y\}\) is identically zero.
It follows that \(\Hom(N_C, K_C^\vee(2)) = 0\) as desired.

\subsubsection{The case \((g, d) = (6, 9)\)} By Riemann--Roch, $C$ is contained in a pencil of quadrics.
Since \(C\) passes through \(13\) general points by \cite[Theorem 1.2]{lv22}, the pencil is general thus has a smooth base locus. Hence we get a map  
$$0 \to K_C(1) \to N_C \to \OO_C(2)^{\oplus 2} \to 0.$$

We conclude that the HN-filtration of $N_C$ in this case is given by 
$K_C(1) \subset N_C$ with quotient $\OO_C(2)^{\oplus 2}$. Hence, the normal bundle is unstable, and \(N_C(-1)\) does not satisfy interpolation.
Moreover, since \(9\) general points in \(\PP^3\) do not lie on a pencil of quadrics, this case is also an exception to Corollary~\ref{cor:geom interpolation}.

\subsubsection{The case \((g, d) = (7, 10)\)}\label{ss:710} By Riemann--Roch, $C$ is contained in a quadric \(Q\).
Since \(C\) passes through \(14\) general points by \cite[Theorem 1.2]{lv22}, the quadric \(Q\) is general thus smooth.
Hence we get an exact sequence
$$0 \to N_{C/Q} \to N_C \to \OO_C(2) \to 0.$$

In particular, \(N_C\) is unstable and \(N_C(-1)\) does not satisfy interpolation.
Moreover, since \(10\) general points in \(\PP^3\) do not lie on a quadric, this case is also an exception to Corollary~\ref{cor:geom interpolation}.

In fact, we will see that \(N_{C/Q}\) is stable in Lemma~\ref{lem: stability in quadric}, and that \(N_{C/Q}(-1)\) satisfies interpolation in Theorem~\ref{thm:LV base cases}; in particular, the above exact sequence is the HN-filtration.

\subsection*{Overview of the proof}
Section \ref{sec: preliminaries} reviews some preliminaries on the stability of vector bundles on nodal curves and modifications of vector bundles. Section \ref{sec: case reduction} reduces the proofs of our theorems to a finite number of cases using a degeneration originally explored in \cite{lv21}.  Section \ref{sec: interpolation proof} outlines the minor modifications to the argument of \cite{lv21} that are needed to prove interpolation for the twisted normal bundle in arbitrary characteristic.
All told, this suffices to reduce the proof of Theorem~\ref{thm-mainstab} to \(24\) low-degree cases.
As in most previous work on the stability of normal bundles, these low-degree cases are by far the most interesting and subtle ones --- the low-degree cases are, after all, where the exceptions lie --- and prior techniques are inadequate to handle them.
For this, we introduce two fundamental innovations, both of which hold promise for generalizing to higher dimensional projective spaces.

The first, in Section \ref{sec: degeneration arguments}, is a detailed analysis of the behavior of (semi)stability under a degeneration to the union of a curve \(C\) with a bisecant line \(L\). Strikingly, we are able to deduce (semi)stability for the normal bundle of a smoothing of \(C \cup L\) by showing that several different modifications of \(N_C\) are close to (semi)stable.  This is a crucial difference; the bounds we need to prove for such modifications of \(N_C\) are weaker than the bounds required for (semi)stability.  Applying this construction iteratively thereby reduces the problem of (semi)stability to verifying bounds on the slopes of subbundles which get progressively weaker as the degree gets larger. 

The second, in Section \ref{sect: types of bundles}, is a method for showing that making such modifications to bundles on elliptic curves which are ``sufficiently unstable'' produces bundles which are ``less unstable.'' This allows us to efficiently verify the desired slope bounds after pulling off bisecant lines to obtain elliptic curves.

Finally, in Section \ref{sec:final}, we apply these two tools to verify the majority of the  \(24\) low-degree cases, and handle the  remaining ones by ad hoc methods.

\subsection*{Acknowledgments} We would like to thank Eric Riedl, Geoffrey Smith, and Isabel Vogt for invaluable conversations about the stability of normal bundles on curves.

We would also like to acknowledge the generous support of the National Science Foundation, the Simons Foundation, and the Sloan Foundation:
Izzet Coskun was supported by NSF grant DMS-2200684 and Simons Foundation Travel Support for Mathematicians SFI-MPS-TSM-00013580;
Eric Jovinelly was supported by the NSF postdoctoral research fellowship DMS-2303335; Eric Larson was supported by NSF grant DMS-2440719 and Sloan Foundation research fellowship FG-2025-24482.

\section{Preliminaries}\label{sec: preliminaries}

Throughout we work over an algebraically closed field of arbitrary characteristic.  Let $C$ be a smooth curve and $E$ be a vector bundle on $C$.  The \defi{HN-filtration} of $E$ is the unique filtration
\[ 0 = E_0 \subset E_1 \subset \cdots \subset E_{n-1} \subset E_n = E\]
by subbundles such that each $F_i:= E_i/E_{i-1}$ is semistable and
$\mu(F_1) > \mu(F_2) > \cdots > \mu(F_n).$
We call $F_1$ the \defi{maximal destabilizing subbundle} and $F_n$ the \defi{minimal destabilizing quotient}.

\subsection{Stability of Vector Bundles on Nodal Curves} For a connected nodal curve $C$, let $\nu : \tilde{C} \rightarrow C$ be the normalization.  For any node $p \in C$, let $\tilde{p}_1, \tilde{p}_2 \in \tilde{C}$ be the two points lying over $p$.  For any vector bundle $E$ on $C$, there is a natural isomorphism between the fibers of $\nu^* E$ over $\tilde{p}_1$ and $\tilde{p}_2$.  

\begin{definition}
Let $E$ be a vector bundle on a connected nodal curve $C$.  The \defi{adjusted slope} of a subbundle $F \subset \nu^*E$ is 
$$\mu_C^{\text{adj}}(F) := \mu(F) - \frac{1}{\rk F} \sum_{p \in C_{\text{sing}}} \codim_F(F|_{\tilde{p}_1} \cap F|_{\tilde{p}_2}),$$
where $\codim_F(F|_{\tilde{p}_1} \cap F|_{\tilde{p}_2})$ is the codimension of the intersection in either $F|_{\tilde{p}_1}$ or $F|_{\tilde{p}_2}$.  When $C$ is unambiguous, we write $\mu^{\text{adj}}(F)$ for $\mu_C^{\text{adj}}(F)$.  We say that $E$ is (\defi{semi})\defi{stable} if for all nonzero subbundles $F \subsetneq \nu^* E$, 
$$\mu^{\text{adj}}(F) \leqor \mu(\nu^* E) = \mu(E).$$
\end{definition}

\noindent
This definition of (semi)stability behaves well in families of vector bundles on nodal curves.

\begin{proposition}\cite[Proposition 2.3]{clv22}\label{prop: openness of adjusted stability}
Let $\mathcal{C} \rightarrow \Delta$ be a family of connected nodal curves over the spectrum of a discrete valuation ring.  For a vector bundle $\mathcal{E}$ on $\mathcal{C}$, if the special fiber of $\mathcal{E}$ is (semi)stable, then the general fiber of $\mathcal{E}$ is also (semi)stable.
\end{proposition}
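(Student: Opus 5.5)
We argue by contradiction. The statement is about a family over a DVR: we may shrink \(\Delta\) and pass to a finite extension of the DVR when convenient. Suppose the general fiber \(\mathcal{E}_\eta\) on \(\mathcal{C}_\eta\) is not (semi)stable. Then there is a destabilizing subbundle \(F_\eta \subset \nu_\eta^*\mathcal{E}_\eta\) on the normalization of the generic fiber, i.e.\ \(\mu^{\text{adj}}(F_\eta) \geq \mu(\mathcal{E}_\eta)\) (respectively \(>\)). After a finite base change we may assume the nodes and the branches of the generic fiber are defined over the fraction field, and the subbundle likewise, so we can work on a simultaneous partial normalization of the family.

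\emph{Step 1: separate the nodes.} Partition the nodes of \(\mathcal{C}_\eta\) into those that persist as nodes of the special fiber and those that appear only in the special fiber, possibly after shrinking \(\Delta\). Let \(\tilde{\mathcal{C}} \to \mathcal{C}\) normalize the family along the persistent nodes. Its special fiber \(\tilde{\mathcal{C}}_0\) is a partial normalization of \(\mathcal{C}_0\), whose remaining nodes are exactly the new nodes.

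\emph{Step 2: extend the subbundle.} On \(\tilde{\mathcal{C}}\) the pullback of \(\mathcal{E}\) is a vector bundle, and we take the saturation of \(F_\eta\) inside it. The family is a regular surface away from the new nodes, since we may resolve the \(A_k\)-singularities there by blowing up, with the extra exceptional chains having trivial restriction. On such a surface the saturation is reflexive, hence locally free, and flat over \(\Delta\). Its restriction \(F_0\) to the special fiber is a subsheaf of \(\mathcal{E}_0\), which we pull back to \(\nu_0 : \tilde{C}_0 \to \mathcal{C}_0\) and saturate. Saturating only increases the degree, so \(\deg F_0^{\text{sat}} \geq \deg F_\eta\) and \(\rk\) is preserved.

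\emph{Step 3: compare the gluing penalties.}
\begin{itemize}
\item At a persistent node, the two fibers of \(F\) at the branch points specialize as points of a Grassmannian. The dimension of their intersection is upper semicontinuous, so the codimension can only drop in the limit. The penalty in \(\mu^{\text{adj}}\) therefore does not increase.
\item At a new node \(p\), the subsheaf \(F_0\) on the singular fiber extends over \(p\) as a subsheaf of the bundle \(\mathcal{E}_0\) on the nodal curve. The standard local computation at a node then says that pulling back to the normalization and saturating increases the degree by at least \(\codim(F|_{\tilde p_1} \cap F|_{\tilde p_2})\). Concretely, the local sections of \(F_0\) near \(p\) lie in the gluing-compatible subsheaf, which has colength equal to that codimension inside the pullback of the saturation.
\end{itemize}
Combining these gives \(\mu^{\text{adj}}_{\mathcal{C}_0}(F_0^{\text{sat}}) \geq \mu^{\text{adj}}_{\mathcal{C}_\eta}(F_\eta)\). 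It also gives \(\rk F_0^{\text{sat}} = \rk F_\eta < \rk \mathcal{E}\) with \(F_0^{\text{sat}} \neq 0\). So \(F_0^{\text{sat}}\) destabilizes the special fiber, a contradiction.

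The main obstacle is the new-node bookkeeping in Step 3. There we need the flat limit to be a genuine subsheaf of \(\mathcal{E}_0\) on the nodal curve, rather than only on its normalization, and we need the degree gain on the normalization to exactly offset the new penalty term. I would first reduce to a local model \(xy = t^k\) with \(\mathcal{E}\) trivial, and compute there explicitly.
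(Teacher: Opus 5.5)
\textbf{The gap is in the first bullet of Step 3 (persistent nodes).} Note that the paper gives no proof of its own here; it only cites \cite[Proposition 2.3]{clv22}. Your overall strategy is sound: take a flat limit of a destabilizing subsheaf and check that the adjusted slope cannot drop. The new-node computation you flag as the main obstacle is also fine. If \(G\) is the saturation of the image of \(\nu^*F_0\), then \(F_0 \subset \mathcal{E}_0 \cap \nu_*G\), and this glued sheaf has \(\chi\)-degree \(\deg G - \codim\).

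Upper semicontinuity controls the Grassmannian limits \(\Lambda_1, \Lambda_2\) of the fibers of \(F_\eta\) along the two branch sections. But the special-fiber penalty is computed from the fibers of \(F_0^{\mathrm{sat}}\), and these differ from \(\Lambda_i\) whenever \(F_0\) fails to be saturated at the branch point. Here is an example.
\begin{itemize}
\item On \(\PP^1_\Delta\) with \(\mathcal{E} = \OO^2\), let \(F_\eta\) be the image of \(\OO(-1) \xrightarrow{(X,\, tY)} \OO^2\), with branch point \(X = 0\).
\item This image is already saturated, since its cokernel is locally the torsion-free ideal \((X/Y, t)\).
\item Its generic fiber at \(X = 0\) is spanned by \((0,1)\), yet \(F_0^{\mathrm{sat}} = \OO \cdot (1,0)\).
\item Glue at \(X = 0\) to a second sheet carrying the constant line \((0,1)\). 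The penalty at this persistent node jumps from \(0\) to \(1\).
\end{itemize}
The adjusted slope survives only because the degree rose from \(-1\) to \(0\). So your claim that the penalty does not increase at persistent nodes is false. The degree gained by saturating at the branch points has to be spent paying for that increase, and your bookkeeping never does this.

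\textbf{First repair: account for the saturation gain at branch points.} Near a branch point \(\tilde p_{i,0}\) the total space is smooth over \(\Delta\), so \(\mathcal{F}\) is locally free there. Suppose \(\mathcal{F}|_{\tilde p_{i,0}} \to \mathcal{E}|_{\tilde p_{i,0}}\) has corank \(k_i\). Its image lies in both \(\Lambda_i\) and the fiber of \(F_0^{\mathrm{sat}}\). Hence the codimension at the node exceeds \(\codim(\Lambda_1 \cap \Lambda_2)\) by at most \(k_1 + k_2\). Meanwhile, saturating \(F_0\) at \(\tilde p_{i,0}\) adds at least \(k_i\) to the degree, by Smith normal form over the local ring of the special fiber. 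These gains sit at points distinct from the new nodes, so nothing is double counted.

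\textbf{Second repair: skip the partial normalization.} This route is cleaner, and presumably closer to the cited argument.
\begin{itemize}
\item Replace \(F_\eta\) by the glued subsheaf \(\mathcal{E}_\eta \cap \nu_*F_\eta\). Its \(\chi\)-degree \(\chi - \rk\cdot\chi(\OO)\) equals exactly \(\rk F \cdot \mu^{\text{adj}}(F_\eta)\).
\item Saturate it in \(\mathcal{E}\) on \(\mathcal{C}\) itself. The quotient is \(t\)-torsion free, so the limit injects into \(\mathcal{E}_0\) with the same \(\chi\) and the same rank on every component.
\item Apply your new-node computation uniformly at every node of \(\mathcal{C}_0\).
\end{itemize}

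\textbf{A smaller point in Step 2.} The detour through resolving the \(A_k\) points is unnecessary, and your claim that the saturation is locally free at those points is not justified. All you need is that the saturated quotient is \(t\)-torsion free, so that \(F_0 \to \mathcal{E}_0\) stays injective.
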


\noindent
Moreover, we may infer (semi)stability on the total curve $C$ from (semi)stability on each of the components.

\begin{lemma}\cite[Lemma 4.1]{clv22}\label{lem: reducible stability}
Suppose $C = X \cup Y$ is a reducible nodal curve.  If $E$ is a vector bundle on $C$ such that $E|_X$ and $E|_Y$ are semistable, then $E$ is semistable.  Furthermore, if in addition one of $E|_X$ or $E|_Y$ is stable, then $E$ is stable.    
\end{lemma}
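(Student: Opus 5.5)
Write \(\nu : \tilde C \to C\) for the normalization; \(\tilde C\) is the disjoint union of the normalizations \(\tilde X\) and \(\tilde Y\). The plan is to show directly that every proper nonzero subbundle \(F \subsetneq \nu^*E\) satisfies \(\mu^{\text{adj}}(F) \le \mu(E)\), with strict inequality under the extra hypothesis. Here \(\nu^*E\) is the direct sum of \(\nu_X^*(E|_X)\) on \(\tilde X\) and \(\nu_Y^*(E|_Y)\) on \(\tilde Y\). I will read ``\(E|_X\) semistable'' in the adjusted sense when \(X\) itself is nodal; the nodes internal to \(X\) or \(Y\) contribute codimension terms that are handled component by component. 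Set \(r = \rk E\). Then \(\deg \nu^*E = \deg E|_X + \deg E|_Y\), so \(\mu(E) = \mu(E|_X) + \mu(E|_Y)\) in the normalization convention, where the rank of a bundle on the disconnected curve is the common rank \(r\).

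\textbf{Step 1: decompose \(F\).} A subbundle \(F\) of \(\nu^*E\) on the disconnected curve \(\tilde C\) is a pair \(F_X \subset \nu^*E|_{\tilde X}\) and \(F_Y \subset \nu^*E|_{\tilde Y}\), of ranks \(a\) and \(b\) with \(\rk F = \max(a,b)\) as a bundle of constant rank. More carefully, the paper's convention requires constant rank, so \(a = b\). I will treat the general case by allowing \(F_X\) or \(F_Y\) to be zero or the whole bundle. The nodes split into two kinds: those internal to \(X\) or to \(Y\), and the set \(\Sigma = X \cap Y\). For \(p \in \Sigma\), the gluing identifies \(F_X|_{p}\) and \(F_Y|_{p}\) as subspaces of \(E|_p\). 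The codimension of their intersection is at least \(|a - b|\), and it is positive unless the two subspaces coincide.

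\textbf{Step 2: the inequality.} I have
\[
\deg F - \sum_{\text{nodes}} \codim \le \Bigl(\deg F_X - \sum_{X\text{-nodes}}\Bigr) + \Bigl(\deg F_Y - \sum_{Y\text{-nodes}}\Bigr) \le a\,\mu(E|_X) + b\,\mu(E|_Y),
\]
using (adjusted) semistability of each restriction. The case \(F_X = \nu^*E|_{\tilde X}\) is trivially fine. With \(a = b = \rk F\), dividing gives \(\mu^{\text{adj}}(F) \le \mu(E|_X) + \mu(E|_Y) = \mu(E)\).

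\textbf{Step 3: degenerate and stable cases.} The main obstacle is the degenerate situation: if, say, \(F_X\) is the whole restriction while \(F_Y\) is proper, the adjusted definition must still be compared correctly, and the \(\Sigma\)-codimension terms must absorb the mismatch. Here I will use that each node in \(\Sigma\) contributes codimension at least \(r - b > 0\) (since \(\Sigma \ne \emptyset\) by connectedness). This gives \(\mu^{\text{adj}}(F) \le \mu(E) - (r-b)/r < \mu(E)\), so such \(F\) never destabilize. For stability, suppose \(E|_X\) is stable. Any proper \(F\) either has \(F_X\) proper and nonzero, giving a strict inequality in Step 2, or has \(F_X\) equal to \(0\) or the whole restriction. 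In the latter case \(F_Y\) must be proper (or there is a mismatch at \(\Sigma\)), and the \(\Sigma\)-term yields strictness as above. Hence \(E\) is stable.
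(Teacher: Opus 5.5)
Your Step 2 is the standard argument, and it is the proof of \cite[Lemma 4.1]{clv22}, which the paper cites without reproducing. The argument is: split the node sum into nodes of $X$, nodes of $Y$, and points of $X\cap Y$; discard the nonnegative $X\cap Y$ terms; apply (adjusted) semistability on each side; and use $\mu(E)=\mu(E|_X)+\mu(E|_Y)$. That step alone proves the lemma. The definition forces $F$ to have constant rank, since the codimension is measured in either fiber. So a proper nonzero $F\subset\nu^*E$ has $0<\rk F<r$, and hence $F_X$ and $F_Y$ are automatically proper and nonzero. In the stable case, strictness comes directly from the strict inequality $\mu^{\text{adj}}(F_X)<\mu(E|_X)$ (or the analogous one for $Y$), whichever restriction is stable.

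You should therefore drop the ``$\rk F=\max(a,b)$'' remark in Step 1 and the degenerate-case analysis in Step 3. Those cases never arise, and the bound you claim for them is false in general. Suppose $F_X$ has rank $r$ and $F_Y$ has rank $b<r$, and you normalize by $r$. Then you only get
\[
\mu^{\text{adj}}(F)-\mu(E)\le -\tfrac{r-b}{r}\bigl(\mu(E|_Y)+|X\cap Y|\bigr),
\]
because replacing $b\,\mu(E|_Y)$ by $r\,\mu(E|_Y)$ requires $\mu(E|_Y)\ge 0$. For a concrete failure, take two copies of $\PP^1$ glued at one point, with $E|_X=\OO^{\oplus 2}$ and $E|_Y=\OO(-10)^{\oplus 2}$. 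Such an $F$ has ``adjusted slope'' $-11/2>-10=\mu(E)$. So these mixed-rank objects genuinely must be excluded, as the definition does, rather than handled by a separate estimate.
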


\subsection{Modifications of Vector Bundles} 
The normal bundle $N_C$ of a reducible nodal curve $C \subset \PP^r$ may be expressed in terms of the normal bundles of its components. To explain this relationship, we will use elementary modifications of vector bundles.

\begin{definition}
Let $X$ be a scheme and $E$ be a vector bundle on $X$.  Let $D \subset X$ be a Cartier divisor and $F \subset E|_D$ be a subbundle of the restriction of $E$ to $D$.  The \defi{negative elementary modification} of $E$ along $D$ toward $F$ is
$$E[D \negmod  F] := \ker(E \rightarrow E|_D/F).$$
The \defi{positive elementary modification} of $E$ along $D$ toward $F$ is the twist $E[D \posmod  F] := E[D \negmod  F](D)$.
\end{definition}

\begin{remark}\label{rem:sequence on D}
Given a vector bundle $E$, a Cartier divisor $D$ and a subbundle $F \subset E|_D$, there are exact sequences
\begin{align*} 0 \to E|_D/F  \otimes \OO(-D)|_D \to E[D \negmod F]|_D &\to F \to 0; \\
0 \to E|_D/F \to E[D \posmod F]|_D &\to F \otimes \OO(D)|_D \to 0.
\end{align*}
\end{remark}

Observe that the modifications $E[D \posmod  F]$ are naturally isomorphic to $E$ over the complement of $D$ in $X$.  Thus, given disjoint divisors $D_1, D_2 \subset X$ and a subbundle $F_i \subset E|_{D_i}$ for each $i$, we can define the iterated modification $E[D_1 \posmod  F_1][D_2 \posmod  F_2]$ without ambiguity.  If instead the supports of $D_1$ and $D_2$ overlap, the subbundles $F_i$ may not be enough information to define an iterated modification.  For this reason, when considering multiple modifications with overlapping supports, we will assume each subbundle $F_i \subset E|_{D_i}$ extends to a subbundle of $E$ on an open neighborhood $U_i$ of $D_i$.  If the subbundle $F_2 \subset E|_{U_2 \setminus D_1}$ extends to a subbundle of $E[D_1 \posmod  F_1]$ over some neighborhood of $D_2$, then it does so uniquely and we may use this extension to define $E[D_1 \posmod  F_1][D_2 \posmod  F_2]$.  However, such an extension may not always exist.  The following condition guarantees such an extension exists and furthermore guarantees multiple modifications satisfy commutativity, i.e., $E[D_1 \posmod  F_1][D_2 \posmod  F_2] = E[D_2 \posmod  F_2][D_1 \posmod  F_1]$, along with other nice properties.

\begin{definition}\label{def:tree_like}
Let \(M = \{(D_i, U_i, F_i)\}_{i \in I}\) be a collection of modification data.
For each point \(x \in X\), define \(I_x \subseteq I\) to be the set of indices for which \(x \in D_i\).
We say that \(M\) is \defi{tree-like} if for all \(x \in X\), and all subsets \(I' \subset I_x\), the following condition holds:
whenever the fibers \(\{F_i|_x\}_{i \in I'}\) are dependent, there exist indices \(i,j \in I'\)
and an open neighborhood \(U\) of \(x\) such that \(F_i|_U \subseteq F_j|_U\).
\end{definition}

We refer to \cite[Section~2]{aly} and \cite[Section~3]{lv22} for more details.  All modifications in this paper will be tree-like.   Furthermore, most modifications will be towards pointing bundles, defined below.

\begin{definition}\label{def:pointingbundles}
Let $C \subset \PP^r$ be a nodal curve and $\Lambda \subset \PP^r$ be a linear subscheme that is disjoint from the singular locus of $C$.  Let $\pi : \PP^r \dashrightarrow \PP^{r-\lambda}$ be the projection away from $\Lambda$.  Suppose $\pi|_C : C \rightarrow \overline{C} \subset \PP^{r - \lambda}$ is unramified.  The \defi{pointing bundle} toward $\Lambda$ is 
$$N_{C \rightarrow \Lambda} :=\ker\big (\pi_* : N_C \rightarrow N_{\overline{C}}(\Lambda \cap C)\big ).$$
Informally, sections of $N_{C \rightarrow \Lambda}$ point towards $\Lambda$.  For any subscheme $\Gamma \subset \PP^r$, we set $N_{C \rightarrow \Gamma}$ to be the pointing bundle toward the linear span of $\Gamma$.
\end{definition}

For a subscheme $\Gamma \subset \PP^r$, we write $N_C[x \posmod  \Gamma]$ for the positive elementary modification of $N_C$ along $x$ toward $N_{C \to \Gamma}$.  Suppose $\Gamma_1, \Gamma_2 \subset \PP^r$ are two disjoint linear subspaces and $\Gamma_1 + \Gamma_2$ represents their span.  We will use the following fact several times: if $T_x C$ is disjoint from $\Gamma_1 + \Gamma_2$, then \[N_C[x \posmod \Gamma_1][x \posmod \Gamma_2] = N_C[x \posmod \Gamma_1 + \Gamma_2].\]  If in addition $\Gamma_1 + \Gamma_2$ is a codimension two linear subspace, then $ N_C[x \posmod \Gamma_1 + \Gamma_2] \cong N_C(x)$.

\medskip
\noindent
If \(x, y \in C\), then for brevity we write
\[N_C[x \biposmod y] = N_C[x \posmod y][y \posmod x] \quad \text{and} \quad N_C [x \binegmod y] = N_C[x \negmod y][y \negmod x].\]
We most frequently use pointing bundles towards a point $p$.  If $p$ is a general point in $\PP^r$, then $N_{C\rightarrow p} \cong \OO_C(1)$ (\cite[Proposition 6.2]{aly}); if instead $p$ is a general point in $C$, then $N_{C \rightarrow p} \cong \OO_C(1)(2p)$ (\cite[Proposition 6.3]{aly}).

\begin{remark}
Pointing bundles allow one to make inductive arguments regarding normal bundles of general nonspecial BN-curves in $\PP^r$.  Indeed, let $C \subset \PP^r$ be a general BN-curve of genus $g$ and degree $d$ such that $d \geq g + r$.
Then \(H^1(C, \OO_C(1)) = 0\), and so \(C\) can be described as a general curve of genus \(g\), equipped with a general line bundle \(L\) of degree \(d\), and a general subspace \(V \subset H^0(C,L)\) of dimension \(r + 1\). Hence, if \(\Lambda = p\) is a general point in $\PP^r$ (resp.\ in $C$), then $\overline{C} \subset \PP^{r - 1}$ is similarly a general curve of genus \(g\), equipped with a general line bundle \(L\) of degree \(d\) (resp.\ of degree \(d - 1\)), and a general subspace \(V \subset H^0(C,L)\) of dimension \(r\). In particular \(\overline{C} \subset \PP^{r-1}\) is a general BN-curve.  This allows us to make use of previous results about normal bundles of general BN-curves in $\PP^3$.
\end{remark}

\begin{remark}\label{rem: exact sequence mod}
By definition, $N_{C \to \Lambda}$ fits into an exact sequence
$$0 \to N_{C \to \Lambda} \to N_C \to N_{\overline{C}}(\Lambda \cap C) \to 0.$$
Elementary modifications of $N_C$ towards subschemes of $\PP^r$ also modify this exact sequence.  More specifically, if $E$ is a modification of $N_C$ and $N_{C\to \Lambda}'$ is the saturation of $N_{C \to \Lambda}$ in $E$, then there is an exact sequence
$$0 \to N_{C \to \Lambda}' \to E \to \overline{E} \to 0$$
where $\overline{E}$ is a modification of $N_{\overline{C}}(\Lambda \cap C)$.  Informally, modifications that point towards $\Lambda$ modify the saturation of $N_{C \to \Lambda}$ whereas all other modifications modify $N_{\overline{C}}(\Lambda \cap C)$.
\end{remark}

We now introduce an additional notation for elementary modifications.
Let \(C = X \cup Y\) be a nodal curve and \(q \in X \cap Y\).
Then \(T_Y|_q\) defines a subspace of \(N_X|_q\), and we write $N_X[q \posmodalong Y]$ for the modification $N_X[q \posmod  T_Y|_q]$ towards this subspace. For any choice of point \(q' \in T_q Y \setminus \{q\} \subset \PP^r\), this coincides with the modification $N_X[q \posmod  q']$ towards the point $q'$. If $\{q_1, \ldots , q_n\} = X \cap Y$, we write $N_X[\posmodalong Y]$ for the iterated modification $N_X[q_1 \posmodalong Y]\cdots [q_n \posmodalong Y]$.  We have the following fundamental observation.

\begin{lemma}\cite[Corollary 3.2]{hartshornehirschowitz} \label{lem:hartshornehirschowitz}
Let $C = X \cup Y$ be a nodal curve with $X \cap Y = \{q_1, \ldots , q_n\}$.  Then
$$N_C|_X \cong N_X[\posmodalong Y].$$
\end{lemma}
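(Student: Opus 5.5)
The claim is local along the nodes $q_1, \ldots, q_n$. Away from them, $C$ agrees with $X$, so $N_C|_X$ and $N_X$ are canonically isomorphic on $X \setminus \{q_1,\dots,q_n\}$. It therefore suffices to work in a neighborhood of each node $q = q_i$. There we identify $N_C|_X$ with a subsheaf of $N_X(q)$ containing $N_X$, and check that this intermediate sheaf is exactly $N_X[q \posmod T_Y|_q]$. Since the nodes are distinct points, the resulting local modifications are at disjoint divisors. The iterated modification $N_X[\posmodalong Y]$ is then unambiguous and equals $N_C|_X$ globally.

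\emph{Local computation at a node.} Near $q$, choose local analytic (or étale/formal) coordinates on $\PP^r$, say $x_1, \ldots, x_r$, in which $X = \{x_2 = \cdots = x_r = 0\}$ and $Y = \{x_1 = x_3 = \cdots = x_r = 0\}$. This is possible because $X$ and $Y$ are smooth at $q$ with distinct tangent lines, the node being ordinary. The ideal of $C$ is then
\[I_C = (x_1x_2,\, x_3, \ldots, x_r).\]
So $N_C = \mathcal{H}om(I_C/I_C^2, \OO_C)$ is generated locally by the duals of $x_1x_2, x_3, \ldots, x_r$.

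Restrict to $X$, where $x_2 = x_3 = \cdots = x_r = 0$ and $x_1$ is the local parameter. The restriction $N_C|_X$ is free with basis $\phi_{12}, \phi_3, \ldots, \phi_r$, dual to these generators. Here $N_X$ has basis $\partial_2, \ldots, \partial_r$. The natural map $N_X \to N_C|_X$, away from $q$, sends
\[\partial_2 \mapsto x_1 \phi_{12}, \qquad \partial_j \mapsto \phi_j \ (j \geq 3),\]
since $\partial_2(x_1x_2) = x_1$. Hence $N_C|_X = N_X + \OO_X \cdot \tfrac{1}{x_1}\partial_2$ inside $N_X(q)$. This is exactly the positive modification of $N_X$ at $q$ toward the line spanned by $\partial_2$ in $N_X|_q$. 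That line is the image of $T_qY$, since $Y$ is the $x_2$-axis.

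One should double-check that this description of $N_C$ via $\mathcal{H}om(I/I^2,\OO_C)$ is locally free of rank $r-1$ at the node. It is, since $C$ is a local complete intersection, so this holds in any characteristic.

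\emph{Gluing.} The isomorphisms on $X \setminus \{q_i\}$ and the local identifications near each $q_i$ are all compatible with the canonical inclusion $N_X \hookrightarrow N_C|_X$ on the smooth locus. They therefore glue to $N_C|_X \cong N_X[q_1 \posmodalong Y]\cdots[q_n \posmodalong Y]$.

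The main obstacle is justifying the normal form of the coordinates in arbitrary characteristic, together with the identification of the kernel and cokernel. It is cleanest to work in the completed local ring, using that two smooth transverse branches can be simultaneously straightened. Alternatively, one can argue intrinsically: the cokernel of $N_X \to N_C|_X$ is a skyscraper of length one at each node, by a degree count, and its direction is forced to be $T_Y|_q$.
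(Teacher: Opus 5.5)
Your argument is correct. The paper gives no proof of its own for this lemma; it simply cites \cite[Corollary 3.2]{hartshornehirschowitz}. Your computation is the standard local argument behind that citation: the local ideal $I_C=(x_1x_2,x_3,\ldots,x_r)$, the dual bases, and the map $\partial_2\mapsto x_1\phi_{12}$, which shows that $N_C|_X=N_X+\OO_X\cdot\tfrac{1}{x_1}\partial_2$ is the positive modification toward $T_Y|_q$. Writing it out makes visible that nothing depends on $r$ or on the characteristic, which is exactly what the paper needs in $\PP^4$ over an arbitrary field.

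\textbf{The normal form.} The step you flag as the main obstacle is easier than you suggest. You should drop ``analytic'' coordinates, which are not available in positive characteristic. The normal form already exists in the Zariski local ring $\OO_{\PP^r,q}$:
\begin{itemize}
\item Since $q$ is a node, the image of $I_C=I_X\cap I_Y$ in $\mathfrak m_q/\mathfrak m_q^2$ is $(T_qX+T_qY)^\perp$, of dimension $r-2$.
\item Choose $x_3,\ldots,x_r\in I_C$ spanning this image. Choose $x_2\in I_X$ and $x_1\in I_Y$ completing them to bases of $T_qX^\perp$ and $T_qY^\perp$. Then $x_1,\ldots,x_r$ is a regular system of parameters.
\item The ideal $(x_2,\ldots,x_r)\subseteq I_X\OO_{\PP^r,q}$, and both are primes of height $r-1$, so $I_X=(x_2,\ldots,x_r)$. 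Likewise $I_Y=(x_1,x_3,\ldots,x_r)$.
\item Finally $I_X\cap I_Y=(x_1x_2,x_3,\ldots,x_r)$, because $\OO_{\PP^r,q}/(x_3,\ldots,x_r)$ is a two-dimensional regular local ring, hence a UFD in which $x_1,x_2$ are non-associate primes.
\end{itemize}
Your formal-completion version also works.

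\textbf{The ``intrinsic'' alternative.} The alternative you mention at the end is incomplete as stated. The degree count, via $\det N_C|_X\cong\det N_X(q_1+\cdots+q_n)$, only shows that the cokernel of $N_X\to N_C|_X$ has total length $n$. It does not show that the cokernel has length one at each node, nor which direction it picks out. So the local computation is the part to rely on.
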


\subsection{Reducible Brill--Noether Curves} 

The most common examples of reducible BN-curves we will consider are unions of irreducible BN-curves with bisecant lines and five-secant twisted cubics.  The following verifies that such curves are BN-curves.

\begin{lemma}\cite[Lemma 2.5]{cs23}\label{lem: reducible BN-curves}
Let $C \subset \PP^r$ be a  BN-curve of genus $g$ and degree $d$.  Consider a rational normal curve $R$ of degree $e$ which intersects $C$ quasitransversely along a set of points $\Gamma \subset C$.  The nodal union $C \cup_\Gamma R$ is a BN-curve if $|\Gamma| \leq e + 1$; if $|\Gamma| = e + 2$ and $e = r$; or if $|\Gamma| = e + 2$, $e = r -1$, $C$ is transverse to the span of $\Gamma$, and $\rho(g,r,d) \geq 1$.
\end{lemma}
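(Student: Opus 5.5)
The statement is Lemma~\ref{lem: reducible BN-curves}, cited from \cite{cs23}. The plan is the standard one: realize $C \cup_\Gamma R$ as a limit of smooth BN-curves. I would show three things: (a) the union is a smoothable nodal curve; (b) it is unobstructed, in the sense that $H^1(N_{C\cup R}) = 0$, or at least that the Hilbert scheme at $[C\cup R]$ has dimension $(r+1)(d+e) - (r-3)(g+|\Gamma|-1)$; (c) its generalizations dominate $M_{g'}$, where $g' = g + |\Gamma| - 1$ and the degree is $d + e$. Given (b), the union lies on a unique component of the Hilbert scheme of the expected dimension. Since it is nodal with $H^1(N)=0$, it smooths. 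By openness, the generic point of that component is a smooth curve whose moduli map is dominant. Such a component is the BN component, by its uniqueness among components dominating $M_{g'}$.

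For (b), I would use Lemma~\ref{lem:hartshornehirschowitz} and the sequence
\[0 \to N_{C\cup R}|_R(-\Gamma) \to N_{C\cup R} \to N_{C\cup R}|_C \to 0.\]
The restriction to $C$ is $N_C[\posmodalong R]$. This contains $N_C$, and $H^1(N_C)=0$ holds for general BN-curves, at least where the component is of expected dimension. So $H^1$ of the right-hand term vanishes, because the cokernel is a torsion sheaf.

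On $R$, the sheaf $N_R[\posmodalong C](-\Gamma)$ is a bundle on $\mathbb{P}^1$. We know $N_R \cong \OO(e+2)^{\oplus(r-1)}$ when $e=r$, and it is balanced in general. Each positive modification at $\Gamma$ adds $1$ to the degree in a direction pointing along $C$. Twisting by $-\Gamma$ subtracts $|\Gamma|$ from each summand. For $|\Gamma|\le e+1$, each summand of $N_R(-\Gamma)$ has degree at least $-1$, so $H^1=0$. For the borderline cases $|\Gamma| = e+2$, I need the modifications to be sufficiently general to kill $H^1$. This uses that the tangent directions of $C$ at $\Gamma$ are general enough, and explains the hypotheses $e=r$, or $e=r-1$ with transversality to the span.

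For (c), I would argue by dimension count. The union has arithmetic genus $g' = g + |\Gamma| - 1$. Because deformations are unobstructed and $C$ and $R$ are general, the stable model maps onto the boundary stratum of curves obtained by gluing a general genus-$g$ curve with a rational curve at $|\Gamma|$ points. Smoothing then dominates $M_{g'}$ once the smoothing is unobstructed and the Petri condition holds in the limit. Alternatively, one can check $\rho(g',r,d+e)\ge 0$ and use that the component has the expected dimension $3g'-3+\rho+\dim PGL_{r+1}$.

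The main obstacle is the borderline case $|\Gamma| = e+2$, $e = r-1$. There I would show that $H^1$ vanishes, using $\rho(g,r,d)\ge 1$. That hypothesis should let me move $\Gamma$ so that the relevant pointing directions are general, giving $H^1(N_R[\posmodalong C](-\Gamma))=0$ by a degree computation on $\mathbb{P}^1$ after general modification. In that case $R$ spans only a hyperplane, so I would first split off $N_{R/H}$ and then treat the normal direction to $H$ separately.
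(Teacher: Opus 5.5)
The paper gives no argument of its own here; it simply imports the result from \cite{cs23}. So the question is whether your outline stands on its own. Your cohomology computation in (b) is essentially right, modulo the points at the end. Step (c), however, is where the content of the lemma lies, and it is not established.

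Unobstructedness and smoothability place $X=C\cup_\Gamma R$ on a unique generically smooth component of dimension $\chi(N_X)=3g'-3+\rho'+\dim PGL_{r+1}$, whose general member is smooth. They do not make the moduli map dominant. For a smooth member $C'$, the differential $H^0(N_{C'})\to H^1(T_{C'})$ has cokernel inside $H^1(T_{\PP^r}|_{C'})$. That space is dual to the kernel of the Petri map and need not vanish.

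Your own computation exhibits this. Take the third case with $\rho(g,r,d)=0$, e.g.\ a canonical genus-$4$ curve in $\PP^3$ and a conic through $4$ of the $6$ points of a general plane section. Transversality alone gives $0\to\OO_{\PP^1}^{\oplus(r-2)}\to N_R[\posmodalong C](-\Gamma)\to\OO_{\PP^1}(r-1)\to 0$. Hence $H^1(N_X)=H^1(N'_X)=0$ exactly as in (b), where $N'_X$ is the subsheaf not smoothing the nodes. Yet $\rho(g+r,r,d+r-1)=\rho(g,r,d)-1=-1$, so this regular component cannot dominate $M_{g'}$. Thus ``by openness the moduli map is dominant'' is not a valid principle. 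Checking $\rho'\ge 0$ only confirms a necessary condition; nothing in your argument turns it into general moduli.

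The boundary-stratum alternative is also not carried out. For $|\Gamma|\ge 3$ the stratum has codimension $|\Gamma|$, so a single smoothing direction does not reach the interior. You would need to smooth the nodes one at a time and induct through the intermediate strata. In the third case there is a further problem: $\Gamma$ is forced into a hyperplane section of the embedded $C$. That is a codimension-one condition, matching $\rho'=\rho-1$, so whether the unions even dominate the stratum is itself a Brill--Noether question.

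One way to close the gap is to prove that the Petri map $V\otimes H^0(\omega_X(-1))\to H^0(\omega_X)$ of the nodal curve is injective, where $V=H^0(\OO_{\PP^r}(1))$; equivalently, $H^1(T_{\PP^r}|_X)=0$. Together with $H^1(N_X)=0$, this makes $H^0(N_X)\to\operatorname{Ext}^1(\Omega_X,\OO_X)$ surjective. Then the Hilbert scheme maps smoothly onto the versal deformation space of $X$, and the smoothings have general moduli. The cases go as follows.
\begin{itemize}
\item If $|\Gamma|\le e+1$, then $\omega_X(-1)|_R$ has negative degree, so $H^0(\omega_X(-1))=H^0(K_C(-1))$. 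Injectivity is then just Gieseker--Petri for $C$.
\item If $|\Gamma|=e+2$, then $\omega_X(-1)|_R\cong\OO_R$, and there may be an extra section $\sigma$ that is nonzero on $R$.
\item For $e=r$, $\sigma$ creates no kernel, because $R$ is nondegenerate.
\item For $e=r-1$, let $h$ be the equation of the span of $R$. One gets a kernel element as soon as $h\sigma|_C\in H^0(K_C)$ lies in the image of the Petri map of $C$. When $\rho(g,r,d)=0$ that map is an isomorphism onto $H^0(K_C)$, so this always happens.
\end{itemize}
Excluding this kernel element is where $\rho\ge 1$ must genuinely be used. It is not used for the vanishing of $H^1$, which is where you put it.

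Smaller points:
\begin{itemize}
\item For $1<e<r$ one has $N_R\cong\OO(e+2)^{\oplus(e-1)}\oplus\OO(e)^{\oplus(r-e)}$, which is not balanced. This is harmless when $|\Gamma|\le e+1$.
\item For $e=r$ no generality of the modifications is needed, since $N_R(-\Gamma)\cong\OO^{\oplus(r-1)}$.
\item For smoothability you want $H^1(N'_X)=0$ rather than $H^1(N_X)=0$. Your computation does give it, via $0\to N_X|_R(-\Gamma)\to N'_X\to N_C\to 0$.
\end{itemize}
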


Much of our strategy involves further specializing the reducible curves $C \cup R$ in Lemma \ref{lem: reducible BN-curves} and studying what happens to $N_C[\posmodalong R]$ under this specialization.  The following lemma explains one of the more intricate such specializations.

\begin{lemma}\label{lem: onion degeneration}
Let $C \subset \PP^r$ be a BN-curve of degree at least $r+1$ and $R \subset \PP^r$ be an $(r+1)$-secant rational normal curve of degree $r-1$.  Let $p \in C$ be a general point, $H$ be a general hyperplane that is tangent to $C$ at $p$, and $q_1, \ldots, q_{r-1} \in C \cap H$ be any $r-1$ points distinct from \(p\) and each other.  Let $L_i$ be the line spanned by $p$ and $q_i$ for all $1 \leq i \leq r-1$.  
    
As an $(r+1)$-secant rational normal curve, $R$ can specialize to $L_1 \cup \cdots \cup L_{r-1}$.  Moreover, this specialization may be chosen so that $N_C[\posmodalong R]$ specializes to $N_C[q_1 \posmod p] \ldots [q_{r-1} \posmod p][p \posmod V]$, where $V \subset N_C|_p$ is a linear subspace of dimension \(2\). Furthermore, as \(H\) varies among tangent hyperplanes to $C$ at $p$, the subspace \(V\) becomes linearly general.
\end{lemma}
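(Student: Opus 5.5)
We build the specialization by hand in a one-parameter family. First we check that the limit \(L_1 \cup \cdots \cup L_{r-1}\) is a legitimate limit of \((r+1)\)-secant rational normal curves of degree \(r-1\). A degree \(r-1\) rational normal curve spans a hyperplane. Since \(H\) is tangent to \(C\) at \(p\), it meets \(C\) in \(2p + q_1 + \cdots + q_{r-1} + \cdots\). So we take a family of hyperplanes \(H_t \to H\) together with rational normal curves \(R_t \subset H_t\) through \(r+1\) points \(p_t, p_t', q_{1,t}, \ldots, q_{r-1,t}\) of \(C \cap H_t\). Here \(p_t, p_t' \to p\) and \(q_{i,t} \to q_i\).

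Inside a fixed \(\PP^{r-1}\), rational normal curves through \(r+1\) general points degenerate to a chain or tree of lines. Concretely, \(L_1 \cup \cdots \cup L_{r-1}\) is a "star" of \(r-1\) lines through \(p\) spanning \(H\). This is a degree \(r-1\) connected curve of arithmetic genus \(0\). Because the lines span \(\PP^{r-1}\), it lies in the closure of rational normal curves: its deformations are unobstructed, with \(H^1(N) = 0\) in \(H\). We will verify this via the exact sequences for normal bundles of a union of lines meeting at one point with independent tangent directions; in that setting the star is a flat limit of rational normal curves, e.g.\ a hyperplane section of a cone over a rational normal curve. The star passes through \(q_1, \ldots, q_{r-1}\) and meets \(C\) at \(p\) with the two limiting points \(p_t, p_t'\) colliding. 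Flatness of the total family \(C \cup R_t\) is checked by arithmetic genus: nodes \(p, q_i\) plus the tangency contribution give the right count.

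Next we compute the limit of \(N_C[\posmodalong R_t]\). We use Lemma~\ref{lem:hartshornehirschowitz} on the general fiber and take the limit of modification data. At \(q_{i,t}\) the modification is toward \(T_{q_{i,t}} R_t\), and this tends to \(T_{q_i} L_i\), i.e.\ toward \(p\); this gives \(N_C[q_i \posmod p]\). At the two points \(p_t, p_t'\) colliding at \(p\), the rank-one modifications toward \(T R_t\) combine in the limit into a length-two modification supported at \(p\). We will argue it is a positive modification toward a \(2\)-dimensional subspace \(V \subset N_C|_p\), not a non-reduced rank-one one. This follows because the two tangent directions of \(R_t\) at \(p_t, p_t'\) limit to two distinct directions among the \(T_p L_i\). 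Their span modulo \(T_pC\) has dimension \(2\), so the modification data stays tree-like and the limit of \(E[p_t \posmod \ell_t][p_t' \posmod \ell'_t]\) is \(E[p \posmod V]\). To make this rigorous, we pass to a base change so the family of modifications is defined over the DVR. We then compare the limiting subsheaf with the candidate via degree: both have degree \(\deg N_C + 2\), and one contains the other.

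Finally, for linear generality of \(V\) we track how \(V\) depends on \(H\). \(V\) is determined by the limiting tangent directions of the branches of \(R_t\) at the collision, and these lie in \(T_pH/T_pC\). As \(H\) ranges over tangent hyperplanes at \(p\) (an \((r-2)\)-dimensional family containing \(T_pC\)) and the \(q_i\) move accordingly, these directions move freely. So \(V\) sweeps out an open subset of the relevant Grassmannian of \(2\)-planes in \(N_C|_p\), possibly after also varying the choice of family \(R_t\).

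\textbf{Main obstacle.} The hardest step is identifying the limiting modification at \(p\) and showing it has the form \([p \posmod V]\) with \(V\) of dimension \(2\), rather than a degenerate modification along \(2p\). I would first attempt this in local coordinates, writing the family \(R_t\) explicitly near \(p\).
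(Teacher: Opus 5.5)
The paper gives no argument of its own here. It quotes the existence of the degeneration, the form of the limit, and the linear generality of $V$ from \cite[Section~7 and Lemma~7.5(1)]{lv22}. As a self-contained proof, your proposal has two genuine gaps.

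\textbf{The limit at $p$.} This is the step you flag as the main obstacle, and the reason you give for it does not hold.

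The limiting tangent directions of $R_t$ at the colliding points $p_t,p_t'$ are not forced to be two of the $T_pL_i$. They depend on how fast $R_t$ smooths the $(r-1)$-fold point of the star relative to $|p_t-p_t'|$. Already for $r=3$, take members of the pencil of conics through $p_t,p_t',q_{1,t},q_{2,t}$ that tend to $L_1\cup L_2$. Their limiting tangent directions at $p_t,p_t'$ vary with how they approach the line pair $\overline{p_tq_{1,t}}\cup\overline{p_t'q_{2,t}}$. For some choices both directions tend to $T_pC$, and then the limiting directions say nothing about the modification.

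Moreover, both directions lie in $T_pH$, so modulo $T_pC$ they span at most $r-2$ dimensions. For $r=3$ your mechanism cannot produce a $2$-dimensional $V$ at all. In fact, there the central fiber lies on a smooth surface germ $S$ at $p$, and the local limit is exactly the curvilinear $[2p\posmod N_{C/S}]$ you wanted to exclude. So your argument must genuinely use $r\ge 4$, which is the only case the paper needs.

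For $r=4$ a robust route goes through the central fiber $X_0=C\cup L_1\cup L_2\cup L_3$.
\begin{itemize}
\item Near $p$, $X_0$ lies in a smooth threefold germ $Y$ tangent to $H$. Inside $Y$ it is formally four general lines through a point.
\item Hence $\delta_p=4$, so by the genus count $X_0$ is the flat limit of $C\cup R_t$.
\item $X_0$ is a complete intersection at $p$. In suitable coordinates, with the $L_i$ the axes and $C$ the line through $(a_1,a_2,a_3)$, one has $I_{X_0/Y}=(x_1(a_3x_2-a_2x_3),\,x_2(a_3x_1-a_1x_3))$.
\item Therefore the normal bundles form a vector bundle on the total space, and the limit of $N_C[\posmodalong R_t]$ is $N_{X_0}|_C$.
\item The map $N_{C/Y}\to N_{X_0/Y}|_C$ is $s\cdot\mathrm{diag}(a_1,a_2)$.
\end{itemize}
This gives $V=T_pH/T_pC$, independent of the chosen family $R_t$. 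Linear generality as $H$ varies is then immediate, with no need to vary $R_t$.

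\textbf{Existence of the family $R_t$.} You assert this rather than prove it. Knowing that the star is a limit of rational normal curves in $H$ ignores the incidence with $C$. You need the star, together with its length-$(r+1)$ intersection $2p+q_1+\cdots+q_{r-1}$ with $C$, to lie in the closure of the locus of rational normal curves meeting $C$ transversely in $r+1$ points. That intersection is non-reduced at the singular point $p$.

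This requires a dimension or deformation-theoretic argument at a point where both the curve and its incidence scheme with $C$ are singular. For instance, you could show that pairs $(H',R')$, with $R'$ containing the relevant length-$(r+1)$ subscheme of $C\cap H'$, form a locus of the expected dimension $2r-2$ at the star that is not contained in the boundary. Checking flatness of $C\cup R_t$ by arithmetic genus presupposes that the family exists. Also, $p$ is an $r$-fold point with $\delta=r$, not a node. This existence statement is exactly what the paper imports from \cite[Section~7]{lv22}.
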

\begin{proof}
The existence of the specialization in question is proven in \cite[Section~7]{lv22}. Linear generality of $V$ is proven in \cite[Lemma~7.5(1)]{lv22}.
\end{proof}

\section{Reduction to Finitely Many Cases}\label{sec: case reduction}

In this section, we use results about normal bundles of BN space curves to reduce the verification of Theorems \ref{thm-mainstab} and \ref{thm-maininterpolation} to a finite list of cases.

\subsection{Attaching curves in hyperplanes}

Let $C \subset \PP^4$ be a general BN-curve of genus $g$ and degree $d$. Let $H \subset \PP^4$ be a hyperplane meeting $C$ transversely.  Assume $N_C(-1)$ satisfies interpolation so that $C \cap H$ is general. Suppose
$$(g',d') \in \{(0,3), (1,4), (3,6), (4,7), (5,8), (6,9)\}$$ and that $d \geq s := \frac{d' + g' -1}{2}$. Observe that $s \leq 2d'$ in these \(6\) cases. Thus we can find a BN space curve $C' \subset H$ of genus $g'$ and degree $d'$ through $s$ general points in $H$ \cite[Theorem 1.2]{lv22}, in particular, one passing through a set $\Gamma$ consisting of $s$ of the intersection points of $C$ with $H$. 

\begin{definition}
A nodal curve is \defi{strongly smoothable} if there exists a smoothing family whose total space is smooth.
\end{definition} 

\begin{lemma}\label{lem:smoothing}
The nodal curve $D= C \cup_\Gamma C'$ is a strongly smoothable BN-curve of genus $g + g' + \frac{d' + g' -3}{2}$ and degree $d + d'$, under the additional assumption that $\rho(g, 4, d) \geq 3$
    if $(g', d') = (6,9)$.
\end{lemma}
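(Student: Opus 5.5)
The plan has three parts: compute the invariants, prove strong smoothability from an unobstructedness statement, and then identify the smoothing as a BN-curve.

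\textbf{Invariants.} The genus of $D = C \cup_\Gamma C'$ is $g + g' + |\Gamma| - 1 = g + g' + s - 1$. Substituting $s = \frac{d'+g'-1}{2}$ gives $g + g' + \frac{d'+g'-3}{2}$. The degree is clearly $d + d'$.

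\textbf{Strong smoothability.} I would use the standard criterion, as in \cite{lv22} and \cite{hartshornehirschowitz}: if $H^1(N_D) = 0$, then $D$ is a smooth point of the Hilbert scheme. If in addition the local-to-global map $H^0(N_D) \to \bigoplus_{p \in \Gamma} T^1_p$ is surjective, then $D$ is strongly smoothable, since we may smooth every node independently and the total space of the smoothing is smooth. Both conditions follow from $H^1(N_D(-\Gamma')) = 0$ for suitable sub-divisors, or more concretely from the following vanishings.
\begin{itemize}
\item[(a)] On $C$: Lemma~\ref{lem:hartshornehirschowitz} gives $N_D|_C = N_C[\posmodalong C']$, which contains $N_C$ as a full-rank subsheaf. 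Since $N_C(-1)$ satisfies interpolation, $H^1(N_C) = 0$, and hence $H^1(N_D|_C(-\Gamma)) = 0$ as well, because $N_C(-\Gamma)$ differs from $N_C$ by a twist of degree $s \le d$ and $N_C \supset N_C(-1)$ up to $\Gamma \subset H \cap C$.
\item[(b)] On $C'$: I would use $0 \to N_{C'/H} \to N_{C'} \to \OO_{C'}(1) \to 0$. The modification $N_D|_{C'} = N_{C'}[\posmodalong C]$ points out of $H$ at each point of $\Gamma$, since $C$ is transverse to $H$. So $N_D|_{C'}$ is an extension of $\OO_{C'}(1)(\Gamma)$ by $N_{C'/H}$. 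Therefore $H^1$ vanishes once $H^1(N_{C'/H}) = 0$, which holds for BN space curves, and $H^1(\OO_{C'}(1)(\Gamma)) = 0$.
\end{itemize}
Gluing (a) and (b) along the sequence $0 \to N_D|_{C'}(-\Gamma) \to N_D \to N_D|_C \to 0$ gives $H^1(N_D) = 0$, and the same argument with an extra twist gives surjectivity onto $T^1$.

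\textbf{Main obstacle: $H^1(\OO_{C'}(1)(\Gamma)) = 0$.} I expect this to be the hardest step. For $(g',d') = (6,9)$, the bundle $\OO_{C'}(1)$ is special of degree $9$ and genus $6$, with $s = 7$. I would argue that $K_{C'}(-1)(-\Gamma)$ has no sections: $K_{C'}(-1)$ has degree $1$, and $\Gamma$ is general on $C'$ because $C'$ passes through the points of $\Gamma$, which are general. The other cases are similar or easier; for $(5,8)$, for instance, $K_{C'} = \OO_{C'}(1)$ and twisting by $\Gamma$ with $s = 6 > 0$ kills $H^1$.

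\textbf{BN-curve property.} It remains to show that a general smoothing is a BN-curve, i.e.\ that $D$ lies in the BN component. I would count dimensions: the component of the Hilbert scheme through $D$ has dimension $\chi(N_D) = 5(d+d') - (g + g' + s - 1 - 1)$, which matches the expected dimension of the BN component. I would also show that the moduli map is dominant, for instance because the partial normalization data varies enough, or alternatively by degenerating $C'$ within $H$ to a union of $C'' \cup$ lines and reducing to Lemma~\ref{lem: reducible BN-curves} iteratively. In the case $(6,9)$, the hypothesis $\rho(g,4,d) \ge 3$ is needed precisely so that the resulting $\rho = \rho(g,4,d) + \rho(g',3,d') - \ldots$ is nonnegative. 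Concretely, the new Brill--Noether number is $\rho(g,4,d) + 5d' - 4g' - 4s + 4 - \ldots$, and for $(6,9)$ the correction is $-3$. This forces the extra assumption, and this step is where I expect the need for care.
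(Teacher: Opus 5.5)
\textbf{The gap: showing that $D$ lies in the Brill--Noether component.} Your dimension count $\chi(N_D)=5(d+d')-(p_a-1)$ is correct, but it cannot identify the component. Once $H^1(N_D)=0$, the curve $D$ lies on a unique component of the Hilbert scheme. However, \emph{every} component through an unobstructed curve has exactly this dimension, so agreeing with the expected dimension of the BN component tells you nothing. What must be shown is that the component through $D$ dominates $M_{p_a}$; by the uniqueness statement recalled in the introduction, that characterizes the BN component. This is the real content of the lemma. The paper does not prove it from scratch: it invokes \cite[Theorem~1.9]{l22}, after checking that the proof is characteristic-free, for $(g',d')\neq(6,9)$, and \cite[Lemma~3.6]{lv21} for $(6,9)$. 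Your phrase ``the partial normalization data varies enough'' does not replace this input.

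The cheap route to dominance is also unavailable. That route would argue $H^1(\OO_D(1))=0$, hence $H^1(T_{\PP^4}|_D)=0$ by the Euler sequence, hence embedded deformations surject onto deformations of the abstract nodal curve. But $h^1(\OO_D(1))\geq p_a-(d+d')+4=(g-d+4)+(g'-3)$. For instance, in the paper's application $Y=(3,7)$, $X=(6,9)$ this gives $h^1(\OO_D(1))\geq 3$.

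\textbf{Your fallback also fails.} You suggest degenerating $C'$ inside $H$ and applying Lemma~\ref{lem: reducible BN-curves} iteratively. If you reduce all the way to that lemma, every component lies in $H\cong\PP^3$ and is therefore a line, conic, or twisted cubic. A component of degree $e$ attached at $\leq e+1$ points raises the genus by at most $e$. A $5$-secant twisted cubic raises it by $4$, but only if the curve built so far is transverse to the span of the five points, which is $H$. That fails as soon as any component in $H$ has been attached. So you can raise the genus by at most $d'+1=g'+4$, whereas you need $g'+s-1=2g'$. This rules out $(5,8)$ and $(6,9)$ outright. Your account of the hypothesis $\rho(g,4,d)\geq 3$ is right, since $\rho$ of $D$ equals $\rho(g,4,d)+15-3g'$. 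But this only shows the hypothesis is necessary, not that it suffices.

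\textbf{Smoothability: right idea, but fix the details.} Proving $H^1(N_D)=0$ together with surjectivity onto $T^1$ is equivalent to $H^1(N'_D)=0$, which is the paper's criterion. Several points need correcting:
\begin{itemize}
\item $\OO_{C'}(1)$ is \emph{not} special. In all six cases $d'=g'+3$, so $h^1(\OO_{C'}(1))=g'-d'+3=0$ for a general BN space curve.
\item Your ``main obstacle'' is therefore trivial: $\deg \OO_{C'}(1)(\Gamma)=2g'+4>2g'-2$.
\item In (a), the input you need is $H^1(N_C(-1))=0$, together with $N_C(-1)\subseteq N_C(-\Gamma)\subseteq N_D|_C(-\Gamma)$ with torsion cokernels.
\item The sequence that matches (a) and (b) is $0\to N_D|_C(-\Gamma)\to N_D\to N_D|_{C'}\to 0$, not the one you wrote.
\item The vague ``extra twist'' for surjectivity onto $T^1$ is best replaced by the paper's sequence $0\to N_D|_C(-\Gamma)\to N'_D\to N_{C'}\to 0$. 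This needs $H^1(N_{C'})=0$, which rests precisely on the nonspeciality of $\OO_{C'}(1)$ that you doubted.
\end{itemize}
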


\begin{proof}
If \((g', d) \neq (6, 9)\), then $D$ is a BN-curve by \cite[Theorem 1.9]{l22}.  Although \cite{l22} assumes characteristic zero, the proof of Theorem~1.9 goes through in arbitrary characteristic. Indeed, tracing through the proof, the characteristic assumption enters in three ways: (1) as a hypothesis for \cite[Corollary 1.4]{aly}; (2) to go from dominance to generic smoothness for certain maps between moduli spaces; (3) to verify that general tangent lines were in linear general position. But using \cite[Theorem 1.4]{lv22} in place of \cite[Corollary 1.4]{aly} eliminates (1) and renders (2) unnecessary. Moreover, by Lemma~\ref{lem: reducible BN-curves} a general BN-curve of any genus and degree can be specialized to a reducible curve, one of whose components is a nondegenerate rational curve, thereby establishing (3).

The case $(g',d')=(6,9)$ follows as in the proof of \cite[Lemma 3.6]{lv21}, whose only invocation of the characteristic zero hypothesis is \cite[Theorem 1.9]{l22}.

To see that these curves are strongly smoothable, let $N_{D}' \subset N_D$ be the subsheaf of the normal bundle that does not smooth the nodes. We then have  the exact sequence
$$0 \to N_D|_C(-\Gamma) \to N_D' \to N_{C'} \to 0.$$ Since the outer terms in this sequence have vanishing $H^1$, we conclude that $H^1(D,N_D')=0$. Hence, $D$ is strongly smoothable by \cite[Proposition~1.1]{hartshornehirschowitz}. 
\end{proof}

\begin{proposition}\label{prop: smoothing P3}
 Let $C''$ be a general smoothing of the curve $D$ in Lemma \ref{lem:smoothing}.  
\begin{enumerate}
\item\label{prop:smp31} If $N_{C}(-1)$ satisfies interpolation, then $N_{C''}(-1)$ satisfies interpolation.
\item\label{prop:smp32} If $N_{C}$ is (semi)stable, then $N_{C''}$ is (semi)stable.
\end{enumerate}
\end{proposition}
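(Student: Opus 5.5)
We argue by degeneration: both interpolation for \(N(-1)\) and (semi)stability are open conditions in flat families of vector bundles on nodal curves. Since \(D\) is strongly smoothable (Lemma~\ref{lem:smoothing}), \(N_D\) is the limit of \(N_{C''}\). So it suffices to verify the corresponding property for \(N_D(-1)\), respectively \(N_D\). For interpolation this openness follows by upper semicontinuity of \(h^0\) and \(h^1\) of twists by general divisors. For stability it is Proposition~\ref{prop: openness of adjusted stability}.

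For (\ref{prop:smp32}) we use Lemma~\ref{lem: reducible stability}, which requires checking that \(N_D|_C\) and \(N_D|_{C'}\) are semistable with one of them stable. By Lemma~\ref{lem:hartshornehirschowitz}, \(N_D|_C \cong N_C[\posmodalong C']\), a modification at the \(s\) points \(\Gamma\) toward the tangent lines of \(C'\). Since \(C'\subset H\), these tangent directions lie in \(H\). The key point is that \(\Gamma\) is general in \(H\) and \(C'\) is a general BN space curve through \(\Gamma\), so the tangent directions at \(\Gamma\) are general directions in \(T H\). Because \(N_C\) is (semi)stable, a positive modification at general points in general directions remains (semi)stable after rescaling slopes. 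Concretely, for any subbundle \(F\), the modification raises \(\deg F\) by at most the number of points where the direction lies in \(F\), and generality bounds this count.

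The subtle term is \(N_D|_{C'}\), which contains the exact sequence
\[0 \to N_{C'/H}[\posmodalong C] \to N_D|_{C'} \to \OO_{C'}(1)(\Gamma) \to 0.\]
We check that this piece has the correct slope and is semistable using known stability and interpolation results for BN space curves of these small genera from \cite{clv22}. Alternatively, we could apply the adjusted-slope criterion directly, bounding \(\mu^{\text{adj}}\) of a subbundle \(F\subset \nu^*N_D\) by combining bounds on each component with the gluing codimension penalties.

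For (\ref{prop:smp31}), we follow \cite[Section 3]{lv21}. We use the exact sequence \(0\to N_D|_{C'}(-\Gamma) \to N_D \to N_D|_C \to 0\), twisted by \(\OO(-1)\). Interpolation for \(N_C(-1)\) implies interpolation for its modification \(N_C[\posmodalong C'](-1)\) at general points, and the restriction to \(C'\) is controlled by interpolation on \(N_{C'/H}(-1)\) together with the line bundle quotient. Distributing a general divisor between the two components, in proportions dictated by the slopes, gives the required vanishing.

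The main obstacle is the slope bookkeeping on \(C'\) in (\ref{prop:smp32}). For the stability claim I would first try checking, case by case over the six \((g',d')\), that \(\mu(N_D|_{C'})\) and \(\mu(N_D|_C)\) both equal \(\mu(N_D)\) up to the necessary rounding.
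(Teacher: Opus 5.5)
There is a genuine gap. Your plan for (2) applies Lemma~\ref{lem: reducible stability} to $N_D$ itself, but $N_D|_{C'}$ is never semistable. The tangent lines of $C$ at $\Gamma$ are transverse to $H$, so the modification $[\posmodalong C]$ only changes the quotient $N_H|_{C'} = \OO_{C'}(1)$. The correct sequence is therefore $0 \to N_{C'/H} \to N_D|_{C'} \to \OO_{C'}(1)(\Gamma) \to 0$, with $N_{C'/H}$ unmodified; your version with $N_{C'/H}[\posmodalong C]$ does not have the right degree. Set $s = (d'+g'-1)/2$. Then $\mu(N_{C'/H}) = 2d'+g'-1 = \mu(N_D|_{C'}) + s/3$, so $N_{C'/H}$ destabilizes $N_D|_{C'}$ in all six cases, and no slope bookkeeping will fix this.

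The same obstruction breaks your sketch of (1). $N_D(-1)|_{C'}$ is an extension of $\OO_{C'}(\Gamma)$ (slope $s$) by $N_{C'/H}(-1)$ (slope $2s$). Hence no twist of it by a general effective divisor has $h^0 = h^1 = 0$, which is exactly what a gluing statement like \cite[Lemma~2.10]{lv21} needs on $C'$. For $(g',d') = (1,4)$ it even fails interpolation outright: twist by a general divisor of degree $3$ and both $h^0$ and $h^1$ are nonzero. Separately, your claim that positive modifications of a (semi)stable bundle at general points in general directions stay (semi)stable is not a general fact and would need its own proof.

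The missing idea is to change the limit bundle without changing the general fiber.
\begin{itemize}
\item On the smooth total space $\mathcal{C}$ of the smoothing, $C'$ is a Cartier divisor with $\OO(C')|_{C'} \cong \OO_{C'}(-\Gamma)$ and $\OO(C')|_C \cong \OO_C(\Gamma)$. This is what strong smoothability in Lemma~\ref{lem:smoothing} is for.
\item The paper replaces $\NN$ by $\NN[C' \posmod N_{C'/H}]$, which agrees with $\NN$ off the special fiber.
\item By Remark~\ref{rem:sequence on D}, its restriction to $C'$ is an extension of $N_{C'/H}(-\Gamma)$ by $\OO_{C'}(1)(\Gamma)$, both of slope $d'+s$. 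It is therefore semistable, using \cite{clv22}.
\item After twisting by $\OO(-1)$, both pieces satisfy interpolation with the same integral slope $s$, so the restriction to $C'$ has a twist-down with $H^0 = H^1 = 0$.
\item Its restriction to $C$ is just $N_C(\Gamma)$, a twist of $N_C$. Equivalently $\NN(-1)[C' \posmod N_{C'/H}]|_C(-\Gamma) \cong N_C(-1)$, so it inherits (semi)stability and interpolation directly from $N_C$.
\end{itemize}
Lemma~\ref{lem: reducible stability} and Proposition~\ref{prop: openness of adjusted stability} then give (2). \cite[Lemma~2.10]{lv21} plus semicontinuity give (1). No information about the tangent directions of $C'$ at $\Gamma$ is needed.

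Your alternative of bounding adjusted slopes directly on $N_D$ could in principle work, but you would have to carry it out rather than name it. For instance, take the rank-two $F$ with $F|_{C'} = N_{C'/H}$ and set $G = F|_C \cap N_C$. Then $\mu^{\mathrm{adj}}(F) \le \mu(N_D)$ reduces exactly to $\mu(G) \le \mu(N_C)$, which is the same rebalancing the modification encodes.
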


\begin{proof}
Let $\pi : \mathcal{C} \rightarrow B$ be a one-parameter family of curves  smoothing $D$, with smooth total space.  The normal bundle of each fiber in $\PP^4$ forms a bundle $\NN$ on $\mathcal{C}$.  By the second exact sequence in Remark \ref{rem:sequence on D}, the restriction to $C'$ of the modification $\NN(-1)[C' \posmod  N_{C'/H}]$ fits into an exact sequence
$$0 \rightarrow \OO_{C'}(\Gamma) \rightarrow \NN(-1)[C' \posmod  N_{C'/H}]|_{C'} \rightarrow N_{C'/H}(-1)(-\Gamma) \rightarrow 0.$$
Since \(s \geq g'\), we have \(H^1(C', \OO_{C'}(\Gamma)) = 0\).
Because $N_{C'/H}(-1)(-\Gamma)$ and $\OO_{C'}(\Gamma)$ are both (semi)stable \cite{clv22}, satisfy interpolation \cite{lv22}, and 
$$\mu(N_{C'/H}(-1)(-\Gamma)) =  d' + g' - 1 - \frac{d' + g' -1}{2} = \frac{d' + g' -1}{2}=\mu(\OO_{C'}(\Gamma)),$$
$\NN(-1)[C' \posmod  N_{C'/H}]|_{C'}$ is semistable and satisfies interpolation as well.  Similarly,
$$\NN(-1)[C' \posmod  N_{C'/H}]|_C(-\Gamma) \cong N_C(-1)$$ is a (semi)stable bundle which satisfies interpolation.  This proves \eqref{prop:smp32} by Proposition \ref{prop: openness of adjusted stability} and Lemma \ref{lem: reducible stability}.

As the restriction $\NN(-1)[C' \posmod  N_{C'/H}]|_{C'}$ satisfies interpolation and has integral slope, it has a twist down with \(H^0 = H^1 = 0\). Since $\NN(-1)[C' \posmod  N_{C'/H}]|_C(-\Gamma)$ satisfies interpolation, \cite[Lemma~2.10]{lv21} implies $\NN(-1)[C' \posmod  N_{C'/H}]|_D$ satisfies interpolation as well.  This proves \eqref{prop:smp31} by upper semicontinuity.
\end{proof}

\begin{theorem}\label{thm: basecases}
Theorems \ref{thm-mainstab} and \ref{thm-maininterpolation} can be reduced to a finite number of cases as follows.
\begin{enumerate}
\item Assume that for the general BN-curve \(C \subset \PP^4\) of genus $g$ and degree $d$ with
$$(g,d) \in \{(5,9), (6,11), (8,11), (8,12), (10,12), (10,13), (11,13), (12,14)\},$$ the bundle $N_C(-1)$ satisfies interpolation. Then Theorem \ref{thm-maininterpolation} holds.

\item Assume that the normal bundle of the general BN-curve in $\PP^4$ of genus $g$ and degree $d$ with
$$(g,d) \in \{(2,7), (2,8), (2,9), (3,8), (3,10), (4,8), (4,9), (4,10), (5,9), (5,10), (5,11), (6,10), $$
$$ (6,11), (7,12), (8,11), (8,12), (9,13), (10,12), (10,13), (11,13), (11,14), (12,14), (13,15), (15,16)\}$$
is stable. Then Theorem \ref{thm-mainstab} holds.
\end{enumerate}
\end{theorem}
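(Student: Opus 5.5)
We argue by induction on $d$, using Proposition~\ref{prop: smoothing P3} as the inductive step. Attaching a curve $C'\subset H$ of type $(g',d')$ to a curve of type $(g,d)$ produces type $(g+g'+\tfrac{d'+g'-3}{2},\, d+d')$. The six choices of $(g',d')$ therefore give the increments
\[(\Delta g,\Delta d)\in\{(0,3),(2,4),(7,6),(9,7),(11,8),(13,9)\}.\]
Proposition~\ref{prop: smoothing P3} says that interpolation of $N_C(-1)$ and (semi)stability of $N_C$ both pass from $(g,d)$ to $(g+\Delta g,d+\Delta d)$. This holds provided $N_C(-1)$ satisfies interpolation (needed so that $C\cap H$ is general), $d\ge s$, and, for the increment $(13,9)$, $\rho(g,4,d)\ge 3$. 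One checks that $\rho$ changes by $\Delta g-5\Delta g+5\cdot 0\ldots$; concretely $\rho(g,4,d)=g-5(g-d+4)$, which changes by $5\Delta d-4\Delta g$. So the increments change $\rho$ by $15,12,2,-1,-4,-7$, and they therefore move between BN-admissible pairs in a controlled way.

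For part (1), I would first settle the seed cases. These are genus $1$ (interpolation for elliptic normal curves, from \cite{lv22}-type results or Remark~\ref{rem-g1} together with the genus-one interpolation results) and the nonspecial range $d\ge g+4$. In the nonspecial range I would induct via the $(0,3)$ and $(2,4)$ steps starting from small $d$. Then, for each BN pair $(g,d)$ with $\rho\ge0$ and $g\ge1$ that is not among the four exceptions, I would exhibit a predecessor $(g-\Delta g,d-\Delta d)$. This predecessor must have $\rho\ge0$, must not be an exception (and not $(3,7)$ where relevant, since interpolation is only needed there), and must satisfy the side conditions. The main work is a combinatorial sweep over the region $\{\rho\ge0\}$, organized by the residue of $\rho$ and by $d$. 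Large $\rho$ is handled by stepping down with $(0,3)$ or $(2,4)$. Small $\rho\in\{0,\dots,6\}$ is handled with the steps $(7,6),(9,7),(11,8),(13,9)$, which change $\rho$ only slightly. The pairs left without an admissible predecessor should be exactly the eight listed, plus pairs already known (small genus, where interpolation is \cite{lv21}-style base cases). I expect that step to be checked by a finite table: for $d$ large every residue class is reachable, and only finitely many pairs near the boundary $\rho\in[0,6]$ with small $d$ fail.

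For part (2), I would proceed the same way. By (1), interpolation now holds for every non-exceptional pair, so the hypothesis of Proposition~\ref{prop: smoothing P3} is available, and stability propagates along the same increments. The seeds are the genus-one curves with $d\not\equiv0 \pmod 3$, which are stable by Remark~\ref{rem-g1}, together with the $24$ listed pairs. Note that genus-one curves with $d\equiv 0\pmod 3$ are only semistable. Likewise $(3,7)$ and $(5,8)$ are only semistable, and $(2,6),(6,9),(7,10)$ are unstable. Hence predecessors must avoid all of these, as well as curves whose $N_C(-1)$ fails interpolation. This explains why pairs like $(2,7)$, $(3,10)$ and $(4,8)$ appear as extra seeds. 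The non-stable claims for the five exceptional pairs are already established in \S\ref{sec-unstablecases}.

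The main obstacle is the bookkeeping: verifying that every admissible pair $(g,d)$ has a chain of allowed increments back to a seed while respecting $d\ge s$, the condition $\rho\ge3$ for the $(6,9)$ attachment, and the avoidance of exceptions. I would do this by fixing $\rho$ and showing that, for each $\rho$, reachable pairs form all but a small initial segment in $d$, then listing that segment explicitly.
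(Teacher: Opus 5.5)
Your outline is the paper's: induction on $d$, with Lemma~\ref{lem:smoothing} and Proposition~\ref{prop: smoothing P3} as the inductive step. However, the increments you feed into it are wrong.

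The nodal union $C\cup_\Gamma C'$ has arithmetic genus $g+g'+s-1$ with $s=\frac{d'+g'-1}{2}$. So $(g',d')=(3,6),(4,7),(5,8),(6,9)$ give $(\Delta g,\Delta d)=(6,6),(8,7),(10,8),(12,9)$, which change $\rho$ by $+6,+3,0,-3$. They do not give $(7,6),(9,7),(11,8),(13,9)$ with changes $+2,-1,-4,-7$. Your general formula is right, but those four evaluations are each off by one in $\Delta g$.

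The statement is nothing but the output of the predecessor bookkeeping, so this error propagates to the conclusion. For example, your ``$(13,9)$'' step would certify $(15,16)$ from the stable seed $(2,7)$, but attaching a $(6,9)$ curve to a $(2,7)$ curve actually produces $(14,16)$. With the correct steps, the only predecessors of $(15,16)$ with $\rho\ge 0$ are the exception $(5,8)$ and $(3,7)$, whose normal bundle is only semistable. That is exactly why $(15,16)$ must be a stability seed. The same mechanism forces $(6,10),(7,12),(9,13),(11,14),(13,15)$ into list (2): their bases $(0,4)$, $(1,6)$, $(3,7)$ satisfy interpolation but are not stable.

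Beyond this, you never carry out the sweep; you only predict that the leftovers ``should be exactly'' the listed pairs. The organization by residues of $\rho$ that you sketch is not shown to terminate. Your seeding for part (1) is also not viable as described. Within the nonspecial range, $(g,g+4)$ and $(g,g+5)$ have no predecessor under the $(0,3)$ and $(2,4)$ steps for any $g$, since both candidates are special. That route would need infinitely many seeds.

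The paper makes the check finite with a simple device.
\begin{itemize}
\item For $g\ge 16$, always use the $(6,9)$ attachment. The predecessor $(g-12,d-9)$ has $\rho\ge 3$ and genus $\ge 4$. This avoids every exception, since those of genus $\ge 4$ all have $\rho\le 2$. It also meets the $\rho\ge 3$ hypothesis of Lemma~\ref{lem:smoothing}, and $d\ge 17$ guarantees $d-9\ge s=7$.
\item For $g\le 15$, step down by $(0,3)$ whenever $d\ge d_{\min}(g)+3$, where $d_{\min}(g)$ is the least non-exceptional degree with $\rho\ge 0$.
\end{itemize}
What remains is $g\le 15$, $d\le d_{\min}(g)+2$.
\begin{itemize}
\item For $g\le 5$, interpolation is imported from \cite[Corollary~3.5]{lv21} and \cite[Theorem~1.4]{lv22} when $d\ge 2g$, leaving only $(5,9)$. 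All of these pairs are stability seeds except $(3,9)$, which comes from the stable $(1,5)$ by the $(1,4)$ attachment.
\item For $6\le g\le 15$, an explicit table of certificates settles interpolation. The entries whose base curve is not stable supply the extra stability seeds.
\end{itemize}
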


\begin{proof}
We argue by induction on $d$.  Consider any case with $g \geq 16$. Then $\rho(g-12, 4, d-9) = \rho(g,4,d) + 3 \geq 3$ and $g-12 \geq 4$. 
The only exceptions to  Theorems \ref{thm-mainstab} and \ref{thm-maininterpolation} either have $\rho(g, 4, d) < 3$ or $g < 4$. Therefore, if $C$ is a general BN-curve of degree $d-9$ and genus $g-12$, then the conclusions of the theorems hold by induction.
The inequalities $\rho(g,4,d) 
\geq 0$ and $g \geq 16$ imply that $d\geq 17$. In particular, $d-9 \geq 7$. Therefore,  we may  apply Lemma \ref{lem:smoothing} and Proposition \ref{prop: smoothing P3} with $(g',d') = (6,9)$ to conclude that the theorems hold for curves of genus $g$ and degree $d$.

We may thus assume that $g \leq 15$. For each such $g$, let $d_{\min}(g)$ be the minimum value of $d$ for which $\rho(g,4,d) \geq 0$ and $(g,d) \notin \{(2,6),(3,7),(5,8),(6,9),(7,10)\}$. Observe that both theorems have no exceptions for $d \geq d_{\min}$. Consider any case with $d \geq d_{\min} + 3$.  If $C$ is a general BN-curve of degree $d-3$ and genus $g$, then the conclusions of the theorems hold by induction. Therefore, we may apply Lemma \ref{lem:smoothing} and Proposition~\ref{prop: smoothing P3} with $(g',d') = (0,3)$ to conclude that the theorems hold for curves of degree $d$ and genus $g$.
It thus suffices to consider the finitely many cases with $g \leq 15$ and $d \leq d_{\min} + 2$.

Suppose \(g \leq 5\).  Recall that the normal bundle of a general elliptic curve in $\PP^4$ of degree $5$ or $7$ is stable by Remark \ref{rem-g1}.  The only $(g,d)$ with $d \leq d_{\min} + 2$ that is not listed as an exception or base case for Theorem \ref{thm-mainstab} is $(g,d) = (3,9)$.  This case follows from an application of Lemma \ref{lem:smoothing} and Proposition \ref{prop: smoothing P3} with $(g',d') = (1,4)$.  For Theorem \ref{thm-maininterpolation}: when $d \geq 2g$, \cite[Corollary~3.5]{lv21} and \cite[Theorem~1.4]{lv22} prove $N_C(-1)$ satisfies interpolation apart from the listed exceptions.  Again apart from these exceptions, the only $(g,d)$ with $g \leq 5$ and $d < 2g$ is $(g,d) = (5,9)$, in which case Theorem \ref{thm-maininterpolation} holds by assumption.

When $6 \leq g \leq 15$, the following table explains how to further reduce the cases to those listed in the theorem.  Below, $Y =(g_Y,d_Y)$ is a general BN-curve in $\PP^4$ of genus $g_Y$ and degree $d_Y$; $X = (g_X,d_X)$ is a BN space curve of genus $g_X$ and degree $d_X$ that meets $Y$ quasi-transversely at $\frac{d_X + g_X - 1}{2}$ points; and we apply Lemma \ref{lem:smoothing} and Proposition \ref{prop: smoothing P3} with $C = Y$ and $C' = X$ to verify Theorem \ref{thm-maininterpolation} for the indicated pair \((g, d)\).
\begin{center}
\begin{tabular}{c | c | c | c | c}
$g$ & $d_{\min}$ & certificate for $(g, d_{\min})$ & certificate for $(g, d_{\min}+1)$ & certificate for $(g, d_{\min}+2)$ \\ \hline
6 & 10 & $X = (3,6)$, $Y= (0,4)$ & by assumption & $X=(1,4)$, $Y=(4,8)$  \\ 
7 & 11 & $X = (3,6)$, $Y= (1,5)$ & $X = (3,6)$, $Y = (1,6)$ & $X = (3,6)$, $Y = (1,7)$ \\
8 & 11 & by assumption & by assumption & $X = (3,6)$, $Y = (2,7)$ \\
9 & 12 & $X = (4,7)$, $Y= (1,5)$ & $X = (3,6)$, $Y = (3,7)$ & $X = (3,6)$, $Y = (3,8)$ \\
10 & 12 & by assumption & by assumption & $X = (3,6)$, $Y = (4,8)$ \\
11 & 13 & by assumption & $X = (4,7)$, $Y = (3,7)$ & $X = (3,6)$, $Y = (5,9)$ \\
12 & 14 & by assumption & $X = (4,7)$, $Y = (4,8)$ & $X = (3,6)$, $Y = (6,10)$ \\
13 & 15 & $X = (5,8)$, $Y = (3,7)$ & $X = (4,7)$, $Y = (5,9)$ & $X = (3,6)$, $Y = (7,11)$ \\
14 & 16 & $X = (5,8)$, $Y = (4,8)$ & $X = (3,6)$, $Y = (8,11)$ & $X = (3,6)$, $Y = (8,12)$\\
15 & 16 & $X = (6,9)$, $Y = (3,7)$ & $X = (6,9)$, $Y = (3,8)$ & $X = (6,9)$, $Y = (3,9)$ 
\end{tabular}
\end{center}
Observe that $N_Y$ is stable by assumption for all choices of $Y$ in the table aside from $Y \in \{(0,4),(1,6), (3,7)\}$.  Thus, unless $(g,d) \in \{(6,10),(7,12), (9,13),(11,14),(13,15),(15,16)\}$, this argument also proves Theorem \ref{thm-mainstab} for the indicated pairs $(g,d)$ by Proposition \ref{prop: smoothing P3}(2).  Hence, these are the additional base cases needed for Theorem \ref{thm-mainstab} with $g > 5$.
\end{proof}

\section{Interpolation for the Twisted Normal Bundle}\label{sec: interpolation proof}

\noindent
In this section we prove Theorem \ref{thm-maininterpolation} and Corollary \ref{cor:geom interpolation}.  We will need the following lemma. 

\begin{lemma}\cite[Lemma 3.4]{cs23} \label{lem: pull off 1-secant}
Let \(\C \subset \PP^r \times B \to B\) be a one-parameter family of embedded curves with smooth total space. Suppose that the central fiber \(\mathcal{C}_0\) is the union \(C \cup_u L\) of a curve \(C\) with a \(1\)-secant line \(L\) meeting \(C\) at a point \(u \in C\).

Let \(\mathcal{N}\) be a vector bundle on \(\mathcal{C}\) equipped with an isomorphism to \(N_{\mathcal{C} / \PP^r \times B}\) over an open subset of \(\mathcal{C}\) containing \(L\), and let  \(v \neq u \in L\) be another point. Let \(N \subset \mathcal{N}|_C\) be the subsheaf which does not smooth the node at $u$, so that $N$ is naturally isomorphic to $N_C$ in a neighborhood of $u$, and let $\mathcal{C}_b$ be a general fiber of $\mathcal{C} \to B$.  Then there is a flat family whose general fiber is $\mathcal{N}|_{\mathcal{C}_b}$ and whose special fiber is \(N[2u \posmod v](u)\).  In particular, if \(N[2u \posmod v](u)\) has no subbundles of rank \(k \in \mathbb{N}\) and slope at least \(\mu \in \mathbb{R}\), then neither does the restriction $\mathcal{N}|_{\mathcal{C}_b}$.  Similarly, if \(N[2u \posmod v](-1)\) satisfies interpolation, then so does the restriction \(\mathcal{N}(-1)|_{\mathcal{C}_b}\).  
\end{lemma}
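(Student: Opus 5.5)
The plan is to study the family locally near \(L\), via a standard blow-up/twist argument. The aim is to replace \(\mathcal{N}\) by a bundle that agrees with it on the general fiber and whose restriction to the central fiber is supported on \(C\) in the sense that the \(L\)-part becomes trivial to glue.

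\textbf{Step 1 (restriction to \(L\)).} First compute \(\mathcal{N}|_L\). Near \(L\) it is \(N_{\mathcal{C}_0}|_L\), which by Lemma~\ref{lem:hartshornehirschowitz} equals \(N_L[u \posmodalong C]\cong \OO_L(1)^{\oplus(r-2)}\oplus\OO_L(2)\). Here the \(\OO_L(2)\) summand corresponds to the direction of \(T_uC\) (a pointing-type line subbundle).

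\textbf{Step 2 (twist by \(L\)).} Since the total space is smooth, \(L\) is a Cartier divisor on \(\mathcal{C}\) with \(\OO(L)|_L\cong\OO_L(-1)\), because \(L^2=-1\) (\(L\) meets the rest of the fiber in one point, and the fiber is linearly trivial). Perform negative elementary modifications of \(\mathcal{N}\) along \(L\), in the manner of Remark~\ref{rem:sequence on D}. First modify toward the subbundle \(\OO_L(2)\), or alternatively twist down so that the restriction to \(L\) becomes as negative as possible in the other directions. Then modify again, repeatedly, until the restriction to \(L\) is trivial, namely \(\OO_L^{\oplus(r-1)}\). The general fiber is unchanged, since these modifications are supported on \(L\).

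\textbf{Step 3 (contract \(L\) and read off the limit).} Once the restriction to \(L\) is trivial, the bundle is pulled back from the contraction of \(L\), which is a family whose central fiber is \(C\). This gives a flat limit bundle on \(C\). Its value is obtained by tracking how each modification along \(L\) changes the restriction to \(C\) at \(u\), again by Remark~\ref{rem:sequence on D}. Each modification along \(L\) toward a subbundle \(F\) acts on \(\mathcal{N}|_C\) as an elementary modification at \(u\) toward the fiber \(F|_u\). The \(\OO_L(2)\) direction records the line \(uv\), after identification with the tangent direction of the smoothing, and a twist by \(\OO(L)|_C=\OO_C(u)\) enters overall. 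Starting from \(N\), the subsheaf not smoothing the node, the accumulated modifications are:
\begin{itemize}
\item a twist by \(u\) overall;
\item a modification of order two at \(u\) toward the pointing direction \(v\), which arises because the \(\OO_L(2)\) summand needs one more step than the \(\OO_L(1)\) summands.
\end{itemize}
This yields \(N[2u\posmod v](u)\).

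\textbf{Step 4 (consequences).} The "in particular" statements follow formally. Both the nonexistence of destabilizing subbundles of given rank and slope and interpolation are open conditions in flat families of vector bundles on smooth curves, the latter by upper semicontinuity of \(h^0,h^1\) of general twists.

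\textbf{Main obstacle.} The main obstacle is Step 3: correctly identifying the order-two modification \([2u\posmod v]\). This requires checking that the line subbundle extends across \(u\) to second order, along the direction toward \(v\), in a tree-like way. I would verify it by a local coordinate computation of the normal bundle of the total space near \(u\), taking the node as \(xy=t\).
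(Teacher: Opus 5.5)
Your outline is the paper's proof: compute $\mathcal{N}|_L \cong \OO_L(2)\oplus\OO_L(1)^{\oplus(r-2)}$, modify along $L$ until the restriction to $L$ is trivial, contract the $(-1)$-curve $L$ (possible since the total space is smooth), and read off the descended bundle on $C$. The paper does the modification in one step, $\mathcal{N}[L\posmod \OO_L(2)](L)=\mathcal{N}[L\negmod \OO_L(2)](2L)$. The twist here is by $+L$, not down, since $\OO(L)|_L\cong\OO_L(-1)$. For the restriction to $C$, which is the local computation you flag as the main obstacle, the paper simply cites \cite[Section~2.2]{clv23}.

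That computation reduces to one check. Inside $\mathcal{N}|_u$, the fiber $F|_u$ of the $\OO_L(2)$ summand must equal the fiber at $u$ of the saturation of $N_{C\to v}$ in $\mathcal{N}|_C\cong N[u\posmod v]$. Granting this, $N[u\posmod v][u\negmod F|_u](2u)=N[2u\posmod v](u)$ exactly.

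The one place where ``follows formally'' is not quite right is the interpolation clause in Step 4. You cannot just twist the limit by $\OO_C(-1)$. The bundle $\OO(1)$ restricts to $\OO_L(1)$, so it does not descend to the contracted family. The naive twist would give $N[2u\posmod v](u)(-1)$, whose degree is off by $r-1$ from that of $\mathcal{N}(-1)|_{\mathcal{C}_b}$, so it cannot be the flat limit.

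The paper instead uses $\OO(1)(L)$, which is trivial on $L$, hence descends, and restricts to $\OO_C(1)(u)$ on $C$. Twisting the descended family by its inverse gives a flat family from $\mathcal{N}(-1)|_{\mathcal{C}_b}$ to $N[2u\posmod v](u)\otimes\OO_C(1)(u)^{-1}=N[2u\posmod v](-1)$, which is exactly the bundle in the statement. Equivalently, you could run your whole construction on $\mathcal{N}(-1)$ directly, using $\mathcal{N}(-1)[L\negmod \OO_L(1)](L)$.
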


\begin{remark}\label{rem: thanksgeoffandizzet}
In fact, if \(N[2u \posmod v](u)\) has no subbundles of rank \(k \in \mathbb{N}\) and slope at least \(\mu \in \mathbb{R}\), then neither does $\mathcal{N}|_{\mathcal{C}_0}$ (using adjusted slopes). This is a slight generalization of \cite[Lemma 4.3]{cs23} which can be proven in the same way.
\end{remark}

\begin{proof}
For the sake of completeness we recall the proof here.
By smoothness of $\mathcal{C}$, we may contract the component $L \subset \mathcal{C}_0$ and obtain a smooth surface $\mathcal{C}^- \rightarrow B$.  We will construct a vector bundle on $\mathcal{C}^-$ whose restrictions to $C$ and $\mathcal{C}_b$ are the indicated bundles.  The result then follows from openness of the desired properties.

The restriction of $\mathcal{N}$ to $L \subset \mathcal{C}_0$ is isomorphic to $\mathcal{O}_L(2) \oplus \mathcal{O}_L(1)^{\oplus r - 2}$.  The modification $\mathcal{N}[L \posmod  \mathcal{O}_L(2)](L)$ restricts to a trivial vector bundle on $L$ and to $\mathcal{N}|_{C}[2u \posmod v](u)$ on $C$ by \cite[Section~2.2]{clv23}.  Hence, $\mathcal{N}[L \posmod  \mathcal{O}_L(2)](L)$ descends to a vector bundle $\overline{\mathcal{N}}$ on $\mathcal{C}^-$, which is the desired flat family.  
    
To see the final claim about interpolation, since $\mathcal{O}(1)(L)$ restricts trivially to \(L\), it descends to a line bundle on \(\mathcal{C}^-\). Twisting down by this line bundle yields the desired claim.
\end{proof}

\noindent
Recall that by Theorem \ref{thm: basecases}, to prove Theorem \ref{thm-maininterpolation} we only need to consider the cases $$(g,d) \in \{ (5,9), (6,11), (8,11), (8,12), (10,12), (10,13), (11,13), (12,14) \}.$$  A majority of these cases are proven by minor changes to \cite{lv21}.

\begin{theorem}\cite{lv21}\label{thm:LV base cases}
For a general BN-curve \(C\) of genus $g$ and degree $d$ in $\PP^4$ with
$$(g,d) \in \{(5,9), (8,11), (10,12), (10,13), (11,13), (12,14)\},$$ 
$N_C(-1)$ satisfies interpolation.

Moreover, for a general BN-curve \(C\) of genus \(7\) and degree \(10\) lying on a smooth quadric \(Q \subset \PP^4\), the twisted normal bundle \(N_{C/Q}(-1)\) satisfies interpolation.
\end{theorem}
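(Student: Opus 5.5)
The plan is to follow the argument of \cite{lv21} case by case and locate every point where characteristic zero is used. In each case we replace that input by a characteristic-free statement proven earlier in this paper or in \cite{lv22}. The proof in \cite{lv21} is a degeneration argument. We specialize \(C\) to a reducible nodal BN-curve \(X \cup Y\), where one component is a curve in a hyperplane \(H\) or a rational curve attached along secant points. We then use Lemma~\ref{lem:hartshornehirschowitz} to write \(N_{X\cup Y}|_X\) and \(N_{X\cup Y}|_Y\) as positive modifications \(N_X[\posmodalong Y]\) and \(N_Y[\posmodalong X]\). Interpolation of \(N_C(-1)\) then follows from interpolation of suitable twists of these restrictions via \cite[Lemma~2.10]{lv21} and upper semicontinuity, exactly as in Proposition~\ref{prop: smoothing P3}. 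The ingredients that depended on characteristic zero in \cite{lv21} are:
\begin{enumerate}
\item the smoothability and BN-ness of the reducible curves, via \cite[Theorem~1.9]{l22};
\item interpolation for normal bundles of BN space curves and their pointing-bundle modifications, via \cite{aly} or \cite{vogt, l21};
\item generality statements such as general tangent directions being linearly general, or the intersection \(C \cap H\) being general.
\end{enumerate}

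For these ingredients we substitute the following. Lemma~\ref{lem:smoothing} and Lemma~\ref{lem: reducible BN-curves} handle smoothability and BN-ness, the former together with its remark that \cite[Theorem~1.9]{l22} holds in arbitrary characteristic. \cite{l24} and \cite[Theorem~1.4]{lv22} give interpolation for twisted normal bundles of space curves in all characteristics. Lemma~\ref{lem: onion degeneration} supplies linear generality of the modification directions, including the subspace \(V\) when a rational normal curve is degenerated to a union of lines through a tangency point. Where \cite{lv21} peels off a \(1\)-secant line, we use Lemma~\ref{lem: pull off 1-secant}, which is characteristic-free as shown. The cases \((8,11),(10,12),(10,13),(11,13),(12,14)\) and \((5,9)\) then reduce, after these substitutions, to interpolation statements for modified normal bundles of lower-degree curves in \(\PP^4\) and \(\PP^3\) that \cite{lv21} verifies by explicit cohomology computations. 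Those computations, being Riemann--Roch and dimension counts, are insensitive to the characteristic.

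For the statement about \(N_{C/Q}(-1)\) with \((g,d)=(7,10)\) on a smooth quadric threefold, we again follow \cite{lv21}. We degenerate \(C\) inside \(Q\) to a union of a lower-degree curve on \(Q\) with a curve in a hyperplane section \(Q\cap H\), which is a smooth quadric surface. We control \(N_{C/Q}\) on the latter component using the explicit structure of curves on \(\PP^1\times\PP^1\), whose normal bundles are line bundles of known bidegree. The gluing step is again \cite[Lemma~2.10]{lv21}.

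The main obstacle will be step (3): the generality inputs. In characteristic \(p\), general hyperplane sections or general tangent lines can fail to be in uniform position, for instance through inseparability of a Gauss map, so each use must be justified individually. My first attempt is to derive each generality statement from the existence of a specialization containing a nondegenerate rational component, as in the proof of Lemma~\ref{lem:smoothing}. Rational normal curves have linearly general tangent lines in every characteristic, and openness then transports this to the general curve.
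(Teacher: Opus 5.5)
Your overall strategy (audit \cite{lv21} and swap out each characteristic-zero input) is the same as the paper's. However, the audit misses the one input that actually needs new mathematics. Your claim that the remaining verifications are ``Riemann--Roch and dimension counts, insensitive to the characteristic'' is exactly where the proof breaks.

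Apart from \cite[Theorem~1.3]{aly}, which you, like the paper, replace by \cite[Theorem~1.4]{lv22}, the relevant arguments of \cite{lv21} invoke \cite[Appendix~B]{aly} twice. Both uses concern modifications of the normal bundle of a general elliptic quartic $D \subset \PP^3$, with all points general on $D$:
\begin{itemize}
\item in the $(8,11)$ case one needs interpolation for $N_D[q_3 + x \negmod z]$;
\item in the $(7,10)$ case one needs interpolation for $N_D[x \negmod z_3][w_3 \binegmod z_3]$.
\end{itemize}
These are statements about particular modified bundles. They are not supplied by \cite{l24} or \cite[Theorem~1.4]{lv22}, which concern (twisted) normal bundles of general BN-curves themselves. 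They are also not formal consequences of Riemann--Roch. As written, your proof would silently import a characteristic-zero result at these two points.

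Relatedly, your sketch of the $(7,10)$ case via curves on $\PP^1 \times \PP^1$ does not engage with this input. The ``main obstacle'' you single out, generality of tangent lines and hyperplane sections, is not where the difficulty lies; those steps already go through.

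The missing idea is short. Since $D$ is a complete intersection of two quadrics, any two distinct quadrics $Q_1, Q_2$ in the pencil containing $D$ give a splitting $N_D = N_{D/Q_1} \oplus N_{D/Q_2} \cong \OO_D(2)^{\oplus 2}$. A bisecant line $\overline{xz}$ lies in exactly one quadric $Q$ of the pencil. The direction of the modification $[x \negmod z]$ at $x$ is then the fiber of the summand $N_{D/Q}$.

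For general points, the lines $\overline{q_3z}$ and $\overline{xz}$ lie in distinct quadrics, and likewise $\overline{xz_3}$ and $\overline{w_3z_3}$. So every modification is toward a summand of one fixed splitting, which gives
\[N_D[q_3 + x \negmod z] \cong \OO_D(2)(-x) \oplus \OO_D(2)(-q_3), \qquad N_D[x \negmod z_3][w_3 \binegmod z_3] \cong \OO_D(2)(-z_3 - w_3) \oplus \OO_D(2)(-x).\]
Each is a direct sum of general line bundles whose degrees differ by at most one, so each visibly satisfies interpolation in any characteristic. Adding this replacement to your list of substitutions completes the audit of \cite{lv21}.
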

\begin{proof}
In characteristic zero, \cite{lv21} proves interpolation for such pairs.  The proof for 
$$(g,d) \in \{(10,12), (10,13), (11,13), (12,14) \}$$
is contained in \cite[Sections~4.1--4.4]{lv21}; the proof for $(g,d) = (8,11)$ lies in \cite[Section~5.1]{lv21}; the proof for $(g,d) = (5,9)$ lies in \cite[Section~6.1]{lv21}; and the proof for \((g, d) = (7, 10)\) lies in \cite[Section~6.6]{lv21}.  While not all results in \cite{lv21} are characteristic independent, these results either already are or become so with slight changes.  We explain these changes below.

Everything referenced from \cite{l22} in these proofs is characteristic independent.  Besides \cite[Theorem~1.3]{aly} and \cite[Appendix~B]{aly}, everything referenced from \cite{aly} is characteristic independent as well.  We can replace \cite[Theorem~1.3]{aly} with \cite[Theorem~1.4]{lv22}.  All other arguments in the proofs of the indicated cases are characteristic independent, so it suffices to consider the use of \cite[Appendix~B]{aly}. This is used twice, for modifications of the normal bundle of a general elliptic curve \(D \subset \PP^3\) of degree \(4\).

In the case $(g,d) = (8,11)$, it is cited for the following: for three general points $q_3, x, z \in D$, the bundle $N_D[q_3 + x \negmod  z]$ satisfies interpolation. But since $q_3, x \in D$ are general, the lines $\overline{q_3z}$ and $\overline{xz}$ lie in distinct quadrics containing $D$.  It follows that $N_D[q_3 + x \negmod  z] \cong \OO_D(2)(-x) \oplus \OO_D(2)(-q_3)$ satisfies interpolation.

Similarly, in the case \((g, d) = (7, 10)\), it is cited for the following: for three general points $x, z_3, w_3 \in D$, the bundle $N_D[x \negmod z_3][w_3 \binegmod z_3]$ satisfies interpolation. This again follows from the observation that the lines $\overline{xz_3}$ and $\overline{w_3z_3}$ lie in distinct quadrics containing $D$.
\end{proof}

\noindent
We prove the cases $(g,d) \in \{(6,11), (8,12)\}$ below.

\begin{lemma}\label{lem:4.4}
The twisted normal bundle of a general BN-curve of genus $6$ and degree $11$ in $\PP^4$ satisfies interpolation.
\end{lemma}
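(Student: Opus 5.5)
The plan is to degenerate $C$ to the union of a general BN-curve $C_0 \subset \PP^4$ of genus $6$ and degree $10$ with a $1$-secant line $L$ meeting $C_0$ at a general point $u$. Such a union is a BN-curve of genus $6$ and degree $11$, and it is strongly smoothable for the same reason as in Lemma~\ref{lem:smoothing}. Choose a point $v \in L \setminus \{u\}$; as $L$ varies, $v$ becomes a general point of $\PP^4$. By Lemma~\ref{lem: pull off 1-secant}, it suffices to show that
\[E := N_{C_0}[2u \posmod v](-1)\]
satisfies interpolation. This bundle has rank $3$ and degree $30 + 2 = 32$. Interpolation is an open condition, so we may also specialize $C_0$ together with $u$ and $v$.

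I would not argue directly on $C_0$. Projecting from $v$ splits $E$ as an extension of $N_{\overline{C_0}}(-1)$ by $\OO_{C_0}(2u)$, and the subbundle has $H^1 \neq 0$ on a genus-$6$ curve, so that extension is useless. Instead, specialize $C_0$ to the nodal curve $Y \cup_\Gamma X$ certifying the case $(6,10)$ in the table of Theorem~\ref{thm: basecases}.
\begin{itemize}
\item $Y$ is a general rational quartic in $\PP^4$.
\item $X \subset H$ is a BN space curve of genus $3$ and degree $6$.
\item $X$ meets $Y$ in a set $\Gamma$ of $4$ points of $Y \cap H$.
\item The point $u$ lies on $Y$.
\end{itemize}
As in the proof of Proposition~\ref{prop: smoothing P3}, take a smoothing family $\mathcal{C}$ with smooth total space and let $\NN$ be its normal bundle. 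Consider the bundle
\[\mathcal{E} = \NN(-1)[C' \posmod N_{X/H}][2u \posmod v] \quad (\text{with } C' = X).\]
Its restriction to $X$ is an extension of $N_{X/H}(-1)(-\Gamma)$ by $\OO_X(\Gamma)$. Both pieces are semistable of slope $3$ and satisfy interpolation, so this restriction has a twist with $H^0 = H^1 = 0$. Its restriction to $Y$, twisted down by $\Gamma$, is $N_Y(-1)[2u \posmod v]$. By \cite[Lemma~2.10]{lv21} and upper semicontinuity, it then suffices to prove that $N_Y(-1)[2u \posmod v]$ satisfies interpolation. On $\PP^1$ this is equivalent to balancedness.

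The remaining computation is on $Y \cong \PP^1$. First, $N_Y(-1) \cong \OO(2)^{\oplus 3}$, and this holds even in characteristic $2$, since the degrees are congruent to $d = 4 \pmod 2$. After the modification the bundle has degree $8$, so we must show it is $\OO(3)^{\oplus 2} \oplus \OO(2)$.

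For this I would use the sequence of Remark~\ref{rem: exact sequence mod} with respect to projection from $v$:
\[0 \to \OO_Y(2u) \to N_Y(-1)[2u \posmod v] \to N_{\overline{Y}}(-1) \to 0.\]
Here $\OO_Y(2u) \cong \OO(2)$, and $\overline{Y} \subset \PP^3$ is a general rational quartic with $N_{\overline{Y}}(-1) \cong \OO(3)^{\oplus 2}$. Since $\mathrm{Ext}^1(\OO(3), \OO(2)) = H^1(\OO(-1)) = 0$, the sequence splits. This gives $\OO(3)^{\oplus 2} \oplus \OO(2)$, which is balanced.

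The main obstacle is the bookkeeping in the gluing step. One must check that the twist by the line bundle coming from contracting $L$, and the $[C' \posmod N_{X/H}]$ modification, interact correctly with the modification at $u$. One must also check that $u$, $v$ and $H$ can be chosen jointly general, so that the hypotheses of Lemma~\ref{lem: pull off 1-secant} and of \cite[Lemma~2.10]{lv21} hold at the same time.

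A possible alternative is to apply Lemma~\ref{lem:smoothing} directly with $X = (3,6)$ and $Y = (0,5)$. This fails in characteristic $2$, because $N_Y(-1)$ for a rational quintic has degree $8$ and cannot be balanced there. That failure is why the $1$-secant line route above is preferable.
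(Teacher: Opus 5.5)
Your argument has a genuine gap in characteristic $2$, and that is exactly the case this lemma is needed for. The false step is the claim that $N_{\overline{Y}}(-1)\cong\OO(3)^{\oplus 2}$ for the projection $\overline{Y}\subset\PP^3$ of the rational normal quartic. By Remark~\ref{rem-g0}, in characteristic $2$ a general rational quartic in $\PP^3$ has $N_{\overline{Y}}\cong\OO(6)\oplus\OO(8)$. Your sequence then reads $0\to\OO(2)\to N_Y(-1)[2u\posmod v]\to\OO(2)\oplus\OO(4)\to 0$, and since $\mathrm{Ext}^1(\OO(4),\OO(2))\neq 0$ it no longer forces balancedness.

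In fact the bundle is not balanced. Apply Lemma~\ref{lem: pull off 1-secant} to $Y\cup\overline{uv}$. This exhibits $N_Y(-1)[2u\posmod v]$ as a specialization of $N_{Y'}(-1)$ for a general rational quintic $Y'\subset\PP^4$. In characteristic $2$ that bundle is $\OO(2)^{\oplus 2}\oplus\OO(4)$. Balancedness is open in families, so the special fiber cannot be $\OO(3)^{\oplus 2}\oplus\OO(2)$. Hence $N_Y(-1)[2u\posmod v]$ fails interpolation, and your gluing step via \cite[Lemma~2.10]{lv21} gives nothing.

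This is structural, not a slip. Composing your two degenerations lands on $X\cup_\Gamma(Y\cup\overline{uv})$. That is the certificate $X=(3,6)$, $Y=(0,5)$ you rejected, with the rational quintic degenerated. So your route inherits exactly the parity obstruction you identified.

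In characteristic $\neq 2$ your argument does go through, modulo the bookkeeping you flag. (Incidentally, the two pieces on $X$ have slope $4$, not $3$.) But there it is no stronger than applying Proposition~\ref{prop: smoothing P3} directly to that certificate. The lemma is needed precisely so that characteristic $2$ is covered too.

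The paper sidesteps the problem by working on an elliptic quintic instead:
\begin{itemize}
\item It starts from the modifications supplied by \cite[Lemma~5.1]{lv21} and Lemma~\ref{lem: pull off 1-secant}.
\item It collides $y_1,y_2$ to a single point $y$ and projects from $y$.
\item It specializes the $q_j$ so that the quotient becomes $\OO_{\overline{C}}(1)(-p_1-p_2-p_3)^{\oplus 2}$ on an elliptic quartic, a complete intersection of quadrics.
\end{itemize}
This is a characteristic-free computation giving $h^1(N')=0$, and \cite[Lemma~2.8]{lv21} then yields interpolation.
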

\begin{proof}
Let $C \subset \PP^4$ be a general elliptic curve of degree 5, and for positive integers $i \leq 2$ and $j \leq 3$ let $x_i, y_i \in C$ and $p_j, q_j \in C$ be ten general points.  By \cite[Lemma~5.1]{lv21} and Lemma \ref{lem: pull off 1-secant}, interpolation for the twisted normal bundle of a general BN-curve of genus 6 and degree 11 reduces to interpolation for the modified twisted normal bundle 
$$N' = N_C(-1)[p_1 \binegmod  q_1][p_2 \binegmod  q_2][p_3 \binegmod  q_3][2x_1 \posmod  y_1][2x_2 \posmod  y_2].$$
Specialize $y_1$ and $y_2$ to a common point $y \in C$. This induces a specialization of \(N'\) to
\[N'' = N_C(-1)[p_1 \binegmod  q_1][p_2 \binegmod  q_2][p_3 \binegmod  q_3][2x_1 + 2x_2 \posmod  y].\]
As in Remark \ref{rem: exact sequence mod}, projection from $y \in C$ induces an exact sequence
\[0 \to \OO(2y + 2x_1 + 2x_2 - p_1 - q_1 -p_2 - q_2 - p_3 - q_3) \to N'' \to  N_{\overline{C}}(-1)[p_1 \binegmod  q_1][p_2 \binegmod  q_2][p_3 \binegmod  q_3]\to 0\]
where $\overline{C} \subset \PP^3$ is the image of $C$ and $N_{\overline{C}}(-1)$ refers to the $N_{\overline{C}} \otimes \OO_{\overline{C}}(-1)$.  After specializing $q_1$ to $p_2$, $q_2$ to $p_3$, and $q_3$ to $p_1$, the last nonzero bundle in the above sequence specializes to \[N_{\overline{C}}(-1)(-p_1 - p_2 - p_3) \cong \OO_{\overline{C}}(1)(-p_1 - p_2 - p_3)^{\oplus 2}.\]
By upper-semicontinuity and generality of $p_1, p_2, p_3 \in C$, we obtain that $h^1(C,N') = 0$.  Thus, by \cite[Lemma~2.8]{lv21}, $N'$ satisfies interpolation.
\end{proof}

\begin{lemma}\label{lem:4.5}
The twisted normal bundle of a general BN-curve of genus $8$ and degree $12$ in $\PP^4$ satisfies interpolation.
\end{lemma}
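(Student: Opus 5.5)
Following the pattern of Lemma~\ref{lem:4.4}, I would degenerate a general BN-curve of genus $8$ and degree $12$ to a curve $C$ of lower genus together with some bisecant and $1$-secant lines. The bisecant lines contribute modifications of the form $[p \binegmod q]$ to $N_C(-1)$, and the $1$-secant lines are removed via Lemma~\ref{lem: pull off 1-secant}. The natural choice is the degeneration used for this case in \cite{lv21} (the analogue of \cite[Lemma~5.1]{lv21}). Concretely, I take $C \subset \PP^4$ a general BN-curve of genus $2$ and degree $7$, attach $6$ general bisecant lines $\overline{p_jq_j}$, which brings the genus to $8$ and the degree to $13$, and then trade one degree via a $1$-secant line. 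Alternatively I would use a degree-$6$ rational or elliptic base with fewer bisecants and several $1$-secant lines, with the counts chosen to give $(g,d) = (8,12)$. By Lemma~\ref{lem: pull off 1-secant} and \cite[Lemma~2.8]{lv21}, it then suffices to show $h^1(N') = 0$ for a modified bundle
\[N' = N_C(-1)[p_1 \binegmod q_1]\cdots[p_k \binegmod q_k][2x_1 \posmod y_1]\cdots[2x_m \posmod y_m],\]
provided $\chi(N')$ is divisible by $\rk = 3$. This divisibility is what lets interpolation reduce to a single cohomology vanishing.

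To prove $h^1(N') = 0$, I would specialize all the $y_i$ to one point $y \in C$. Then projection from $y$ gives, as in Remark~\ref{rem: exact sequence mod}, an exact sequence
\[0 \to \OO_C(1)(-1)(2y + 2\textstyle\sum x_i - \sum (p_j + q_j)) \to N'' \to N_{\overline{C}}(-1)[p_1 \binegmod q_1]\cdots[p_k \binegmod q_k] \to 0,\]
where $\overline{C} \subset \PP^3$ is a general BN space curve. For the sub line bundle, $H^1$ vanishes once its degree exceeds $2g-2$, which I would check by a degree count. For the quotient, I would specialize the pairs cyclically, $q_1 \to p_2$, $q_2 \to p_3$, and so on, as in Lemma~\ref{lem:4.4}. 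This collapses the modifications to a twist $N_{\overline{C}}(-1)(-\sum p_j)$, and $h^1$ of that twist vanishes because $N_{\overline{C}}(-1)$ satisfies interpolation by \cite{lv22} and the points are general. Upper semicontinuity then gives $h^1(N') = 0$.

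The main obstacle is the arithmetic: I need a decomposition in which both pieces of the projection sequence have vanishing $H^1$ simultaneously. The sub line bundle loses degree $2$ for every bisecant, so with many bisecants its degree may drop below $2g-1$, which would force $H^1$ to be nonzero.

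If that happens, I would do two things. First, move some bisecant modifications so that their points pass through $y$, making them point toward $y$; these are then absorbed into the saturated subbundle instead of lowering it. Second, use a partial cyclic specialization that leaves a quotient modification of $N_{\overline{C}}(-1)$ which still satisfies interpolation. If even this fails, I would instead use the elliptic quartic base, where $N_{\overline{C}}$ splits as $\OO(2)^{\oplus 2}$ and modifications toward lines in distinct quadrics split explicitly, as in the proof of Theorem~\ref{thm:LV base cases}.
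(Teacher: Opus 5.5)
Your proposal never actually specifies the bundle $N'$, and the degenerations you name do not produce a curve of type $(8,12)$.

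\begin{itemize}
\item A genus-$2$, degree-$7$ base with six bisecants has $(g,d)=(8,13)$. Attaching a $1$-secant line only raises the degree, so there is no way to ``trade'' one degree away.
\item A degree-$6$ base of genus $\le 1$ needs at least seven bisecants to reach genus $8$, hence has degree at least $13$.
\item If you insist on a base of genus $\le 1$ plus lines, the counts force either a rational quartic with eight bisecants or an elliptic quintic with seven, and no $1$-secant lines at all.
\end{itemize}

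Without a definite $N'$ you cannot check what \cite[Lemma~5.1]{lv21} and Lemma~\ref{lem: pull off 1-secant} actually reduce to, nor the degrees in your projection sequence. You also leave the heart of the matter as an unresolved obstacle with only vague fallbacks: making the sub line bundle and the quotient acyclic at the same time.

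Your reduction criterion is also off. If $\chi(N')=3k$, interpolation amounts to $h^0(N'(-D))=0$ for a general $D$ of degree $k$. That coincides with $h^1(N')=0$ only when $k=0$.

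For comparison, here is what the paper does.
\begin{itemize}
\item It runs the $(8,11)$ argument of \cite[Section~5.1]{lv21} (via \cite[Lemmas~5.1 and~5.2]{lv21}) with an elliptic sextic $D$. This reduces everything to the explicit bundle $N'=N_D(-1)(x+y)[x\negmod y][2y+q_2+z\negmod x][2x+q_3\negmod z]$, with $\chi(N')=2$.
\item Since $\chi(N')=2$, by \cite[Lemma~2.8]{lv21} the needed vanishing is $h^0(N'(-p))=0$ for general $p$. This is not a divisibility-by-$3$ situation.
\item It projects from $x$. Because $[2y+q_2+z\negmod x]$ points toward $x$, that modification becomes a twist of the quotient.
\item The sub line bundle is then $\OO(y-q_3)$, of degree $0$, so $\OO(y-q_3-p)$ has no sections. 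The quotient is $N''=N_{\overline{D}}(-1)(-y-q_2-z)[x+q_3\negmod z]$ on an elliptic quintic $\overline{D}\subset\PP^3$.
\item It controls $N''$ not by collisions but by a further degeneration. It degenerates $\overline{D}$ to an elliptic quartic $D'$ plus a $1$-secant line $\overline{uv}$ and applies Lemma~\ref{lem: pull off 1-secant}.
\item It then limits $x,q_3$ to $u$, so that $[2u\negmod z]$ and $[2u\posmod v]$ cancel. What remains is $\OO_{D'}(1)(-y-q_2-z)^{\oplus 2}$, whose summand is a general degree-$1$ line bundle.
\end{itemize}

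What your sketch lacks is exactly these three things: a concrete reduction with computable Euler characteristic, the right point to project from, and a way to control the quotient.
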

\begin{proof}
Interpolation for the twisted normal bundle of a general BN-curve of genus $8$ and degree $11$ is proven in \cite[Section~5.1]{lv21}.  We modify  their argument to prove interpolation for the twisted normal bundle of a general BN-curve of genus 8 and degree 12.

Let $D \subset \PP^4$ be a general elliptic curve of degree 6, and let $x, y, z, q_2, q_3 \in D$ be five general points.  As argued in \cite[Section~5.1]{lv21}, by \cite[Lemmas~5.1~and~5.2]{lv21} we may reduce interpolation for the twisted normal bundle of a general BN-curve of genus 8 and degree 12 to interpolation for the modified twisted normal bundle 
$$N' = N_D(-1)(x+y)[x \negmod  y][2y + q_2 + z \negmod  x][2x + q_3 \negmod  z].$$
The exact sequence associated to projection from $x$ is 
\begin{equation}\label{eq:Deg12Genus8}
0 \to \OO(y - q_3)  \to N' \to N_{\overline{D}}(-1)(x - y - q_2 - z)[x \negmod  y][2x + q_3 \negmod  z] \to 0.
\end{equation}
The last nonzero bundle in \eqref{eq:Deg12Genus8} is isomorphic to $N'' := N_{\overline{D}}(-1)(- y - q_2 - z)[x + q_3 \negmod  z]$.  Let $p \in \overline{D}$ be a general point.  If $h^0(\overline{D},N''(-p)) = 0$, then $h^0(D,N'(-p)) = 0$, and so \cite[Lemma~2.8]{lv21} shows that $N'$ satisfies interpolation.  To prove $h^0(\overline{D},N''(-p)) = 0$, further degenerate $\overline{D}$ to the union of an elliptic curve $D' \subset \PP^3$ of degree 4 and a \(1\)-secant line spanned by general points $u \in D'$ and $v \in \PP^3$.  By Lemma \ref{lem: pull off 1-secant}, it suffices to prove $N_{D'}(-1)(-y - q_2 - z)[x + q_3 \negmod  z][2u \posmod  v]$ satisfies interpolation.  We may further limit $x$ and $q_3$ to $u$ to obtain the bundle 
$$N_{D'}(-1)(-y - q_2 - z)[2u \negmod  z][2u \posmod  v] \cong N_{D'}(-1)(-y - q_2 - z).$$
Since $N_{D'}(-1)(-y - q_2 - z) \cong \OO_{D'}(1)(-y - q_2 - z)^{\oplus 2}$ and $\OO_{D'}(1)(-y - q_2 - z)$ is a general line bundle on $D'$, this proves interpolation via upper semicontinuity.
\end{proof}

\begin{proof}[Proof of Theorem \ref{thm-maininterpolation}] In order to prove Theorem \ref{thm-maininterpolation}, it suffices to verify interpolation for the twisted normal bundle holds in the $8$ cases listed in Theorem \ref{thm: basecases}. Six of these cases are verified in Theorem \ref{thm:LV base cases} and the remaining $2$ cases are verified in Lemmas \ref{lem:4.4} and \ref{lem:4.5}. This concludes the proof.
\end{proof}

\subsection{Geometric Interpolation} Having proven Theorem \ref{thm-maininterpolation}, we now prove a geometric consequence.

\begin{proof}[Proof of Corollary \ref{cor:geom interpolation}]
Let $g$ and $d$ be nonnegative integers such that $\rho(g,4,d) \geq 0$ and 
$$(g,d) \notin \{(5,8), (6,9),(7,10)\}.$$
Let $\Gamma \subset \PP^4$ be a set of 
$n = \big\lfloor\frac{5d-g+1}{3}\big\rfloor$
points which is general subject to the condition that exactly $d$ of them lie on a hyperplane $H$. 
We want to prove that there is a smooth nondegenerate BN-curve of genus $g$ and degree $d$ that passes through $\Gamma$.  

If \((g, d) = (2, 6)\), then the desired claim follows from \cite[Section~9.3]{lv22}
(since a collection of \(9\) points on \(C\) which is general subject to the constraint of containing a hyperplane section gives a general line bundle on~\(C\)). 
Otherwise, if $g \geq 1$, Theorem \ref{thm-maininterpolation} implies there exists a smooth BN-curve of genus $g$ and degree $d$ passing through $\Gamma$.  The same is true if $g = 0$ and $d \equiv 1$ mod $3$ by Remarks \ref{rem-g0} and \ref{rem:char2 exceptions}.  To prove Corollary \ref{cor:geom interpolation}, it suffices to establish the claim for $g = 0$ and $d \not\equiv 1$ mod $3$ as well.

To prove Corollary \ref{cor:geom interpolation} for $g = 0$ and $d \equiv 2$ mod $3$, let $p$ be one of the $d$ points in $\Gamma$ that lie on $H$.  Note that $n - 1 = \frac{5(d-1) + 1}{3}$.  Thus, there exists a smooth BN-curve $C \subset \PP^4$ of genus 0 and degree $d-1$ that passes through $\Gamma \setminus \{p\}$.  Let $L_p \subset \PP^4$ be a general line that passes through $p$ and meets $C$.  Let $D = C \cup L_p$ and $q$ be the node of $D$. Consider the subsheaf $N_{D}' \subset N_D$ of the normal bundle that does not smooth the node.  By the exact sequence
$$0 \to N_D|_{L_p}(-p - q) \to N_D'(-\Gamma) \to N_{C}(-(\Gamma\setminus\{p\})) \to 0,$$
we conclude that $h^1(D,N_D'(-\Gamma)) = 0$.  Hence we may smooth $C \cup L_p$ to a BN-curve $C'$ passing through $\Gamma$.  This proves our claim for $d \equiv 2$ mod $3$.

Let $C, L_p$, and $C'$ be as in the preceding paragraph. Before continuing to the case $d \equiv 0$ mod $3$, we prove that deformations of $C'$ that pass through $\Gamma$ sweep out a divisor in $\PP^4$.  Observe that there are no deformations of $C$ which continue to pass through $\Gamma\setminus \{p\}$.  Deformations of the curve $C \cup L_p$ that do not smooth the node and remain incident to $\Gamma$ therefore sweep out a cone $S \subset \PP^4$ whose vertex is $p$ and base is $C$.
We will show that $C \cup L_p$ has no smoothings in $S$ that preserve the incidence with $\Gamma$.

As a divisor on $S$, we claim that $C$ is linearly equivalent to a hyperplane section. Indeed, blowing up the cone point of $S$, the divisor $C$ restricts trivially to the exceptional divisor. It is thus a rational multiple of the hyperplane class, which for degree reasons must be the hyperplane class.

It follows that $N_{C/S} \cong \OO_{\PP^1}(d-1)$.  Since $d \geq 5$, we have $|\Gamma\setminus\{p\}| = \frac{5(d-1) + 1}{3} \geq d + 1$.  The restriction of $N_{C \cup L_p/S}(-\Gamma)$ to $C$ is therefore a line bundle of negative degree.  It follows that each deformation of $C \cup L_p$ in $S$ that passes through $\Gamma$ does not smooth the node, and so $C' \not\subseteq S$.  This proves our claim.

To prove Corollary \ref{cor:geom interpolation} when $d \equiv 0$ mod $3$, let $q_1, q_2 \in \Gamma$ be two points such that $q_1 \in H$ and $q_2 \notin H$.  Since $n - 2 = \frac{5(d-1) - 1}{3}$, there exists a smooth BN-curve $C \subset \PP^4$ of genus 0 and degree $d - 1$ that passes through $\Gamma \setminus \{q_1, q_2\}$.  By the preceding paragraph, deformations of $C$ that pass through $\Gamma \setminus \{q_1, q_2\}$ sweep out a divisor $D \subset \PP^4$.  By generality of $q_1, q_2$, the line $L$ that they span meets $D$ at a point contained in a smooth deformation $C'$ of $C$.  As in the previous case, the nodal union $C' \cup L$ deforms to a smooth BN-curve passing through $\Gamma$.
\end{proof}

\section{Degeneration Arguments}\label{sec: degeneration arguments}

\subsection{Attaching Bisecant Lines}  Let $C$ be a smooth BN-curve.  Let $D$ be the nodal union of $C$ with a bisecant line $L$ through $x,y \in C$, and let $\nu: C \sqcup L \to D$ denote the  normalization of $D$.
By \cite[Lemma~5.4]{lv22}, $D$ is also a BN-curve.  In this subsection, we study the stability of modifications of the normal bundle of $D$.  We recall that $N_D|_C \cong N_C[x \biposmod y]$.

\begin{lemma}\label{lem:extra juice}
Suppose $N$ is a vector bundle on $D$ equipped with an isomorphism to $N_D$ over an open subset $U$ of $D$ containing $L$.
Write $N_C'$ for the subsheaf of $N|_C$ corresponding to sections that do not smooth the nodes \(x\) and \(y\), so that \(N|_C \simeq N_C'[x \biposmod y]\).
For any real number $r$, if the following two conditions both hold:
\begin{center}
\begin{tabular}{l l}
every rank one subbundle of\ldots \hspace{.5cm} & has slope at most\ldots \\
$N_C'[x \biposmod y]$ & $\mu(N_C'[x \biposmod y]) + r + \frac{2}{3}$ \\
$N_C'[x \biposmod y][x \posmod 2y][y \posmod 2x]$ &  $\mu(N_C'[x \biposmod y][x \posmod 2y][y \posmod 2x]) + r + \frac{1}{3}$ \\
\end{tabular}
\end{center}
then any rank one subbundle of $\nu^* N$ has adjusted slope at most $\mu(N) + r$.
Similarly, if 
\begin{center}
\begin{tabular}{l l}
every rank two subbundle of\ldots \hspace{.5cm} & has slope at most\ldots \\
$N_C'[x \biposmod y]$ & $\mu(N_C'[x \biposmod y]) + r + \frac{1}{6}$ \\
$N_C'[x \biposmod y][x \posmod 2y][y \posmod 2x]$ &  $\mu(N_C'[x \biposmod y][x \posmod 2y][y \posmod 2x]) + r + \frac{1}{3}$ \\
\end{tabular}
\end{center}
then any rank two subbundle of $\nu^* N$ has adjusted slope at most $\mu(N) + r$.  Moreover, it suffices to check the first of these conditions only for rank two subbundles of $N_C'[x \biposmod y]$ that are not subbundles of $N_C'$, provided that every rank two subbundle of $N_C'$ has slope at most $\mu(N_C'[x \biposmod y]) + r + \frac{2}{3}$.
\end{lemma}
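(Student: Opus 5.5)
The plan is to decompose an arbitrary subbundle of \(\nu^*N\) along the two components and bound each piece separately. Write \(F = F_C \oplus F_L \subset \nu^* N = N|_C \oplus N|_L\), where \(F_C \subset N|_C = N_C'[x \biposmod y]\) and \(F_L \subset N|_L\) are subbundles of the same rank \(k \in \{1,2\}\). Since \(N \cong N_D\) near \(L\), Lemma~\ref{lem:hartshornehirschowitz} gives \(N|_L \cong N_L[x \posmodalong C][y \posmodalong C]\). Starting from \(N_L \cong \OO_L(1)^{\oplus 3}\) and modifying at \(x\) and \(y\) toward the distinct directions \(T_xC\) and \(T_yC\), we expect \(N|_L \cong \OO_L(2)^{\oplus 2} \oplus \OO_L(1)\). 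In particular \(\deg N|_L = 5\), so
\[
\mu(N) = \mu(N|_C) + \tfrac{5}{3}.
\]
The target inequality is therefore
\[
\deg F_C + \deg F_L - \sum_{p \in \{x,y\}} \codim(F_C|_p \cap F_L|_p) \le k\bigl(\mu(N|_C) + \tfrac53 + r\bigr).
\]

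The key geometric input, to be checked first, is a description of the fibers at \(x\) and \(y\) of the maximal-degree subbundles of \(N|_L\). The subbundle \(\OO_L(2)^{\oplus 2}\) is canonical, and I expect its fiber \(W_x\) at \(x\), viewed inside \(N|_C|_x\) via the gluing, to be exactly the \(2\)-dimensional subspace of \(N|_C|_x\) pointing toward the plane spanned by \(L\) and \(T_yC\), that is, toward the scheme \(2y\); similarly for \(W_y\). I would verify this by projecting from \(L\) and following which directions acquire the extra twist. This identification is the step I expect to be the main obstacle, since everything else is bookkeeping. Granting it, set \(N^+ = N_C'[x \biposmod y][x \posmod 2y][y \posmod 2x]\). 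Then \(N^+\) is obtained from \(N|_C\) by positive modifications toward \(W_x\) and \(W_y\), so
\[
\mu(N^+) = \mu(N|_C) + \tfrac43.
\]
Moreover, the saturation of \(F_C\) in \(N^+\) has degree
\[
\deg F_C + \dim(F_C|_x \cap W_x) + \dim(F_C|_y \cap W_y).
\]

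Now split into cases by \(\deg F_L\). In rank one we have \(\deg F_L \le 2\). If \(\deg F_L \le 1\), drop the penalty and use the first hypothesis: the left side is at most \(\mu(N|_C) + r + \tfrac23 + 1\), which is the target. If \(\deg F_L = 2\), then \(F_L\) lies in \(\OO_L(2)^{\oplus 2}\), so \(F_L|_p \subset W_p\), and each node at which \(F_C|_p\) differs from \(F_L|_p\) costs \(1\). Hence \(2 - (\text{penalty}) \le \sum_p \dim(F_C|_p \cap W_p)\). The left side is then at most the degree of the saturation of \(F_C\) in \(N^+\), which by the second hypothesis is at most \(\mu(N|_C) + \tfrac43 + r + \tfrac13\), the target. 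Rank two works the same way. If \(\deg F_L = 4\), then \(F_L = \OO_L(2)^{\oplus 2}\) with fibers \(W_x, W_y\), the penalty is \(\sum_p (2 - \dim(F_C|_p \cap W_p))\), and the second hypothesis gives
\[
\deg F_C + 4 - \text{penalty} \le 2\bigl(\mu(N|_C) + \tfrac53 + r\bigr).
\]
If \(\deg F_L \le 3\), the first hypothesis gives \(\deg F_C + 3 \le 2\mu(N|_C) + 2r + \tfrac13 + 3\), which is the target.

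For the final ``moreover'' clause, suppose \(F_C\) is (the saturation of) a subbundle of \(N_C'\), so that at \(x\) and \(y\) it avoids the modification directions \(T_L\). Then, using the weaker bound \(2\mu(N|_C) + 2r + \tfrac43\) on \(\deg F_C\), I must recover one unit elsewhere when \(\deg F_L = 3\). The plan is to show that every rank-two subbundle of \(\OO_L(2)^{\oplus 2} \oplus \OO_L(1)\) of degree \(3\) has fiber at \(x\) or at \(y\) containing the glued image of \(T_L\). Such an \(F_L\) contains an \(\OO_L(2)\) whose fiber is generic in \(W_p\), and it meets \(W_p\) in a line; I expect that line to be forced to be the \(T_L\)-direction at one of the nodes. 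Granting this, \(F_C|_p \neq F_L|_p\) at that node, so a penalty of at least \(1\) occurs and closes the gap. If this fiber argument fails, the fallback is to run the same saturation comparison with \(N^+\), treating the degree-\(3\) case exactly like the degree-\(4\) case.
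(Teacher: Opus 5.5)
\paragraph{Review of the proposal.}
Your argument for the two main bounds is the paper's argument. You use the same case split according to whether $F_L$ is the canonical $\OO_L(2)^{\oplus 2}$ (or lies in it, in rank one). You use the same identification of its fiber $W_x$ with the saturated fiber of $N_{C \to 2y}$ at $x$; the paper does not prove this either, but imports it from \cite[Lemma~8.4]{aly}. And you use the same saturation count in $N^+$.

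\paragraph{The gap: the final ``moreover'' assertion.}
There you need a penalty of at least $1$ when $\deg F_L = 3$ and $F_C \subset N_C'$. You propose to get it from a distinguished ``$T_L$-direction'' line that the fiber of every degree-$3$ $F_L$ must contain at $x$ or at $y$. No such line exists.
\begin{itemize}
\item The $\OO_L(2)$ inside a degree-$3$ $F_L$ is $\OO_L(2)\cdot w$ for an arbitrary nonzero $w \in \Hom(\OO_L(2), \OO_L(2)^{\oplus 2}) \cong k^2$. So its fiber at $x$ can be any line of $W_x$.
\item Containing a fixed line at $x$ or at $y$ is a proper closed condition on the three-dimensional family of degree-$3$ subbundles.
\end{itemize}
Your fallback does not work either. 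Suppose $F_C|_p = F_L|_p$ at both nodes. A degree-$3$ $F_L$ has $F_L|_p \neq W_p$, so the saturation of $F_C$ in $N^+$ has degree only $\deg F_C + 2$. That is one short of what you need.

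\paragraph{The fix: use a hyperplane, not a line.}
Let $W^0_p \subset N|_p$ be the common $2$-dimensional image of $N_C'|_p$ and of $N_L|_p$ under the gluing.
\begin{itemize}
\item A subbundle of $N|_C$ (resp.\ $N|_L$) is contained in $N_C'$ (resp.\ $N_L$) near $p$ if and only if its fiber at $p$ lies in $W^0_p$. So $F_C \subset N_C'$ means exactly $F_C|_p = W^0_p$ at both nodes, not merely that $F_C$ ``avoids'' some direction.
\item $F_L \cap N_L$ is a rank-two subsheaf of $N_L \cong \OO_L(1)^{\oplus 3}$, so it has degree at most $2$.
\item Each node raises $\deg F_L - \deg(F_L \cap N_L)$ by at most $1$, and it does so precisely when $F_L|_p \not\subset W^0_p$.
\end{itemize}
Thus $\deg F_L = 3$ forces $F_L|_p \not\subset W^0_p$ at some node. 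There $F_L|_p \neq F_C|_p$, so the penalty is at least $1$. This is the paper's ``$F_L$ must smooth a node, while $F_C$ does not.''

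If you read ``contains the $T_L$-direction'' as ``is not contained in $W^0_p$,'' your $\OO_L(2)$ idea can be salvaged the same way, since $\OO_L(2)\cdot w \cap N_L$ has degree at most $1$.
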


We use the final statement about subbundles of $N_C'$ only in the proof of Lemma \ref{lem: genus 10 degree 12}.  Therein, it shows that a destabilizing subbundle of $N_C'$ does not create a destabilizing subsheaf of $N$.

\begin{proof}
Let $x'$ and $y'$ be general points on the tangent lines to $C$ at $x$ and $y$, respectively.  Note 
$$N|_L \cong N_L[x\posmod  x'][y\posmod y'] \cong \OO(1) \oplus \OO(2) \oplus \OO(2).$$  
For each subbundle  $F \subset \nu^*N$, let $F_L := F|_L$ and $F_C := F|_C$.  We have
$$\mu^{\text{adj}}(F) = \mu(F_L) + \mu(F_C) - \frac{1}{\text{rk } F}\big[\codim_F(F_L|_x \cap F_C|_x) + \codim_F(F_L|_y \cap F_C|_y)\big].$$

\smallskip
\paragraph{Suppose first $F$ has rank one.}  If $\mu(F_L) < 2$, then
$$\mu^{\text{adj}}(F) \leq \mu(F_L) + \mu(F_C) \leq 1 + \left(\mu(N|_C) + r + \frac{2}{3}\right) = \mu(N|_C) + \mu(N|_L) + r = \mu(N) + r.$$ 
Otherwise, $F_L \cong \OO(2)$.  In particular, $F_L|_x$ lies in $N_{L \rightarrow \overline{x'y'}}|_x$, where $\overline{x'y'}$ denotes the line spanned by $x'$ and $y'$.  Hence, by \cite[Lemma~8.4]{aly}, either $F_C|_x$ lies in $N_{C \rightarrow \overline{yy'}}|_x$ or $\codim_F(F_L|_x \cap F_C|_x) = 1$.  If $F_C|_x$ lies in $N_{C \rightarrow \overline{yy'}}|_x$, then $F_C$ is not saturated in $N|_C[x \posmod  2y]$.  By symmetry and our hypotheses, we see that 
$$\mu(F_C) -\codim_F(F_L|_x \cap F_C|_x) - \codim_F(F_L|_y \cap F_C|_y) \leq \left(\mu(N|_C[x \posmod  2y][y \posmod  2x]) + r + \frac{1}{3}\right) -1 -1.$$
Hence, $\mu^{\text{adj}}(F) \leq \mu(N|_C[x \posmod  2y][y \posmod  2x]) + r + \frac{1}{3} = \mu(N) + r.$
\smallskip

\paragraph{Next, suppose $F$ has rank two.}   If $\mu(F_L)<2$, then 
$$\mu^{\text{adj}}(F) \leq \mu(F_L) + \mu(F_C) \leq \frac{3}{2} + \left(\mu(N|_C) + r + \frac{2}{3}\right) = \mu(N|_C) + \mu(N|_L) + r + \frac{1}{2} = \mu(N) + r + \frac{1}{2}.$$
Thus $\mu^{\text{adj}}(F) \leq \mu(N) + r$ unless $\mu^{\text{adj}}(F) > \mu(F_L) + \mu(F_C) - \frac{1}{2}$ and $\mu(F_L) > 1$ and $\mu(F_C) > \mu(N|_C) + r + \frac{1}{6}$.
The first of these three inequalities implies \(F_C\) and \(F_L\) agree at both nodes. The second implies $F_L$ cannot be a subbundle of $N_L \cong \mathcal{O}(1)^{\oplus 3}$, i.e., $F_L$ must smooth a node. The third implies $F_C$ is a subbundle of $N_C'$, i.e., $F_C$ does not smooth either node. These three inequalities therefore cannot be simultaneously satisfied.
    
Otherwise, $F_L \cong \OO(2) \oplus \OO(2)$.  This implies $F_L|_x=N_{L \rightarrow \overline{x'y'}}|_x$.  Let $F_C^+$ be the saturation of $F_C$ in the modification $N|_C[x \posmod  2y][y \posmod  2x]$. Then the length of $F_C^+ / F_C$ at $x$ is  $\dim_F(F_L|_x \cap F_C|_x)$, and similarly for the length of $F_C^+ / F_C$ at $y$. Thus $\mu(F_C) = \mu(F_C^+) - \frac{1}{2} [\dim_F(F_L|_x \cap F_C|_x) + \dim_F(F_L|_y \cap F_C|_y)]$, so
$$ \mu(F_C) - \frac{1}{2} [\codim_F(F_L|_x \cap F_C|_x) + \codim_F(F_L|_y \cap F_C|_y)] = \mu(F_C^+) - 2.$$
Therefore,
\[\mu^{\text{adj}}(F) = \mu(F_C^+) \leq \mu(N|_C[x \posmod 2y][y \posmod 2x]) + r + \frac{1}{3} = \mu(N) + r. \qedhere\]
\end{proof}

\begin{remark}\label{rem:iterative extra juice}
Lemma \ref{lem:extra juice} may be applied iteratively.  For instance, suppose $D = C \cup L_1 \cup L_2$, where $L_i$ is a two-secant line through general points $x_i, y_i \in C$. Write \(N = N_C[x_1 \biposmod y_1][x_2 \biposmod y_2] = N_D|_C\). By applying  Lemma \ref{lem:extra juice} first to the union of a smoothing of \(C \cup L_1\) with \(L_2\), then with \(C \cup L_1\), we obtain the following: for any real number $r$, if\ldots
\begin{center}
\begin{tabular}{l l}
every rank one subbundle of\ldots \hspace{.5cm} & has slope at most\ldots \\
$N$ & $\mu(N) + r + \frac{4}{3}$ \\
$N[x_1 \posmod  2y_1][y_1 \posmod  2x_1]$ & $\mu(N[x_1 \posmod  2y_1][y_1 \posmod  2x_1]) + r + 1$ \\
$N[x_2 \posmod  2y_2][y_2 \posmod  2x_2]$ & $\mu(N[x_2 \posmod  2y_2][y_2 \posmod  2x_2]) + r + 1$ \\
$N[x_1 \posmod  2y_1][y_1 \posmod  2x_1][x_2 \posmod  2y_2][y_2 \posmod  2x_2]$ & $\mu(N[x_1 \posmod  2y_1][y_1 \posmod  2x_1][x_2 \posmod  2y_2][y_2 \posmod  2x_2]) + r + \frac{2}{3}$ \\
\end{tabular}
\end{center}
\ldots then any rank one subbundle of a general deformation of \(D\) has slope at most $\mu(N_D) + r$. Of course by symmetry the two middle conditions are equivalent.
In addition, similar formulas hold for rank two subbundles.
\end{remark}

\subsection{Modifications of Bundles on Elliptic Curves}
To apply Lemma \ref{lem:extra juice}, we show related modifications of normal bundles of elliptic curves are \textit{almost} stable.  To begin, we have the following result from \cite{cs23}.

\begin{theorem}[Theorem 3.1 and Proposition 3.3 \cite{cs23}]\label{thm: normal bundle elliptic}
Let $C \subset \PP^r$ be a general $BN$-curve of genus $g = 1$ and degree $d \geq r + 1 \geq 4$.  Then $N_C$ is semistable.  Moreover, if $\mu(N_C) \in \mathbb{Z}$ and $r = 4$, then $N_C$ is a direct sum of distinct line bundles.
\end{theorem}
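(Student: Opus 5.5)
The plan is to handle semistability and the integral-slope splitting separately. Both will be proved by degeneration and upper semicontinuity, using the fact that on a genus-one curve the structure of semistable bundles is governed by Atiyah's classification \cite{Atiyah}.

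\emph{Semistability.} Induct on $d$. Since $C$ is nonspecial for $d \geq r+1$, a general elliptic curve of degree $d$ specializes to the union $C_0 \cup_u L$ of a general elliptic curve $C_0$ of degree $d-1$ and a $1$-secant line $L$, with smooth total space. Lemma~\ref{lem: pull off 1-secant} then reduces semistability of $N_C$ to semistability of $N_{C_0}[2u \posmod v](u)$, where $v \in \PP^r$ is general. A positive modification of a semistable bundle at a point is not automatically semistable. So I would use Remark~\ref{rem: exact sequence mod}: projecting from $v$ exhibits $N_{C_0}[2u\posmod v](u)$ as an extension of a modification of $N_{\overline{C_0}}$ by a line subbundle of controlled degree. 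Then I would check, using that $v$ and $u$ vary, that every subbundle has slope at most $\mu$.

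I expect the cleanest route is not to attach lines one at a time in full generality. Instead, for the base case $d = r+1$ I would use the Heisenberg-group symmetry of elliptic normal curves: $N_C$ is a homogeneous bundle under translations by $C[r+1]$. The maximal destabilizing subbundle would then be invariant under this group, and its degree and rank are forced to make the slope at most $\mu(N_C)$. For larger $d$ I would induct via the $1$-secant line degeneration above, using the slack $\deg N_C - \deg N_{C_0} = r+1$ spread across rank $r-1$.

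\emph{Integral slope, $r=4$.} Here $\mu = 5d/3 \in \mathbb{Z}$, so $3 \mid d$. By Atiyah, a semistable rank-$3$ bundle of integral slope $\mu$ is a direct sum of bundles $F_k \otimes M$ with $\deg M = \mu$. It is a direct sum of \emph{distinct} line bundles exactly when $h^0(N_C \otimes M^{-1}) \leq 1$ for every $M \in \operatorname{Pic}^\mu(C)$. Because $\operatorname{Pic}^\mu$ of the universal curve is proper over the base, the locus of curves with $\max_M h^0(N_C \otimes M^{-1}) \geq 2$ is closed. Hence it suffices to produce one (possibly nodal, via Lemma~\ref{lem: reducible stability}-type gluing) member of the family where the bound holds.

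For $d=6$ I would project from a general point $p \in C$. This gives
\[0 \to \OO_C(1)(2p) \to N_C \to N_{\overline{C}}(p) \to 0,\]
where $\overline{C} \subset \PP^3$ is a general elliptic quintic. Its normal bundle is stable of rank $2$ and odd degree by Remark~\ref{rem-g1} in $\PP^3$, so $N_{\overline{C}}(p)$ is stable of slope $10$. Then $h^0(N_C \otimes M^{-1}) \leq 1 + h^0(N_{\overline{C}}(p)\otimes M^{-1})$, and the last term is $0$ for stable non-split reasons, unless $M = \OO_C(1)(2p)$. Varying $p$ and using that the extension class is nonzero should give the bound everywhere. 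For $d = 3k > 6$ I would degenerate by attaching a degree-$3$ elliptic-type piece, or by iterating $1$-secant lines three at a time, and track $h^0(\,\cdot\,\otimes M^{-1})$ through Lemma~\ref{lem: pull off 1-secant}.

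The main obstacle is this last uniformity over all $M \in \operatorname{Pic}^\mu$: ruling out a $2$-dimensional $\Hom(M, N_C)$ for special $M$, such as $M = \OO_C(1)(2p)$-type bundles, requires a genuine nonsplitting argument rather than a dimension count.
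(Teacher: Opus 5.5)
The paper does not prove this statement; it quotes it from \cite{cs23}. Judged against what a proof needs, your outline has a genuine gap in each half.

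\emph{Semistability.} The inductive step does not close if the only hypothesis is semistability of $N_{C_0}$. The modification $[2u \posmod v](u)$ raises $\mu$ by $1+\frac{2}{r-1}$. It raises the slope of a rank-$k$ subbundle containing the pointing $2$-jet at $u$ by $1+\frac{2}{k}$, so the ``slack'' is not spread evenly. For instance, suppose $N_{C_0} \cong L^{\oplus(r-1)}$ with $r \geq 4$, and $m_0 + m_1 t$ is the pointing jet in a constant frame. Then $L \otimes \langle m_0, m_1\rangle$ becomes a rank-$2$ subbundle of slope $\mu(N_{C_0})+2 > \mu(N_{C_0})+1+\frac{2}{r-1}$. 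So the induction must carry information about repeated Jordan--H\"older factors; compare Lemma~\ref{lem: elliptic in P3}, which strengthens its inductive hypothesis, and the type bookkeeping of \S\ref{sect: types of bundles}.

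Even then, ``using that $u$ and $v$ vary'' is where the real work lies. Take $r=4$ and $N_{C_0} \cong L_1\oplus L_2\oplus L_3$ with the $L_i$ distinct. You must rule out a line subbundle of degree $\mu(N_{C_0})-1$ containing the pointing $2$-jet, since such a subbundle would reach slope $\mu(N_{C_0})+2$. These line subbundles form a $3$-dimensional family. The pointing jet depends only on the line $\overline{uv}$, so it also moves in a $3$-dimensional family, inside the $4$-dimensional space of such jets. The paper's own tools do not settle this: the remark after Lemma~\ref{lem: S_0 one-secant mod} leaves the unstable type $\Utype{2}{1}{\frac{1}{3}}$ open. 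So attaching one $1$-secant line at a time cannot be taken for granted.

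Your base case also needs more than you state. Invariance of $\det F$ under $C[r+1]$ gives only $(r+1)\mid \deg F$. That does not exclude, say, a degree-$10$ line subbundle in the normal bundle of the elliptic normal quintic, whose slope is $\frac{25}{3}$. You need descent of $F$ along $C \to C/C[r+1]$, which gives $(r+1)^2 \mid \deg F$. You also need positivity of quotients, which holds because $N_C(-1)$ is globally generated. As stated, this uses $\operatorname{char} \nmid r+1$.

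\emph{Splitting for $r=4$.} Your criterion holds in only one direction. The condition $h^0(N_C\otimes M^{-1}) \leq 1$ for all $M$ says only that the $M_i$ in Atiyah's decomposition $\bigoplus F_{k_i}\otimes M_i$ are distinct. It also holds for $F_3\otimes M$ and for $(F_2\otimes M)\oplus M'$. Such bundles genuinely occur here: in characteristic $2$ the normal bundle of the elliptic quintic in $\PP^3$, the very $N_{\overline{C}}$ you project to, is indecomposable.

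The $d=6$ computation is also off. $N_{\overline{C}}$ has rank $2$ and degree $20$, so it is never stable (Remark~\ref{rem-g1}), and $N_{\overline{C}}(p)$ has slope $11$. For $M \in \operatorname{Pic}^{10}(C)$, the subbundle $\OO_C(1)(2p)$ has degree $8$ and contributes nothing, while $h^0(N_{\overline{C}}(p)\otimes M^{-1}) = 2$. So the sequence gives only $h^0(N_C\otimes M^{-1})\leq 2$. Everything then rests on the connecting map $k^2 \to H^1(\OO_C(1)(2p)\otimes M^{-1})\cong k^2$ being nonzero for \emph{every} $M$. A nonzero extension class does not give this, and it is exactly the step you leave open.

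The missing idea is the symmetry you already use in your base case. Apply Lemma~\ref{lem:black magic} to $N_C$ itself, with no modification data and $m=d$, which is allowed since $3\mid d$. Then the distinct Jordan--H\"older factors of any given multiplicity have total degree divisible by $d$. Each factor is a line bundle of degree $\frac{5d}{3}$, so the number of such factors is divisible by $3$. Since the multiplicities sum to $3$, there are three distinct factors, each of multiplicity one. Finally, $\operatorname{Ext}^1(L_i,L_j)=0$ for non-isomorphic line bundles of equal degree, so $N_C\cong L_1\oplus L_2\oplus L_3$. This derives the second statement from the first for every $d\equiv 0 \pmod 3$, with no degeneration and no uniformity over $\operatorname{Pic}^{\mu}$.
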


If $r = 3$, then $N_C$ need not be a direct sum of distinct line bundles.  Indeed, \cite[Theorem~2]{l24} implies $N_C$ is indecomposable if $d$ is odd and the characteristic of the base field is 2.  To ensure our arguments remain independent of characteristic, we will use only the following lemma to describe these normal bundles.

\begin{lemma}\label{lem: elliptic in P3}
Let $C \subset \PP^3$ be a general $BN$-curve of genus $g = 1$ and degree $d \geq 5$.  Regardless of characteristic, $N_C$ is not a direct sum of two isomorphic line bundles.
\end{lemma}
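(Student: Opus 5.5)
The plan is to combine upper semicontinuity with the $1$-secant degeneration of Lemma~\ref{lem: pull off 1-secant}, inducting on $d$.

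First I reduce to a numerical condition. We have $\deg N_C = 4d$ and $\rk N_C = 2$. By Theorem~\ref{thm: normal bundle elliptic}, or its $\PP^3$ analogue from \cite{cs23}, $N_C$ is semistable of integral slope $2d$. By Atiyah's classification \cite{Atiyah}, a semistable rank-two bundle of even degree on an elliptic curve is one of three kinds:
\begin{itemize}
\item $L_1 \oplus L_2$ with $L_1 \not\cong L_2$, where $h^0(\mathcal{E}nd) = 2$;
\item a nonsplit self-extension $F_2 \otimes L$, where $h^0(\mathcal{E}nd) = 2$;
\item $L \oplus L$, where $h^0(\mathcal{E}nd) = 4$.
\end{itemize}
So the statement is equivalent to $h^0(\mathcal{E}nd(N_C)) \le 3$. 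This condition is open in flat families of bundles. Hence it suffices to exhibit, for each $d \ge 5$, one bundle in a flat family whose general member is $N_C$, and to check that this special member has at most three independent endomorphisms. The argument never uses the characteristic, which is the point of phrasing it this way rather than via \cite[Theorem~2]{l24}.

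For the inductive step $d-1 \to d$, I would degenerate $C$ to $C' \cup_u L$. Here $C'$ is a general elliptic curve of degree $d-1$ and $L$ is a $1$-secant line through $u \in C'$ and a general point $v \in \PP^3$. Lemma~\ref{lem: pull off 1-secant} then shows that $N_C$ specializes to $E := N_{C'}[2u \posmod v](u)$. It remains to show $E \not\cong M \oplus M$, and for this I would look at line subbundles of degree $\mu(E) = 2d$.

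If $E \cong M \oplus M$, then every fiber direction at every point lies in some copy of $M$, and $\dim \Hom(M,E) = 2$. Now take the saturation in $E$ of the pointing bundle $N_{C' \to v} \cong \OO_{C'}(1)$, as in Remark~\ref{rem: exact sequence mod}. The modification at $2u$ is toward $v$, so this saturation should have degree $(d-1) + 2 + 1 = d+2$. That is below $2d$, so this alone gives no contradiction. Instead I would compare the $2$-jet of the direction $v$ at $u$ with the $1$-parameter family of constant subbundles $M \subset M \oplus M$. The aim is to show that these constant subbundles can match the prescribed direction only to first order. From that, the maximal-degree line subbundles of $E$ are either unique, or at least two of them are non-isomorphic.

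For the base case $d = 5$, I would use the same degeneration with $C'$ an elliptic quartic. Then $C'$ is a complete intersection of two quadrics, so $N_{C'} \cong \OO_{C'}(2)^{\oplus 2}$, which is exactly the bad case one step down. The two quadrics through $C'$ give distinguished summands, and a general $v$ is not in the tangent cone of either quadric at $u$. The plan is to compute $E$ explicitly: the double-point modification toward $v$ mixes the two summands in a controlled way, and I expect $E$ to come out either as a nonsplit self-extension or as $\OO_{C'}(2)(u + a) \oplus \OO_{C'}(2)(u + b)$ with $a \ne b$ points determined by the quadrics.

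The main obstacle is the jet computation, in both the base case and the inductive step. To control it, I would first try to specialize $v$ onto a line tangent to one of the quadrics, or onto $C'$ itself. Such a special position might make $E$ visibly a direct sum of two non-isomorphic line bundles. Whatever specialization is used, it still has to stay within the flat family of Lemma~\ref{lem: pull off 1-secant}.
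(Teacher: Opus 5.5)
Your framework (specialize via Lemma~\ref{lem: pull off 1-secant} to $E=N_{C'}[2u\posmod v](u)$ and conclude by semicontinuity) matches the paper's. However, the decisive step is missing, and the mechanism you propose for it cannot succeed on its own.

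In the inductive step you never use anything about $N_{C'}$, in particular not the inductive hypothesis. Comparing $2$-jets with the constant subbundles of $M\oplus M$ then gives no contradiction. Suppose $E\cong M\oplus M$ and write $N_{C'}(u)\subset E$ as the kernel of a surjection $E\to\OO_{2u}$. Either some constant copy of $M$ lies in this kernel, and then $N_{C'}$ is unstable. Or $N_{C'}\cong F_2\otimes M(-2u)$, which is perfectly good and semistable, with its unique maximal line subbundle pointing toward $v$ at $u$. So $E\cong M\oplus M$ is compatible with your jet condition, and the step ``hence the maximal subbundles are unique or non-isomorphic'' does not follow.

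The contradiction has to come from $N_{C'}$ together with generality of $v$, and this is exactly the paper's argument:
\begin{enumerate}
\item By induction $N_{C'}$ is semistable and not $M'\oplus M'$, so it has at most two line subbundles of slope $\mu(N_{C'})$.
\item A general $v$ avoids their fibres at $u$, so $F:=N_{C'}[u\posmod v]$ is stable of odd degree.
\item Since $E\subseteq F(2u)$, every $\mathcal L$ of degree $\mu(E)$ satisfies $\Hom(\mathcal L,E)\subseteq H^0(F\otimes\mathcal L^{\vee}(2u))$.
\item This space is one-dimensional, because $F\otimes\mathcal L^{\vee}(2u)$ is stable of rank $2$ and degree $1$.
\end{enumerate}

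Your numerical criterion also needs repair. Bounding $h^0(\mathcal{E}nd)\le 3$ on the special fibre requires $E$ to be semistable, not merely $E\not\cong M\oplus M$. Your own base case shows the difference. From $N_{C'}\cong\OO_{C'}(2)^{\oplus 2}$ one gets $N_{C'}[u\posmod v]\cong\OO_{C'}(2)(u)\oplus\OO_{C'}(2)$. Then $E$ is either $F_2\otimes\OO_{C'}(2)(2u)$, or, if the map $C'\to\PP^1$ recording the pointing direction is ramified at $u$, $\OO_{C'}(2)(3u)\oplus\OO_{C'}(2)(u)$. The latter has $h^0(\mathcal{E}nd\, E)=4$. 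The split form $\OO_{C'}(2)(u+a)\oplus\OO_{C'}(2)(u+b)$ that you anticipate does not arise.

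The paper instead uses the criterion $\dim_k\Hom(\mathcal L,\,\cdot\,)\le 1$ for all line bundles $\mathcal L$ of degree $\mu$. This condition passes from the special to the general fibre by properness of the Picard variety, needs no semistability of the special fibre, and holds for both outcomes above. With that criterion, your quartic degeneration would give a characteristic-free base case $d=5$ with no ramification analysis, whereas the paper cites \cite{l24} and \cite{l21} there.
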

\begin{proof}
We must show $\dim_k \Hom(\mathcal{L}, N_C) \leq 1$ for any line bundle $\mathcal{L}$ of slope $\mu(N_C)$.  We prove this claim by induction on $d$.  When $d = 5$, this claim follows from \cite[Theorem~2]{l24} in characteristic 2 and \cite[Lemma~10.1]{l21} in all other characteristics. 
    
Suppose $d > 5$ instead and let $C \subset \mathbb{P}^3$ be a general BN-curve of genus $1$ and degree $d - 1$.  Let $C \cup L$ be the union of $C$ with a general one-secant line through $u \in C$.  As $h^1(N_L(-u)) = h^1(N_C) = 0$, the curve $C\cup L$ is strongly smoothable.  Consider a one-parameter family with smooth total space smoothing $C \cup L$ to  a general BN-curve $C_1$ of genus \(1\) and degree $d$.  Let \(v \neq u\) be a point on \(L\).  As shown in Lemma \ref{lem: pull off 1-secant}, $N_{C_1/\mathbb{P}^3}$ is a generalization of $N_C[2u \posmod  v](u)$.  By the inductive hypothesis and generality of $L$, the bundle $N_C[u \posmod  v]$ is stable.  This implies  $\dim_k \Hom(\mathcal{L}, N_C[2u \posmod  v](u)) \leq 1$ for each line bundle $\mathcal{L}$ of slope $\mu(N_C) + 2$, which proves our claim through upper semicontinuity.
\end{proof}

\begin{definition}\label{defn: even modification}
Let $C \subset \PP^r$ be a smooth curve.  Consider a modification $E$ of $N_C$ such that the modification data depends only on a general collection of points $\{p_i\} _{i\in I} \subset C$ and a general point in a rational variety $V$.
Suppose there is an action of an algebraic group $G$ on $I \times V$ that preserves the modification data.  If the cardinality of every orbit of the induced $G$ action on $I$ is divisible by $m$, then we say that $E$ is an \defi{$m$-cyclic modification} of $N_C$.
\end{definition}

\begin{lemma}\label{lem:black magic}
Let $C\subset \PP^r$ be a general $BN$-curve of genus $g = 1$ and degree $d$ divisible by $m$.  Suppose $E$ is an $m$-cyclic modification of $N_C$. 
Let $E_s/E_{s-1}$ be one of the factors in the HN-filtration of $E$.  Consider the associated-graded module $\bigoplus_{t \in T} V_{s,t}^{\oplus n_{s,t}}$ to a JH-filtration of $E_s/E_{s-1}$, written so that the $V_{s,t}$ are nonisomorphic.  
For any $(s, n)$, write \(h_{s, n}  = \deg (\bigoplus_{\{t \in T| n_{s,t} = n\}} V_{s,t})\). Then \(m \mid h_{s, n}\). 
\end{lemma}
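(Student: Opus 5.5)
The plan is to exploit the translation action of the $m$-torsion subgroup of the elliptic curve together with the group $G$ of the definition. Fix an origin on $C$ so that $C$ is an elliptic curve, and note that $C \subset \PP^r$ is embedded by a line bundle $\OO_C(1)$ of degree $d$. Since $m \mid d$, every translation $\tau_a$ by a point $a \in C[m]$ satisfies $\tau_a^*\OO_C(1)^{\otimes m} \cong \OO_C(1)^{\otimes m}$. We do not need $\tau_a^*\OO_C(1) \cong \OO_C(1)$; instead, since $d$ is divisible by $m$, we may choose $a$ so that $\tau_a^*\OO_C(1)\cong \OO_C(1)$. This holds for $a \in C[d]$, and hence in particular for points of $C[m]$.

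First step: I will argue that $\tau_a$ extends to a projective automorphism of $\PP^r$ preserving $C$. This gives an automorphism of $N_C$ covering $\tau_a$. Moreover, $\tau_a$ carries the modification data at the general points $\{p_i\}$, together with the general point of $V$, to the modification data at translated points.

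Second step, which I expect to be the main subtlety: the family of all modifications $E$ is parametrized by the irreducible (rational) variety $C^I \times V$. The HN and JH data of a general member, i.e.\ the multiset of determinants of stable JH factors grouped by multiplicity $n$, vary in a way that is constant up to translation of line bundles. Let $\mathcal{D}_{s,n} = \det\bigl(\bigoplus_{n_{s,t}=n} V_{s,t}\bigr)$, a line bundle of degree $h_{s,n}$. This depends algebraically on the parameters. Since $C^I\times V$ is rational and $\operatorname{Pic}^{h}(C)$ is an elliptic curve (it contains no rational curves), $\mathcal{D}_{s,n}$ depends on the points $p_i$ only through a homomorphism-like rule: it equals a fixed bundle twisted by $\OO(\sum_i c_i p_i)$ for integers $c_i$. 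These $c_i$ are constant along $G$-orbits, because $G$ preserves the data and hence the canonically defined HN/JH invariants.

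Third step: compare $\tau_a^*E$ with the modification at translated points. On the one hand, $\tau_a^*\mathcal{D}_{s,n} = \mathcal{D}_{s,n}\otimes(\text{translate})$, which shifts the determinant in $\operatorname{Pic}^{h}$ by $h_{s,n}\cdot a$ (translation acts on degree-$h$ bundles by $-h a$). On the other hand, from the formula in the second step, translating all $p_i$ by $a$ shifts it by $\sum_i c_i a$. Because the $c_i$ are constant on $G$-orbits, and each orbit has size divisible by $m$, we get $\sum_i c_i \equiv 0 \pmod m$, so $\sum c_i a = 0$ for $a\in C[m]$. Hence $h_{s,n}\, a = 0$ for all $a \in C[m]$. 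Taking $a$ of exact order $m$, which exists in every characteristic, we conclude $m\mid h_{s,n}$.

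The main obstacle is the justification in the second step. Rigidity of maps from rational varieties to $\operatorname{Pic}$ gives the linear dependence on the $p_i$. Uniqueness of the HN filtration, and uniqueness of the JH factors with multiplicities, ensures that $\mathcal{D}_{s,n}$ is well defined and canonical, so that it is compatible with both $G$ and $\tau_a$. In characteristic $p\mid m$, $C[m]$ may be smaller, so I would instead use a suitable subgroup scheme, or argue directly with the formula $\mathcal{D}_{s,n}\cong \mathcal{D}_0(\sum c_ip_i)$ and degree bookkeeping: $h_{s,n} = \deg\mathcal{D}_0 + \sum c_i$, with $\deg \mathcal{D}_0$ a combination of multiples of $d$. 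That degree bookkeeping is the fallback I would try first.
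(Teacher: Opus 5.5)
Your Step 1 is false in the generality the lemma needs, and Step 3 depends on it. The isomorphism $\tau_a^*\OO_C(1)\cong\OO_C(1)$ only lets $\tau_a$ act on the complete linear system, i.e.\ on the elliptic normal curve $\tilde C\subset\PP^{d-1}$. When $d>r+1$, a general BN-curve $C\subset\PP^r$ is the projection of $\tilde C$ from a general linear center $\Lambda$. So $\tau_a$ descends to an automorphism of $\PP^r$ preserving $C$ only if the nontrivial projective automorphism of $\PP^{d-1}$ induced by $\tau_a$ preserves $\Lambda$, and this fails for general $\Lambda$.

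This is exactly where the paper uses the lemma: elliptic sextics in $\PP^4$ with $m=2$. There the center would have to lie in the fixed locus of an involution of $\PP^5$. With $C\subset\PP^r$ held fixed, you cannot identify $\tau_a^*E$ with a modification of the same kind at translated points, so Step 3 has nothing to compare. Your fallback fails for the same reason: with the embedding fixed, nothing forces $\deg\mathcal{D}_0$ to be a combination of multiples of $d$. Two smaller points. $C^I\times V$ is not rational; what you actually use is that $V$ is rational and $C^I$ is an abelian variety. And a point of exact order $m$ need not exist in characteristic $p\mid m$ (supersingular curves); it does exist here because a general elliptic curve is ordinary.

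The missing idea is to let the embedding vary too. Embeddings $\iota\colon C\to\PP^r$ with a given line bundle $\iota^*\OO(1)\cong L$ form a rational variety (choices of $r+1$ sections up to scaling). Hence the determinant map to $\operatorname{Pic}^{h_{s,n}}(C)$ factors through $\operatorname{Pic}^d(C)\times\operatorname{Pic}^1(C)^{|I|}$, where it is a homomorphism up to translation.

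Translation then acts on the whole parameter space by $(\iota,\{p_i\})\mapsto(\iota\circ\tau_a,\{\tau_a^{-1}(p_i)\})$. The resulting modification is tautologically $\tau_a^*E$, so the determinant gets pulled back by $\tau_a$. On the other hand, $\tau_a^*L\cong L$ because $m\mid d$, and the shifts of the $p_i$ cancel orbit by orbit exactly as in your Step 3. So $\tau_a^*$ fixes a point of $\operatorname{Pic}^{h_{s,n}}(C)$, which forces $h_{s,n}a=0$, and taking $a$ of exact order $m$ gives $m\mid h_{s,n}$. This is the paper's argument.

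Once the embedding varies, your fallback can also be made to work. Equivariance under all translations $\tau_b$ forces the constant twist $M$ in $\mathcal{D}_{s,n}\cong L^{\otimes e}(\sum c_ip_i)\otimes M$ to have degree $0$. That gives $h_{s,n}=ed+\sum c_i$, which is divisible by $m$, with no need for a point of exact order $m$.
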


\begin{proof}
For a fixed curve \(C\) of genus \(1\), let \(X\) be the parameter space of such bundles \(E\). In other words, \(X\) parameterizes embeddings of \(C\) in \(\mathbb{P}^r\), together with choices of points \(p_i \in C\) and a point in the rational variety $V$. There is a nonempty open subset of $X$ where the numerical invariants of the HN and JH filtrations of $E$ are constant. The first Chern class of \(\bigoplus_{\{t \in T| n_{s,t} = n\}} V_{s,t}\) thus defines a rational map $f_{s, n}\colon  X \dashrightarrow \text{Pic}^{h_{s,n}}(C)$.

Since $\text{Pic}^{h_{s,n}}(C)$ is a torsor for an abelian variety, $f_{s, n}$ extends to a regular map and factors through a map $\bar{f}_{s, n} \colon \text{Pic}^d(C) \times \text{Pic}^1(C) \times \cdots \times \text{Pic}^1(C) \to \text{Pic}^{h_{s,n}}(C)$. Because maps between torsors for abelian varieties are group homomorphisms up to translation, we can write \(\bar{f}_{s, n} = \varphi + \psi_1 + \psi_2 + \cdots + \psi_{|I|}\), where \(\varphi : \text{Pic}^d(C) \to \text{Pic}^{h_{s,n}}(C)\) and the \(\psi_i : \text{Pic}^1(C) \to \text{Pic}^{h_{s,n}}(C)\) are group homomorphisms up to translation.
By \(G\)-invariance, \(\psi_i\) and \(\psi_j\) differ by translation for $i,j \in I$ in the same $G$-orbit.
Consider the map \(t : C \to C\) given by translation by a fixed \(m\)-torsion point.
Since \(d\) and the cardinalities of all the orbits of \(G\) acting on \(I\) are divisible by \(m\), we have \(\bar{f}_{s, n} = \bar{f}_{s, n} \circ t^*\). Since \(\bar{f}_{s, n} \circ t^* = t^* \circ \bar{f}_{s, n}\), it follows that \(t^*\) must act trivially on \(\text{Pic}^{h_{s,n}}(C)\). Thus, $m \mid h_{s, n}$.
\end{proof}

We use Lemma \ref{lem:black magic} below to study modified normal bundles arising from the attachment of one-secant and two-secant lines.  This is explained in the following section. 

\section{Types of Bundles}\label{sect: types of bundles}
Throughout this section, we let $C \subset \PP^4$ be a general elliptic curve of even degree.  To prove Theorem \ref{thm-mainstab}, we will consider normal bundles of nodal BN-curves $D$ obtained by gluing one- and two-secant lines to $C$.  Lemmas \ref{lem: pull off 1-secant} and \ref{lem:extra juice} allow us to prove $N_D$ is stable by studying several modifications of its restriction to $C$.  By Lemma \ref{lem:extra juice} it suffices to prove many of these modifications are ``almost" stable.  In this section we bound ``how unstable'' these modifications can be. 

To do this, suppose some modification $E$ of $N_C$ is unstable, and consider the effect of adding the modifications appearing in Lemmas \ref{lem: pull off 1-secant} or \ref{lem:extra juice} to $E$.
Because the directions of these modifications are linearly general, they are transverse to each subbundle of $E$ appearing in the HN-filtration.  In particular, if $E$ is ``sufficiently unstable,'' then this new modification is ``less unstable.'' In this section we make this qualitative argument precise by defining several different ``types'' of possible HN-filtrations and showing that adding modifications appearing in Lemmas \ref{lem: pull off 1-secant} and \ref{lem:extra juice} produces a bundle with one of finitely many types.

\begin{definition}\label{def-types_of_bundles}
Let $E$ be a modification of the normal bundle of a general BN-curve $C \subset \PP^4$ of genus $1$ and even degree.
We say that $E$ has type $\Stype{c}$ if $c_1(E) \equiv c$ mod $3$, $E$ is semistable, and $E$ is not isomorphic to $L^{\oplus 3}$ for any line bundle $L$ (this last condition is immediate unless $c = 0$).
    
Furthermore, we say that $E$ has type $\Utype{c}{r}{\delta}$ if $c_1(E) \equiv c$ mod $3$, and $E$ has a two-step HN-filtration $0 \subset V \subset E$ where $\rk(V) = r$ and \(\mu(V) = \mu(E) + \delta\).
\end{definition}
\noindent
Our $7$ types of HN-filtrations will be: 
$\Stype{0}$, $\Stype{1}$, $\Stype{2}$, $\Utype{1}{1}{\frac{2}{3}}$, $\Utype{1}{2}{\frac{1}{6}}$, $\Utype{2}{1}{\frac{1}{3}}$, $\Utype{2}{2}{\frac{1}{3}}$.

The following two lemmas describe how modifications $[x \posmod y]$ and $[x \posmod 2y]$ meet subbundles of $E$, and will form the backbone of our approach in this section.

\begin{lemma}\label{lem: direction of modification}
Fix a point $x \in C$ lying in the open subset on which $E$ is isomorphic to $N_C$.  As $y \in C$ varies, the directions of the modifications $[x \posmod y]$ and $[x \posmod 2y]$ in $\PP E|_x$ vary in linearly general families.
\end{lemma}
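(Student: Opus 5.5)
The plan is to reduce both claims to statements about the projection of $C$ from its tangent line at $x$. Since $x$ lies in the open set where $E \cong N_C$, the directions of $[x \posmod y]$ and $[x \posmod 2y]$ in $\PP E|_x$ agree with the corresponding directions in $\PP N_C|_x$, so we may work with $N_C$ throughout. We identify $N_C|_x = T_x\PP^4 / T_xC$. Then $\PP N_C|_x$ is identified with the $\PP^2$ of planes containing the tangent line $T_xC$, equivalently the target of the projection $\pi_x \colon \PP^4 \dashrightarrow \PP^2$ from $T_xC$.

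Under this identification:
\begin{itemize}
\item The direction of $[x \posmod y]$ is $N_{C \to y}|_x$. This is the point $\pi_x(y)$, i.e.\ the plane spanned by $T_xC$ and $y$.
\item The direction of $[x \posmod 2y]$ is $N_{C \to T_yC}|_x$. This is the line in $\PP^2$ given by $\pi_x(T_yC)$, which is the tangent line to the plane curve $\overline{C} := \pi_x(C)$ at $\pi_x(y)$ for $y$ general.
\end{itemize}
So "linearly general" means two things. First, the points $\pi_x(y)$ do not all lie on a fixed line. Second, the lines $\pi_x(T_yC)$ do not all pass through a fixed point. This is precisely what is needed so that a general such modification is transverse to any fixed subspace, such as a piece of the HN-filtration of $E$ at $x$.

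The first claim is immediate from nondegeneracy. If all $\pi_x(y)$ lay on a line in $\PP^2$, then $C$ would lie in the hyperplane spanned by that line and $T_xC$, contradicting that $C$ is a nondegenerate BN-curve.

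For the second claim, suppose that every line $\pi_x(T_yC)$ passes through a fixed point $z \in \PP^2$. Then $\overline{C}$ is a plane curve all of whose tangent lines are concurrent, i.e.\ a strange curve (here $\overline{C}$ is not a line, by the first claim). In characteristic $0$ this forces $\overline{C}$ to be a line, contradicting the first claim, so we are done there. In arbitrary characteristic I would argue by specialization, since concurrency of all tangent lines is a closed condition on the data $(C,x)$. As in the proof of Lemma~\ref{lem:smoothing}, Lemma~\ref{lem: reducible BN-curves} lets us specialize $C$ to a reducible BN-curve containing a nondegenerate rational normal quartic component $R$ with $x \in R$. It then suffices to check non-concurrency for the projection of $R$ from $T_xR$. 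That projection is a plane conic, whose tangent lines are concurrent only in characteristic $2$. So in characteristic $2$ I would instead project $R$ together with a second component, or use an explicit computation with two tangent lines of $C$ at general points $y_1,y_2$ together with $T_xC$. The goal there is to show that the spans $\langle T_xC, T_{y_i}C\rangle$ ($i=1,2,3$) are three hyperplanes with trivial common intersection beyond $T_xC$, using the general position of tangent lines of a general BN-curve.

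I expect this characteristic-$2$ strangeness issue to be the main obstacle. I would try first to deduce it from linear general position of three general tangent lines of $C$ together with $T_xC$. If that is insufficient, I would fall back on a degeneration in which the relevant tangent lines lie on different components.
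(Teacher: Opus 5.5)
Your reduction is the same as the paper's: $y \mapsto [x \posmod y]$ is the projection $\pi_x$ from $T_xC$, nondegeneracy of $C$ gives the first claim, and the second claim is exactly that $\overline{C} = \pi_x(C)$ is not strange. The gap is the one you flag yourself. Non-strangeness in characteristic $2$ is not proved, and since the lemma is needed in arbitrary characteristic, that is the only case with real content.

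Your chosen degeneration gives no information in characteristic $2$. Projection from $T_xR$ is given by the complete system $|\OO_{\PP^1}(2)|$, so $\pi_x(R)$ is a conic, and every smooth conic is strange in characteristic $2$. Neither fallback closes this.
\begin{itemize}
\item The first fallback would require showing that some other component's image has a tangent line avoiding the nucleus of $\pi_x(R)$. You do not do this.
\item The second fallback is a restatement of non-strangeness rather than a proof of it. The three hyperplanes $\langle T_xC, T_{y_i}C\rangle$ share a plane $\Pi \supset T_xC$ exactly when each $T_{y_i}C$ meets $\Pi$. This is compatible with the four tangent lines being in linear general position in the sense of spans, so invoking general position of tangent lines does not settle it.
\end{itemize}

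The missing idea is to choose a degeneration in which the component through $x$ projects to a smooth plane curve of degree at least $3$. This works because the only smooth strange curves are lines and, in characteristic $2$, conics. The paper does exactly this. Using Lemma~\ref{lem: reducible BN-curves}, specialize $C$ (genus $1$, even degree $d \geq 6$) to an elliptic normal quintic with $d-5$ one-secant lines attached, with $x$ on the quintic. Projection from the quintic's tangent line at $x$ is given by the complete degree-$3$ system $|\OO(1)(-2x)|$. So it embeds the quintic as a smooth plane cubic, which is not strange in any characteristic. Since strangeness is a closed condition under specialization, the general $\overline{C}$ is not strange either.
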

\begin{proof}
Since $C \subset \PP^4$ is linearly nondegenerate, the direction of the modification $[x \posmod y]$ in $\PP E|_x$ is linearly general.
In fact, the map $C \dashrightarrow \PP E|_x$ given by $y \mapsto [x \posmod y] \in \PP E|_x$ is naturally the projection of $C$ from its tangent line $T_xC$ at $x$.

Furthermore, we claim that the image of $C \dashrightarrow \PP E|_x$ is not a strange curve. Indeed, applying Lemma~\ref{lem: reducible BN-curves} to specialize \(C\) to the union of an elliptic normal curve and several \(1\)-secant lines induces a specialization of this projection to a reducible curve, one component of which is a smooth plane cubic. Thus the modification $[x \posmod 2y]$ is also linearly general in the dual space $\PP E|_x^\vee$.
\end{proof}

\begin{lemma}\label{lem: JHF images} 
Suppose $E$ is semistable.  Let $p \in \PP E|_x$ be a general point in some irreducible and linearly general subvariety of a fiber. If there exists a proper subbundle $F \subset E$ of slope $\mu(F) = \mu(E)$ such that $p \in \PP F|_x \subset \PP E|_x$, then:
\begin{enumerate}
\item If $\rk(F) = 1$, then $E \cong F^{\oplus 3}$;
\item If $\rk(F) = 2$, then there exist line bundles $L_1, L_2$ such that $E$ is an extension of $L_2 \oplus L_2$ by $L_1$:
$$0 \rightarrow L_1 \rightarrow E \rightarrow L_2 \oplus L_2 \rightarrow 0.$$
In this case, if $E \not\cong L_2^{\oplus 3}$, then the extension above is canonical and $F$ must contain $L_1$; if in addition $L_1 \cong L_2$, then $F$ is the nonsplit extension of $L_1 \cong L_2$ by itself.
\end{enumerate}
\end{lemma}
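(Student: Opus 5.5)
We work with the semistable bundle $E$ of rank $3$ on the elliptic curve $C$. Let $\mathcal{F}$ be the set of proper subbundles of $E$ of slope $\mu(E)$; these are exactly the semistable subbundles that can occur as steps of Jordan--H\"older filtrations. The key observation is that the fibers at $x$ of bundles in $\mathcal{F}$ form a \emph{bounded} family of linear subspaces of $\PP E|_x$. The point $p$ is general in an irreducible linearly general subvariety $Z$, and it lies in $\PP F|_x$ for some $F \in \mathcal{F}$. It follows that the union of the $\PP F|_x$ for $F \in \mathcal{F}$ of a fixed rank contains all of $Z$. Since $Z$ is linearly general, it is not contained in any fixed proper linear subspace, nor in any finite union of them. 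Hence $\mathcal{F}$ must contain a positive-dimensional family of rank-$\rk F$ subbundles whose fibers at $x$ sweep out a set containing $Z$. So the plan is to show that such a positive-dimensional family forces the stated structure.

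\textbf{Rank one.} The line subbundles of slope $\mu(E)$ are degree-$\mu$ line bundles $M$ with $\Hom(M,E)\neq 0$. On an elliptic curve, $\Hom(M,E)\neq 0$ forces $M$ to be isomorphic to a JH factor of $E$, so only finitely many isomorphism classes of $M$ occur. Fix one class $M$. The images of nonzero maps $M\to E$ then have fibers at $x$ forming the linear space $\PP$ of the image of $\Hom(M,E)\to E|_x$. This map is injective on fibers for subbundles of this slope, since such a map is nowhere vanishing.

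So the points $\PP F|_x$ lie in a finite union of linear subspaces $\PP W_M$. Linear generality of $Z$ forces some $W_M$ to equal all of $E|_x$, and hence $h^0(M^\vee\otimes E)=3$. The evaluation map $M^{\oplus 3}\to E$ is then injective at $x$. Both sides have the same degree and rank, so the map is an isomorphism, which gives $E\cong F^{\oplus 3}$.

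\textbf{Rank two.} We dualize, or equivalently pass to quotients. A rank-two subbundle $F$ of slope $\mu(E)$ corresponds to a quotient line bundle $E\to L$ of degree $\mu(E)$. The condition $p\in\PP F|_x$ is a hyperplane condition, since $\PP F|_x$ is the kernel of $E|_x\to L|_x$. As before, $L$ ranges over finitely many JH factors. For a fixed $L$, the kernels at $x$ of maps in $\Hom(E,L)$ sweep out hyperplanes parametrized by the linear space $\PP(\mathrm{im}(\Hom(E,L)\to E|_x^\vee))$.

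If this space has dimension $0$ for every $L$, only finitely many hyperplanes occur, and they cannot contain a linearly general $Z$. So some $L_2$ has $h^0(E^\vee\otimes L_2)\geq 2$, and we obtain a map $E\to L_2^{\oplus 2}$ that is surjective at a general point. Degree considerations should make it surjective everywhere. Its kernel is then a line bundle $L_1$ of degree $\mu(E)$, which gives the extension in the statement.

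If $E\not\cong L_2^{\oplus 3}$, then $h^0(E^\vee\otimes L_2)=2$. In that case the extension is canonical: it is obtained as the common kernel of $\Hom(E,L_2)\otimes E\to L_2$. Every $F$ in the family is a kernel of some map $E\to L_2$, so every such $F$ contains $L_1$.

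Finally, suppose $L_1\cong L_2$. Then $F\supset L_1$ with $F/L_1\cong L_2$. If this extension were split, $F$ would be $L_2\oplus L_2$, and we would get $h^0(E^\vee\otimes L_2)\geq$ more homs or a splitting of $E$. Here the main obstacle is showing that a general such $F$ is nonsplit. The plan is to argue that split $F$ would give a map $L_2\to E$ lifting a map to $L_2^{\oplus 2}$. Such lifts exist only along a proper linear subspace of $\Hom(L_2,L_2^{\oplus 2})$, namely the kernel of the connecting map to $\mathrm{Ext}^1(L_2,L_1)$. That subspace is a single line, since otherwise $E\cong L_2^{\oplus 3}$. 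So the split $F$ form a proper closed subset, which is avoided by the general point $p$.
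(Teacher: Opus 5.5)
Your proposal is correct and follows essentially the same route as the paper. Both arguments reduce to finitely many isomorphism classes of slope-$\mu(E)$ subbundles. In rank one, both use $\dim\Hom(F,E)=3$ to get $E\cong F^{\oplus 3}$. In rank two, both use $\dim\Hom(E,L_2)\in\{2,3\}$ together with the canonical kernel $L_1\subset F$. Both rule out a split $F$ because the subbundle isomorphic to $L_2^{\oplus 2}$ is unique; your connecting-map computation is just the verification that $\dim\Hom(L_2,E)=2$, which the paper uses directly.
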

\begin{proof}
Subbundles \(F \subset E\) of slope \(\mu(F) = \mu(E)\) have one of finitely many isomorphism classes.  Thus we can suppose the isomorphism class of \(F\) does not depend on \(p\).

If \(\rk(F) = 1\), then \(\dim_k \Hom(F, E) = 3\), so \(E \simeq F^{\oplus 3}\) as claimed.

Suppose $\rk(F) = 2$ instead.  Let $L_2 = \bigwedge^3 E \otimes \bigwedge^2 F^\vee$.  Observe that any inclusion $F \subset E$ corresponds to a surjection $E \to L_2$.  Since $p$ varies in an irreducible linearly general family, $\dim_k \Hom(E, L_2)$ is $2$ or $3$. If $\dim_k \Hom(E, L_2) = 3$ then $E \simeq L_2^{\oplus 3}$. Otherwise there is a surjection $\theta : E \to L_2 \oplus L_2$ that is canonical up to automorphisms of the target.  In this case $\ker (\theta)$ is a line bundle \(L_1\) that must be contained in $F$.  

If $L_1 \cong L_2$ and $E \not\cong L_2^{\oplus 3}$, then $\Hom(L_2, E) = 2$, and so there is a unique subbundle isomorphic to $L_2^{\oplus 2}$. Thus $F \not\cong L_2^{\oplus 2}$ by linear generality of $p$.  We conclude $F$ is the nonsplit extension of $L_1 \cong L_2$ by itself.
\end{proof}

\begin{corollary}\label{cor: JHF images dual}
Suppose $E$ is semistable.  Let $\Lambda \subset \PP E|_x$ be a general line in some irreducible and linearly general subvariety of $\PP E|_x^\vee$.  If there exists a proper subbundle $F \subset E$ of slope $\mu(F) = \mu(E)$ such that $\Lambda$ contains $\PP F|_x$, then:
\begin{enumerate}
\item If $\rk(F) = 2$, then $E \cong (E/F)^{\oplus 3}$;
\item If $\rk(F) = 1$, then there exist line bundles $L_1, L_2$ such that $E$ is an extension of $L_2$ by $L_1 \oplus L_1$:
$$0 \rightarrow L_1 \oplus L_1 \rightarrow E \rightarrow L_2 \rightarrow 0.$$
In this case, if $E \not\cong L_1^{\oplus 3}$, then the extension above is canonical and $F$ is contained in $L_1 \oplus L_1$; if in addition $L_1 \simeq L_2$, then $E/F$ is the nonsplit extension of $L_1 \simeq L_2$ by itself.
\end{enumerate}
\end{corollary}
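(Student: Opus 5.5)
The plan is to deduce Corollary~\ref{cor: JHF images dual} from Lemma~\ref{lem: JHF images} by duality. Set $E' = E^\vee$, which is again semistable on $C$ and has rank $3$. A proper subbundle $F \subset E$ with $\mu(F) = \mu(E)$ gives the quotient $E^\vee \to F^\vee$. Its kernel is $F^\perp = (E/F)^\vee$, a subbundle of $E^\vee$ of rank $3 - \rk F$, and
\[\mu(F^\perp) = -\mu(E/F) = -\mu(E) = \mu(E^\vee).\]
On fibers, $\PP F^\perp|_x \subset \PP E^\vee|_x$ is the linear space of hyperplanes in $\PP E|_x$ containing $\PP F|_x$.

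The condition that $\Lambda$ contains $\PP F|_x$, for a general line $\Lambda$ from an irreducible linearly general family, should then translate into the hypothesis of Lemma~\ref{lem: JHF images} for $E^\vee$. I will carry this out separately for each rank.

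\emph{Rank two.} If $\rk F = 2$, then $\PP F|_x$ is itself a line, so $\Lambda = \PP F|_x$. Hence $F^\perp|_x$ is a single point $p \in \PP E^\vee|_x$, namely the point dual to $\Lambda$. Since $\Lambda$ moves in an irreducible linearly general subvariety of the dual space, $p$ is a general point of an irreducible linearly general subvariety of $\PP E^\vee|_x$. Lemma~\ref{lem: JHF images}(1) applied to the rank one subbundle $F^\perp = (E/F)^\vee$ gives $E^\vee \cong ((E/F)^\vee)^{\oplus 3}$, and dualizing yields $E \cong (E/F)^{\oplus 3}$.

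\emph{Rank one.} If $\rk F = 1$, then $F^\perp$ has rank $2$ and $\PP F^\perp|_x$ is the plane of hyperplanes through the point $\PP F|_x$. That plane contains the point $\Lambda^\vee$ dual to $\Lambda$, which is again a general point of an irreducible linearly general subvariety. Lemma~\ref{lem: JHF images}(2) then produces line bundles $M_1, M_2$ with
\[0 \to M_1 \to E^\vee \to M_2 \oplus M_2 \to 0.\]
Dualizing gives $0 \to M_2^\vee \oplus M_2^\vee \to E \to M_1^\vee \to 0$, so I set $L_1 = M_2^\vee$ and $L_2 = M_1^\vee$.

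Lemma~\ref{lem: JHF images} also says that when $E^\vee \not\cong M_2^{\oplus 3}$ (equivalently $E \not\cong L_1^{\oplus 3}$), this extension is canonical and $F^\perp \supset M_1$. Taking annihilators, $F \subset M_1^\perp = L_1 \oplus L_1$. If in addition $L_1 \cong L_2$, then $F^\perp$ is the nonsplit self-extension of $M_1$. Since $F^\perp = (E/F)^\vee$, dualizing shows that $E/F$ is the nonsplit self-extension of $L_1 \cong L_2$.

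The step needing the most care is the translation of the genericity hypothesis. I must check that "a general line in an irreducible linearly general subvariety of $\PP E|_x^\vee$" corresponds exactly to "a general point of an irreducible linearly general subvariety of $\PP E^\vee|_x$" under projective duality for $\PP^2$. I must also check that containment of $\PP F|_x$ in $\Lambda$ is equivalent to $\Lambda^\vee \in \PP F^\perp|_x$. Both points are elementary linear algebra. The remaining ingredients, namely that dualizing preserves semistability and that the canonical extension is dual to the one in the lemma, are routine.
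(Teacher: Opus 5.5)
Your proposal is correct and is exactly the paper's argument: the paper's proof is the one-line instruction to apply Lemma~\ref{lem: JHF images} to $E^\vee$, and you have unpacked the duality bookkeeping correctly ($F \mapsto F^\perp = (E/F)^\vee$, $L_1 = M_2^\vee$, $L_2 = M_1^\vee$, with annihilators and nonsplitness passing through dualization). One cosmetic slip: when $\rk F = 1$, $\PP F^\perp|_x$ is a line in the dual plane rather than a plane, although the underlying vector space $F^\perp|_x$ is two-dimensional.
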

\begin{proof}
Apply Lemma~\ref{lem: JHF images} to $E^\vee$.
\end{proof}

\subsection{Two-Cyclic Modifications}
Adding the modifications appearing in Lemma~\ref{lem:extra juice} preserves two-cyclicity.
Therefore, by considering these modifications first, we can assume while studying them that $E$ is two cyclic.  Since $C$ is assumed to have even degree, this allows us to apply Lemma \ref{lem:black magic}.  In this section we use the following additional assumptions and notation:
\begin{enumerate}
\item $x,y \in C$ are a general pair of points;
\item $E$ is a two-cyclic modification of the normal bundle of $C$;
\item $\ell$ is the unique integer such that $\mu(E) \in \{2\ell, 2\ell + \frac{2}{3}, 2\ell + \frac{4}{3}\}$.
\end{enumerate}
Note that if $E$ is of type $S_c$ or $U_c(r, \delta)$, then
\[\mu(E) = \begin{cases}
2\ell & \text{if $c = 0$,} \\
2\ell + \frac{4}{3} &  \text{if $c = 1$,} \\
2\ell + \frac{2}{3} & \text{if $c = 2$.}
\end{cases}\]
By Lemma \ref{lem:black magic} only the following $5$ types may be two-cyclic:
$\Stype{0}$, $\Stype{1}$, $\Stype{2}$, $\Utype{1}{1}{\frac{2}{3}}$, $\Utype{2}{2}{\frac{1}{3}}$. 

\begin{remark}\label{rem: destab quotient distinct JHF factors}
For $E$ a two-cyclic modification of $N_C$, if $E$ has type $\Utype{1}{1}{\frac{2}{3}}$ and $V \subset E$ is the maximal destabilizing subbundle, then Lemma~\ref{lem:black magic} implies $E/V$ is a direct sum of two non-isomorphic line bundles. 
\end{remark}

Observe that the first Chern classes of $E$ and $E[x \biposmod  y][x \posmod  2y][y \posmod 2x]$ are congruent modulo 3.  By choosing an appropriate order of modifications, we will only apply the modification $[x \biposmod  y][x \posmod  2y][y \posmod 2x]$ to bundles $E$ of type $S_0$.  The following lemma shows this does not change the bundle's type.  

\begin{lemma}\label{lem: S_0 big mod}
Suppose $E$ has type $\Stype{0}$.  Then $E[x \biposmod  y][x \posmod  2y][y \posmod 2x]$ has type $\Stype{0}$ as well.
\end{lemma}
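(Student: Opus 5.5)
The plan is a direct saturation argument. I will compare subbundles of $E' := E[x \biposmod y][x \posmod 2y][y \posmod 2x]$ with their intersections with $E$, and use Lemmas~\ref{lem: direction of modification} and~\ref{lem: JHF images} and Corollary~\ref{cor: JHF images dual} to control how a subbundle of $E$ can meet the special directions at $x$ and $y$.

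\textbf{Degree bookkeeping.} At $x$, the combined modification is toward a flag: the line $\ell_y = [x \posmod y]$ inside the plane $P_{2y} = [x \posmod 2y]$. The same holds at $y$ with the roles of $x$ and $y$ swapped. So $c_1(E') = c_1(E) + 6$, and $\mu(E') = \mu(E) + 2 = 2\ell + 2$.

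For a subbundle $F \subset E'$, let $F_0 = F \cap E$, saturated in $E$. Then $\deg F = \deg F_0 + g_x + g_y$, where the gain at $x$ is
\[ g_x = \dim(F_0|_x \cap \ell_y) + \dim(F_0|_x \cap P_{2y}), \]
and $g_y$ is defined similarly. Since $E$ is semistable, $\mu(F_0) \le 2\ell$. So for semistability of $E'$ it suffices to show:
\begin{itemize}
\item if $\rk F = 1$ and $\mu(F_0) = 2\ell$, then $g_x + g_y \le 2$;
\item if $\rk F = 2$ and $\mu(F_0) = 2\ell$, then $g_x + g_y \le 4$.
\end{itemize}
If $\mu(F_0) < 2\ell$, then $\mu(F_0) \le 2\ell - 1/\rk F$. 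The gains are bounded by $4$ (rank one) and $6$ (rank two), so this case needs a little extra care, and I expect it to reduce to the same kind of analysis.

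\textbf{Rank one.} Gain $2$ at $x$ means $F_0|_x$ is the point $\ell_y$. By Lemma~\ref{lem: direction of modification}, $\ell_y$ is a general point in a linearly general family as $y$ varies. By Lemma~\ref{lem: JHF images}(1), this forces $E \cong F_0^{\oplus 3}$, which type $\Stype{0}$ excludes. So $g_x \le 1$, and by symmetry $g_y \le 1$. Gain $1$ at each point only requires $F_0|_x \in P_{2y}$ and $F_0|_y \in P_{2x}$, which is allowed, since equality is permitted for semistability.

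\textbf{Rank two (the main obstacle).} Here $g_x = 3$ exactly when $F_0|_x = P_{2y}$. That plane is a general line in a linearly general family in the dual space by Lemma~\ref{lem: direction of modification}, so Corollary~\ref{cor: JHF images dual}(1) gives $E \cong (E/F_0)^{\oplus 3}$, which is again excluded. Hence $g_x, g_y \le 2$, and $g_x + g_y \le 4$ holds automatically.

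The delicate part is the case $\mu(F_0) < 2\ell$, that is, $\mu(F_0) \le 2\ell - \tfrac12$. There I must exclude $g_x + g_y \ge 6$, and I need to make sure the unstable-looking configurations cannot occur. This is where I would use the fine structure. Lemma~\ref{lem: JHF images}(2) gives a canonical extension $0 \to L_1 \to E \to L_2 \oplus L_2 \to 0$, and any slope-$2\ell$ rank-two subbundle containing the general direction must contain $L_1$. Requiring this simultaneously at $x$ (containing $\ell_y$) and at $y$ (containing $\ell_x$) forces an extra incidence condition. Generality of the pair $(x,y)$ should make that condition fail.

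Any leftover possibility, I would exclude by two-cyclicity: $E'$ is still two-cyclic, so Lemma~\ref{lem:black magic} restricts it to the types $\Stype{0}$, $\Stype{1}$, $\Stype{2}$, $\Utype{1}{1}{\frac{2}{3}}$, $\Utype{2}{2}{\frac{1}{3}}$. Since $c_1(E') \equiv 0 \pmod 3$, only $\Stype{0}$ or a non-listed unstable type could occur. I would try to rule out the latter by the degree bounds above.

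\textbf{Finally, $E' \not\cong M^{\oplus 3}$.} If it were, there would be a $3$-dimensional space of line subbundles of slope $2\ell + 2$. By the rank-one analysis, each has $g_x = g_y = 1$ with $\mu(F_0) = 2\ell$, so in particular $F_0|_x \in P_{2y}$. These fibers at $x$ would then all lie in the plane $P_{2y}$, while the fibers of $M^{\oplus 3}$ at $x$ fill $\PP E'|_x$. This is a contradiction.
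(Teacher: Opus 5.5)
\textbf{There are two genuine gaps: the sub-$\mu(E)$ case of semistability, and the exclusion of $E'\cong M^{\oplus 3}$.}

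\textbf{Semistability.} Your argument is incomplete exactly at the case you flag, and the tool you propose for it does not apply there. Using only $\mu(F_0)\le \mu(E)$ and your gain bounds, two configurations survive. A line subbundle with $\deg F_0=2\ell-1$ and $g_x=g_y=2$ would have degree $2\ell+3>\mu(E')$. A rank-two subbundle with $\deg F_0=4\ell-1$ and $g_x=g_y=3$ would have slope $2\ell+\tfrac52$. Lemma~\ref{lem: JHF images}, Corollary~\ref{cor: JHF images dual}, and the canonical extension $0\to L_1\to E\to L_2\oplus L_2\to 0$ only concern subbundles of slope \emph{exactly} $\mu(E)$, so they say nothing about such $F_0$. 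Nor does Lemma~\ref{lem:black magic} ``restrict $E'$ to five types''; it is a divisibility statement, and you should use it as one. Apply it to the maximal destabilizing subbundle $F$ of the two-cyclic bundle $E'$: it gives $2\mid \deg F$, so $\mu(F)\in 2\mathbb{Z}$ if $\rk F=1$ and $\mu(F)\in\mathbb{Z}$ if $\rk F=2$. Combined with $2\ell+2<\mu(F)\le 2\ell+4$ (resp.\ $\le 2\ell+3$), this forces equality. Hence $\mu(F_0)=\mu(E)$ with maximal gain at both points, which is precisely what your Lemma~\ref{lem: JHF images}/Corollary~\ref{cor: JHF images dual} arguments already exclude. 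Your formula for $g_x$ is only an upper bound for the true gain, but that is all you need.

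\textbf{Excluding $E'\cong M^{\oplus 3}$.} Your argument here does not work, for two reasons.
\begin{enumerate}
\item A line subbundle $F\cong M$ has $\deg F=2\ell+2$, but nothing forces $\deg F_0=2\ell$. Degree $2\ell-1$ or $2\ell-2$ with total gain $3$ or $4$ is allowed. In fact, for a general section of $M^{\oplus 3}$ the gain at each of $x,y$ is $2$, so $\deg F_0=2\ell-2$ is the typical case.
\item Even granting $F_0|_x\in P_{2y}$, this is a condition in $\PP E|_x$, not in $\PP E'|_x$. The map $E|_x\to E'|_x$ has rank one with kernel $P_{2y}$. So $F_0|_x\in P_{2y}$ merely says that $F$ gains at $x$, which is perfectly compatible with the fibers $F|_x$ filling $\PP E'|_x$. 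No contradiction arises.
\end{enumerate}

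A real argument is needed here, as in the paper. The kernels of the rank-one maps $E'|_x\to E(2x+2y)|_x$ and $E'|_y\to E(2x+2y)|_y$ are distinguished planes. Negatively modifying $M^{\oplus 3}$ toward them recovers $I=E[x\posmod 2y][y\posmod 2x]$, so $I\cong M^{\oplus 2}\oplus M(-x-y)$ or $I\cong M\oplus M(-x)\oplus M(-y)$.
\begin{enumerate}
\item In the first case, $E$ acquires a rank-two subbundle of slope $\mu(E)$ with fiber $P_{2y}$ at $x$. This is excluded by Corollary~\ref{cor: JHF images dual}(1).
\item In the second case, $E$ contains a subbundle $F\cong M(-x-y)$ meeting both $[x\posmod 2y]$ and $[y\posmod 2x]$. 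Corollary~\ref{cor: JHF images dual}(2) then makes $E/F$ the nonsplit self-extension of $M(-x-y)$. A monodromy argument exchanging $x$ and $y$ shows instead that $E/F\cong M(-x-y)^{\oplus 2}$, a contradiction.
\end{enumerate}
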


\begin{proof}
Suppose $E[x \biposmod  y][x \posmod 2y][y \posmod 2x]$ is not semistable, and let $F$ be the maximal destabilizing subbundle. Since $E$ is semistable: If $\rk(F)=1$, then $\mu(F) \leq 2\ell+4$; and if $\rk(F)=2$, then $\mu(F) \leq 2\ell+3$. By Lemma \ref{lem:black magic}: If $\rk(F)=1$, then $\mu(F) \in 2 \mathbb{Z}$; and if $\rk(F)=2$, then $\mu(F) \in \mathbb{Z}$. It thus suffices to rule out the case of equality, i.e., when there is a line subbundle (respectively a rank 2 subbundle) of $E$, with the same slope as $E$, and which picks up as many of the modifications at $x$ and $y$ as possible.  These cases are ruled out by Lemmas \ref{lem: direction of modification} and \ref{lem: JHF images}, since $E$ is not isomorphic to $L^{\oplus 3}$ for any line bundle $L$.

Next, we rule out that $E' := E[x \biposmod  y][x \posmod  2y][y \posmod 2x] \cong L^{\oplus 3}$ for some line bundle $L$. Suppose for the sake of contradiction that this were true. Since $E'|_x \to E(2x + 2y)|_x$ and $E'|_y \to E(2x + 2y)|_y$ are of rank $1$, their kernels would give distinguished lines in \(\PP E'|_x \simeq \PP E'|_y\).  The negative modification of $E'$ towards these lines is $I := E[x \posmod  2y][y \posmod 2x]$. 

If these two lines coincide, then $I \simeq L^{\oplus 2} \oplus L(-x-y)$. 
This would force $E$ to have a rank two subbundle of slope $\mu(E)$ that aligns with both modifications $[x \posmod 2y], [y \posmod 2x]$.  By Corollary \ref{cor: JHF images dual} this cannot happen.  

If these two lines differ, then $I \simeq L \oplus L(-x) \oplus L(-y)$.  The only way this can happen is if there is a subbundle $F \subset E$ such that $F \cong L(-x -y)$ and $F$ meets both of the positive modifications $[x \posmod 2y], [y \posmod 2x]$.  Letting $L_1$ denote $L(-x - y)$, by Corollary \ref{cor: JHF images dual} this forces $E$ to be an extension of some line bundle $L_2$ by $L_1 \oplus L_1$.  A Chern class computation shows $L_2 \cong L_1$.  By Corollary \ref{cor: JHF images dual} this implies that $E/F$ is the nonsplit extension of $L_1$ by itself.  
    
In particular, there is a unique map up to scaling from $L(-x-y)$ to $E/F$.  Composing with the natural inclusion $E/F \hookrightarrow I/L \cong L(-x) \oplus L(-y)$ we obtain a nonzero map $L(-x -y) \to L(-x) \oplus L(-y)$.  Equivalently, we obtain maps $L(-x -y) \to L(-x)$ and $L(-x -y) \to L(-y)$ that are not both zero.  By symmetry/monodromy swapping $x$ and $y$, both of these maps must be nonzero.  We conclude that the image of $L(-x-y)|_x$ is the first coordinate subspace, while the image of $L(-x-y)|_y$ is the second coordinate subspace.  Recalling that $E/F$ is obtained from negative modifications of $I/L$ at $x$ and $y$, these negative modifications must be towards the first coordinate subspace at $x$ and the second coordinate subspace at $y$ in order for the map $L(-x-y) \to I/L$ to factor through $E/F$ as constructed. But this implies $E/F \simeq L(-x-y)^{\oplus 2}$, contradicting that $E/F$ is the nonsplit extension of $L(-x-y)$ by itself.
\end{proof}

\noindent
Next, we study the type of $E[x \biposmod  y]$ given the type of $E$ in Lemmas \ref{lem: S_0 small mod} through \ref{lem: S2 U22 small mod}.

\begin{lemma}\label{lem: S_0 small mod}
Suppose $E$ has type $\Stype{0}$.  Then $E[x \biposmod  y]$ has type $\Stype{2}$ or $\Utype{2}{2}{\frac{1}{3}}$.
\end{lemma}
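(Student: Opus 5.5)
Write $E' := E[x \biposmod y]$. Then $\deg E' = \deg E + 2$, so $c_1(E') \equiv 2 \pmod 3$ and $\mu(E') = 2\ell + \frac{2}{3}$. Since $E'$ is again a two-cyclic modification of $N_C$, Lemma~\ref{lem:black magic} leaves only the types $\Stype{0}, \Stype{1}, \Stype{2}, \Utype{1}{1}{\frac{2}{3}}, \Utype{2}{2}{\frac{1}{3}}$. Of these, only $\Stype{2}$ and $\Utype{2}{2}{\frac{1}{3}}$ have $c \equiv 2$. So the plan is to bound the HN-filtration of $E'$ directly and show that the only possible instability is a rank two subbundle of slope exactly $\mu(E')+\frac13$, with semistable quotient. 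Note that if $E'$ is semistable, the extra condition in type $\Stype{2}$ is automatic, since $\deg E'$ is not divisible by $3$.

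\textbf{Step 1: bound subbundles of $E'$.} Let $F \subset E'$ be a subbundle, and let $F_0 = F \cap E$ be its saturation in $E$. The degree of $F$ exceeds that of $F_0$ by the number of the two positive modifications (at $x$ toward $y$, at $y$ toward $x$) that $F$ picks up, so at most $2$.
\begin{itemize}
\item If $\rk F = 1$: semistability of $E$ gives $\mu(F_0)\le 2\ell$, hence $\mu(F)\le 2\ell+2$. Lemma~\ref{lem:black magic} applied to a rank one maximal destabilizing subbundle forces $\mu(F)$ even, so a destabilizing $F$ would need $\mu(F)=2\ell+2$ exactly. That requires $F_0$ of slope $\mu(E)$ containing both directions $[x\posmod y]$ and $[y\posmod x]$. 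Lemmas~\ref{lem: direction of modification} and~\ref{lem: JHF images}(1) then force $E\cong F_0^{\oplus 3}$, which is excluded by type $\Stype{0}$.
\item If $\rk F = 2$: $\mu(F)\le 2\ell+1$. Lemma~\ref{lem:black magic} says a two-cyclic destabilizing rank two $F$ has integer slope, i.e.\ at most $2\ell+1 = \mu(E')+\frac13$. Equality means $F_0$ has slope $2\ell$ and contains both modification directions.
\end{itemize}
In the unstable case, the maximal destabilizing subbundle $V$ therefore has rank $2$ and slope $\mu(E')+\frac13$.

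\textbf{Step 2: the quotient.} The quotient $E'/V$ is a line bundle, so it is trivially semistable, and the HN-filtration is two-step. Hence $E'$ has type $\Utype{2}{2}{\frac{1}{3}}$. Otherwise no subbundle destabilizes, so $E'$ is semistable and has type $\Stype{2}$.

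The main obstacle is the careful use of Lemma~\ref{lem:black magic} in Step 1: it must be applied to the HN-factors of $E'$, and one must check that these have the claimed integrality. The check goes as follows. For a rank one first factor $F$, we have $h = \deg F$, and two-cyclicity gives $2 \mid \deg F$; since $\deg E'$ is even, $\mu(F)$ is even. For a rank two factor, the JH-graded pieces either have distinct degrees summing appropriately or appear with multiplicity, and in each case we obtain $2\mid \deg F$, so $\mu(F)\in\mathbb{Z}$. This is the same bookkeeping as in the proof of Lemma~\ref{lem: S_0 big mod}. The rank one equality case reuses the argument given there.
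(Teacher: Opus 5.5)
Your proposal is correct and follows the paper's proof essentially verbatim. Both arguments bound the maximal destabilizing subbundle of $E[x \biposmod y]$ using semistability of $E$, rule out a rank-one destabilizer of odd degree by Lemma~\ref{lem:black magic}, and rule out one of degree $2\ell+2$ by Lemmas~\ref{lem: direction of modification} and~\ref{lem: JHF images}; this leaves only a rank-two destabilizer of slope $\mu(E[x \biposmod y])+\frac{1}{3}$. One small point: in the rank-two case the appeal to Lemma~\ref{lem:black magic} is unnecessary, since $\deg F\le 4\ell+2$ together with $\deg F>4\ell+\frac{4}{3}$ already forces $\mu(F)=2\ell+1$.
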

\begin{proof}
Suppose $E[x \biposmod  y]$ is not stable. Let $F$ be the maximal destabilizing subbundle. Since $E$ is semistable, if $\rk(F)=2$, then $\mu(F) \leq 2\ell+1$ as desired. Similarly, if $\rk(F)=1$, then $\mu(F) \leq 2\ell+2$, and $\mu(F) \neq 2\ell + 1$ by Lemma \ref{lem:black magic}. If $\mu(F)=2\ell+2$, then $E$ must have a line subbundle that picks up both modifications. Since $E$ is not equal to $L^{\oplus 3}$ and the modification $[x \posmod y]$ is general, this is impossible by Lemma \ref{lem: JHF images}.
We conclude that either $E[x \biposmod  y]$ is stable or the maximal destabilizing $F$ has $\rk(F)=2$ and $\mu(F)= \mu(E) + \frac{1}{3}$.
\end{proof}

\begin{remark}\label{rem: even mod distinct JHF}
The argument of Lemma \ref{lem: S_0 small mod} shows that $E[x \biposmod  y]$ is stable if $\dim_k \Hom(L, E) \leq 1$ for every line bundle $L$ of slope $\mu(E)$. 
Indeed, in this case, there can only be finitely many rank 2 subbundles of $E$ with this slope and none can pick up both modifications.
\end{remark}

\begin{lemma}\label{lem: S1 U11 U12 small mod}
Suppose $E$ has type $\Stype{1}$ or $\Utype{1}{1}{\frac{2}{3}}$.  Then $E[x \biposmod  y]$ has type $\Stype{0}$.
\end{lemma}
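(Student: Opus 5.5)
The plan is to show directly that $E' := E[x \biposmod y]$ is semistable and not of the form $L^{\oplus 3}$. First, the bookkeeping. The two positive modifications raise the degree by $2$, so $c_1(E') \equiv c_1(E) + 2 \equiv 0 \pmod 3$. If $\mu(E) = 2\ell + \tfrac43$, then $\mu(E') = 2\ell + 2$. Moreover, $E'$ is still a two-cyclic modification: the pair $\{x,y\}$ is a single orbit of size $2$ under the swap. So Lemma~\ref{lem:black magic} applies to $E'$. In particular, any rank-one HN-factor of $E'$ has even degree.

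\emph{Semistability when $E$ has type $\Stype{1}$.} Let $F \subset E'$ be a line subbundle. Its saturation in $E$ has degree at most $\lfloor 2\ell + \tfrac43 \rfloor = 2\ell+1$, so $\deg F \le 2\ell+3$. If $F$ were maximal destabilizing, Lemma~\ref{lem:black magic} would force $\deg F$ to be even, so $\deg F \le 2\ell+2 = \mu(E')$. For a rank-two subbundle $F$, the saturation in $E$ has degree at most $\lfloor 2(2\ell+\tfrac43)\rfloor = 4\ell+2$. Hence $\deg F \le 4\ell+4$, i.e.\ $\mu(F) \le \mu(E')$. So $E'$ is semistable. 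I would also cross-check the rank-one case with Lemma~\ref{lem: direction of modification}: a line subbundle of degree $2\ell+1$ in $E$ cannot contain both general directions at $x$ and $y$.

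\emph{Semistability when $E$ has type $\Utype{1}{1}{\frac23}$.} Let $V \subset E$ be the destabilizing line bundle, of degree $2\ell+2$. By Remark~\ref{rem: destab quotient distinct JHF factors}, $E/V \cong M_1 \oplus M_2$ with $M_1 \not\cong M_2$, both of degree $2\ell+1$. By Lemma~\ref{lem: direction of modification}, the modification directions at $x$ and $y$ avoid $V$. Hence $V$ stays saturated of degree $2\ell+2$ in $E'$, and $E'/V \cong (M_1\oplus M_2)[x \posmod \cdot][y \posmod \cdot]$ with the images of the directions general in $\PP(E/V)$.

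Consider a line subbundle of $E'$ other than $V$. It maps isomorphically onto a line subsheaf of $E'/V$. Such a line subbundle has degree at most $2\ell+1$, or $2\ell+2$ if it picks up one modification. Picking up both modifications would require $M_1$ or $M_2$ to contain two general directions, which is impossible since there are only finitely many such subbundles. So every line subbundle has degree at most $2\ell+2$.

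A rank-two subbundle of $E'$ either contains $V$, in which case its degree is at most $(2\ell+2)+(2\ell+2)$, or maps injectively to $E'/V$, in which case its degree is at most $4\ell+4$. In both cases the slope is at most $2\ell+2$. So $E'$ is semistable in this case too.

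\emph{Ruling out $E' \cong L^{\oplus 3}$.} This is the step needing the most care, and I would handle it uniformly in both cases. Suppose $E' \cong L^{\oplus 3}$ with $\deg L = 2\ell+2$. Then $E = E'[x \negmod y][y \negmod x]$ is a negative modification of $E'$ along one direction at $x$ and one at $y$. Take the rank-two subbundle $L^{\oplus 2} \subset L^{\oplus 3}$ spanned by these two directions; such a subbundle exists because any two lines in $k^3$ span a plane. It contains both modification directions, so it loses no degree under the negative modifications and gives a rank-two subsheaf of $E$ of degree $4\ell+4$.

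But $E$ admits no rank-two subbundle of degree $4\ell+4$. In type $\Stype{1}$, semistability bounds the degree by $4\ell+2$. In type $\Utype{1}{1}{\frac23}$, the HN-filtration bounds the degree by $(2\ell+2)+(2\ell+1) = 4\ell+3$. This contradiction shows $E' \not\cong L^{\oplus 3}$, so $E'$ has type $\Stype{0}$.

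The main obstacle is justifying the generality claims in the $\Utype{1}{1}{\frac23}$ case: that the directions avoid $V$, and that their images in $E/V$ avoid both $M_i$ simultaneously. Here I would invoke Lemma~\ref{lem: direction of modification} together with the finiteness of these subbundles, which follows from $M_1 \not\cong M_2$.
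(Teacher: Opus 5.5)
\textbf{Semistability.} This part of your proposal is sound and follows essentially the paper's route: bound subbundles of $E' = E[x \biposmod y]$ by their intersections with $E$, get parity from Lemma~\ref{lem:black magic}, and use Lemma~\ref{lem: direction of modification} to keep the modifications off $V$ and off $M_1, M_2$. In type $\Utype{1}{1}{\frac{2}{3}}$ you pass through $E'/V$, whereas the paper notes that a rank-one destabilizer would need degree $2\ell+4$ and so would have to be $V$ picking up both modifications. Your route works. One small correction: a line subsheaf of degree $\le 2\ell$ in $M_1 \oplus M_2$ can pick up both modifications, but that still gives the bound $2\ell+2$.

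\textbf{The gap: ruling out $E' \cong L^{\oplus 3}$.} Your argument here fails.
\begin{itemize}
\item Since $E \subset E'$ has colength one at $x$ and at $y$, you have $E = \ker(E' \to E'|_x/W_x \oplus E'|_y/W_y)$, where $W_x, W_y$ are $2$-planes (the images of $E|_x$ and $E|_y$). It is not a negative modification toward lines. In the paper's notation $E'[x \negmod y][y \negmod x]$ has degree $\deg E - 2$. Also, a rank-two subbundle containing a line $D$ still loses a degree under $[x \negmod D]$.
\item A rank-two subbundle of $L^{\oplus 3}$ of degree $4\ell+4$ is constant, i.e.\ $P \otimes L$ with $P \subset k^3$: its quotient twisted by $L^{\vee}$ is a globally generated degree-$0$ line bundle, hence trivial. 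Such a subbundle survives into $E$ without loss iff $P = W_x = W_y$.
\item If $W_x = W_y$, then $E \cong L \oplus L \oplus L(-x-y)$, and your bound does give a contradiction.
\item If $W_x \neq W_y$, then $E \cong L \oplus L(-x) \oplus L(-y)$. This bundle satisfies all the numerical constraints of type $\Utype{1}{1}{\frac{2}{3}}$. Its rank-two subbundles have degree at most $4\ell+3$. Yet positive modifications toward $L(-x)|_x$ and $L(-y)|_y$ return $L^{\oplus 3}$.
\end{itemize}
So no degree count can finish this step. Using the line $W_x \cap W_y$ instead gives $L \subset E$ of degree $2\ell+2$. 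That disposes of type $\Stype{1}$, but in type $\Utype{1}{1}{\frac{2}{3}}$ it is just $V$.

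\textbf{The missing ingredient.} You need the generality of $x, y$ relative to the fixed isomorphism class of $E$. The paper argues that $E' \cong L^{\oplus 3}$ forces $E \cong L \oplus L(-x) \oplus L(-y)$ or $E \cong L \oplus L \oplus L(-x-y)$. The summands of $E$ do not depend on $x, y$, so they would determine $\OO_C(x+y)$, a contradiction. In your notation: $E'/V \cong L^{\oplus 2}$ forces $\{M_1, M_2\} = \{L(-x), L(-y)\}$. Then $M_1 \otimes M_2^{\vee} \cong \OO_C(\pm(x-y))$, which is impossible for general $x, y$ because $M_1, M_2$ are fixed.
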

\begin{proof}
Suppose $E [x \biposmod  y]$ is not semistable. Let $F$ be the maximal destabilizing subbundle. Any line subbundle of $E$ has slope at most $2\ell + 2$ and any rank two subbundle of $E$ has slope at most $2\ell + \frac{3}{2}$.  So, if \(\rk(F) = 1\), then \(\mu(F) \leq 2\ell + 4\), and if \(\rk(F) = 2\), then \(\mu(F) \leq 2\ell + \frac{5}{2}\). By Lemma \ref{lem:black magic}, if \(\rk(F) = 1\), then \(\mu(F) \in 2\mathbb{Z}\), and if \(\rk(F) = 2\), then \(\mu(F) \in \mathbb{Z}\).  We conclude that $\rk(F) = 1$, and $E$ has type $\Utype{1}{1}{\frac{2}{3}}$, and the intersection of $F$ with $E \subset E [x \biposmod  y]$ is the maximal destabilizing subbundle of $E$, which must pick up both modifications.  Thus, it suffices to observe that the maximal destabilizing subbundle of \(E\) cannot pick up both modifications by generality of the direction \([x \posmod y]\).

Next we rule out $E[x \biposmod y] \cong L^{\oplus 3}$ for some line bundle $L$. If this were the case, then as in the proof of Lemma~\ref{lem: S_0 big mod}, either $E \cong L \oplus L(-x) \oplus L(-y)$ or $E \cong L\oplus L \oplus L(-x-y)$. In either case, the line bundle $\OO_C(x+y)$ would be determined by the isomorphism class of $E$, contradicting the generality of $x$ and $y$.
\end{proof}

\begin{lemma}\label{lem: S2 U22 small mod}
Suppose $E$ has type $\Stype{2}$ or $\Utype{2}{2}{\frac{1}{3}}$.  Then $E[x \biposmod  y]$ has type $\Stype{1}$ or $\Utype{1}{1}{\frac{2}{3}}$.
\end{lemma}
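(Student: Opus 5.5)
Write $E' := E[x \biposmod y]$. Since $c_1(E') = c_1(E) + 2 \equiv 1 \pmod 3$ and $\mu(E) = 2\ell + \frac{2}{3}$, we have $\mu(E') = 2\ell + \frac{4}{3}$. It therefore suffices to show the following: if $E'$ is unstable, then its maximal destabilizing subbundle $F$ is a line bundle of slope exactly $2\ell + 2$, and the HN-filtration has only two steps. We follow the pattern of Lemmas~\ref{lem: S_0 small mod} and~\ref{lem: S1 U11 U12 small mod}. First we bound slopes of subbundles of $E$, then we add the possible gain from the two modifications, and finally we use the divisibility constraints from Lemma~\ref{lem:black magic}.

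\textbf{Step 1 (slope bounds in $E$).} If $E$ has type $\Stype{2}$, then every line subbundle has slope at most $2\ell + \frac{2}{3}$, hence at most $2\ell$ since it is an integer. Every rank two subbundle has slope at most $2\ell + \frac{2}{3}$, hence at most $2\ell + \frac{1}{2}$.

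If $E$ has type $\Utype{2}{2}{\frac{1}{3}}$, the destabilizing $V$ has rank two and slope $2\ell + 1$. The quotient $E/V$ has degree $6\ell + 2 - 4\ell - 2 = 2\ell$. A line subbundle either lies in $V$, with slope at most $2\ell + 1$ by semistability of $V$ (and Lemma~\ref{lem:black magic} forbids odd-degree JH pieces here), or maps nonzero to $E/V$, with slope at most $2\ell$. So line subbundles have slope at most $2\ell$, except possibly those inside $V$, which have slope at most $2\ell$ because $V$ is semistable of slope $2\ell+1$ with $2$-divisible JH degrees. Rank two subbundles have slope at most $2\ell + 1$, with equality only for $V$.

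\textbf{Step 2 (after modification).} Each positive modification raises the degree of a subbundle by at most one.
\begin{itemize}
\item A line subbundle $F \subset E'$ has $\mu(F) \le 2\ell + 2$. Lemma~\ref{lem:black magic} requires even degree for a rank one HN piece, which excludes slope $2\ell+1$. So either $F$ is not destabilizing, or $\mu(F) = 2\ell + 2 = \mu(E') + \frac{2}{3}$, and it must pick up both modifications.
\item A rank two $F$ has $\mu(F) \le 2\ell + 2$, with slope $2\ell + 2$ only when $F$ comes from $V$ and picks up both modifications. By Lemma~\ref{lem: direction of modification}, the direction $[x \posmod y]$ is linearly general, so it does not lie in $V|_x$. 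Hence $\mu(F) \le 2\ell + \frac{3}{2}$. Integrality from Lemma~\ref{lem:black magic} (degree divisible by $2$, so slope in $\frac{1}{2}\mathbb{Z}$ with even degree, i.e.\ $\mu \in \mathbb{Z}$) then forces $\mu(F) \le 2\ell + 1 < \mu(E')$.
\end{itemize}
So rank two destabilizers are excluded.

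\textbf{Step 3 (type $\Utype{1}{1}{\frac{2}{3}}$).} If $F$ is a line bundle of slope $2\ell + 2$, then $E'/F$ has slope $2\ell + 1 < \mu(F)$. The HN-filtration is two-step provided $E'/F$ is semistable. A destabilizing line subbundle of $E'/F$ would pull back to a rank two subbundle of $E'$ of slope greater than $2\ell + \frac{3}{2}$, contradicting Step 2. So $E'$ has type $\Utype{1}{1}{\frac{2}{3}}$.

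If instead $E'$ is semistable, then its type is $\Stype{1}$; the condition $E' \not\cong L^{\oplus 3}$ is automatic because $\mu(E') \notin \mathbb{Z}$.

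\textbf{Main obstacle.} The delicate point is the divisibility bookkeeping in Step 2. We must check carefully that Lemma~\ref{lem:black magic} applies to the HN pieces of $E'$ itself, which holds because $E'$ is still two-cyclic. The relevant constraint is that a rank two HN piece has even degree, which rules out slope $2\ell + \frac{3}{2}$.
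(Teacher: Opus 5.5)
Your strategy is the paper's. You bound subbundle slopes in $E$, add the gain from the two modifications, exclude the equality case via linear generality of $[x \posmod y]$, and finish with the parity constraint from Lemma~\ref{lem:black magic}. However, Step~1 contains a false claim.

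In type $\Utype{2}{2}{\frac{1}{3}}$, Lemma~\ref{lem:black magic} does not forbid odd-degree JH factors of $V$. It only constrains the total degree $h_{s,n}$ of the non-isomorphic factors that occur with a given multiplicity. An elliptic curve carries no stable rank-two bundle of even degree, so $V$ (degree $4\ell+2$) has JH factors $L_1, L_2$ of degree $2\ell+1$. The lemma only rules out $L_1 \cong L_2$. Then $\operatorname{Ext}^1(L_2, L_1) = 0$ gives $V \cong L_1 \oplus L_2$; compare Remark~\ref{rem: destab quotient distinct JHF factors}. So $E$ always contains line subbundles of degree $2\ell+1$. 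The correct rank-one bound in $E[x \biposmod y]$ is therefore $\mu(F) \le 2\ell+3$, not $2\ell+2$. For the same reason, your remark that a degree-$(2\ell+2)$ line subbundle ``must pick up both modifications'' is wrong: it could be $L_i$ picking up only one modification.

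The error is harmless, and the fix uses only tools you already invoke. The parity fact you cite says a rank-one HN piece of the two-cyclic bundle $E[x \biposmod y]$ has even degree. It excludes $2\ell+3$ as well as $2\ell+1$, leaving $\mu(F) = 2\ell+2$. Alternatively, as in the paper, equality $2\ell+3$ would force $L_1$ or $L_2$ to contain the direction $[x \posmod y]$ at $x$, which Lemma~\ref{lem: direction of modification} rules out.

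Your rank-two analysis in Step~2 (the bound $\mu \le 2\ell+\frac{3}{2}$ for every rank-two subbundle, then parity) does not use the incorrect bound and is correct. The same holds for the semistability of $E'/F$ in Step~3. There the paper argues more directly: by the HN property, every line subbundle of $E'/F$ has degree less than $\mu(F) = 2\ell+2$, hence at most $2\ell+1 = \mu(E'/F)$.
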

\begin{proof}
Suppose $E [x \biposmod  y]$ is not stable.  Let $F$ be the maximal destabilizing subbundle. Any subbundle of $E$ has slope at most $2\ell+1$. So, if \(\rk(F) = 1\), then \(\mu(F) \leq 2\ell + 3\), and if \(\rk(F) = 2\), then \(\mu(F) \leq 2\ell + 2\). Furthermore, in case of equality, \(E\) must have type \(\Utype{2}{2}{\frac{1}{3}}\) and \(F\) must arise from a bundle contained in the maximal destabilizing subbundle which picks up as many modifications as possible. This cannot happen by Lemma \ref{lem: direction of modification}. As $\rk(F)\cdot \mu(F)$ cannot be odd by Lemma \ref{lem:black magic}, we conclude that $\rk(F) = 1$ and $\mu(F) = 2\ell + 2$.
Properties of the HN-filtration imply the quotient by $F$ is semistable, and so $E[x \biposmod  y]$ has type $\Utype{1}{1}{\frac{2}{3}}$. 
\end{proof}

\noindent
The following proposition summarizes our results.

\begin{proposition}\label{prop: types of bundles summary}
Let $C \subset \PP^4$ be a general BN-curve of even degree and genus $g = 1$. Suppose $E$ has type $\Stype{0}$.
Consider the modification 
$$E_{n,m} = E[x_1 \biposmod  y_1][x_1 \posmod  2y_1][y_1 \posmod 2x_1]\ldots [x_n \biposmod  y_n][x_n \posmod  2y_n][y_n \posmod 2x_n][z_1 \biposmod  w_1]\ldots [z_m \biposmod  w_m]$$
where $n,m \geq 0$ and the $x_i,y_i,z_j$ and $w_j$ form a general collection of points on $C$.  The bundle $E_{n,m}$ has one of the \(5\) types $\Stype{0}$, $\Stype{1}$, $\Stype{2}$, $\Utype{1}{1}{\frac{2}{3}}$, $\Utype{2}{2}{\frac{1}{3}}$  described by Definition \ref{def-types_of_bundles}. 
\end{proposition}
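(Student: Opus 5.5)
The plan is to argue by induction, applying the modifications in the order written: first the \(n\) ``big'' blocks \([x_i \biposmod y_i][x_i \posmod 2y_i][y_i \posmod 2x_i]\), then the \(m\) ``small'' blocks \([z_j \biposmod w_j]\). Every intermediate bundle is a two-cyclic modification of \(N_C\). The pairs \(\{x_i,y_i\}\) and \(\{z_j,w_j\}\) are swapped by the monodromy/symmetry that exchanges the two points of each pair, so every orbit on the index set has even cardinality. Since \(\deg C\) is even, Lemma \ref{lem:black magic} and all the lemmas of this subsection apply at each stage. At each step the new points are general with respect to the bundle already constructed, because the full collection of points is general. So the hypothesis ``\(x,y\in C\) a general pair'' in Lemmas \ref{lem: S_0 big mod}--\ref{lem: S2 U22 small mod} is satisfied, taking \(E\) to be the previous bundle.

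First handle the big blocks. Starting from \(E\) of type \(\Stype{0}\), apply Lemma \ref{lem: S_0 big mod} \(n\) times to conclude that \(E_{n,0}\) has type \(\Stype{0}\). The only point to check is that Lemma \ref{lem: S_0 big mod} applies to the output of the previous step. That output is again a modification of \(N_C\) whose data depends on a general collection of points, is two-cyclic, and has type \(\Stype{0}\). This is exactly the hypothesis of the lemma.

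Next handle the small blocks with a state-transition argument on the set \(\{\Stype{0},\Stype{1},\Stype{2},\Utype{1}{1}{\frac{2}{3}},\Utype{2}{2}{\frac{1}{3}}\}\). By Lemma \ref{lem: S_0 small mod}, \(\Stype{0}\) goes to \(\Stype{2}\) or \(\Utype{2}{2}{\frac13}\). By Lemma \ref{lem: S2 U22 small mod}, \(\Stype{2}\) and \(\Utype{2}{2}{\frac13}\) go to \(\Stype{1}\) or \(\Utype{1}{1}{\frac23}\). By Lemma \ref{lem: S1 U11 U12 small mod}, \(\Stype{1}\) and \(\Utype{1}{1}{\frac23}\) go back to \(\Stype{0}\). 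This set of five types is therefore closed under \(E\mapsto E[z\biposmod w]\). Inducting on \(m\), \(E_{n,m}\) has one of the five types; in fact its type is determined up to this ambiguity by \(m \bmod 3\), consistent with \(c_1\) dropping by \(2\) modulo \(3\) at each step.

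The main obstacle is justifying that generality is preserved across the iteration. Each lemma asserts a property for a general pair \((x,y)\) relative to a fixed \(E\), whereas the proposition takes all points jointly general. I would resolve this by working over the parameter space of all the points. The type is constant on a nonempty open set, since HN numerics, semistability, and the condition \(E\not\cong L^{\oplus 3}\) are constructible/open conditions. Choosing the last pair general relative to the first ones then matches a general point of the joint parameter space. A secondary check is that the two-cyclic structure, and hence the applicability of Lemma \ref{lem:black magic} inside each lemma, is inherited at every step; this holds because each new block is itself symmetric under swapping its two points.
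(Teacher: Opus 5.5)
Your proposal is correct and matches the paper's proof: iterate Lemma~\ref{lem: S_0 big mod} to get \(E_{n,0}\) of type \(\Stype{0}\), then close up under the transitions given by Lemmas~\ref{lem: S_0 small mod}--\ref{lem: S2 U22 small mod} for the \([z_j \biposmod w_j]\) steps. Your added care about joint generality and inherited two-cyclicity is implicit in the paper. One slip in a side remark: each \([z \biposmod w]\) raises \(c_1\) by \(2\), so \(c_1 \bmod 3\) cycles \(0 \to 2 \to 1 \to 0\), which is a drop of \(1\) (not \(2\)) modulo \(3\).
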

\begin{proof}
Applying Lemma \ref{lem: S_0 big mod}, we conclude that \(E_{n,0}\) has type \(S_0\).

We then apply Lemmas \ref{lem: S_0 small mod} through \ref{lem: S2 U22 small mod} to study the modifications \([z_j \biposmod w_j]\). The following table displays the possible transitions between \(E_{n,m}\) and \(E_{n,m+1}\), where the rows correspond to the possible types of \(E_{n,m}\) and the columns correspond to the possible types of \(E_{n,m+1}\).

\begin{center}
{\renewcommand{\arraystretch}{1.2}
\begin{tabular}{c||c|c|c|c|c}
{} & $\Stype{0}$ & $\Stype{1}$ & $\Stype{2}$ & $\Utype{1}{1}{\frac{2}{3}}$ & $\Utype{2}{2}{\frac{1}{3}}$ \\ \hline\hline
$\Stype{0}$ & & & $\bullet$ & & $\bullet$ \\ \hline 
$\Stype{1}$ & $\bullet$ & & & & \\ \hline
$\Stype{2}$ & & $\bullet$ & & $\bullet$ & \\ \hline
$\Utype{1}{1}{\frac{2}{3}}$ & $\bullet$ & & & & \\ \hline
$\Utype{2}{2}{\frac{1}{3}}$ & & $\bullet$ & & $\bullet$ &  
\end{tabular}}
\end{center}
This completes the proof.
\end{proof}

\subsection{Attaching One-Secant Lines} Here we describe some stability properties of bundles obtained by gluing one-secant lines to the elliptic curve $C$.  Throughout this subsection we adopt the following notation:
\begin{itemize}
\item $(u,v) \in C \times \PP^4$ is a general point;
\item $E$ is a vector bundle on $C$ such that either $E$ or $E[2u \posmod  v]$ is a two-cyclic modification of $N_C$;
\item $\ell$ is the unique integer such that $\mu(E) \in \{2\ell, 2\ell + \frac{2}{3}, 2\ell + \frac{4}{3}\}$.
\end{itemize}

In this subsection, we will study the modification $E[2u \posmod  v] \cong E[u \posmod v][u \posmod v]$ that arises in Lemma~\ref{lem: pull off 1-secant} from attaching the one-secant line $\overline{uv}$ to $C$.
We will assume $E$ has one of the $7$ types listed after Definition~\ref{def-types_of_bundles}.  By Lemma \ref{lem:black magic}, $E$ cannot have type $\Utype{1}{2}{\frac{1}{6}}$ or $\Utype{2}{1}{\frac{1}{3}}$ if it is a two-cyclic modification of $N_C$.  

\begin{lemma}\label{lem: S_0 one-secant mod}
Suppose $E$ has type $\Stype{0}$.  Then $E[2u \posmod v]$ has type $\Stype{2}$, $\Utype{2}{2}{\frac{1}{3}}$, or $\Utype{2}{1}{\frac{1}{3}}$.
\end{lemma}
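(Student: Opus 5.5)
Since $E$ has type $\Stype{0}$, we have $\mu(E) = 2\ell$, and $E[2u \posmod v]$ has $c_1$ increased by $2$. Hence $c_1 \equiv 2 \pmod 3$ and $\mu(E[2u\posmod v]) = 2\ell + \frac{2}{3}$. The plan is to follow the template of Lemmas \ref{lem: S_0 small mod} and \ref{lem: S2 U22 small mod}: bound the slope of a maximal destabilizing subbundle $F \subset E[2u \posmod v]$ using semistability of $E$, and rule out the extreme values using generality of $v$.

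We bound the slope of $F$ according to its rank.

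\textbf{Rank one.} The intersection $F \cap E$ has slope at most $2\ell$. The modification $[2u \posmod v] = [u\posmod v][u \posmod v]$ adds at most $2$ to the degree of a line subbundle, so $\mu(F) \leq 2\ell+2$. Equality forces a line subbundle $M\subset E$ of slope $2\ell$ with $v$-direction in $\PP M|_u$. Since $v \in \PP^4$ is general, the direction $[u \posmod v]$ is a general point of $\PP E|_u$. Lemma \ref{lem: JHF images}(1) then gives $E \cong M^{\oplus 3}$, contradicting type $\Stype{0}$. Note also that $\mu(F) = 2\ell+2$ requires picking up both steps of $[2u\posmod v]$, which forces the first step to lie in $M$. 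So $\mu(F) \leq 2\ell+1 = \mu(E[2u\posmod v]) + \frac{1}{3}$.

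\textbf{Rank two.} The intersection $F\cap E$ has slope at most $2\ell$, and the modification adds at most $2$ to its degree, so $\mu(F) \le 2\ell + 1$. Equality requires a rank two subbundle of slope $\mu(E)$ containing the general direction $v$ at $u$. The subtlety is that Lemma \ref{lem: JHF images}(2) does not directly forbid this. However, rank-two $F$ with $\mu(F) = 2\ell+1$ gives $\mu(F) - \mu = \frac{1}{3}$, which is exactly type $\Utype{2}{2}{\frac{1}{3}}$. So no contradiction is needed, and the bound $\mu(F) \leq \mu + \frac13$ suffices.

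In both cases any destabilizing subbundle exceeds $\mu$ by exactly $\frac{1}{3}$. A line subbundle of slope $2\ell+1$ or a rank-two subbundle of slope $2\ell+1$ are the only possibilities. Note that no use of Lemma \ref{lem:black magic} is needed here; this is important, since $E[2u\posmod v]$ need not be two-cyclic.

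Next we show the HN-filtration has exactly two steps with the stated ranks. Suppose $F$ is the maximal destabilizing subbundle, of rank $r$ and slope $\mu + \frac13$. We need the quotient to be semistable. If $r=1$, a destabilizing subbundle of the quotient would lift to a rank two subbundle of slope greater than $\mu(F)$'s average appropriately. Concretely, it would produce a rank two subbundle $G \supset F$ with $\mu(G) > \mu$. By the rank-two bound, $\mu(G) \le \mu + \frac13$, so $\deg G \le 2\ell + 2 + \ldots$. Bookkeeping with $3\mu = 6\ell+2$ shows $G/F$ would have slope at least that of $F$, contradicting maximality. The case $r = 2$ is symmetric, using the rank-one bound. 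This yields types $\Utype{2}{1}{\frac13}$ or $\Utype{2}{2}{\frac13}$.

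If instead $E[2u\posmod v]$ is semistable, it is of type $\Stype{2}$ automatically, since $c\neq 0$ makes the non-$L^{\oplus3}$ condition vacuous.

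The main obstacle is the equality case for rank one, namely ensuring that both steps of $[2u\posmod v]$ can be picked up only by a line subbundle containing the general direction. Here I would use that $E[u\posmod v][u\posmod v]$ picks up the same direction twice. After the first step, the saturation of $M$ must contain that direction, so the first step already forces $v \in \PP M|_u$, and Lemma \ref{lem: JHF images} applies.
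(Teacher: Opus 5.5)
Your proposal is correct and follows essentially the same route as the paper. You bound the slope of the maximal destabilizing subbundle $F$ of $E[2u \posmod v]$ by rank using semistability of $E$, rule out the rank-one extreme $\mu(F)=2\ell+2$ via Lemma~\ref{lem: JHF images}(1) and generality of the direction toward $v$, accept the rank-two extreme $\mu(F)=2\ell+1$ as type $\Utype{2}{2}{\frac{1}{3}}$, and use the HN property in the rank-one case.

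Your bookkeeping for semistability of the quotient when $\rk F = 1$ is garbled, but the conclusion follows at once. A destabilizing line subbundle of $E[2u\posmod v]/F$, which has slope $2\ell+\frac{1}{2}$, would have degree at least $2\ell+1=\deg F$, so its preimage would contradict the maximality of $F$.
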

\begin{proof}
Suppose $E[2u \posmod v]$ is not stable.  Let $F$ be the maximal destabilizing subbundle. Since $E$ is semistable, if $\rk(F)=2$, then $\mu(F) \leq 2\ell+1$ and $E[2u \posmod v]$ has type $\Utype{2}{2}{\frac{1}{3}}$. Similarly, if $\rk(F)=1$, then $\mu(F) \leq 2\ell+2$. If $\mu(F)=2\ell+2$, then $E$ must have a line subbundle that picks up both modifications. Since $E \not\cong L^{\oplus 3}$ for any line bundle $L$ and the modification $[u \posmod v]$ is general, this is impossible by Lemma \ref{lem: JHF images}.   
We conclude that either $E[2u \posmod v]$ is stable or the maximal destabilizing subbundle $F$ has slope $\mu(F)= \mu(E) + \frac{1}{3}$.  If $\rk(F) = 1$, then properties of the HN-filtration imply the quotient by $F$ is semistable, and so $E[2u \posmod v]$ has type $\Utype{2}{1}{\frac{1}{3}}$. 
\end{proof}

\begin{remark}
The argument of Lemma \ref{lem: S_0 one-secant mod} shows that $E[2u \posmod  v]$ has type $\Stype{2}$ or $\Utype{2}{1}{\frac{1}{3}}$ whenever $\dim_k \Hom(L, E) \leq 1$ for every line bundle $L$ of slope $\mu(E)$. 
Indeed, in this case, there can only be finitely many rank $2$ subbundles of $E$ with slope $\mu(E)$ and none can pick up the modification $[u \posmod v]$.
\end{remark}

\begin{lemma}\label{lem: S1 one secant mod}
Suppose $E$ has type $\Stype{1}$, $\Utype{1}{1}{\frac{2}{3}}$, or $\Utype{1}{2}{\frac{1}{6}}$.  Then $E[2u \posmod  v]$ has type $\Stype{0}$.
\end{lemma}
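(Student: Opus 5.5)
The plan is to mirror the proof of Lemma~\ref{lem: S1 U11 U12 small mod}, replacing the two modifications $[x \posmod y][y \posmod x]$ at distinct points by the double modification $[u \posmod v][u \posmod v]$ at a single general point $u$ towards a general point $v \in \PP^4$. Note first that $c_1(E[2u \posmod v]) = c_1(E) + 2 \equiv 0$ mod $3$, so $\mu(E[2u\posmod v]) = 2\ell + 2$ (recall $\mu(E) = 2\ell + \frac43$). Moreover, by the standing hypotheses either $E$ or $E[2u\posmod v]$ is two-cyclic, and since $[2u\posmod v]$ preserves two-cyclicity, $E[2u\posmod v]$ is a two-cyclic modification of $N_C$ in either case. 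Hence Lemma~\ref{lem:black magic} applies to $E[2u \posmod v]$.

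\textbf{Step 1: semistability.} Suppose $E[2u\posmod v]$ is not semistable, with maximal destabilizing subbundle $F$. In all three types, line subbundles of $E$ have slope at most $2\ell+2$ and rank two subbundles have slope at most $2\ell + \frac32$; in type $\Utype{1}{2}{\frac16}$ the rank two subbundle has slope $2\ell+\frac32$ and line subbundles have slope at most $2\ell + \frac32$. A subbundle can pick up at most $2$ from the two modifications. Thus $\mu(F) \le 2\ell+4$ if $\rk F = 1$, and $\mu(F) \le 2\ell + \frac52$ if $\rk F = 2$. Lemma~\ref{lem:black magic} gives $\mu(F) \in 2\mathbb{Z}$ when $\rk F = 1$ and $\mu(F) \in \mathbb{Z}$ when $\rk F = 2$. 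The rank two case therefore forces $\mu(F) \le 2\ell+2$, so it does not destabilize. The rank one case forces $\mu(F) = 2\ell+4$. This requires $E$ to have type $\Utype{1}{1}{\frac23}$ and the maximal destabilizing line subbundle $V\subset E$ to pick up both modifications, that is, to contain the direction $[u\posmod v]$ at $u$ to second order. Since $v$ is a general point of $\PP^4$, the direction of $[u \posmod v]$ in $\PP E|_u$ is general, as $\PP^4 \dashrightarrow \PP E|_u$ is dominant. Hence $V|_u$, which is a single fixed point, does not contain it. This is a contradiction.

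\textbf{Step 2: ruling out $L^{\oplus 3}$.} Suppose $E' := E[2u\posmod v] \cong L^{\oplus3}$. The map $E'|_u \to E(2u)|_u$ has rank $1$, and as in Lemma~\ref{lem: S_0 big mod} we undo the modification one step at a time. The first negative modification $E'[u \negmod \Lambda]$ towards a line $\Lambda$ gives $L\oplus L(-u)^{\oplus 2}$ for a suitable splitting. The second gives $E$, which is either $L \oplus L(-u) \oplus L(-2u)$ or $L\oplus L \oplus L(-2u)$, or contains a rank two piece determined by $u$. In every case the isomorphism class of $E$ (or of its HN/JH factors) determines $\OO_C(2u)$ up to finitely many choices, or equivalently $L$ together with $L(-2u)$. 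The key point is that the isomorphism class of $E$ does not depend on $(u,v)$ when $E$ is the bundle being modified, whereas $u$ is general. This contradicts the generality of $u$, as in the final paragraph of Lemma~\ref{lem: S1 U11 U12 small mod}.

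\textbf{Main obstacle.} The delicate point is Step 2 when $E$ is the bundle that is not two-cyclic, so that only $E[2u\posmod v]$ is. Here $E$ itself may depend on $u$, and the "determined by the isomorphism class" argument breaks down. In that situation I would instead use two-cyclicity of $E'$ through Lemma~\ref{lem:black magic}: $E' \cong L^{\oplus 3}$ is compatible with that lemma, so I would argue via directions instead. The splitting $L \oplus L(-u)^{\oplus 2}$ requires the second-order data of $[2u\posmod v]$ to align with a fixed line of $E|_u$. This fails because, as $v$ varies in $\PP^4$, the directions at $u$ (and their first-order jets) are linearly general, in the spirit of Lemma~\ref{lem: direction of modification}.
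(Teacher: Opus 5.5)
\textbf{Gap in Step 1.} Your opening claim that $[2u \posmod v]$ preserves two-cyclicity is false, and Step 1 depends on it. The new point $u$ carries a modification that no other point carries, so it forms a $G$-orbit of size $1$. This is why the standing hypothesis says \emph{either} $E$ \emph{or} $E[2u\posmod v]$ is two-cyclic. It is also why Proposition~\ref{prop: one secant} gets two-cyclicity of $E_{n,m,k}$ only for even $k$, and why Lemma~\ref{lem: S_0 one-secant mod} allows the output type $\Utype{2}{1}{\frac{1}{3}}$, whose destabilizing line bundle has odd degree.

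The case you lose is one the paper actually uses: $E_{n,m,0}\to E_{n,m,1}$ with $E_{n,m,0}$ of type $\Stype{1}$ or $\Utype{1}{1}{\frac{2}{3}}$. There $E$ is two-cyclic and $E[2u\posmod v]$ is not, so Lemma~\ref{lem:black magic} cannot be applied to $E[2u\posmod v]$. Step 1 then leaves two destabilizing candidates unaddressed:
\begin{enumerate}
\item a line subbundle of degree $2\ell+3$, coming from a line subbundle $F'\subset E$ of degree $2\ell+1$ that picks up both modifications;
\item a rank two subbundle of slope $2\ell+\frac{5}{2}$, coming from $F'\subset E$ of slope $2\ell+\frac{3}{2}$ that picks up both.
\end{enumerate}
Your Step 1 is complete only when $E[2u\posmod v]$ is the two-cyclic bundle.

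\textbf{The missing idea} is to use two-cyclicity of $E$ in this case.
\begin{itemize}
\item Both candidates have odd $c_1$, so by Lemma~\ref{lem:black magic} they can only occur when $E$ is two-cyclic. Then $E$ has type $\Stype{1}$ or $\Utype{1}{1}{\frac{2}{3}}$.
\item In type $\Utype{1}{1}{\frac{2}{3}}$, Remark~\ref{rem: destab quotient distinct JHF factors} gives $E\cong V\oplus L_1\oplus L_2$ with $L_1\not\cong L_2$.
\item A rank two $F'$ of slope $2\ell+\frac{3}{2}>\mu(E)$ forces this unstable type. It must then be one of the finitely many summands $V\oplus L_i$, and these cannot contain the general direction of $[u\posmod v]$.
\item For a rank one $F'$ of degree $2\ell+1$, the isomorphism class cannot vary with $v$, since $v$ ranges over the rational variety $\PP^4$. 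Containing a general direction at $u$ would therefore require $\dim_k\Hom(F',E)\geq 3$. But this dimension is $1$ in type $\Stype{1}$ by Riemann--Roch, and at most $2$ in type $\Utype{1}{1}{\frac{2}{3}}$ since $L_1\not\cong L_2$.
\end{itemize}

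\textbf{Step 2.} Your \emph{main obstacle} is illusory: $(u,v)$ is chosen generally after $E$ is fixed, so $E$ never depends on $u$. The simple argument works in both cases. Suppose $E[2u\posmod v]\cong L^{\oplus 3}$. The map $\Hom(L,L^{\oplus 3})\to\Hom(L,E[2u\posmod v]/E)$ goes from a $3$-dimensional space to a $2$-dimensional one, so there is a nonzero map $L\to E$. This map destabilizes $E$, so $L$ is the maximal destabilizing subbundle of $E$. Then $\OO_C(2u)\cong L^{\otimes 3}\otimes(\det E)^{\vee}$ would be determined by $E$, contradicting generality of $u$. Separately, your case list there is off: $L\oplus L(-u)\oplus L(-2u)$ has colength $3$ in $L^{\oplus 3}$, not $2$.
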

\begin{proof}
Suppose $E[2u \posmod v]$ is not semistable.  Let $F$ be the maximal destabilizing subbundle.  Any subbundle of $E$ has slope at most $2\ell+2$. Furthermore, the maximal subbundle of slope $2\ell+2$ is unique. Such a bundle therefore misses the modification $[2u \posmod v]$ entirely.   We conclude that if $\rk(F)=1$, then $\mu(F) \leq 2\ell+3$. If $\rk(F) = 2$, then $\mu(F) \leq 2\ell + \frac{5}{2}$.  Since $\mu(F) > 2\ell + 2$, in both cases we must have equality.  Therefore, $c_1(F)$ is odd and so Lemma \ref{lem:black magic} implies $E[2u \posmod v]$ cannot be a two-cyclic modification of $N_C$. By hypothesis $E$ must be a two-cyclic modification of $N_C$.  In particular, $E$ must have type $\Stype{1}$ or $\Utype{1}{1}{\frac{2}{3}}$.      
Consider the intersection $F'$ of $F$ with $E \subset E [2u \posmod v]$.  Since $F'$ cannot have slope $2\ell + 2$ and $\mu(F) > 2\ell + 2$, we must have $c_1(F') = c_1(F) - 2$.
    
First suppose that $\rk(F) = 2$, so that $F' \subset E$ is a subbundle of slope $2\ell + \frac{3}{2}$.  As $\mu(E) < \mu(F')$, the bundle $E$ is not stable and $F'$ must intersect the maximal destabilizing subbundle $V \subset E$.  Thus $E$ has type $\Utype{1}{1}{\frac{2}{3}}$.  Then by Remark~\ref{rem: destab quotient distinct JHF factors}, $E \cong V \oplus L_1 \oplus L_2$ for some pair of line bundles $L_1 \not\cong L_2$, and $F'$ would be a direct summand of $E$. This contradicts the generality of $v \in \PP^4$.
    
Next suppose that $\rk(F) = 1$ and $F' \subset E$ is a line bundle of degree $2\ell + 1$.  The isomorphism class of $F'$ cannot depend on the choice of $v \in \PP^4$ because the latter is parameterized by a rational variety.  Thus, since $F'$ meets the linearly general modification $[2u \posmod v]$ we must have $\dim_k \Hom(F', E) \geq 3$. This cannot happen by Remark \ref{rem: destab quotient distinct JHF factors}.  Thus $E [2u \posmod v]$ is semistable.
    
It remains to show that $E [2u \posmod v]$ is not isomorphic to $L^{\oplus 3}$ for any line bundle $L$.  If this were the case,  
then there would be a nonzero map $L \to E$ that destabilizes $E$.  In this case $L$ would have to be a JH-factor of the maximal destabilizing subbundle of $E$.  In particular, the first Chern class of $L$ is independent of $u$.  The line bundle $\OO_C(2u)$ would therefore be determined by the isomorphism class of $E$, contradicting generality of $u$.
\end{proof}

\begin{lemma}\label{lem: S2 one secant mod}
If $E$ has type $\Stype{2}$, then $E[2u \posmod v]$ has type $\Stype{1}$.
\end{lemma}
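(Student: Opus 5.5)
The plan is a slope count, as in Lemmas~\ref{lem: S_0 one-secant mod} and \ref{lem: S1 one secant mod}. Since $E$ has type $\Stype{2}$, it is semistable with $\mu(E) = 2\ell + \frac{2}{3}$, so $\deg E = 6\ell + 2$. The modification $[2u \posmod v]$ raises $c_1$ by $2$. Hence $c_1(E[2u \posmod v]) \equiv 1 \pmod 3$ and $\mu(E[2u \posmod v]) = 2\ell + \frac{4}{3}$. The condition ``not isomorphic to $L^{\oplus 3}$'' is automatic because $3 \nmid c_1$. So it suffices to show $E[2u \posmod v]$ is semistable.

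Suppose $F \subset E[2u \posmod v]$ destabilizes, and let $F' = F \cap E$ be the corresponding saturated subbundle of $E$. Then $c_1(F) - c_1(F') \in \{0,1,2\}$: it counts how many of the two successive modifications $[u \posmod v][u \posmod v]$ $F'$ picks up. Semistability of $E$ gives the bounds:
\begin{itemize}
\item if $\rk F = 1$, then $\deg F' \leq 2\ell$, since $2\ell + \frac{2}{3}$ is not an integer;
\item if $\rk F = 2$, then $\deg F' \leq 4\ell + 1$.
\end{itemize}
In the rank-one case, $\mu(F) > 2\ell + \frac{4}{3}$ forces $\deg F' = 2\ell$ and $F'$ picks up both modifications. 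In the rank-two case, $\mu(F) > 2\ell + \frac{4}{3}$ means $\deg F > 4\ell + \frac{8}{3}$, so $\deg F = 4\ell + 3$. This forces $\deg F' = 4\ell + 1$ (slope $2\ell + \frac{1}{2}$) and again $F'$ picks up both modifications. Every other configuration already has slope at most $\mu(E[2u \posmod v])$, so only these two extremal cases remain to rule out.

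To rule them out, use that $v$ ranges over the rational variety $\PP^4$, while $u \in C$ is general and independent of $v$. Subbundles of $E$ of the relevant degree lie in families whose isomorphism classes cannot depend on a point of a rational variety, so, as in Lemma~\ref{lem: S1 one secant mod}, we may take the isomorphism class of $F'$ independent of $v$. The requirement that $F'$ pick up $[2u \posmod v]$ twice says that the $1$-jet of $F'$ at $u$ contains the (length two) modification datum determined by $v$.
\begin{itemize}
\item \emph{Rank one.} The datum is a point of $\PP E|_u \cong \PP^2$ together with a first-order extension of it. By Lemma~\ref{lem: direction of modification}, as $v$ varies this is linearly general. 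So the subbundles $\PP \Hom(F', E)$, together with the choice of isomorphism class of $F'$, would have to dominate a $4$-dimensional space of first-order data.
\item \emph{Rank two.} Dualize via Corollary~\ref{cor: JHF images dual} and work with quotients $E \to M$ with $\deg M = 2\ell + 1$, where $\chi(\Hom(E,M)) = 1$. The condition is again two conditions at first order beyond the one-dimensional choice of $M \in \mathrm{Pic}^{2\ell+1}(C)$.
\end{itemize}
In both cases I would compare the dimension of the family of such $F'$ with the number of conditions. In the rank-one case this gives $\dim_k \Hom(F',E) \geq 3$ or more; I would then argue, exactly as in Lemma~\ref{lem: JHF images}, that this forces $E$ to have a nonsemistable or $L^{\oplus 3}$-like structure incompatible with $\deg E = 6\ell+2$ and $\deg F' = 2\ell$.

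The main obstacle is this last dimension count. One must check that a semistable $E$ of slope $2\ell + \frac{2}{3}$ cannot have $\Hom(F', E)$ (respectively $\Hom(E, M)$) large enough to absorb the second-order condition imposed by $v$. I would first try to extend Lemma~\ref{lem: JHF images} to $1$-jets: a family of maps $F' \to E$ whose $1$-jets at $u$ fill out the general first-order data would produce, after one negative modification at $u$, a line subbundle $F'(u) \hookrightarrow E$ of degree $2\ell + 1 > \mu(E)$, contradicting semistability. Dually, in the rank-two case it would produce a quotient of degree $2\ell < \mu(E)$, again a contradiction.
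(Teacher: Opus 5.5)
\textbf{There is a genuine gap in the final step.} Your reduction is correct and matches the paper's. Type $\Stype{2}$ gives $\gcd(\deg E,\rk E)=1$, so $E$ is actually stable. The only candidate destabilizers are therefore a line bundle $F$ of degree $2\ell+2$, or a rank-two $F$ of degree $4\ell+3$. In each case $F'=F\cap E$ has degree $2\ell$ (resp.\ $4\ell+1$) and must pick up both copies of $[u\posmod v]$. Freezing the isomorphism class of $F'$ by rationality of $v$ is also exactly what the paper does.

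The gap is the last step, which you flag as the obstacle and then attack with claims that do not hold:
\begin{itemize}
\item Lemma~\ref{lem: direction of modification} only says the \emph{direction} $N_{C\to v}|_u$ is linearly general. It says nothing about first-order data. In fact the $1$-jet of $N_{C\to v}$ at $u$ depends only on the projection of $v$ from $u$, so it moves in at most a $3$-dimensional family, not a $4$-dimensional one. Your resulting count ``$\dim_k\Hom(F',E)\geq 3$'' is inconsistent in any case.
\item Lemma~\ref{lem: JHF images} is about subbundles of slope exactly $\mu(E)$. Here $\mu(F')<\mu(E)$, so it cannot be applied as you describe.
\item Your closing idea is that a map vanishing at $u$ gives $F'(u)\to E$, contradicting stability. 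This only yields $\dim_k\Hom(F',E)\le 3$. A $3$-dimensional space of maps can have images spanning all of $E|_u$, so this does not even stop $F'$ from picking up the first modification.
\end{itemize}

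The missing ingredient is an exact computation of $\Hom$ by Riemann--Roch. First, $F'$ is stable: in rank two, its line subbundles are line subbundles of $E$, hence of degree $\le 2\ell<\mu(F')$. Both $E$ and $F'$ are stable with $\mu(F')<\mu(E)$, so Serre duality on the elliptic curve gives $H^1(F'^\vee\otimes E)\cong\Hom(E,F')^\vee=0$. Hence
\[\dim_k\Hom(F',E)=\chi(F'^\vee\otimes E)=\rk F'\cdot\deg E-3\deg F'=2/\rk F',\]
which is $2$ in rank one and $1$ in rank two. In rank two you did compute $\chi(\Hom(E,M))=1$, but you never observed that $h^1=\dim\Hom(M,E)$ vanishes by stability, which would have finished that case.

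With this count the argument closes at first order. In rank one, all images of $F'|_u$ lie in a fixed $2$-dimensional subspace of $E|_u$. In rank two, $F'\subset E$ is a single fixed subbundle. Either way, a general $v$ puts $N_{C\to v}|_u$ outside, so $F'$ fails to pick up even one copy of $[u\posmod v]$. No second-order analysis is needed.
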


\begin{proof}
Suppose $E[2u \posmod v]$ is unstable and let $F$ be its maximal destabilizing subbundle.  Denote by $F'$ the intersection of $F$ with $E \subset E[2u \posmod v]$.  Since $E$ is stable, either $\rk(F) = 2$ and $\mu(F) = 2\ell + \frac{3}{2}$, or $\rk(F) = 1$ and $\mu(F) = 2\ell + 2$.  In both cases, $c_1(F') = c_1(F) - 2$ and $F'$ is stable.  Since $v$ is parameterized by a rational variety, the classification of stable vector bundles on elliptic curves implies the isomorphism type of $F'$ is independent of $v$.  Since $E$ is stable as well, $\dim_k \Hom(F',E) = \chi(\Hom(F',E)) = \frac{2}{\rk(F')}$.  Hence, for $v \in \PP^4$ general, $N_{C \to v}|_u$ lies outside the linear span of $\Hom(F',E)|_u \subset E|_u$, contradicting $c_1(F') = c_1(F) - 2$.
\end{proof}

\begin{lemma}
If $E$ has type $\Utype{2}{1}{\frac{1}{3}}$, then $E[2u \posmod v]$ has type $\Stype{1}$.
\end{lemma}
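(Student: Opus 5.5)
Write \(E' := E[2u \posmod v]\). We check the numerics first. Since \(E\) has type \(\Utype{2}{1}{\frac{1}{3}}\), we have \(\mu(E) = 2\ell + \frac{2}{3}\). Its maximal destabilizing subbundle \(V\) is a line bundle of degree \(2\ell+1\), and the quotient \(Q := E/V\) is semistable of rank \(2\) and degree \(4\ell+1\). Since this degree is odd, \(Q\) is in fact stable (Atiyah). Moreover \(c_1(E') = c_1(E)+2 \equiv 1 \pmod 3\) and \(\mu(E') = 2\ell + \frac{4}{3}\). The condition \(E' \not\cong L^{\oplus 3}\) is therefore automatic, and it suffices to prove that \(E'\) is semistable. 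The plan is to take a subbundle \(F \subset E'\) and set \(F' := F \cap E\). The degree of \(F\) is then \(\deg F'\) plus the length (at most \(2\)) of the part of the modification \([2u \posmod v]\) that \(F'\) picks up. I bound \(\deg F'\) using the HN-filtration \(V \subset E\), and bound the gain using the generality of \(v \in \PP^4\), as in Lemmas~\ref{lem: S1 one secant mod} and~\ref{lem: S2 one secant mod}.

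\emph{Rank one.} Here we need \(\deg F \le 2\ell+1\).
\begin{enumerate}
\item If \(F' = V\): the subbundle \(V\) is canonical and does not depend on \(v\). For general \(v\), the direction \(N_{C\to v}|_u\) is not \(V|_u\), so there is no gain and \(\deg F = 2\ell+1\).
\item If \(F' \neq V\): then \(F'\) maps nonzero to the stable bundle \(Q\), so \(\deg F' \le 2\ell\). The only danger is \(\deg F' = 2\ell\) together with a gain of at least \(1\).
\end{enumerate}
To rule out the dangerous case, note that \(F'\) is a line bundle varying with \(v\), which ranges over a rational variety, so the isomorphism class of \(F'\) is independent of \(v\). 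Now compute \(\Hom(F', E)\):
\begin{itemize}
\item \(\dim \Hom(F', V) \le 1\), since \(\Hom(F',V)\) is a line bundle of degree \(1\);
\item \(\dim \Hom(F', Q) = 1\), since \(Q \otimes F'^\vee\) is stable of degree \(1\).
\end{itemize}
Hence \(\dim \Hom(F', E) \le 2\), and the fibres at \(u\) of the images of such maps sweep out at most a line in \(\PP E|_u\). By Lemma~\ref{lem: direction of modification} (or simply because \(v\) is general in \(\PP^4\)), the direction \(N_{C \to v}|_u\) avoids that line, so there is no gain.

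\emph{Rank two.} Here we need \(\deg F \le 4\ell+2\). Every rank two subbundle \(F' \subset E\) has \(\deg F' \le 4\ell+1\), whether or not it contains \(V\):
\begin{itemize}
\item if \(F' \supset V\), then \(F'/V \subset Q\) is a line subbundle, so \(\deg(F'/V) \le 2\ell\);
\item otherwise \(F' \to Q\) is generically injective, so \(\deg F' \le 4\ell+1\).
\end{itemize}
The only bad case is therefore \(\deg F' \in \{4\ell, 4\ell+1\}\) with gain \(2\) (for degree \(4\ell\)), or gain at least \(1\) (for degree \(4\ell+1\)).

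I would handle this dually, as in Corollary~\ref{cor: JHF images dual}. Write \(M := E/F'\), a line bundle of degree \(2\ell+1\) or \(2\ell+2\), again with isomorphism class independent of \(v\). Then compute \(\Hom(E, M)\) via \(0 \to \Hom(Q,M) \to \Hom(E,M) \to \Hom(V,M)\). For \(\deg M = 2\ell+1\):
\begin{itemize}
\item \(\dim \Hom(Q, M) = 1\);
\item \(\dim \Hom(V, M) \le 1\);
\end{itemize}
so \(\dim \Hom(E, M) \le 2\). Gaining at \(u\) requires \(N_{C\to v}|_u \subset \ker(E|_u \to M|_u)\). A pencil of maps does cover \(\PP E|_u\), so this first-order condition alone does not suffice. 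However, the kernels form only a one-parameter family, while the condition on \(F'\) is that it contain the pointing direction, and the parameters here are \((u, v)\), with \(u\) also general.

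\emph{Main obstacle.} I expect the delicate step to be these boundary cases in rank two. There one must use either that a gain of \(1\) forces \(\deg F = 4\ell+2\), which is still allowed, or that the gain of \(2\) imposes a second-order condition along \(2u\) which a one-parameter family of kernels cannot meet for general \(v\). I would first try to argue exactly this: gaining \(2\) needs \(F'|_{2u} \supseteq N_{C\to v}|_{2u}\), which is two conditions on a family of dimension \(\le 1\). For the degree \(4\ell+1\) case, I would first check whether gain \(1\) already keeps \(\deg F \le 4\ell+2\), which is fine, and only worry about gain \(2\) there.
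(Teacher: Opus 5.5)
Your numerics and your rank-one analysis are correct. The rank-one part is essentially the paper's argument: the paper also reduces to a line subbundle $F'\subset E$ of degree $2\ell$ or $2\ell+1$, notes that $\dim\Hom(F',E)\le 2$ with the class of $F'$ independent of $v$, and concludes from generality of $v$.

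The rank-two part, however, is not a proof. Start with the bookkeeping. Semistability only needs $\deg F\le 4\ell+2$, so two of your listed cases are harmless: $\deg F'=4\ell$ with gain $2$, and $\deg F'=4\ell+1$ with gain $1$. The only dangerous case is $\deg F'=4\ell+1$ with gain $2$, i.e.\ $\deg F=4\ell+3$. For that case you offer only the count ``two conditions on a family of dimension $\le 1$.''

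Gain $2$ is a condition on the $1$-jet of $N_{C\to v}$ at $u$. Nothing available to you controls $1$-jets: Lemma~\ref{lem: direction of modification} and the generality of $v$ only control the fibre $N_{C\to v}|_u$. In fact, the $1$-jet at $u$ of $N_{C\to v}$ depends only on the line $\overline{uv}$. So these jets form a $3$-dimensional family inside the $4$-dimensional space of $1$-jets of $\PP E$ at $u$. Meanwhile, the jets lying in some member of a one-parameter family of rank-two $F'$ can also form a $3$-dimensional locus. A dimension count therefore cannot exclude that the former lies inside the latter. You would need a genuine transversality argument, which you have not supplied.

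The missing idea is two-cyclicity. The maximal destabilizing subbundle of $E$ is a line bundle of odd degree $2\ell+1$, so Lemma~\ref{lem:black magic} shows $E$ is not two-cyclic. By the standing hypothesis of the subsection, $E[2u\posmod v]$ is therefore two-cyclic.

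Now suppose $E[2u\posmod v]$ were not semistable with a rank-two maximal destabilizing subbundle $F$. Your own bounds force $\deg F=4\ell+3$. Then $F$ is semistable of odd degree, hence stable, and is its own JH factor with multiplicity one. Lemma~\ref{lem:black magic} would then require $2\mid 4\ell+3$, a contradiction. Note that it is essential to apply the lemma to the maximal destabilizing subbundle, not to an arbitrary subbundle.

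Hence the maximal destabilizing subbundle would have rank one, and your rank-one argument already rules that out. This is exactly how the paper argues, and it avoids any $1$-jet analysis.
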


\begin{proof}
Any line subbundle of $E$ has degree at most $2\ell + 1$, and any rank two subbundle of $E$ has slope at most $2\ell + \frac{1}{2}$.  Therefore any line subbundle of $E[2u \posmod v]$ has slope at most $2\ell + 3$, and any rank two subbundle of $E[2u \posmod v]$ has slope at most $2\ell + \frac{3}{2}$.
    
Observe that $E[2u \posmod v]$ must be a two-cyclic modification of $N_C$.
Thus, by Lemma \ref{lem:black magic} either it is stable or its maximal destabilizing subbundle is a line bundle $F$ of degree $2\ell + 2$.  The intersection $F'$ of $F$ with $E \subset E[2u \posmod v]$ would have to be a line bundle of degree $2\ell$ or $2\ell + 1$, and so $\dim_k \Hom(F',E) \leq 2$.  As the isomorphism class of $F'$ cannot depend on $v$, this contradicts generality of $v \in \PP^4$. 
\end{proof}

\begin{lemma}\label{lem: U22 one-secant mod}
If $E$ has type $\Utype{2}{2}{\frac{1}{3}}$, then $E[2u \posmod v]$ has type $\Stype{1}$, $\Utype{1}{1}{\frac{2}{3}}$, or $\Utype{1}{2}{\frac{1}{6}}$.
\end{lemma}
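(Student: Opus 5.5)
We bound the slopes of subbundles of $E' := E[2u \posmod v]$ directly from the HN-filtration of $E$, rule out the extremal cases using the generality of $v \in \PP^4$, and then read off the type.

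\textbf{Numerics.} Since $E$ has type $\Utype{2}{2}{\frac{1}{3}}$, we have $\mu(E) = 2\ell + \frac{2}{3}$. Its HN-filtration $0 \subset V \subset E$ has $V$ semistable of rank $2$ and slope $2\ell+1$, and quotient $E/V$ a line bundle of degree $2\ell$. The bundle $E'$ satisfies $c_1(E') = c_1(E) + 2$, so $c_1(E') \equiv 1$ mod $3$ and $\mu(E') = 2\ell + \frac{4}{3}$. Thus the target types $\Stype{1}$, $\Utype{1}{1}{\frac{2}{3}}$, $\Utype{1}{2}{\frac{1}{6}}$ correspond to: no destabilizing subbundle; a line subbundle of degree $2\ell+2$; a rank two subbundle of slope $2\ell + \frac32$. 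In each unstable case we also need a two-step HN-filtration.

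\textbf{Bounds on $E$.} A line subbundle $F' \subset E$ either lies in $V$, so $\deg F' \leq 2\ell+1$ by semistability of $V$, or maps nontrivially to $E/V$, so $\deg F' \leq 2\ell$. A rank two subbundle of $E$ has slope at most $2\ell+1$, with equality only for $F' = V$: any other such $F'$ meets $V$ in a line of degree $\leq 2\ell+1$ and has quotient mapping into $E/V$.

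\textbf{Bounds on $E'$.} For a saturated $F \subset E'$, write $F' = F \cap E$, so $c_1(F) - c_1(F') \leq 2$. Hence line subbundles of $E'$ have degree $\leq 2\ell+3$ and rank two subbundles have slope $\leq 2\ell+2$.

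\textbf{Excluding the extremal cases.}
\begin{itemize}
\item \emph{Rank two, slope $>2\ell+\frac32$.} This forces $c_1(F) = c_1(F') + 2$ with $c_1(F') = 4\ell+2$, hence $F' = V$, and $V|_u$ must contain the direction $N_{C\to v}|_u$. For general $v$ this direction is a general point of $\PP E|_u$, which avoids the fixed line $\PP V|_u$.
\item \emph{Line subbundle of degree $2\ell+3$.} Then $F' \subset V$ has degree $2\ell+1$ and picks up both modifications at $u$. As $v$ ranges over a rational variety, the isomorphism class of $F'$ is independent of $v$. Semistability of $V$ gives $\dim_k \Hom(F', E) = \dim_k \Hom(F', V) \leq 2$. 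The fibers at $u$ of these maps span at most a line in $\PP E|_u$, which the general direction $[u \posmod v]$ avoids. This is a contradiction.
\end{itemize}
Consequently every line subbundle of $E'$ has degree $\leq 2\ell+2$, and every rank two subbundle has slope $\leq 2\ell+\frac32$.

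\textbf{Reading off the type.} If nothing destabilizes, $E'$ is semistable and, since $c_1 \equiv 1$, it has type $\Stype{1}$. Otherwise, consider the maximal destabilizing subbundle $F$.
\begin{itemize}
\item If $F$ is a line bundle of degree $2\ell+2$, then $E'/F$ has rank $2$ and slope $2\ell+1$. Any line subbundle of $E'/F$ of degree $\geq 2\ell+2$ would lift to a rank two subbundle of $E'$ of slope $\geq 2\ell+2$, which is excluded. So $E'/F$ is semistable and $E'$ has type $\Utype{1}{1}{\frac23}$.
\item If $F$ has rank $2$ and slope $2\ell+\frac32$, the quotient is a line bundle, so the HN-filtration is two-step and $E'$ has type $\Utype{1}{2}{\frac16}$.
\end{itemize}
The case where a degree $2\ell+2$ line sits inside a rank two subbundle of slope $2\ell+\frac32$ is absorbed into the first case: the slopes of the resulting three-step filtration are not strictly decreasing.

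I expect the main obstacle to be the rigidity step used to exclude the degree $2\ell+3$ line subbundle. Specifically, one must justify that $F'$ is independent of $v$ while the direction $[u \posmod v]$ still varies in a linearly general family inside $\PP E|_u$, as in Lemma~\ref{lem: S2 one secant mod}. One must also handle the possibility that $V \cong F'^{\oplus 2}$, where $\Hom(F',V)$ attains dimension $2$ but its fibers at $u$ still span only a line.
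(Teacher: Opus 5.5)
Your argument is correct and is essentially the paper's. The only input is that, for general $v$, the direction $N_{C \to v}|_u$ misses $V|_u$. The paper packages this by first noting that $E[u \posmod v]$ is already semistable of slope $2\ell+1$, so the second modification can raise line-subbundle degrees by at most $1$ and rank-two slopes by at most $\frac{1}{2}$; this gives exactly your bounds and your reading-off of the types.

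The rigidity step you flag as the main obstacle is unnecessary. Any line subbundle $F' \subset E$ of degree $2\ell+1$ already lies in $V$, so $F'|_u \subset V|_u$ cannot contain the general direction $[u \posmod v]$, exactly as in your rank-two case.
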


\begin{proof}
Observe that $E[u \posmod v]$ must be semistable by generality of $v \in \PP^4$. Our claim about $E[2u \posmod v]$ follows immediately.
\end{proof}

\begin{remark}\label{rem: U22 restriction}
Suppose $E$ has type $\Utype{2}{2}{\frac{1}{3}}$ and $E[2u \posmod v]$ has type $\Utype{1}{1}{\frac{2}{3}}$. Write $Q$ for the minimal destabilizing quotient of $E$. Then the destabilizing line subbundle of $E[2u \posmod v]$ is isomorphic to $Q(2u)$.  Indeed, $Q$ is the only line bundle of degree $2\ell$ or $2\ell + 1$ with $\dim_k \Hom(Q, E) = 3$, and therefore the only such line bundle that can pick up the entire modification $[2u \posmod v]$.
\end{remark}

\noindent
We summarize our findings in the following proposition.

\begin{proposition}\label{prop: one secant}
Let $C \subset \PP^4$ be a general $BN$-curve of even degree and genus $g = 1$.  Let $E$ and $E_{n,m}$ be vector bundles of rank $3$ as in Proposition \ref{prop: types of bundles summary}; in particular, $E$ and $E_{n,m}$ are two-cyclic modifications of~$N_C$.  For $k \geq 1$ general pairs of points $(u_i,v_i) \in C \times \PP^4$, consider the modification
$$E_{n,m,k} = E_{n,m}[2u_1 \posmod v_1] \cdots [2u_k \posmod v_k].$$
The bundle $E_{n,m,k}$ has one of the \(7\) types $\Stype{0}$, $\Stype{1}$, $\Stype{2}$, $\Utype{1}{1}{\frac{2}{3}}$, $\Utype{1}{2}{\frac{1}{6}}$, $\Utype{2}{1}{\frac{1}{3}}$, $\Utype{2}{2}{\frac{1}{3}}$ described by Definition \ref{def-types_of_bundles}.
Moreover, if $k$ is even then $E_{n,m,k}$ cannot have type $\Utype{1}{2}{\frac{1}{6}}$ or $\Utype{2}{1}{\frac{1}{3}}$, and if \(k \geq 2\) then $E_{n,m,k}$ cannot have type \(\Utype{1}{1}{\frac{2}{3}}\).
\end{proposition}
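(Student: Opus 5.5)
The argument is an induction on $k$, building a transition table for the one-secant modification $[2u \posmod v]$ as was done for $[x \biposmod y]$ in Proposition~\ref{prop: types of bundles summary}. The base input is Proposition~\ref{prop: types of bundles summary}: $E_{n,m}$ is a two-cyclic modification of $N_C$ and has one of the types $\Stype{0}$, $\Stype{1}$, $\Stype{2}$, $\Utype{1}{1}{\frac{2}{3}}$, $\Utype{2}{2}{\frac{1}{3}}$.

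For the inductive step, set $E := E_{n,m,k-1}$, which satisfies the running hypothesis of this subsection. Indeed, $E_{n,m,k-1}$ is a two-cyclic modification of $N_C$ when $k-1$ is even, and $E_{n,m,k-1}[2u_k \posmod v_k]$ is one when $k-1$ is odd. The reason is that the pair of one-secant modifications $[2u_{k-1} \posmod v_{k-1}][2u_k \posmod v_k]$ admits the swap $(u_{k-1},v_{k-1}) \leftrightarrow (u_k,v_k)$, an orbit of size two. So, granting that $E$ has one of the seven types, Lemmas~\ref{lem: S_0 one-secant mod} through~\ref{lem: U22 one-secant mod} give the following transitions for $E[2u_k \posmod v_k]$:
\[
\Stype{0} \to \{\Stype{2}, \Utype{2}{2}{\tfrac13}, \Utype{2}{1}{\tfrac13}\},
\qquad
\Stype{1},\ \Utype{1}{1}{\tfrac23},\ \Utype{1}{2}{\tfrac16} \to \Stype{0},
\]
\[
\Stype{2},\ \Utype{2}{1}{\tfrac13} \to \Stype{1},
\qquad
\Utype{2}{2}{\tfrac13} \to \{\Stype{1}, \Utype{1}{1}{\tfrac23}, \Utype{1}{2}{\tfrac16}\}.
\]
Every arrow lands inside the seven types, which proves the first claim.

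For the parity claim, when $k$ is even the bundle $E_{n,m,k}$ is itself two-cyclic. Lemma~\ref{lem:black magic} then excludes $\Utype{1}{2}{\frac16}$ and $\Utype{2}{1}{\frac13}$, since in these types the destabilizing subbundle has odd degree ($3\cdot$ rank times the slope offset is odd in each case, relative to the even-divisible total).

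The last claim is the main obstacle, because it needs more than the table. From the table, $\Utype{1}{1}{\frac23}$ arises at step $k$ only from $\Utype{2}{2}{\frac13}$ at step $k-1$. For $k \ge 2$, I would trace back one further step. $\Utype{2}{2}{\frac13}$ at step $k-1$ comes from $\Stype{0}$ at step $k-2$, and then Remark~\ref{rem: U22 restriction} identifies the destabilizing line subbundle of $E_{n,m,k}$ as $Q(2u_k)$, where $Q$ is the minimal destabilizing quotient of $E_{n,m,k-1}$. The plan is to show that the isomorphism class of $Q$ depends on $u_{k-1}$ in a controlled way, namely through $\OO(2u_{k-1})$, while $Q$ must have $\dim_k \Hom(Q,E_{n,m,k-1}) = 3$.

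I would then compare with the two-cyclic structure. If $k$ is even, the swap of $(u_{k-1},v_{k-1})$ with $(u_k,v_k)$ must preserve the destabilizing subbundle, which forces $\OO(2u_k)$ to be determined symmetrically, contradicting the generality of $u_k$. If $k$ is odd, I would apply the same argument to $E_{n,m,k}[2u_{k+1} \posmod v_{k+1}]$, or reduce to the even case.

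If this approach stalls, the fallback is a direct slope count. In type $\Utype{2}{2}{\frac13}$ arising from $\Stype{0}$, the modification $[u \posmod v]$ already makes the bundle semistable, as in Lemma~\ref{lem: U22 one-secant mod}. A line subbundle absorbing both modifications at $u$ would then have to be a $\Hom$-saturating line bundle like $Q$, and one checks via Lemma~\ref{lem: JHF images} that, because $Q$ came from a general one-secant modification, $\dim_k \Hom(Q,E) \le 2$, a contradiction.
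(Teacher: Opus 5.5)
Your handling of the first two claims matches the paper. You build the transition table from Lemmas \ref{lem: S_0 one-secant mod}--\ref{lem: U22 one-secant mod} with the two-cyclicity bookkeeping you describe, and you apply Lemma \ref{lem:black magic} for even $k$. There the relevant degrees are $4\ell+3$ and $2\ell+1$; your parenthetical reason is garbled, but the conclusion is right.

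The gap is in the last claim. You propose to show that $Q$ depends on $u_{k-1}$ through $\OO(2u_{k-1})$. That is backwards, and it would sink the argument: if $Q \cong Q_0(2u_{k-1})$, then $F \cong Q(2u_k) \cong Q_0(2u_{k-1}+2u_k)$ is symmetric in the two pairs, and the swap yields no contradiction.

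The missing idea is that $Q$ is \emph{independent} of $(u_{k-1},v_{k-1})$. Since $E_{n,m,k-2}$ has type $\Stype{0}$ and $\deg Q = \mu(E_{n,m,k-2})$, the composite $E_{n,m,k-2} \hookrightarrow E_{n,m,k-1} \to Q$ is nonzero. Its image is a quotient of a semistable bundle of slope $\mu(E_{n,m,k-2})$, so it is all of $Q$. Thus $Q$ is a same-slope line bundle quotient of $E_{n,m,k-2}$, hence one of finitely many JH factors of $E_{n,m,k-2}$, and none of these depend on the last two pairs. Then $F \cong Q(2u_k)$. Swapping $(u_{k-1},v_{k-1})$ with $(u_k,v_k)$ leaves $E_{n,m,k}$ unchanged and gives $F \cong Q'(2u_{k-1})$ with $Q'$ from the same finite list. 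This forces $\OO(2u_k-2u_{k-1})$ into finitely many classes, contradicting generality.

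Two further problems. First, that swap symmetry holds because the two modifications have disjoint supports and commute; it has nothing to do with two-cyclicity. So the argument works for every $k \geq 2$, and your even/odd split is unnecessary. Your odd-$k$ step, passing to $E_{n,m,k}[2u_{k+1} \posmod v_{k+1}]$, would not help anyway, since that bundle would just have type $\Stype{0}$.

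Second, your fallback fails. For any $E$ of type $\Utype{2}{2}{\frac{1}{3}}$ with HN-filtration $0 \to V \to E \to Q \to 0$, the bundle $V \otimes Q^{-1}$ is semistable of rank $2$ and degree $2$. Hence $\dim \Hom(Q,V) = 2$ and $\mathrm{Ext}^1(Q,V) = 0$, so $\dim \Hom(Q,E) = 3$. This is exactly what Remark \ref{rem: U22 restriction} uses, and it is what you yourself asserted a paragraph earlier, so you cannot get $\dim\Hom(Q,E) \leq 2$. Note also that the statement does not exclude $\Utype{1}{1}{\frac{2}{3}}$ when $k=1$. This signals that the type of $E_{n,m,k-1}$ together with one general modification is not enough; the contradiction has to come from relating $Q$ back to $E_{n,m,k-2}$.
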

\begin{proof}
When $k$ is even, $E_{n,m,k}$ is a two-cyclic modification of $N_C$ because there is a free action of $\mathbb{Z}/2\mathbb{Z}$ on the set of pairs $\{(u_1,v_1),\ldots ,(u_k,v_k)\}$.
    
Thus, we may apply Lemmas \ref{lem: S_0 one-secant mod} through \ref{lem: U22 one-secant mod} to find the possible types $E_{n,m,k}$.  The following table displays the possible transitions between $E_{n,m,k}$ and $E_{n,m,k+1}$, where the rows correspond to the possible types of $E_{n,m,k}$ and the columns correspond to possible types of $E_{n,m,k+1}$.

\begin{center}
{\renewcommand{\arraystretch}{1.2}
\begin{tabular}{c||c|c|c|c|c|c|c}
{} & $\Stype{0}$ & $\Stype{1}$ & $\Stype{2}$ & $\Utype{1}{1}{\frac{2}{3}}$ & $\Utype{1}{2}{\frac{1}{6}}$ & $\Utype{2}{1}{\frac{1}{3}}$ & $\Utype{2}{2}{\frac{1}{3}}$ \\ \hline\hline
$\Stype{0}$ & & & $\bullet$ & & & $\bullet$ & $\bullet$ \\ \hline 
$\Stype{1}$ & $\bullet$ & & & & & & \\ \hline
$\Stype{2}$ & & $\bullet$ & & & & & \\ \hline
$\Utype{1}{1}{\frac{2}{3}}$ & $\bullet$ & & & & & & \\ \hline
$\Utype{1}{2}{\frac{1}{6}}$ & $\bullet$ & & & & & & \\ \hline
$\Utype{2}{1}{\frac{1}{3}}$ & & $\bullet$ & & & & & \\ \hline
$\Utype{2}{2}{\frac{1}{3}}$ & & $\bullet$ & & $\bullet$ & $\bullet$ & & 
\end{tabular}}
\end{center}
This proves all but one desired fact: if $c_1(E_{n,m,k}) \equiv 1$ mod $3$ and $k \geq 2$, then $E_{n,m,k}$ cannot have type $\Utype{1}{1}{\frac{2}{3}}$.  To rule this out, observe that $E_{n,m,k-2}$ has type $\Stype{0}$.  If $E_{n,m,k}$ has type $\Utype{1}{1}{\frac{2}{3}}$, then $E_{n,m,k-1}$ must have type $\Utype{2}{2}{\frac{1}{3}}$.  In this case, let $\phi : E_{n,m,k-1}\to Q$ be the minimal destabilizing quotient of $E_{n,m,k-1}$.   As $\mu(Q) = \mu(E_{n,m,k-2})$, the restriction of $\phi$ to $E_{n,m,k-2} \subset E_{n,m,k-1}$ is also surjective, which limits the first Chern class of $Q$ to one of finitely many possibilities.  Therefore the isomorphism type of $Q$ cannot depend on $(u_{k-1}, v_{k-1})$.  But by Remark~\ref{rem: U22 restriction}, the maximal destabilizing subbundle $F \subset E_{n,m,k}$ must be $Q(2u_k)$.  This contradicts the invariance of $F$ under swapping $(u_{k-1},v_{k-1})$ with $(u_k, v_k)$.   
\end{proof}

\section{Proof of Theorem \ref{thm-mainstab}\label{sec:final}}

Throughout this section, given a modification $E$ of the normal bundle of a smooth curve $C \subset \PP^4$, for integers $n,m,k \geq 0$ we let $E_{n}, E_{n,m}$, and $E_{n,m,k}$ denote the modifications
\begin{align*}
E_{n} &= E[x_1 \biposmod  y_1][x_1 \posmod  2y_1][y_1 \posmod 2x_1]\cdots [x_n \biposmod  y_n][x_n \posmod  2y_n][y_n \posmod 2x_n]\\
E_{n,m} &= E_n[z_1 \biposmod  w_1]\cdots [z_m \biposmod  w_m]\\
E_{n,m,k} &= E_{n,m}[2u_1 \posmod v_1] \cdots [2u_k \posmod v_k]
\end{align*}
where the $x_i, y_i, z_i, w_i, u_i$ form a general collection of points in $C$ and the $v_i$ form a general collection of points in $\PP^4$.

\begin{remark}\label{rem: proving stability}
Let $C \subset \PP^4$ be a smooth BN-curve and let $C'$ be the nodal union of $C$ with $s$ general bisecant lines and $k$ general one-secant lines.  Suppose $E'$ is a vector bundle on $C'$ equipped with an isomorphism to $N_{C'}$ over an open subset of $C'$ containing each bisecant and one-secant line to $C$.  Write $E$ for the subbundle of $E'|_C$ corresponding to sections that do not smooth the nodes of $C'$, so that $E$ is naturally isomorphic to $N_C$ in a neighborhood of each node.  By Remarks \ref{rem: thanksgeoffandizzet} and \ref{rem:iterative extra juice}, to prove semistability (resp.\ stability) of $E'$, it is sufficient to prove the following statements about $E_{n,m,k}$ for all $n, m \geq 0$ such that $n + m = s$:
\begin{enumerate}
\item \label{linesubbound} Every line subbundle of $E_{n,m,k}$ has slope at most (resp.\ less than) $\mu(E_{n,m,k}) + \frac{1}{3}n + \frac{2}{3}m$;
\item \label{rktwosubbound}Every rank two subbundle of $E_{n,m,k}$ has slope at most (resp.\ less than) $\mu(E_{n,m,k}) +  \frac{1}{3}n + \frac{1}{6}m$.
\end{enumerate}
\end{remark}

\noindent
The following result is a consequence of Section \ref{sect: types of bundles}.

\begin{lemma}\label{lem: Enmk stability}
Adopt the notation of Remark \ref{rem: proving stability}.  Suppose $C \subset \PP^4$ is a general elliptic curve of even degree, $E$ is a two-cyclic modification of $N_C$ with type $\Stype{0}$, and $k \leq 2$.  Then $E_{n,m,k}$ satisfies the slope bounds of Remark \ref{rem: proving stability} required for semistability if
\[(n,m,k) \notin \{(0,1,0),(0,0,1)\}.\] 
Moreover  $E_{n,m,k}$ satisfies the slope bounds required for stability if
\[(n,m,k) \notin \{(0,0,0),(0,1,0),(0,0,1),(1,0,1), (0,1,1), (0,2,2)\}.\]
\end{lemma}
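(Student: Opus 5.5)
The plan is a case-by-case bookkeeping argument on top of the type classification of Section~\ref{sect: types of bundles}. By Proposition~\ref{prop: types of bundles summary} (for $k=0$) and Proposition~\ref{prop: one secant} (for $k\geq 1$), each $E_{n,m,k}$ has one of the seven types of Definition~\ref{def-types_of_bundles}. Moreover, both transition tables determine which types actually occur for given $(n,m,k)$.

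\emph{Step 1: the possible types.} Since $E$ has type $\Stype{0}$, Lemma~\ref{lem: S_0 big mod} gives that $E_{n,0,0}$ has type $\Stype{0}$. The modifications $[z_j \biposmod w_j]$ then cycle the type as follows:
\begin{itemize}
\item $m\equiv 0 \pmod 3$: type $\Stype{0}$;
\item $m\equiv 1$: type $\Stype{2}$ or $\Utype{2}{2}{\frac13}$;
\item $m\equiv 2$: type $\Stype{1}$ or $\Utype{1}{1}{\frac23}$.
\end{itemize}
The one-secant table, together with the parity constraints in Proposition~\ref{prop: one secant}, then handles $k=1,2$. When $k=2$, types $\Utype{1}{2}{\frac16}$, $\Utype{2}{1}{\frac13}$ and $\Utype{1}{1}{\frac23}$ are excluded.

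\emph{Step 2: the excess of each type.} For each type I compute the maximal excess $\mu(F)-\mu(E_{n,m,k})$ over line subbundles $F$ and over rank two subbundles $F$.
\begin{itemize}
\item Types $\Stype{1}$ and $\Stype{2}$: semistable with $3\nmid c_1$. A subbundle of slope exactly $\mu$ would need $\mu$ or $2\mu$ to be an integer, which fails, so these are stable and both excesses are $<0$.
\item Type $\Stype{0}$: both excesses are $\leq 0$, with equality possible.
\item Type $\Utype{1}{1}{\frac23}$: the line excess is $\frac23$. For rank two, a subbundle either contains $V$, giving slope at most $\frac12\bigl((\mu+\frac23)+(\mu-\frac13)\bigr)=\mu+\frac16$, or maps nontrivially to the semistable quotient, giving less. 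So the rank two excess is $\frac16$.
\item Type $\Utype{1}{2}{\frac16}$: the excesses are $\frac16$ and $\frac16$.
\item Type $\Utype{2}{1}{\frac13}$: the excesses are $\frac13$ and $\frac1{12}$.
\item Type $\Utype{2}{2}{\frac13}$: the excesses are $\frac13$ and $\frac13$.
\end{itemize}
Each bound comes from the standard argument: intersect $F$ with the HN-filtration and use semistability of the graded pieces.

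\emph{Step 3: compare with the required bounds.} The required bounds are $\frac n3+\frac{2m}3$ for line subbundles and $\frac n3+\frac m6$ for rank two subbundles.
\begin{itemize}
\item $n\geq 2$ or $m\geq 3$: the bounds are at least $\frac23$ and $\frac12$, respectively, which strictly exceed every excess. (For $m\geq3$ the line bound is at least $2$.)
\item $n=1$, $m=0$: the bounds are $\frac13$ and $\frac13$.
  \begin{itemize}
  \item $k=0$ gives $\Stype{0}$, which is fine for stability.
  \item $k=1$ allows $\Utype{2}{2}{\frac13}$, with equality in the rank two bound. This is semistable but not stable, hence the exclusion of $(1,0,1)$.
  \item $k=2$ gives $\Stype{1}$, which is fine because $\Utype{1}{1}{\frac23}$ is excluded for $k\geq2$.
  \end{itemize}
\item $n=1$, $m\geq 1$: the bounds are at least $1$ and $\frac12$, so this is fine.
\item $n=0$, $m=0$:
  \begin{itemize}
  \item $k=0$: type $\Stype{0}$ gives semistability only.
  \item $k=1$: types $\Utype{2}{\ast}{\frac13}$ fail even semistability.
  \item $k=2$: type $\Stype{1}$ is stable.
  \end{itemize}
\item $n=0$, $m=1$: the bounds are $\frac23$ and $\frac16$.
  \begin{itemize}
  \item $k=0$: type $\Utype{2}{2}{\frac13}$ fails.
  \item $k=1$: the possible types are $\Stype{1}$, $\Utype{1}{2}{\frac16}$ and $\Utype{1}{1}{\frac23}$. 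The last meets both bounds with equality, so we get semistability but not stability.
  \item $k=2$: every type transitions to $\Stype{0}$, which is stable enough.
  \end{itemize}
\item $n=0$, $m=2$: the bounds are $\frac43$ and $\frac13$.
  \begin{itemize}
  \item $k=0,1$ are fine.
  \item $k=2$ allows $\Utype{2}{2}{\frac13}$, with equality in the rank two bound, hence the exclusion of $(0,2,2)$.
  \end{itemize}
\end{itemize}
This produces exactly the two exceptional lists in the statement.

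I expect the main obstacle to be Step 2 rather than Step 3. The rank two bound for the $U$-types requires care that a rank two subbundle not containing the destabilizing piece cannot do better, which needs the semistability of the quotient. The line bound for $\Utype{1}{2}{\frac16}$ similarly requires semistability of the rank two $V$. I would handle both uniformly via the saturation of $F\cap V$ and the slope inequalities for semistable bundles.
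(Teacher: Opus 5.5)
Your proposal is correct and follows the paper's proof. The paper likewise invokes the seven-type classification of Propositions~\ref{prop: types of bundles summary} and~\ref{prop: one secant}, notes that the case $m\ge 3$ is automatic and that the bounds only loosen as $n$ grows, and leaves the nine checks $(m,k)\in\{0,1,2\}^2$ to the reader; your Steps~2--3 carry out exactly those checks and recover both exceptional lists.

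One small slip: for $(n,m)=(2,0)$ the line bound is exactly $\frac23$, which equals the line excess of $\Utype{1}{1}{\frac{2}{3}}$, so ``strictly exceed every excess'' is not literally true there. Your Step~1 rescues it, since $\Utype{1}{1}{\frac{2}{3}}$ never occurs when $m=0$ and $k\le 2$.
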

\begin{proof}
By Propositions \ref{prop: types of bundles summary} and \ref{prop: one secant}, $E_{n,m,k}$ has one of \(7\) possible types: $\Stype{0}$, $\Stype{1}$, $\Stype{2}$, $\Utype{1}{1}{\frac{2}{3}}$, $\Utype{1}{2}{\frac{1}{6}}$, $\Utype{2}{1}{\frac{1}{3}}$, $\Utype{2}{2}{\frac{1}{3}}$. This implies the bounds in Remark \ref{rem: proving stability} for \(m \geq 3\).

The $9$ cases \((m, k) \in \{0, 1, 2\} \times \{0, 1, 2\}\) are straightforward checks left to the reader. Note that as $n$ increases the bounds become more lenient so once they are satisfied they are satisfied forever.
\end{proof}

\begin{corollary}\label{cor: most cases}
For the following pairs $(g,d)$, the normal bundle of a general BN-curve of genus $g$ and degree $d$ in $\mathbb{P}^4$ is stable:
$$(g,d) \in \{(2,7), (2,9), (3,8), (3,10), (4,9), (4,10), (5,10), (5,11), (6,11), (7,12), (8,12), (9,13)\}.$$
\end{corollary}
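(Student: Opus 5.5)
The plan is to prove each case by degeneration to an elliptic curve with lines attached, and then to read off the slope bounds from Lemma~\ref{lem: Enmk stability}. For a pair $(g,d)$ in the list, let $C \subset \PP^4$ be a general elliptic curve of even degree $e$ with $3 \mid e$; in practice $e = 6$. Attach $s = g - 1$ general bisecant lines and $k = d - e - s$ general one-secant lines. The resulting nodal curve $C'$ is a BN-curve: bisecant lines preserve the BN property by \cite[Lemma~5.4]{lv22}, and one-secant lines are strongly smoothable as in the proof of Lemma~\ref{lem: elliptic in P3}. By Proposition~\ref{prop: openness of adjusted stability} it therefore suffices to show that $N_{C'}$ is stable in the adjusted sense. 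By Remark~\ref{rem: proving stability}, applied with $E = N_C$, this reduces to checking the two slope bounds for $E_{n,m,k}$ for every split $n + m = s$.

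Theorem~\ref{thm: normal bundle elliptic} says that $N_C$ is semistable. Since $\mu(N_C) = 5e/3 \in \mathbb{Z}$, it is moreover a direct sum of distinct line bundles, so $N_C$ has type $\Stype{0}$. It is also trivially a two-cyclic modification of itself, because $e$ is even. Lemma~\ref{lem: Enmk stability} then gives the stability bounds whenever $k \le 2$ and no triple $(n,m,k)$ with $n + m = s$ lies in the exceptional set $\{(0,0,0),(0,1,0),(0,0,1),(1,0,1),(0,1,1),(0,2,2)\}$. With $e = 6$ this is a direct bookkeeping check:
\begin{itemize}
\item $k=0$, $s \ge 2$ gives $(3,8)$, $(4,9)$, $(5,10)$, $(6,11)$, $(7,12)$, and further $s$ give $d = g + 5$;
\item $k=1$, $s \ge 2$ gives $(3,9)$, $(4,10)$, $(5,11)$, \dots;
\item $k=2$ with $s = 1$ or $s \ge 3$ gives $(2,9)$, $(4,11)$, \dots.
\end{itemize}
This covers $(2,9)$, $(3,8)$, $(4,9)$, $(4,10)$, $(5,10)$, $(5,11)$, $(6,11)$ and $(7,12)$. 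I would also use $e = 12$, where Lemma~\ref{lem: Enmk stability} applies verbatim, to reach pairs with small $g$ relative to $d$ if needed.

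The remaining pairs $(2,7)$, $(3,10)$, $(8,12)$ and $(9,13)$ fall outside this naive table. For $(2,7)$ the required split is $(0,1,0)$. For $(3,10)$ the split $(0,2,2)$ is forced. For $(8,12)$ and $(9,13)$ we have $d < g + 5$, so the elliptic core must have degree $5$, which is odd and not divisible by $3$. For these I would change the base bundle $E$ rather than the lemma. Concretely, I would take $E$ to be a two-cyclic modification of $N_C$ of type $\Stype{0}$ that already encodes part of the degeneration, for instance by first collapsing a pair of one-secant lines or bisecant lines into $E$ via Lemma~\ref{lem: pull off 1-secant} and Lemma~\ref{lem: S_0 big mod}. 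Alternatively I would replace a bisecant line by a $5$-secant twisted-cubic/onion configuration using Lemma~\ref{lem: onion degeneration}, so that the offending triple no longer occurs. Either way, the exceptional triples would then be checked directly.

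The main obstacle is these last few cases, which sit at the boundary of the exceptional set in Lemma~\ref{lem: Enmk stability}. There one has to show by hand that the specific bundles $E_{n,m,k}$ are not of the borderline types $\Utype{2}{2}{\frac{1}{3}}$ or $\Utype{1}{2}{\frac{1}{6}}$. For that I would use Lemma~\ref{lem: JHF images}, Corollary~\ref{cor: JHF images dual} and the monodromy and generality arguments in the style of Proposition~\ref{prop: one secant}.
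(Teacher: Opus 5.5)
\textbf{There is a genuine gap: four of the twelve pairs are not actually proved.} Your treatment of $(2,9)$, $(3,8)$, $(4,9)$, $(4,10)$, $(5,10)$, $(5,11)$, $(6,11)$, $(7,12)$ is exactly the paper's: a sextic elliptic curve, $g-1$ bisecants, $d-g-5$ one-secant lines, and Lemma~\ref{lem: Enmk stability}. For the remaining pairs $(2,7)$, $(3,10)$, $(8,12)$, $(9,13)$, the proposals for $(8,12)$ and $(9,13)$ miss the needed idea.

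You correctly observe that one- and two-secant lines alone force $d-g\geq 5$ on a sextic core. ``Collapsing'' such lines into $E$ does not change this count, so it cannot help. The twisted-cubic variant has the right numerics: a sextic, one $5$-secant cubic, and $g-5$ bisecants. But then $N_C[\posmodalong R]$ has $c_1=35\equiv 2 \pmod 3$, so it is not of type $\Stype{0}$, and five attachment points give no evident two-cyclic structure. Hence neither Lemma~\ref{lem: Enmk stability} nor the type calculus applies, and you would need a separate ad hoc analysis in the style of Lemma~\ref{lem: last lem}.

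The missing ingredient in the paper is a component that adds more genus than degree while keeping the base bundle of type $\Stype{0}$. Attach three general bisecants $l_1,l_2,l_3$ to $C$, together with the unique line $l$ meeting all three (and not $C$). The resulting $C'$ has genus $6$ and degree $10$, and $N_{C'}$ is semistable on every line. Moreover $E=N_{C'}|_C=N_C[x_1\biposmod y_1][x_2\biposmod y_2][x_3\biposmod y_3]$ is two-cyclic of type $\Stype{0}$ by Proposition~\ref{prop: types of bundles summary}, since three steps of its transition table return to $\Stype{0}$. After attaching $g-6$ further bisecants, Lemma~\ref{lem: Enmk stability} applies with $k=0$ and $n+m=g-6\geq 2$, where no exceptional triple occurs.

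For $(2,7)$ and $(3,10)$, checking the exceptional triples $(0,1,0)$ and $(0,2,2)$ directly is the right plan, but you do not give the argument. In both cases $c_1\equiv 2 \pmod 3$, so $\Utype{1}{2}{\frac{1}{6}}$ is irrelevant; the only type to exclude is $\Utype{2}{2}{\frac{1}{3}}$, and doing so needs a specific input. Since $N_C$ is a sum of three distinct line bundles, $\dim\Hom(L,N_C)\leq 1$ for $\deg L=10$. So there are only finitely many slope-$10$ rank-two subbundles, and none contains the general direction of $[z_1\posmod w_1]$. This is Remark~\ref{rem: even mod distinct JHF}, and it gives stability of $E_{0,1,0}$.

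For $(0,2,2)$ you must propagate this property through the one-secant modifications:
\begin{enumerate}
\item $E_{0,1,1}$ is stable by Lemma~\ref{lem: S2 one secant mod}.
\item Hence $\Hom(L,E_{0,1,1}[u_2\posmod v_2])=0$ for $\deg L=12$, so $\dim\Hom(L,E_{0,1,2})\leq 1$.
\item Remark~\ref{rem: even mod distinct JHF} applied to $E_{0,1,2}$ then shows that $E_{0,2,2}=E_{0,1,2}[z_2\biposmod w_2]$ is stable, after reordering the commuting modifications.
\end{enumerate}
Absorbing the one-secant lines into the base bundle instead would not work. The bundle $N_C[2u_1\posmod v_1][2u_2\posmod v_2]$ has $c_1\equiv 1\pmod 3$, so it is not of type $\Stype{0}$.
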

\begin{proof}
Let $C \subset \PP^4$ be a general BN-curve of genus \(1\) and degree \(6\). To prove our claim, we will consider nodal unions of $C$ with other curves in $\PP^4$.

First consider the cases \((g, d) \in \{(8, 12), (9, 13)\}\).  Let $C'$ be the nodal union of $C$ with \(3\) general bisecant lines $l_i$ and the unique line $l$ meeting each $l_i$.  The restriction of $N_{C'}$ to any line in $C'$ is semistable by generality of the $l_i$.  Moreover, the bundle $E = N_{C'}|_C$ has type $\Stype{0}$ by Proposition \ref{prop: types of bundles summary}.  Let $D$ be the nodal union of $C'$ with $g-6$ general bisecant lines to $C$.  By Lemma \ref{lem: Enmk stability}, the bounds required by Remark \ref{rem: proving stability} are satisfied since $n+m = g - 6 \geq 2$ and $k = 0$.

In all other cases, let $D$ be the nodal union of $C$ with $g-1$ general bisecant lines and $k = d-g - 5$ general one-secant lines.  Let $E = N_C$.  By Lemma \ref{lem: Enmk stability}, the bounds required by Remark \ref{rem: proving stability} are satisfied, except for the bounds required for \(E_{0, 1, 0}\) and \(E_{0, 2, 2}\), as the other exceptions in Lemma \ref{lem: Enmk stability} do not appear for the desired \((g, d)\). 

Because $N_C$ is a direct sum of distinct line bundles by Theorem \ref{thm: normal bundle elliptic}, Remark \ref{rem: even mod distinct JHF} shows $E_{0,1,0}$ is stable.  Lemma \ref{lem: S2 one secant mod} implies $E_{0,1,1}$ is stable. Thus, for any line bundle $L$ of degree $\mu(E_{0,1,2})$, we have $\dim_k \Hom(L, E_{0,1,1}[u \posmod v]) = 0$ for any $u \in C$ and $v \in \PP^4$ general.  Therefore $\dim_k \Hom(L, E_{0,1,2}) \leq 1$ for any such line bundle $L$.  
By Remark \ref{rem: even mod distinct JHF}, this implies $E_{0,2,2}$ is stable.
\end{proof}

\noindent
To handle a few of the remaining cases, we will need the following lemma.

\begin{lemma}\label{lem: deg 4 elliptic curve modifications}
Let $C \subset \PP^3$ be an elliptic normal curve.  For $n, m \geq 0$ with $n \leq 4 + 2m$, let $p$, $y_1,\ldots , y_n$, $z_1, \ldots , z_m$, $w_1, \ldots , w_m$ be general points of $C$.  If $n \geq 2$ or $m \geq 2$, the bundle
$$N_C[y_1 + \cdots + y_n \posmod p][z_1 \biposmod w_1]\cdots [z_m \biposmod w_m]$$
is semistable and not the direct sum of two isomorphic line bundles.
\end{lemma}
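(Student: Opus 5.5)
The plan is to exploit the very explicit structure of $N_C$ for an elliptic normal quartic $C\subset\PP^3$. Since $C$ is the base locus of a pencil of quadrics, we have $N_C\cong \OO_C(2)\otimes V$, where $V$ is the $2$-dimensional dual of the pencil. Under this identification the direction of each modification in the statement is a point of $\PP V\cong\PP^1$, namely the quadric of the pencil containing the relevant line. For $[y_i\posmod p]$ this is the unique quadric $Q$ of the pencil containing the chord $\overline{py_i}$. For $[z_j\biposmod w_j]$ both modifications at $z_j$ and at $w_j$ point toward the same quadric, the one containing the chord $\overline{z_jw_j}$, so each pair contributes a single point $\lambda_j\in\PP V$ used twice.

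First I will check that these directions are as general as they can be. The map $y\mapsto[\overline{py}\subset Q]\in\PP V$ is nonconstant: the quadrics of the pencil are cones or smooth, and only finitely many of them contain a given chord. Hence the $n$ directions at the $y_i$ are distinct general points of $\PP V$, and the $\lambda_j$ are general and independent of them. Write $E$ for the modified bundle and $N=n+2m$. Then $\deg E=16+N$ and $\mu(E)=8+N/2$.

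Next I will bound the degrees of line subbundles. Any saturated line subbundle $L\subset \OO_C(2)\otimes V$ is $\OO_C(2)(-D)$, embedded via a morphism $\phi\colon C\to\PP V$ of degree $k=\deg D$; here either $k=0$ (constant $\phi$) or $k\ge2$. Its saturation in $E$ has degree $8-k+\#\{\text{modification points where }\phi\text{ hits the prescribed direction}\}$.
\begin{itemize}
\item If $k=0$, the constant $\ell\in\PP V$ meets at most one $y_i$ direction, or both points of one pair $(z_j,w_j)$, so the degree is at most $10$ if $m\ge1$ and at most $9$ if $m=0$. Comparing with $8+N/2$, this is fine exactly under the hypothesis $n\ge2$ or $m\ge2$. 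Equality, e.g.\ $m=2,n=0$, occurs only for finitely many $\ell$.
\item If $k\ge2$, maps of degree $k$ to $\PP^1$ form a $2k$-dimensional family. For general points and directions, a dimension count shows at most $\min(2k,N)$ incidence conditions can be imposed, with finitely many solutions when equality holds. This gives degree at most $8-k+\min(2k,N)\le 8+N/2$, with equality only when $2k=N$.
\end{itemize}
This yields semistability. The hypothesis $n\le 4+2m$ is used to ensure that the $n$ conditions imposed by the $y_i$, all of which involve the single base point $p$ and so are not fully independent, still behave like general conditions in the count. I will check this by degenerating points in pairs, or by noting that the map $y\mapsto$ direction has degree $2$, so at most $4$ of the forced conditions can be dependent.

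Finally, to rule out $E\cong L\oplus L$, observe that this would force a positive-dimensional family of maximal-degree line subbundles, since $\Hom(L,E)$ would be $2$-dimensional. But in every equality case above there are only finitely many. The main obstacle is making the incidence/dimension count in the $k\ge2$ case rigorous in arbitrary characteristic, in particular checking that the direction map is separable so that the conditions are transverse. I would handle this first by specializing some of the $y_i$ and $z_j,w_j$ together and using upper semicontinuity, mirroring the proofs of Lemmas~\ref{lem:4.4} and~\ref{lem:4.5}.
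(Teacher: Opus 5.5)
\textbf{There is a genuine gap in the case $k\ge 2$.} Your framework is sound. Line subbundles of $E$ correspond to morphisms $\phi\colon C\to\PP V$. The saturation attached to $\phi$ has degree $8-k$ plus the number of modification points where $\phi$ takes the prescribed value. Your $k=0$ analysis is correct, and it is exactly where the hypothesis ``$n\ge 2$ or $m\ge 2$'' enters.

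The problem is that the prescribed directions at the $y_i$ are not general points of $\PP V$ attached to general points of $C$. They are $\phi_p(y_i)$, where $\phi_p\colon C\to\PP V$ is the degree-$2$ map sending $y$ to the quadric containing $\overline{py}$. So the map $\phi=\phi_p$ itself, with $k=2$, hits \emph{all} $n$ of the $y_i$-directions.

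Its line subbundle is the pointing bundle $N_{C\to p}\cong\OO_C(1)(2p)$. The saturation of this in $E$ is $N_{C\to p}(y_1+\cdots+y_n)$, of degree $6+n$. This exceeds your bound $8-2+\min(4,N)$ as soon as $n>4$. Hence the claim that ``at most $\min(2k,N)$ incidence conditions can be imposed'' is false, and so is ``equality only when $2k=N$.'' For example, when $n=4+2m$ with $m\ge1$, the map $\phi_p$ attains $\mu(E)$ while $2k=4\neq N$.

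The hypothesis $n\le 4+2m$ does not make the $y_i$-conditions behave like general conditions; they do not. It is precisely the inequality $6+n\le 8+\frac{n+2m}{2}$ that bounds this one exceptional subbundle. Your suggested remedy, ``at most $4$ of the forced conditions can be dependent,'' does not address this.

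\textbf{How to repair it.}
\begin{enumerate}
\item Treat $\phi_p$ separately. It contributes a single subbundle of degree $\le\mu(E)$, with equality iff $n=4+2m$. So finiteness of the maximal-degree subbundles survives, and with it your exclusion of $L\oplus L$.
\item For $\phi\neq\phi_p$, justify the count by an incidence correspondence. The coincidence locus $\{\phi=\phi_p\}$ is then a divisor of degree $k+2$. So $\{(\phi,y_S):\phi(y_i)=\phi_p(y_i)\}$ has dimension at most $2k$, and $|S|$ general $y_i$ cut the family down by $|S|$.
\item A similar check is needed for the pair conditions $\phi(z_j)=\phi(w_j)=\lambda_j$. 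You must show that only a proper closed set of $\phi$ can satisfy them along a curve of pairs.
\end{enumerate}
These fiber-dimension arguments are characteristic-free, so separability is not the real obstacle you identified.

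\textbf{Comparison with the paper.} The paper avoids any global count. It starts from the split cases $(n,m)=(2,0),(0,2)$, which are split because general bisecants lie on distinct quadrics. It then adds one modification at a time, using two facts:
\begin{itemize}
\item A stable bundle stays semistable and not of the form $L\oplus L$ after one positive modification.
\item A semistable, non-$L\oplus L$ bundle of integral slope becomes stable after one more modification, unless a slope-$\mu$ subbundle carries that direction. For $[y\posmod p]$ that subbundle could only be the saturated pointing bundle, which is where $n<4+2m$ is used.
\end{itemize}
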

\begin{proof}
Recall that $C$ is a complete intersection of two quadrics.  If $l_1, l_2$ are two general bisecant lines to $C$, then they lie in distinct quadrics containing $C$, even if both $l_i$ pass through a common point $p \in C$.  This demonstrates that
$$N_C[y_1 + y_2 \posmod p] \cong \OO_C(2)(y_1) \oplus \OO_C(2)(y_2) \ \text{and} \ N_C[z_1 \biposmod w_1][z_2 \biposmod w_2] \cong \OO_C(2)(z_1 + w_1) \oplus \OO_C(2)(z_2 + w_2).$$
Thus our claim is true when $(n,m) \in \{(0,2),(2,0)\}$.  We finish the proof by induction, showing that if the claim holds for $(n,m)$, then it holds for $(n+1, m)$; and if the claim holds for $(2n, m)$ then it holds for $(2n, m+1)$.  For this purpose, let $N_{n,m} = N_C[y_1 + \cdots + y_n \posmod p][z_1 \biposmod w_1]\cdots [z_m \biposmod w_m]$.

Suppose the claim holds for $(n,m)$.    By hypothesis, if $n$ is odd, then $N_{n,m}$ is stable.  In this case, $N_{n+1,m}$ must be semistable but not the direct sum of two isomorphic line bundles.  If instead $n$ is even, then $N_{n,m}$ is semistable but not the direct sum of two isomorphic line bundles.  Provided that $n < 4 + 2m$, the saturation of the pointing bundle $N_{C \to p}$ in $N_{n,m}$ cannot be a direct summand.  Thus, for a general choice of $y_{n+1}$, the resulting bundle $N_{n+1,m}$ is stable.
    
Now suppose the claim holds for $(2n,m)$.  Observe that the modification $[z_i \posmod w_i]$ is linearly general for general $w_i$.  Moreover, by hypothesis $N_{2n,m}$ is semistable and of integral slope.  Thus $N_{2n,m}[z_{m+1} \posmod w_{m+1}]$ is stable, and so $N_{2n,m+1}$ is semistable and not the direct sum of two isomorphic line bundles.
\end{proof}

\begin{lemma}\label{lem: extend to genus 8} 
The normal bundle of a general BN-curve of genus $g$ and degree $d$ in $\PP^4$ is stable in the cases $(g,d) \in \{(10, 13), (11, 14)\}$.
\end{lemma}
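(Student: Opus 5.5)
The plan is to degenerate the curve to a union of an elliptic quintic with one five-secant twisted cubic and several bisecant lines. I would then verify the slope bounds of Remark~\ref{rem: proving stability}, using projection from a point to reduce to Lemma~\ref{lem: deg 4 elliptic curve modifications}. The title suggests extending from genus \(8\), but the stability of \((8,11)\) is itself a base case not yet available at this point, so I would not go through it.

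\textbf{The degeneration.} Let \(D \subset \PP^4\) be a general elliptic curve of degree \(5\), and let \(R\) be a \(5\)-secant twisted cubic, so \(D\cup R\) is a BN-curve of genus \(5\) and degree \(8\) by Lemma~\ref{lem: reducible BN-curves}. Attach \(s\) general bisecant lines to \(D\), with \(s=5\) for \((10,13)\) and \(s=6\) for \((11,14)\). This gives a curve of the right genus and degree; it is BN by Lemma~\ref{lem: reducible BN-curves} and \cite[Lemma~5.4]{lv22}. Since \(N_{C'}|_R\) of a general \(5\)-secant cubic is semistable, Lemma~\ref{lem: reducible stability} reduces us to bounds on the restriction to \(D\cup(\text{lines})\).

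Apply Lemma~\ref{lem: onion degeneration} with \(r=4\): specialize \(R\) to \(L_1\cup L_2\cup L_3\), lines through a general point \(p\in D\) and \(q_1,q_2,q_3\). Then \(N_{D\cup R}|_D\) specializes to
\[E=N_D[q_1+q_2+q_3\posmod p][p\posmod V],\]
with \(V\) a linearly general \(2\)-plane. By Remark~\ref{rem:iterative extra juice}, it suffices to check the two bounds of Remark~\ref{rem: proving stability}, with strict inequalities, for \(E_{n,m}\) with \(n+m=s\) and \(k=0\).

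\textbf{Verifying the bounds.} Project from \(p\), as in Remark~\ref{rem: exact sequence mod}. Since \(N_{D\to p}\simeq \OO_D(1)(2p)\), this gives a sequence
\[0\to M\to E_{n,m}\to \overline E_{n,m}\to 0,\]
where \(M\) is a line bundle and \(\overline E_{n,m}\) is a modification of \(N_{\overline D}(p)\), with \(\overline D\subset\PP^3\) an elliptic normal quartic. The modifications \([q_i\posmod p]\) only alter \(M\). The bisecant modifications \([x\biposmod y]\) and \([x\posmod 2y][y\posmod 2x]\) alter \(\overline E_{n,m}\) in linearly general directions, and the \(2\)-plane \(V\) contributes one direction to each piece.

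After a further specialization of some bisecant endpoints to \(p\), turning some modifications into \([y_j\posmod p]\), \(\overline E_{n,m}\) becomes exactly a bundle of the shape in Lemma~\ref{lem: deg 4 elliptic curve modifications}. That lemma then says \(\overline E_{n,m}\) is semistable and not a sum of two isomorphic line bundles, since \(n+m\geq 5\) gives \(m\geq2\) or enough \(y_j\). From this I would bound the subbundles of \(E_{n,m}\):
\begin{itemize}
\item line subbundles are either \(M\) or map injectively to \(\overline E_{n,m}\);
\item rank two subbundles either contain \(M\) or map isomorphically onto \(\overline E_{n,m}\) generically.
\end{itemize}
For each case I would compare degrees numerically against \(\mu(E_{n,m})+\tfrac13 n+\tfrac23 m\) and \(\mu(E_{n,m})+\tfrac13 n+\tfrac16 m\). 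Generality of \(V\) and of the bisecant directions prevents \(M\), or a lift of a maximal subbundle of \(\overline E_{n,m}\), from picking up all available modifications.

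\textbf{Main obstacle.} The hardest step is the boundary cases with small \(n\), where the numerical margin is at most \(\tfrac13\). There the bound depends on the extension class of the sequence not splitting along the relevant subbundle. I would first try to rule out the offending lift using that \(\overline E_{n,m}\) is not \(L^{\oplus2}\): the lift then has at most a one-dimensional family, which a linearly general modification direction avoids, as in Remark~\ref{rem: even mod distinct JHF}. If that fails, I would instead place one bisecant as a one-secant pair via Lemma~\ref{lem: pull off 1-secant} to gain extra slack.
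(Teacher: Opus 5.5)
Your route is viable, but it is not the one the paper takes.

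The paper starts from an elliptic sextic with two $5$-secant twisted cubics. So it needs only $g-9\in\{1,2\}$ bisecant lines, and only the cases with $n+m\le 2$:
\begin{itemize}
\item $E_{0,1}$ and $E_{1,1}$ come from the type calculus: Lemma~\ref{lem:black magic} forces $E$ to have type $\Stype{1}$ or $\Utype{1}{1}{\frac23}$, and then Lemmas~\ref{lem: S1 U11 U12 small mod} and~\ref{lem: S_0 big mod} apply.
\item $E_{1,0}$ and $E_{0,2}$ come from projecting to an elliptic quintic in $\PP^3$ and applying Lemma~\ref{lem: elliptic in P3}.
\item $E_{2,0}$ comes from first pulling off a $1$-secant line (Lemma~\ref{lem: pull off 1-secant}), so that the projection lands on an elliptic quartic.
\end{itemize}
You instead run the argument of Lemma~\ref{lem: last lem} (elliptic quintic plus one cubic) with $s=5,6$ bisecants instead of $3$. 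That costs $6$ and $7$ cases. Each one is the same projection-to-a-quartic check against Lemma~\ref{lem: deg 4 elliptic curve modifications}, and the larger slack $\frac13n+\frac23m$ and $\frac13n+\frac16m$ from the extra bisecants makes all of them go through. You need neither the type calculus nor Lemma~\ref{lem: elliptic in P3}.

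When you carry it out, two points in your sketch need correcting.

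First, a general $2$-plane $V$ misses $N_{D\to p}|_p$. So $[p\posmod V]$ twists the quotient by $p$ and does not touch $M$; it does not contribute one direction to each piece. The base sequence is
\[0\to N_{D\to p}(q_1+q_2+q_3)\to E\to N_{\overline D}(2p)\to 0,\]
with degrees $10$ and $20$.

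Second, the binding constraint is not small $n$. It is the rank-two subbundles that map isomorphically onto the quotient $Q$. These force $\mu(Q)<\mu(E_{n,m})+\frac13n+\frac16m$, which forces $\deg M$ up. You raise $\deg M$ by colliding modifications at $p$:
\begin{itemize}
\item colliding $\{z_1\}$, $\{x_1\}$, $\{z_1,z_2,z_3\}$, $\{x_1,z_1\}$, $\{x_1,x_2\}$, $\{x_1,x_2,z_1\}$ gives $\deg M=12,13,14,15,16,18$ respectively;
\item use $\{z_1\}$ for $(0,5)$, $\{x_1\}$ for $(1,4)$, $\{x_1,z_1\}$ for $(2,3),(3,2),(1,5),(2,4)$, $\{z_1,z_2,z_3\}$ for $(0,6)$, $\{x_1,x_2\}$ for $(4,1),(3,3)$, and $\{x_1,x_2,z_1\}$ for $(4,2)$.
\end{itemize}
With these choices the quotient is covered by Lemma~\ref{lem: deg 4 elliptic curve modifications} and every strict bound holds. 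For $(5,0),(6,0),(5,1)$, use the cyclic collision $y_i\to x_{i+1}$ together with the $5$-cyclic semistability of $E$, exactly as in Lemma~\ref{lem: last lem}.

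So no non-splitting argument is ever needed. Your fallback of trading a bisecant for $1$-secant lines would not preserve $(g,d)$ in any case.
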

\begin{proof}
Let $C$ be a general BN-curve of genus 1 and degree 6.  Let $C'$ be the nodal union of $C$ with two general five-secant twisted cubics \(R_1\) and \(R_2\).  Let $E = N_{C'}|_C$.  Note that $E$ is a two-cyclic modification of $N_C$.  Indeed, for a given point $p \in C$, the set of \(5\)-secant twisted cubics  disjoint from $p$ and contained in a hyperplane through $p$ is parameterized by a rational variety.  As $g \geq 10$, let $D$ be the nodal union of $C'$ with $g - 9$ general bisecant lines to $C$.  It suffices to prove subsheaves of $E_{n,m}$ satisfy slope bounds given by Remark \ref{rem: proving stability} when $n + m = g - 9$.  
By Lemma \ref{lem: onion degeneration}, there is a degeneration of \(R_1\) to the union of three bisecant lines $\overline{p_1q_i}$ to $C$ passing through a single point $p_1 \in C$, and moreover, the resulting transformation at \(p_1\) is general. Degenerate \(R_2\) to the nodal union of a bisecant line $\overline{p_2q_4}$ and a conic $T$ meeting $C$ along $q_5, q_6, q_7 \in C$. This induces a specialization of \(E\) to
\[N_C[q_1 + q_2 + q_3 \posmod p_1][p_1 \posmod V][q_4 \biposmod p_2][q_5 + q_6 + q_7 \posmodalong T]\]
where \(V \subset N_C|_{p_1}\) is a general \(2\)-dimensional subspace. 
Then further limit \(p_1\) and \(p_2\) to a common point \(p\).  The modifications $[p_1 \posmod V]$ and $[p_2 \posmod q_4]$ specialize to the modification $[p \posmod V + q_4]$, which is equivalent to a twist up at $p$.  Thus, this induces a specialization of the above bundle to
\[E' = N_C(p)[q_1 + q_2 + q_3 + q_4 \posmod p][q_5 + q_6 + q_7 \posmodalong T].\]
By Remark~\ref{rem: exact sequence mod}, the bundle \(E'\) fits into the exact sequence 
$$0 \rightarrow N_{C \rightarrow p}(q_1 + q_2 + q_3 + q_4 + p) \rightarrow E' \rightarrow N_{\overline{C}}(2p)[q_5 + q_6 + q_7 \posmodalong \overline{T}] \rightarrow 0,$$
where $\overline{C}, \overline{T} \subset \PP^3$ are the images of $C$ and $T$ under projection from $p$.
We claim that $N_{\overline{C}}(2p)[q_5 + q_6 + q_7 \posmodalong \overline{T}]$ is either stable or the direct sum of two line bundles of degree $13$ and $14$.  Indeed, $\overline{T}$ is a general conic through the points $q_4, q_5, q_6, q_7 \in \overline{C}$.  It follows that the direction of the modification $[q_5 \posmodalong \overline{T}]$ is general.  Using Lemma \ref{lem: elliptic in P3}, we conclude that $N_{\overline{C}}(2p)[q_5 \posmodalong \overline{T}]$ is stable.  Making two more positive modifications, we see that $N_{\overline{C}}(2p)[q_5 + q_6 + q_7 \posmodalong \overline{T}]$ must either be stable or the direct sum of line bundles of degrees $13$ and $14$.

From Lemma \ref{lem:black magic}, it follows that $E$ has type $\Stype{1}$ or type $\Utype{1}{1}{\frac{2}{3}}$.  Either way, applying Lemma \ref{lem: S1 U11 U12 small mod} and then Lemma \ref{lem: S_0 big mod} shows $E_{0,1}$ and $E_{1,1}$ must be semistable.

To see the desired slope bounds for $E_{1,0}$ and $E_{0,2}$, specialize each \(R_i\) to the union of three bisecant lines $\overline{p_iq_{3i - 3 + j}}$ through a common point $p_i \in C$ using Lemma \ref{lem: onion degeneration}.  Specialize \(p_1\) and \(p_2\) to a common point \(p\). Consider the pointing bundle exact sequence for $p$:
\begin{align*}
0 \rightarrow N_{C \rightarrow p}(q_1 + \cdots + q_6 + p) \rightarrow E_{1,0} & \rightarrow N_{\overline{C}}(2p + x_1 + y_1)[x_1 \biposmod y_1][p \posmod \overline{V}] \rightarrow 0\\
0 \rightarrow N_{C \rightarrow p}(q_1 + \cdots + q_6 + p) \rightarrow E_{0,2} & \rightarrow N_{\overline{C}}(2p)[z_1  \biposmod w_1][z_2 \biposmod w_2][p \posmod \overline{V}] \rightarrow 0
\end{align*}
where the $x_i, y_i, z_i, w_i$ are general points in $C$ and $\overline{V} \subset N_{\overline{C}}|_p$ is a general linear subspace of dimension $1$.  We claim the last nonzero bundle in each sequence is stable.  Indeed, in the first case specialize \(p\) to \(x_1\), and in the second, specialize $p$ to $z_1$ and $w_2$ to $w_1$.  The two bundles specialize to $N_{\overline{C}}(4x_1 + y_1)[y_1 \posmod x_1]$ and $N_{\overline{C}}(3z_1 + w_1)[z_2 \posmod w_1]$, respectively.  These specializations are stable by Lemma \ref{lem: elliptic in P3}.  We conclude that $E_{1,0}$ and $E_{0,2}$ satisfy the desired slope bounds from Remark \ref{rem: proving stability}.

Lastly we show that subsheaves of $E_{2,0}$ satisfy the desired slope bounds.  Specialize $C$ to the nodal union $C''\cup l$ of an elliptic normal curve $C''$ and a one-secant line $l = \overline{uv}$ such that $(u,v) \in C'' \times \PP^4$ is general, and such that the points of incidence of both \(R_i\) specialize onto \(C''\).
Write \(E'' = N_{C''}[2u \posmod v][\posmodalong R_1][\posmodalong R_2]\).
Applying Lemma~\ref{lem: pull off 1-secant}, it suffices to show that \(E''_{2,0}\) satisfies the slope bounds of Remark~\ref{rem: proving stability}.

As before, specialize each \(R_i\) to the union of three bisecant lines $\overline{p_iq_{3i - 3 + j}}$ through a common point $p_i \in C$ using Lemma \ref{lem: onion degeneration},
and then specialize \(p_1\) and \(p_2\) to a common point \(p\).
Specialize $v$ to $p$ as well, and consider the pointing bundle exact sequence:
\[0 \rightarrow N_{C'' \rightarrow p}(q_1 + \cdots + q_6 + p + 2u) \rightarrow E''_{2,0} \rightarrow N_{\overline{C}''}(2p + x_1 + x_2 + y_1 + y_2)[x_1  \biposmod y_1][x_2 \biposmod y_2][p \posmod \overline{V}] \rightarrow 0\]
where the $x_i, y_i$ are general points in $C''$ and $\overline{V} \subset N_{\overline{C}''}|_p$ is a general linear subspace of dimension \(1\).  By Lemma \ref{lem: deg 4 elliptic curve modifications}, the
quotient is stable and so $E_{2,0}$ satisfies the slope bounds from Remark \ref{rem: proving stability}.
\end{proof}

\begin{lemma}
The normal bundle of a general BN-curve of genus 2 and degree 8 in $\PP^4$ is stable. 
\end{lemma}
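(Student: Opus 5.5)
I would degenerate a general genus $2$, degree $8$ BN-curve to a nodal union $D = C \cup L$. Here $C \subset \PP^4$ is a general elliptic curve of degree $6$ and $L$ is a general bisecant line meeting $C$ at $x, y$. By \cite[Lemma~5.4]{lv22}, $D$ is a BN-curve of genus $2$ and degree $7$, so this alone gives the $(2,7)$ case, not $(2,8)$. Instead I would take $C$ of genus $1$ and degree $6$, attach one bisecant line $L$ through $x,y$, and attach one general $1$-secant line $\overline{uv}$. The result is genus $2$ and degree $8$, i.e.\ $s=1$ and $k=1$ in the notation of Remark~\ref{rem: proving stability}.

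With $E = N_C$, Remark~\ref{rem: proving stability} reduces stability to checking the slope bounds for $E_{1,0,1}$ and $E_{0,1,1}$. These are exactly the two cases excluded in Lemma~\ref{lem: Enmk stability}, and this is presumably why the lemma is stated separately rather than covered by Corollary~\ref{cor: most cases}. So the plan is to verify these two bounds directly, using the extra input that $N_C$ is a direct sum of three distinct line bundles (Theorem~\ref{thm: normal bundle elliptic}).

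For $E_{0,1,1} = N_C[z \biposmod w][2u \posmod v]$ the required bounds are strict inequalities with slack $\frac23$ for line subbundles and $\frac16$ for rank two subbundles. The argument would run as follows.
\begin{enumerate}
\item By Remark~\ref{rem: even mod distinct JHF}, $N_C[z \biposmod w]$ is stable, of type $\Stype{2}$.
\item By Lemma~\ref{lem: S2 one secant mod}, $E_{0,1,1}$ is then of type $\Stype{1}$.
\item A stable bundle in particular satisfies both strict bounds, so this case is done.
\end{enumerate}
I expect this to be the easy case.

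For $E_{1,0,1}$ the slack is only $\frac13$ in both ranks, and I expect this to be the main obstacle. First, $E_1 = N_C[x\biposmod y][x\posmod 2y][y \posmod 2x]$ is of type $\Stype{0}$ by Lemma~\ref{lem: S_0 big mod}. Lemma~\ref{lem: S_0 one-secant mod} then allows $E_{1,0,1}$ to be $\Utype{2}{2}{\frac13}$ or $\Utype{2}{1}{\frac13}$, which would hit the bound with equality and give only semistability. To exclude these I would mimic the remark after Lemma~\ref{lem: S_0 one-secant mod}. It suffices to show that $\dim_k\Hom(L, E_1)\le 1$ for every line bundle $L$ of slope $\mu(E_1)$. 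Then no rank two subbundle of slope $\mu(E_1)$ picks up $[u\posmod v]$, which rules out $\Utype{2}{2}{\frac13}$, and no line subbundle picks up both modifications.

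The remaining type $\Utype{2}{1}{\frac13}$ needs a line subbundle $F'\subset E_1$ of degree $\mu(E_1)$ that picks up one direction of $[2u\posmod v]$ tangentially. I would rule it out via Lemma~\ref{lem: direction of modification}: since $v\in\PP^4$ is general, the direction is a general point of $\PP E_1|_u$. That forces $\dim\Hom(F',E_1)\ge 3$, contradicting the bound above.

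The key computation is therefore $\dim\Hom(L,E_1)\le1$. Write $N_C = M_1\oplus M_2\oplus M_3$ with the $M_i$ distinct of degree $\frac{5\cdot 6}{3}=10$. Then $E_1$ is obtained by modifying in three directions at each of $x$ and $y$. These directions are general by Lemma~\ref{lem: direction of modification}, and together they amount to a twist by $2x+2y$ plus extra data. If $L \subset E_1$ has degree $14=\mu(E_1)$, its intersection with $N_C$ has degree $10$ and must gain full modifications at both points. So it must be one of the $M_i$, and I would argue that only finitely many, each with a one-dimensional Hom, can align with the general directions; this is the same reasoning as in Lemma~\ref{lem: S_0 big mod}. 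If this direct analysis becomes too delicate, my fallback is to specialize $v$ onto $C$ or to $x$, so that a pointing-bundle exact sequence as in Lemma~\ref{lem: extend to genus 8} reduces the question to stability of a modified normal bundle of an elliptic curve in $\PP^3$, handled by Lemma~\ref{lem: elliptic in P3} and Lemma~\ref{lem: deg 4 elliptic curve modifications}.
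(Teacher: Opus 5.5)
Your treatment of $E_{0,1,1}$ is fine; it is the argument of Corollary~\ref{cor: most cases}. But the case $E_{1,0,1}$, which is the entire content here, has a genuine gap.

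First, the numbers are off. At $x$ the modifications $[x\posmod y][x\posmod 2y]$ have total length $3$, and likewise at $y$. So $\deg E_1=36$ and $\mu(E_1)=12$, not $14$.

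More seriously, these modifications form a flag $N_{C\to y}|_x\subset N_{C\to 2y}|_x$. A line subbundle of $N_C$ that agrees to first order with $N_{C\to y}$ at $x$ therefore gains $2$ there, and similarly at $y$. So a degree-$12$ line subbundle of $E_1$ may come from a subbundle of $N_C$ of degree $10$, $9$ or $8$. For general $x,y$ none of the $M_i$ picks up anything, so your reasoning would show that $E_1$ has no line subbundle of degree $\mu(E_1)$. That is impossible: $E_1$ is semistable (Lemma~\ref{lem: S_0 big mod}) of integral slope on an elliptic curve, so its Jordan--H\"older factors are three line bundles $L_1,L_2,L_3$ of degree $12$. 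These must come from degree-$9$ or degree-$8$ subbundles of $N_C$ that pick up several modifications. For degree $8$, matching both full flags is six conditions on a six-dimensional family.

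The bound $\dim_k\Hom(L,E_1)\le 1$ amounts to showing these special factors are pairwise non-isomorphic (or glue to a nonsplit self-extension). Nothing you invoke gives this. Lemma~\ref{lem: S_0 big mod} only excludes $E_1\cong L^{\oplus 3}$, and Lemma~\ref{lem:black magic} with $m=2$ is vacuous when every factor has degree $12$.

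Even granting that bound, your exclusion of $\Utype{2}{1}{\frac{1}{3}}$ is incomplete. A degree-$13$ line subbundle $F\subset E_1[2u\posmod v]$ can also come from a degree-$11$ subbundle $F'\subset E_1$ that picks up the \emph{whole} length-two modification $[2u\posmod v]$. The case $\deg F'=12$ that you treat is already excluded by type $\Stype{0}$ alone. The degree-$11$ case is exactly the one Lemma~\ref{lem: S_0 one-secant mod} leaves open.

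Genericity of the direction in $\PP E_1|_u$ does not help here. The degree-$11$ subbundles form a three-dimensional family whose fibers at $u$ cover all of $\PP E_1|_u$. Also, by Lemma~\ref{lem: pull off 1-secant}, the first-order datum of $[2u\posmod v]$ depends only on the line $\overline{uv}$, so it is not a general $1$-jet either. Some genuine transversality argument on $1$-jets is needed.

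Your fallback does not obviously close this. Specializing $v$ to $y$ makes the saturated pointing subbundle toward $y$ have degree $12$. But the rank-two quotient then has degree $26$, i.e.\ integral slope $13$ on an elliptic curve, so it is not stable, and the sequence alone only bounds line subbundles by $13$. Moreover, projecting from a point of $C$ lands on a quintic in $\PP^3$, where Lemma~\ref{lem: deg 4 elliptic curve modifications} does not apply.

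The paper avoids all of this by degenerating differently, to $C$ union a conic $R$ meeting it at $p,q$:
\begin{itemize}
\item $N_D|_C\cong N_C[p+q\posmod o]$ with $o=T_pR\cap T_qR$, which is stable by \cite[Proposition~4.5]{cs23} and Lemma~\ref{lem:black magic};
\item $N_D|_R\cong\OO(3)^{\oplus 2}\oplus\OO(4)$;
\item a direct adjusted-slope check, using that $N_C$ is a sum of three distinct line bundles, rules out equality.
\end{itemize}
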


\begin{proof}
Consider the nodal union $D$ of an elliptic curve $C \subset \PP^4$ of degree 6 and a conic $R$ meeting $C$ at two points $p,q$.  Let $x \in \PP^4$ be the intersection of $T_p R$ and $T_q R$.    Let $\Delta \subset \PP^4$ be the linear span of $R$ and $\nu : C \sqcup R \rightarrow D$ be the normalization.  Note that 
$$N_D|_R \cong \OO(3) \oplus \OO(3) \oplus \OO(4)$$
while $N_D|_C \cong N_C[p + q \posmod x]$.  Since $N_C[p \posmod x]$ is stable by \cite[Proposition~4.5]{cs23}, every sub line bundle of $N_C[p + q \posmod x]$ has degree at most \(11\); hence, \(N_D|_C\) is stable by Lemma \ref{lem:black magic}.  This already shows that \(N_D\) is semistable. To establish stability, assume for the sake of contradiction that $F \subset \nu^* N_D$ is a proper subbundle of adjusted slope $\mu(N_D) = 14$.  Set $F_R = F|_R$ and $F_C = F|_C$.

First suppose $\rk(F) = 1$.  As $\mu(F_R) \leq 4$ and $\mu(F_C) \leq 10$, we must have equality everywhere and $F$ must be the pullback of a subbundle under $\nu$.  In particular $F_R|_p$ and $F_R|_q$ must be the restriction of $N_{R/\Delta} \subset N_R \subset N_D|_R$ to each respective point.  This places two restrictions on $F_C$: (1) $F_C \subset N_C \subset N_D|_C$ and (2) $F_C|_p$ is contained inside the span of $T_{\Delta}|_p \subset N_C|_p$.  However, by Theorem \ref{thm: normal bundle elliptic}, $N_C$ is a direct sum of three distinct line bundles of degree \(10\).  As we may choose $\Delta$ such that $T_{\Delta}|_p$ does not contain the fiber of any direct summand of $N_C$ over $p$, this implies $\mu(F_C)<10$.  Hence, $\rk(F) \neq 1$.

Next suppose $\rk(F) = 2$.  As $\mu(F_R) \leq 3\frac{1}{2}$ and $\mu(F_C) \leq 10\frac{1}{2}$, we must have equality everywhere and $F$ must be the pullback of a subbundle under $\nu$.  It follows that $F_R|_p$ and $F_R|_q$ must contain the restriction of $N_{R/\Delta} \subset N_R \subset N_D|_R$ to each respective point.  Hence the intersection $F'$ of $F_C$ with $N_C[p \negmod T_{\Delta}|_p][q \negmod T_{\Delta}|_q]$
has slope at least $9\frac{1}{2}$.  However, by the preceding paragraph, $N_C[p \negmod T_{\Delta}|_p]$ has no subbundles of slope \(10\).  Hence $F'$ must have slope exactly \(9 \frac{1}{2}\) and be the maximal destabilizing subbundle of $N_C[p \negmod T_{\Delta}|_p][q \negmod T_{\Delta}|_q]$.  This contradicts Lemma \ref{lem:black magic}.
\end{proof}

\begin{lemma}
The normal bundle of a general BN-curve of genus $15$ and degree $16$ in $\PP^4$ is stable.
\end{lemma}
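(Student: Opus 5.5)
Genus $15$, degree $16$: here $\rho(15,4,16) = 15 - 5\cdot 3 = 0$ and $\mu(N_C) = (5\cdot 16 + 28)/3 = 36$. I would follow the pattern of Lemma~\ref{lem: extend to genus 8}: degenerate to an elliptic sextic $C \subset \PP^4$ together with attached curves whose contributions to $N_{C'}|_C$ can be controlled, and then apply the bisecant-line machinery of Remark~\ref{rem: proving stability}.

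\textbf{Choice of degeneration.} Attaching a five-secant twisted cubic adds $3$ to the degree and $4$ to the genus; attaching a bisecant line adds $1$ to each. Starting from $(1,6)$:
\begin{itemize}
\item two cubics plus bisecants reach $(9+b, 12+b)$, which does not hit $(15,16)$;
\item three cubics give $(13,15)$, and one more bisecant line gives $(14,16)$, still not the target.
\end{itemize}
So I would instead use a \emph{six}-secant twisted cubic $R$, which is allowed by Lemma~\ref{lem: reducible BN-curves} since $|\Gamma| = e+2$ with $e = r-1$, transverse to the span, and $\rho \geq 1$ beforehand; each such cubic adds $3$ to the degree and $5$ to the genus. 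Then three cubics give $(16,15)$, which is the wrong direction. I would therefore take two five-secant cubics $R_1,R_2$ and one six-secant cubic $R_3$, reaching $(1+4+4+5,\, 6+9) = (14,15)$, and then one bisecant line to reach $(15,16)$.

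\textbf{The bundle $E$.} Set $E = N_{C'}|_C$ with $C' = C\cup R_1\cup R_2\cup R_3$, so $s = 1$ and $k = 0$. Remark~\ref{rem: proving stability} then asks for the bounds on $E_{1,0}$ and $E_{0,1}$ only. I would first show $E$ is a two-cyclic modification of $N_C$, arguing as in Lemma~\ref{lem: extend to genus 8} via rational parametrization of the secant cubics. Its degree is $\deg N_C + 5+5+6 = 30 + 16 = 46 \equiv 1 \pmod 3$. Lemma~\ref{lem:black magic} then limits $E$ to type $\Stype{1}$ or $\Utype{1}{1}{\frac{2}{3}}$, provided I can show $E$ is at worst mildly unstable.

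\textbf{Controlling $E$.} I would degenerate each five-secant cubic as in Lemma~\ref{lem: onion degeneration}, into three bisecant lines through a common point $p_i$ plus a general $2$-plane twist at $p_i$. I would degenerate $R_3$ similarly. I would then collide the $p_i$ to a single point $p$ and use the pointing-bundle sequence of Remark~\ref{rem: exact sequence mod}. The sub line bundle $N_{C\to p}(\sum q_j + \cdots)$ and the quotient, a modification of $N_{\overline C}$ for the quartic elliptic curve $\overline C \subset \PP^3$, should both have controlled slopes. The quotient is handled by Lemma~\ref{lem: elliptic in P3} or Lemma~\ref{lem: deg 4 elliptic curve modifications}, after specializing $p$ onto the modification points as in Lemma~\ref{lem: extend to genus 8}.

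\textbf{Finishing.} From the type of $E$, Lemma~\ref{lem: S1 U11 U12 small mod} shows $E_{0,1}$ has type $\Stype{0}$, which meets the strict bound $\mu + \frac{2}{3}$ for lines and $\mu + \frac{1}{6}$ for planes, since semistability suffices when the bound has positive slack. For $E_{1,0}$ the allowance is only $\frac{1}{3}$, and I would check it directly via the same pointing-bundle sequence with the extra $[x\biposmod y][x\posmod 2y][y\posmod 2x]$ modifications pushed to the quotient.

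\textbf{Main obstacle.} I expect the hardest step to be the six-secant cubic: Lemma~\ref{lem: onion degeneration} only treats $(r+1)$-secant curves. If this fails, I would fall back on a different decomposition, for example $C$ of genus $2$ and degree $7$ (stable by Corollary~\ref{cor: most cases}) with attached secant cubics, arguing via Lemma~\ref{lem: reducible stability}.
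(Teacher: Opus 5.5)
\textbf{Verdict: genuine gap.} The degeneration you propose is not justified, and the core slope estimate is never actually carried out.

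\textbf{The degeneration.} A twisted cubic has $e=3=r-1$, so the case $|\Gamma|=e+2$ of Lemma~\ref{lem: reducible BN-curves} is exactly the \emph{five}-secant cubic. A six-secant cubic has $|\Gamma|=e+3$ and is not covered. In your setting it cannot be: a six-secant cubic lowers $\rho$ by $5$, so $C\cup R_1\cup R_2\cup R_3$ has $\rho(14,4,15)=-1$ and is not a BN-curve. Your final curve, with the bisecant line added, might still lie in the BN component for $(15,16)$, but nothing in the paper shows this; it would need its own argument.

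\textbf{The analysis of $E$.} Even granting the degeneration, three steps are missing.
\begin{itemize}
\item Lemma~\ref{lem: onion degeneration} does not apply to $R_3$, as you note yourself.
\item Lemma~\ref{lem:black magic} only constrains Chern classes. It forces type $\Stype{1}$ or $\Utype{1}{1}{\frac{2}{3}}$ only after you have an a priori bound on how unstable $E$ is. In Lemma~\ref{lem: extend to genus 8} that bound came from an explicit pointing-bundle computation, which you only say ``should'' work here.
\item The tight bound for $E_{1,0}$, where the allowance is only $\frac13$, is asserted rather than checked.
\end{itemize}
Your fallback does not fix the count either: $(2,7)$ plus three five-secant cubics gives $(14,16)$, so you would again need a six-secant cubic.

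\textbf{The missing idea.} Start from the exceptional case $(3,7)$ instead of an elliptic curve.
\begin{itemize}
\item For a general BN-curve $C$ of genus $3$ and degree $7$, $N_C\cong\bigoplus_i\OO_C(2)(-s_i)$ is a sum of three distinct line bundles of slope $13$.
\item Three five-secant twisted cubics can be attached legitimately, with $\rho$ running $3,2,1,0$, landing exactly on $(15,16)$. It then suffices to prove that $N_C[\posmodalong R_1][\posmodalong R_2][\posmodalong R_3]$ is stable.
\item The paper puts all three cubics in one hyperplane. They share the four points $q_2,\dots,q_5$, where their tangent directions are independent, and $R_1,R_2$ also share $q_1$. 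So the bundle specializes to $N_C(q_2+\cdots+q_5)$, further modified only by the plane $T_{R_1}+T_{R_2}$ at $q_1$ and the line $T_{R_3}$ at $q_6$.
\item It remains to show $N'=N_C[q_1\posmodalong R_1][q_1\posmodalong R_2][q_6\posmodalong R_3]$, of slope $14$, is stable. A destabilizing subsheaf would come from a subbundle $F\subset N_C$ of one of two kinds.
\begin{itemize}
\item $F$ is a direct summand. Then the general modifications miss it.
\item $F$ has slope exactly $12$ (rank one) or $12\frac12$ (rank two). Then $F$ (resp.\ its cokernel) has isomorphism class independent of the rationally varying cubics. So $F$ moves in at most a two-dimensional family, too small to meet the three conditions needed to absorb the modifications.
\end{itemize}
\end{itemize}
No bisecant lines or type analysis are needed.
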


\begin{proof}
Consider the nodal union $D$ of a general BN-curve $C \subset \PP^4$ of genus 3 and degree 7 with three 5-secant twisted cubics $R_i$.  Recall that there are three points $s_1, s_2, s_3 \in C$, the intersection of $C$ with its unique trisecant line, such that
$$N_C \cong \bigoplus_i \OO_C(2)(-s_i).$$
It suffices to prove the stability of $N = N_D|_C = N_C[\posmodalong R_1][\posmodalong R_2][\posmodalong R_3]$.  To do this, we will consider the limit of $N$ under a specialization of $D$. 
    
Specialize the $R_i$ so that each curve lies in the same hyperplane $H$.  We may assume $R_1$ and $R_2$ meet $C$ along five common points $q_i \in C \cap H$, $1 \leq i \leq 5$, while $R_3$ meets $C$ along $q_i$ for $i > 1$ and a distinct point $q_6 \in  C\cap H$. 
Because we may assume the tangent directions of $R_i$ are linearly independent at  $q_2$, $q_3$, $q_4$, and $q_5$, the restricted normal bundle $N$ specializes to the modified bundle 
$$N_C(q_2 + q_3 + q_4 + q_5)[q_1 \posmodalong R_1][q_1 \posmodalong R_2][q_6\posmodalong R_3].$$
We will show 
$N' = N_C[q_1 \posmodalong R_1][q_1 \posmodalong R_2][q_6\posmodalong R_3]$ is stable using generality of $T_{R_1}|_{q_1} + T_{R_2}|_{q_1}$ and $T_{R_3}|_{q_6}$.
    
First suppose $N'$ has a subbundle $F' \subset N'$ of slope at least $\mu(N') = 14$.  Let $F \subset N_C$ be the intersection of $F'$ with \(N_C \subset N'\).  If $\mu(F) \geq 13$, then \(\mu(F) = 13\) and $F$ must be a direct summand of $N_C$. Since $N_C$ is a direct sum of three distinct line bundles, generality of $T_{R_1}|_{q_1} + T_{R_2}|_{q_1}$ and $T_{R_3}|_{q_6}$ ensures both modifications are transverse to the image of $F$.  Thus $\mu(F') < 14$, a contradiction. We may therefore suppose \(\mu(F) < 13\).
    
Suppose $F$ has rank one.  If $\mu(F) < 12$, then $\mu(F') < 14$. We may therefore suppose $\mu(F) = 12$.
Since the twisted cubics $R_i$ vary in a rational family while $q_1,\ldots, q_6$ remain fixed, the isomorphism class of $F$ does not depend on the linear subspaces $T_{R_1}|_{q_1} + T_{R_2}|_{q_1}$ and $T_{R_3}|_{q_6}$.  
Since \(\dim_k \Hom(F, N_C) \leq 3\), there is only a two-dimensional family of possibilities for \(F\). Thus \(F\) cannot pick up both modifications.
    
Suppose $F$ has rank two.  If $\mu(F) < 12\frac{1}{2}$, then $\mu(F') < 14$. We may therefore suppose $\mu(F) = 12\frac{1}{2}$.
The cokernel \(N_C \to K\) of \(F \to N_C\) completely determines \(F\) and \(\mu(K) = 14\).
As before, the isomorphism class of $K$ does not depend on the choice of the $R_i$.
Since \(\dim_k \Hom(N_C, K) \leq 3\), there is only a two-dimensional family of possibilities for \(F\). Thus \(F\) cannot contain both modifications.
\end{proof}

\begin{lemma}\label{lem: genus 10 degree 12}
The normal bundle of a general BN-curve of genus $10$ and degree $12$ in $\PP^4$ is stable.
\end{lemma}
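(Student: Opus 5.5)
We need a degeneration for $(g,d)=(10,12)$ with $\mu(N)=(5\cdot 12+18)/3=26$. The paper earlier remarked that the final clause of Lemma~\ref{lem:extra juice} (rank two subbundles of $N_C'$) is used only here. That suggests the degeneration makes $N_C'$ itself destabilized by a rank two subbundle, which should not produce a destabilizing subsheaf of $N$.

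The construction I would choose: let $C$ be a general BN-curve with $(g,d)=(7,10)$, lying on a smooth quadric $Q$. Attach $2$ general bisecant lines to get genus $9$, degree $12$. This falls short of genus $10$, so I adjust the count. Genus $g_C+s$ and degree $d_C+s$ with $s$ bisecant lines gives, from $(7,10)$, the pair $(10,13)$ — the wrong degree. From $(6,9)$ we get $(9,12)$ with $3$ lines, then one more unit of genus is needed.

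Instead I would take $C$ of genus $8$, degree $10$. Here $\rho=8-5\cdot 2=-2$, which is invalid. So the right choice is $C$ of type $(7,10)$ together with a $5$-secant twisted cubic $R$ (genus $+4$, degree $+3$), giving $(11,13)$ — also not right.

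Revisiting: a conic meeting $C$ twice adds $(1,2)$, and a bisecant line adds $(1,1)$. From $(7,10)$, adding two bisecant lines gives $(9,12)$; adding one line and one $3$-secant conic gives $(10,13)$. I will settle on $C=(6,9)$, which lies on a pencil of quadrics and has $N_C$ destabilized by $K_C(1)$. Adding three bisecant lines gives $(9,12)$; then I need $(10,12)$. With $C=(7,10)$ and two bisecant lines $L_1,L_2$ plus an extra node, one could glue $L_1$ and $L_2$, but that adds a node without degree and is not a BN-curve.

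So the concrete plan is: $D=C\cup L_1\cup L_2$ with $C$ of type $(8,10)$ is impossible, and I fall back on $C$ of type $(7,10)$ on $Q$ with three lines, one of them $1$-secant. This does not match either. I therefore commit to $(7,10)\cup$ three bisecant lines, yielding $(10,13)$, followed by projecting off, i.e. the inverse of Lemma~\ref{lem: pull off 1-secant}. That lemma would require degree $13\to 12$ by removing a $1$-secant line, which again fails.

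Given this uncertainty, the key steps I would pursue are as follows. First, realize a general $(10,12)$ curve as a smoothing of $C\cup L_1\cup L_2$, where $C$ has a destabilizing rank two $N_{C/Q}$-type subbundle. Second, apply Lemma~\ref{lem:extra juice} iteratively using its last clause, so that the subbundle of $N_C'$ is handled by the bound $\mu+r+\tfrac23$. Third, control the remaining modifications $N_C'[x\biposmod y]$ and the big modification via the pointing-bundle sequence (Remark~\ref{rem: exact sequence mod}), projecting to $\PP^3$ and using Lemma~\ref{lem: elliptic in P3} after degenerating to an elliptic curve.

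The main obstacle is finding the right degeneration numerics. It is likewise the verification that the rank two subbundle of $N_C'$ picks up no modification at $x$ and $y$, which should follow by generality of $x$ and $y$ via Lemma~\ref{lem: direction of modification}.
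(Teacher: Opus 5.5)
There is a genuine gap. You never arrive at a degeneration that specializes a general BN-curve of genus $10$ and degree $12$. The plan you finally record in your ``key steps'' is numerically impossible: $C\cup L_1\cup L_2$ of type $(10,12)$ forces $C$ of type $(8,10)$, where $\rho(8,4,10)=-2$, as you yourself noticed. None of your other combinations of lines and conics reaches $(10,12)$ either.

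The missing idea is to use $5$-secant twisted cubics, each of which contributes $(4,3)$. The paper takes an elliptic quintic $C$, two general $5$-secant twisted cubics $R_1,R_2$, and one bisecant line $l$ through $x,y\in C$, so that $(1,5)+(4,3)+(4,3)+(1,1)=(10,12)$. The quadric you were reaching for then appears naturally: $C\cup R_1\cup R_2$ lies on a unique quadric $Q$, whereas $l\not\subset Q$ because the secant variety of $C$ is a quintic hypersurface. This gives
\[0\to N_{C\cup R_1\cup R_2/Q}|_C\to N_D|_C\to \OO_C(2)(x+y)\to 0.\]
Its rank two subbundle has slope exactly $12\frac12=\mu(N_D|_C)+\frac16$ and lies in $N_C'=N_C[\posmodalong R_1][\posmodalong R_2]$. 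This is precisely the borderline case the final clause of Lemma~\ref{lem:extra juice} absorbs, as you anticipated. (That this subbundle picks up no modification at $x,y$ is equivalent to $l\not\subset Q$.)

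Even with the right curve, the two substantive verifications are missing from your outline.

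First, $N_{C\cup R_1\cup R_2/Q}|_C$ must be stable, or one of its line subbundles could violate the rank one bound. The paper shows $C\cup R_1$ is a complete intersection of quadrics. A cubic scroll through $C\cup R_1$ is excluded by a dimension count, since the scrolls containing $C$ form a one-parameter family indexed by $\operatorname{Pic}^2(C)$. Hence $N_{C\cup R_1/Q}|_C\cong\OO_C(2)^{\oplus 2}$. The paper then combines invariance under translation by $5$-torsion, which forces the degree of a destabilizing subbundle to be divisible by $5$, with the fact that $Q$ is the only quadric through $C\cup R_1$ tangent to $R_2$ along $C\cap R_2$.

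Second, the other hypothesis of Lemma~\ref{lem:extra juice} requires stability of $N_D|_C[x\posmod 2y][y\posmod 2x]$. This is not obtained from Lemma~\ref{lem: elliptic in P3}. Instead, each $R_i$ is broken via Lemma~\ref{lem: onion degeneration} into three bisecants through a common point $p$. The pointing-bundle sequence from $p$, whose quotient splits explicitly, together with invariance over the rational family of $R_i$, shows that the only possible destabilizing subbundle is a line bundle with $c_1=\OO_C(2)(2x+2y)$. That class is then excluded by a second specialization: $p_1\to x$, with the projected $\overline{R}_2$ broken into a line meeting $\overline{xy}$ and a conic. In that limit the quotient splits into line bundles none of which is $\OO_C(2)(2x+2y)$.
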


\begin{proof}
Consider the nodal union $D$ of an elliptic curve $C$ of degree 5 with one bisecant line $l$ and two \(5\)-secant twisted cubics $R_1, R_2$. There is a unique quadric $Q$ containing $C \cup R_1 \cup R_2$. This is because there are no reducible quadrics containing $C$, and if we specialize $R_1$ and $R_2$ to lie in the same hyperplane, no two quadrics can contain their union, which is a degree six curve.  Because the secant variety to \(C\) is a quintic hypersurface, we may suppose $l$ is not contained in $Q$.  This implies $l$ is transverse to $Q$.  Consider the exact sequence
$$0 \rightarrow N_{C \cup R_1 \cup R_2 / Q} \rightarrow N_{C \cup R_1 \cup R_2} \rightarrow N_{Q/\PP^4}|_{C \cup R_1 \cup R_2} \rightarrow 0.$$
We claim $N_{C \cup R_1 \cup R_2 / Q}$ is stable. 
Indeed, $N_{C \cup R_1 \cup R_2/Q}|_{R_i} \cong \OO(6)^{\oplus 2}$, so it suffices to prove the stability of $N_{C \cup R_1 \cup R_2/Q}|_C$.  As in the proof of Lemma \ref{lem:black magic}, multiplication by a five torsion point on $C$ fixes $N_{C \cup R_1 \cup R_2/Q}|_C$, and so the first Chern class of a destabilizing subbundle must be divisible by 5.  However, we will show in the following three paragraphs that $C \cup R_1$ is a complete intersection of quadrics, so that $N_{C \cup R_1/Q}|_C \cong \OO_C(2)^{\oplus 2}$.
Assuming this, the only quadric containing $C \cup R_1$ that is tangent to $R_2$ at each point $R_2 \cap C$ is $Q$.  Hence, $N_{C \cup R_1 \cup R_2/Q}|_C$ cannot have a subbundle of slope at least $15$, so $N_{C \cup R_1 \cup R_2/Q}|_C$ is stable as desired.  

To show $C \cup R_1$ is a complete intersection of quadrics, first observe that it lies in a net of quadrics spanned by $Q$ and two other quadrics $Q_1, Q_2$.  If  $Q \cap Q_1 \cap Q_2$ has dimension one, then this intersection must be $C \cup R_1$ for degree reasons.  Hence, we may suppose the base locus of this net of quadrics contains a surface $S \subset \PP^4$ that contains $C$.  In this case, $Q_1 \cap Q_2$ must be reducible.  Since $C$ is nondegenerate, it does not lie in a surface of degree less than $3$.  Hence, $S$ must be a nondegenerate cubic surface.  Since $R_1$ does not lie in a two-dimensional plane, it follows that $R_1 \subset S$ as well.

Up to automorphisms of $\PP^4$, there are only two possibilities for $S$: it is either the cone over a twisted cubic or a rational normal scroll of degree \(3\).  However, $S$ cannot be the cone over a twisted cubic, as $C$ cannot map to a twisted cubic upon projection from a point for degree reasons.  Therefore $S$ is a cubic scroll.  

There is a line $L \subset S$ such that projection from $L$ maps $S$ onto a conic in $\PP^2$.  Again for degree reasons, this projection map restricts to a degree two map $C \to \PP^1$.  The cubic scroll $S$ is determined by this map.  Hence, there is only a one-parameter family of cubic scrolls containing $C$, parameterized by \(\operatorname{Pic}^2 (C)\).  As $R_1$ is the intersection of a cubic scroll containing $C$ with a hyperplane $H$, upon fixing $C$ and $H$ there is only a one-dimensional family of such intersections.  However, there is a two-dimensional family of twisted cubics passing through the five intersection points $C \cap H$. The general \(5\)-secant twisted cubic is therefore not such an intersection, completing our proof that $C \cup R_1$ is a complete intersection of quadrics.  

Our final goal is to prove stability of $N_D$ using Lemma \ref{lem:extra juice}.  Let $x,y \in C \cap l$ be the two points of attachment in $D$.  Since $l$ meets $Q$ transversely, the restriction to $C$ of the normal bundle $N_D$ fits into an exact sequence
$$0 \rightarrow N_{C \cup R_1 \cup R_2 / Q}|_{C} \rightarrow N_D|_{C} \rightarrow N_{Q/\PP^4}|_{C}(x + y) \rightarrow 0.$$
Note that $\mu(N_{C \cup R_1 \cup R_2 / Q}|_{C}) = 12\frac{1}{2}$ and $\mu(N_{Q/\PP^4}|_{C}(x + y)) = 12$.  In particular, $N_D|_C$ has no subbundles of slope $13 = \mu(N_D|_C) + \frac{2}{3}$ or greater.  Furthermore, every rank two subbundle of $N_D|_C$ that smooths at least one of the nodes $x$ or $y$ has slope less than $12\frac{1}{2}$.  By Lemma \ref{lem:extra juice}, it therefore suffices to show the stability of
$$N = N_D|_C[x \posmod 2y][y \posmod 2x] = N_C[x \biposmod y][x \posmod 2y][y \posmod 2x][\posmodalong R_1][\posmodalong R_2].$$

Specialize each \(R_i\) to the union of three bisecant lines \(\overline{p_i q_{3i - 3 + j}}\) through a common point \(p_i\) using Lemma~\ref{lem: onion degeneration}. Then specialize \(p_1\) and \(p_2\) to a common point \(p\).  Observe that \[\OO_C(q_1 + q_2 + q_3 + 2p) \cong \OO_C(q_4 + q_5 + q_6 + 2p) \cong \OO_C(1).\]  The resulting specialization \(N' = N_C(p)[x \biposmod y][x\posmod  2y][y \posmod  2x][q_1 + \cdots + q_6 \posmod  p][p \posmod  V]\) of $N$ fits into the exact sequence
\[0 \to N_{C \to p}(p + q_1 + \cdots + q_6) \to N' \to N_{\overline{C}}(2p + x + y)[x \biposmod y][p \posmod  \overline{V}] \to 0,\]
where $\overline{C} \subset \PP^3$ is the projection of $C$ from $p$ and \([p \posmod \overline{V}]\) is a general modification.
Observe that we have $N_{\overline{C}}(2p + x + y)[x \posmod y][p \posmod  v] \cong \OO_C(2)(2x + 2y) \oplus \OO_C(2)(x + y + p)$, while $N_{C \rightarrow p}(p + q_1 + \cdots + q_6) \cong \OO_C(3)(-p)$.  Since each $R_i$ deforms in a rational family, the first Chern class of the maximal destabilizing subbundle of $N$ cannot depend on the choice of $R_i$.  Varying $p$, we see that if $N$ is unstable, it must have a destabilizing line bundle with first Chern class $\OO_C(2)(2x + 2y)$.

Suppose instead that 
$R_1$ specializes as before while $R_2$ remains general, and then specialize $p_1$ to $x$.  Because the space of possible \(R_1\) is rational, if $N$ is unstable then there must be a nontrivial map from $\OO_C(2)(2x + 2y)$ to the resulting specialization
$N' = N_C(x)[x\posmod  2y][y \posmod  2x][q_1 + q_2 + q_3 + y \posmod  x][\posmodalong R_2]$ of \(N\).  This specialization fits into the exact sequence
\[0 \to N_{C \to x}(x + 2y + q_1 + q_2 + q_3) \to N' \to N_{\overline{C}}(3x)[y \posmod  x][\posmodalong \overline{R}_2] \to 0,\]
where $\overline{C} \cup \overline{R}_2 \subset \PP^3$ is the projection of $C \cup R_2$ from $x$.  

It thus suffices to show that $N_{\overline{C}}(3x)[y \posmod  x][\posmodalong \overline{R}_2]$ is semistable and does not have a JH-factor with first Chern class $\OO_C(2)(2x + 2y)$.  We may suppose $\overline{R}_2$ meets $\overline{C}$ along any five points $q_4, \ldots, q_8$ whose sum is $\OO_C(1)$.  Specialize $\overline{R}_2$ to the union of a line through $q_4, q_5$ and a conic through $q_6, q_7, q_8$ meeting the line $\overline{q_4q_5}$.  We may assume $\overline{q_4q_5}$ meets $\overline{xy} \subset \PP^3$, so that $q_4 + q_5 + y + 2x = \OO_C(1)$.  In particular, $q_6 + q_7 + q_8 = y + 2x \neq x + 2y$.  Since $\overline{q_4q_5}$ and $\overline{xy}$ are contained in the same quadric containing $\overline{C}$, the bundle $N_{\overline{C}}(3x)[y \posmod  x][\posmodalong \overline{R}_2]$ splits as a direct sum $\OO_C(2)(x + y + q_4 + q_5) \oplus \OO_C(2)(x + q_6 + q_7 + q_8)$.  For general $x,y$, the quadric containing $\overline{xy} \cup \overline{C}$ is smooth, whence $q_4 + q_5 \neq x + y$.
\end{proof}

\begin{lemma}
The normal bundle of a general BN-curve of genus $g$ and degree $d$ in $\PP^4$ is stable in the cases $(g,d) \in \{(11, 13), (12, 14), (13,15)\}$.
\end{lemma}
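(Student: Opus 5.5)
The approach is to adapt the genus-$10$, degree-$12$ argument of Lemma~\ref{lem: genus 10 degree 12}, attaching further bisecant and one-secant lines and feeding them through Lemma~\ref{lem:extra juice}, Remark~\ref{rem:iterative extra juice} and Lemma~\ref{lem: pull off 1-secant}. Start from $C_0 = C \cup R_1 \cup R_2$, where $C \subset \PP^4$ is a general elliptic quintic and the $R_i$ are general $5$-secant twisted cubics. By Lemma~\ref{lem: reducible BN-curves} this is a BN-curve of genus $9$ and degree $11$. Then:
\begin{itemize}
\item For $(11,13)$, attach two general bisecant lines $l_1, l_2$ to $C$.
\item For $(12,14)$, attach three general bisecant lines.
\item For $(13,15)$, attach four general bisecant lines.
\end{itemize}
Each bisecant adds one to both genus and degree, and Lemma~\ref{lem: reducible BN-curves} shows all of these unions are BN-curves. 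Since $R_1, R_2$ are rational with $N|_{R_i}$ semistable, Lemma~\ref{lem: reducible stability} reduces everything to checking the slope bounds of Remark~\ref{rem: proving stability} for $E_{n,m}$, with $n+m$ the number of bisecants and $k=0$. Here $E = N_{C_0}|_C$ and the modifications are only at the bisecant endpoints.

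The key structural input is the quadric $Q$ containing $C \cup R_1 \cup R_2$. The proof of Lemma~\ref{lem: genus 10 degree 12} gives the exact sequence
\[0 \to N_{C\cup R_1\cup R_2/Q}|_C \to E \to \OO_C(2) \to 0,\]
with stable kernel of slope $12\frac12$ and quotient of degree $10$. For each bisecant $\overline{x_iy_i}$ transverse to $Q$, the modification $[x_i \biposmod y_i]$ lands entirely in the quotient, twisting it by $x_i + y_i$. The bracket $[x_i\posmod 2y_i][y_i\posmod 2x_i]$ instead mixes into the kernel. So for $E_{0,m}$ I get a sub of slope $12\frac12$ and a quotient line bundle of degree $10+2m$.

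With $m = 2, 3, 4$, the quotient has degree $14, 16, 18$ against $\mu(E_{0,m}) = (35+2m)/3$. The rank-two bound from Remark~\ref{rem: proving stability} is $\mu(E_{0,m}) + \tfrac16 m$, and the kernel satisfies it comfortably. For line subbundles, the kernel is stable, so a line sub has degree at most $12$ inside the kernel, or maps nonzero to the quotient with degree at most $10+2m$. The bound required is $\mu + \tfrac23 m$, that is $(35+4m)/3$, which equals $14\frac13$, $15\frac23$ and $17$ for $m = 2, 3, 4$. The cases $m = 3, 4$ fail naively, since $16 > 15\frac23$ and $18 > 17$. A line sub of that degree would need to split the extension, so I would rule this out using the $5$-torsion invariance argument from Lemma~\ref{lem: genus 10 degree 12}.

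For the terms with $n \geq 1$, I would mirror the last part of the proof of Lemma~\ref{lem: genus 10 degree 12}:
\begin{itemize}
\item specialize each $R_i$ to three bisecants through $p_i$ via Lemma~\ref{lem: onion degeneration};
\item merge $p_1, p_2$ into a single point $p$;
\item project from $p$, obtaining a pointing-bundle sequence with sub $\OO_C(3)(-p)$ and quotient a modification of $N_{\overline C}$ for the elliptic quartic $\overline C \subset \PP^3$;
\item control that quotient with Lemma~\ref{lem: deg 4 elliptic curve modifications}, which for $n$ or $m \geq 2$ makes it semistable and not a sum of two isomorphic line bundles.
\end{itemize}
The rationality of the family of $R_i$ pins down the Chern class of any destabilizing subbundle. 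Varying $p$ then rules it out, as in the final paragraph of that lemma.

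The main obstacle I expect is the borderline cases with few bracket modifications, namely $E_{0,m}$ and $E_{1,m}$ with $m$ large, where the quotient line bundle $\OO_C(2)(\sum x_i + y_i)$ has large degree. There I would need to show the extension does not split at the relevant line bundle. I would first try specializing $p_1$ to a bisecant endpoint, as in Lemma~\ref{lem: genus 10 degree 12}, so that the quotient becomes a direct sum on $\overline C$ whose Chern classes differ from the candidate destabilizer.
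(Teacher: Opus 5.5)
Your configuration is the paper's: $C$ an elliptic quintic, two general $5$-secant twisted cubics, $g-9$ bisecant lines, and the bounds of Remark~\ref{rem: proving stability} for $E_{n,m}$ with $n+m=g-9$. However, the $n=0$ step has a genuine gap.

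The quadric sequence $0\to K\to E_{0,m}\to \OO_C(2)(\sum_i z_i+w_i)\to 0$ settles only $m=2$. For $m=3,4$, which are needed for $(12,14)$ and $(13,15)$, you must exclude:
\begin{itemize}
\item line subbundles of degree $16$ when $m=3$, and of degrees $17$ and $18$ when $m=4$;
\item for $m=4$, rank-two subbundles containing a degree-$12$ line subbundle of $K$ and surjecting onto the quotient. These have slope $15=\mu(E_{0,4})+\frac{2}{3}$, so they violate the strict bound. You only checked $K$ itself.
\end{itemize}
The $5$-torsion argument cannot exclude these. With bisecant modifications at $2m\in\{6,8\}$ general points, the bundle is no longer $5$-cyclic in the sense of Lemma~\ref{lem:black magic}. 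Indeed, under translation by a $5$-torsion point $\tau$, the candidate $\OO_C(2)(\sum_i z_i+w_i)$ changes by exactly $2m\tau=(10+2m)\tau$, as any class of that degree must, so no contradiction arises. Note also that a degree-$17$ subbundle need not split the extension.

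The paper does not use $Q$ here at all. It onion-degenerates both $R_i$ to bisecants through a common point $p$ and projects from $p$. The pointing subbundle then has degree $14$, and the quotient $N_{\overline C}(2p)[z_1\biposmod w_1]\cdots[z_m\biposmod w_m][p\posmod\overline V]$ is stable of slope $10\frac{1}{2}+m$ by Lemma~\ref{lem: deg 4 elliptic curve modifications}. This gives both bounds for all $2\le m\le 4$ at once.

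For $n\geq 1$ the sketch also fails as stated. If the pointing subbundle is just $\OO_C(3)(-p)$ of degree $14$, the quotient has slope $(21+6n+2m)/2$. This grows by $3$ per unit of $n$, while the rank-two bound $(70+14n+5m)/6$ grows only by $\frac{7}{3}$. At $(n,m)=(3,0)$, for instance, the quotient is stable of slope $19\frac{1}{2}$, so it has degree-$19$ line subbundles, above the line bound $18\frac{2}{3}$. Since every degree-$19$ line bundle maps nontrivially to it for every $p$, varying $p$ gives no contradiction; $(4,0)$ is worse. So the real difficulty is large $n$, not large $m$ as you suggest.

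Your fallback of moving a point of the degeneration onto a bisecant endpoint is the right instinct, but it must be carried out case by case:
\begin{itemize}
\item When $x_i\to p$, the modifications $[y_i\posmod x_i]$ and $[y_i\posmod 2x_i]$ point toward $p$ and toward $T_pC\ni p$. The pointing subbundle therefore absorbs $2y_i$, e.g.\ $N_{C\to p}(q_1+\cdots+q_6+2p+2y_1)$ for $n=3$.
\item For $n=1$, $m\ge 2$, one also sends $z_1\to p$.
\item $n=2$ needs a different degeneration: onion-degenerate only $R_1$, send $x_1,x_2\to p$ and $y_1$ to a point of $C\cap R_2$, then break $R_2$ into two bisecants and a transversal line.
\item $n=4$ additionally collides $C\cap R_1$ with $C\cap R_2$.
\end{itemize}

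Finally, $C\cup R_1\cup R_2$ is not a BN-curve, since $\rho(9,4,11)=-1$. Attach the bisecant lines first, so that Lemma~\ref{lem: reducible BN-curves} applies with $\rho\ge 1$ at each twisted-cubic step.
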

\begin{proof}
Let $C \subset \PP^4$ be a general elliptic normal curve.  Consider the nodal union $D$ of $C$ with two general five-secant twisted cubics $R_1, R_2$, and $g - 9$ general bisecant lines.  We will prove $N_D$ is stable.
Let $E$ be the restriction to $C$ of the normal bundle of $C \cup R_1 \cup R_2$.  Consider the modification
$$E_{n,m} = E[x_1 \biposmod y_1][x_1 \posmod  2y_1][y_1 \posmod 2x_1]\cdots [x_n \biposmod  y_n][x_n \posmod  2y_n][y_n \posmod 2x_n][z_1 \biposmod  w_1]\cdots [z_m \biposmod  w_m]$$ 
where the $x_i,y_i,z_i,w_i$ are general points in $C$.  It suffices to prove that subbundles of $E_{n,m}$ satisfy the slope bounds of Remark \ref{rem: proving stability} for all $2 \leq n + m \leq 4$.  
Note that $\mu(E_{n,m}) = 11\frac{2}{3} + \frac{2}{3}m + 2n$.  We will show \ref{rem: proving stability}\eqref{linesubbound} and \ref{rem: proving stability}\eqref{rktwosubbound} hold for fixed values of $n$ and all values of $m$ simultaneously.  To do so, we specialize the twisted cubics to reducible curves and consider the corresponding limits $E'$ of $E$ and $E_{n,m}'$ of $E_{n,m}$ as modifications of $N_C$.

First suppose \(n = 0\).
Specialize each \(R_i\) to the union of three bisecant lines \(\overline{p_i q_{3i - 3 + j}}\) through a common point \(p_i\) using Lemma~\ref{lem: onion degeneration}, and then specialize \(p_1\) and \(p_2\) to a common point \(p\).
The resulting specialization $E_{0,m}'$ of $E_{0,m}$ fits into the pointing bundle sequence
\[0 \to N_{C \to p}(q_1 + \cdots + q_6 + p) \to E_{0, m}' \to N_{\overline{C}}(2p)[z_1 \biposmod w_1]\cdots [z_m \biposmod w_m][p \posmod \overline{V}] \to 0.\]
For $2 \leq m \leq 4$, Lemma \ref{lem: deg 4 elliptic curve modifications} demonstrates that the quotient of $E_{0, m}'$ is a stable rank two bundle of slope $10\frac{1}{2} + m$, while the subbundle is a line bundle of degree $14$.  This proves \ref{rem: proving stability}\eqref{linesubbound} and \ref{rem: proving stability}\eqref{rktwosubbound} for $n = 0$.

Suppose $n = 1$ instead  and specialize the twisted cubics as before.  When $m = 1$, the resulting specialization $E_{1,1}'$ of $E_{1,1}$ fits into an exact sequence
$$0 \to N_{C \rightarrow p}(q_1 + \cdots + q_6 + p) \to E_{1,1}' \to N_{\overline{C}}(2p + x_1 + y_1)[x_1 \biposmod  y_1][z_1 \biposmod w_1][p \posmod \overline{V}]\to 0.$$
Again, Lemma \ref{lem: deg 4 elliptic curve modifications} demonstrates that the quotient is stable.  This proves \ref{rem: proving stability}\eqref{linesubbound} and \ref{rem: proving stability}\eqref{rktwosubbound} for $(n,m) = (1,1)$.  If $m \geq 2$ instead, specialize $z_1$ to $p$ as well.  The resulting specialization $E_{1,m}''$ of $E_{1,m}'$ sits in an exact sequence
$$0 \to N_{C \rightarrow p}(q_1 + \cdots + q_6 + p + w_1) \to E_{1,m}'' \to N_{\overline{C}}(3p + x_1 + y_1)[x_1 \biposmod  y_1][z_2 \biposmod w_2] \cdots [z_m \biposmod w_m] \to 0.$$
By Lemma \ref{lem: deg 4 elliptic curve modifications} the quotient is semistable.  This proves \ref{rem: proving stability}\eqref{linesubbound} and \ref{rem: proving stability}\eqref{rktwosubbound} for $n = 1$.  

Suppose $n = 3$ and specialize the twisted cubics as before once more.  Specialize $x_1$ to $p$, and, if $m = 1$, then specialize $z_1$ to $p$ as well.  The resulting specializations $E_{3,0}'$ and $E_{3,1}'$ fit into exact sequences
{\small \begin{gather*}
0 \to N_{C \rightarrow p}(q_1 + \cdots + q_6 + 2p + 2y_1) \to E_{3,0}' \to N_{\overline{C}}(3p + x_2 + y_2 + x_3 + y_3)[p \biposmod y_1][x_2 \biposmod y_2][x_3 \biposmod y_3] \to 0; \\
0 \to N_{C \rightarrow p}(q_1 + \cdots + q_6 + 2p + 2y_1 + w_1) \to E_{3,1}' \to N_{\overline{C}}(4p + x_2 + y_2 + x_3 + y_3)[y_1 \posmod p][x_2 \biposmod y_2][x_3 \biposmod y_3] \to 0.
\end{gather*}}
By Lemma \ref{lem: deg 4 elliptic curve modifications} the quotients are semistable.  This proves \ref{rem: proving stability}\eqref{linesubbound} and \ref{rem: proving stability}\eqref{rktwosubbound} for $n = 3$.

When $n = 2$, specialize just one of the twisted cubics $R_1$ to a union of lines $\overline{pq_i}$ through $p \in C$.  Specialize $x_1$ and $x_2$ to $p$ as well.
We have an exact sequence 
\[0 \to N_{C \rightarrow p}(q_1 + q_2 + q_3 + 2y_1 + 2y_2 + 2p) \rightarrow E_{2,m}' \to N_{\overline{C}}(4p)[y_1 + y_2 \posmod p][\posmodalong \overline{R}_2][z_1 \biposmod w_1]\cdots [z_m \biposmod w_m].\]
We will show the quotient in the above sequence is stable.  To do so, let $q_4, \ldots, q_8 \in C$ be the five intersection points with $R_2$.  Specialize $y_1$ to $q_8$, so that the quotient specializes to 
$$N_{\overline{C}}(4p + q_8)[y_2 \posmod p][q_4 + q_5 + q_6 + q_7 \posmodalong \overline{R}_2][z_1 \biposmod w_1]\cdots [z_m \biposmod w_m].$$
We may specialize $R_2$ to a union of lines $\overline{q_4q_5}$, $\overline{q_6q_7}$, and the unique line $L$ that contains $q_8$ and passes through both $\overline{q_4q_5}$ and $\overline{q_6q_7}$.  This induces a specialization of the above quotient to 
$$N_{\overline{C}}(4p + q_8)[y_2 \posmod p][q_4 \biposmod q_5][q_6 \biposmod q_7][z_1 \biposmod w_1]\cdots [z_m \biposmod w_m].$$
This bundle is stable by Lemma \ref{lem: deg 4 elliptic curve modifications}.  We conclude that \ref{rem: proving stability}\eqref{linesubbound} and \ref{rem: proving stability}\eqref{rktwosubbound} hold for $n = 2$. 

Finally, when $n = 4$, we specialize all four $x_i$ to $p$.  The resulting specialization of $E_{4,0}$ fits into an exact sequence
$$0 \to N_{C \rightarrow p}(2y_1 + \cdots + 2y_4 + 4p) \rightarrow E_{4,0}' \to N_{\overline{C}}(5p)[y_1 + \cdots + y_4 \posmod p][\posmodalong \overline{R}_1][\posmodalong \overline{R}_2].$$ 
By Lemma \ref{lem: deg 4 elliptic curve modifications} $N_{\overline{C}}(5p)[y_1 + \cdots + y_4 \posmod p]$ is semistable.  Upon specializing $C \cap R_1$ to $C \cap R_2$, the quotient specializes to $N_{\overline{C}}(5p + \overline{C} \cap \overline{R}_2)[y_1 + \cdots + y_4 \posmod p]$.  This proves \ref{rem: proving stability}\eqref{linesubbound} and \ref{rem: proving stability}\eqref{rktwosubbound} for $n = 4$. 
\end{proof}

\begin{lemma}
The normal bundle of a general BN-curve of genus $g$ and degree $d$ in $\PP^4$ is stable in the cases $(g,d) \in \{(4, 8), (5, 9), (6,10)\}$.
\end{lemma}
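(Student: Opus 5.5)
We prove stability for $(g,d)\in\{(4,8),(5,9),(6,10)\}$ by the same framework as Corollary~\ref{cor: most cases}, but starting from an elliptic curve of degree $5$ or $6$ glued to $5$-secant twisted cubics rather than only to lines. Concretely, let $C\subset \PP^4$ be a general elliptic curve of degree $6$ and let $D$ be the nodal union of $C$ with a general $5$-secant twisted cubic $R$ (genus $5$, degree $9$, a BN-curve by Lemma~\ref{lem: reducible BN-curves}). We get $(6,10)$ by adding one bisecant line, and $(4,8)$ by using a degree $5$ elliptic curve with a $5$-secant twisted cubic and instead pulling off a one-secant line (or by taking the degree $6$ elliptic curve with two bisecant lines and one fewer degree elsewhere). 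In each case $N_D|_R$ is semistable of the right slope, $\OO(6)^{\oplus 2}\oplus\OO(5)$-type or balanced. So, by Lemma~\ref{lem: reducible stability}, together with Remark~\ref{rem: proving stability} and Lemma~\ref{lem: pull off 1-secant} whenever lines are attached, it suffices to control subbundles of $E=N_C[\posmodalong R]$ and its modifications $E_{n,m,k}$.

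The key step is to show that $E$, or the relevant $E_{n,m}$, is stable, or at least satisfies the bounds of Remark~\ref{rem: proving stability}. I would first use Lemma~\ref{lem: onion degeneration} to specialize $R$ to three bisecant lines $\overline{pq_j}$ through a common point $p\in C$, with a general $2$-plane modification $V$ at $p$. Projection from $p$ then gives a pointing bundle sequence
\[0\to N_{C\to p}(q_1+q_2+q_3+p)\to E'\to N_{\overline C}(2p)[p\posmod \overline V](\cdots)\to 0,\]
with $\overline{C}\subset\PP^3$ an elliptic curve of degree $5$ (or $4$). For degree $5$, Lemma~\ref{lem: elliptic in P3} and the generality of $\overline V$ give stability of the quotient. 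For degree $4$, Lemma~\ref{lem: deg 4 elliptic curve modifications} applies once the bisecant modifications are included. Comparing the degree of the line subbundle with the slope of the quotient then bounds subbundle slopes. The point $p$ and the rational family of $R$ allow the monodromy/rationality argument from Lemma~\ref{lem: genus 10 degree 12} to be run: a destabilizing subbundle's Chern class cannot depend on $R$, and varying $p$ forces a contradiction.

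For genus $6$, I would then apply Lemma~\ref{lem:extra juice}. This requires the bound for $E[x\biposmod y]$ (slack $\frac23$, resp.\ $\frac16$) and the bound for $E[x\biposmod y][x\posmod 2y][y\posmod 2x]$ (slack $\frac13$). Following the $(11,13)$ argument, I would handle these by further specializing $x$ or $z$ to $p$, so that the quotient becomes a modification of the degree $4$ or $5$ space elliptic curve covered by Lemma~\ref{lem: deg 4 elliptic curve modifications} or Lemma~\ref{lem: elliptic in P3}. Where the degree of $C$ is even and the modification is two-cyclic (one cubic plus lines, as in Lemma~\ref{lem: extend to genus 8}), Lemma~\ref{lem:black magic} rules out the borderline slopes.

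The main obstacle is the small case $(4,8)$ (and $(5,9)$ itself), where there is no slack: one needs genuine stability of $E$, not just the weakened bounds. Here the equality cases, namely a line subbundle of the exact slope, must be excluded. I would try to exclude them by showing that the isomorphism class of such a subbundle is independent of the rational family of $R$, while the pointing-direction $V$ is linearly general. This is the same mechanism used for $(15,16)$: a destabilizing subbundle would have to pick up a general $2$-dimensional modification, which a finite-dimensional $\Hom$ cannot accommodate.
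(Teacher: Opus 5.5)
There is a genuine gap: the configurations you propose do not carry the argument in any of the three cases. First, $(4,8)$ is never reached. An elliptic quintic union a $5$-secant twisted cubic has $(g,d)=(5,8)$, which is precisely the strictly semistable exception with $N\cong\OO_C(2)^{\oplus 3}$. Attaching a $1$-secant line to it gives $(5,9)$, and an elliptic sextic with two bisecant lines gives $(3,8)$.

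Second, for $(5,9)$ via an elliptic sextic $C$ and one cubic $R$, no lines are attached, so there is no slack: you need $E=N_C[\posmodalong R]$ (degree $35$) to be genuinely stable. Your specialization cannot show this.
\begin{itemize}
\item Since $V\subset N_C|_p$ is a general $2$-plane, it maps isomorphically onto $N_C|_p/N_{C\to p}|_p$. So $[p\posmod\overline V]$ is just a twist, and the sequence is $0\to N_{C\to p}(q_1+q_2+q_3)\to E'\to N_{\overline C}(2p)\to 0$, with degrees $11$ and $24$.
\item A rank-$2$ bundle of even degree on an elliptic curve is never stable, and Lemma~\ref{lem: elliptic in P3} only excludes $L\oplus L$. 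So $N_{\overline C}(2p)$ has line subbundles of degree $12>35/3$. Showing these do not lift to $E'$ is exactly the missing content.
\end{itemize}
Your rationality/monodromy fallback does not supply it, for three reasons.
\begin{itemize}
\item Over a fixed sextic, the $5$-secant cubics are fibered over $C$ by the residual point of the hyperplane section, so their parameter space is not rational and Chern classes may depend on that point.
\item One cubic does not make $E$ two-cyclic, so Lemma~\ref{lem:black magic} gives no parity constraint.
\item The five tangent directions of a single cubic through five fixed points move in only a $2$-parameter family. The $(15,16)$ count relies on independent general modifications coming from different cubics, so it does not transfer.
\end{itemize}

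Third, for $(6,10)$, sending $z$ to $p$ produces the line subbundle $N_{C\to p}(p+q_1+q_2+q_3+w)$ of degree $13$. This equals, rather than beats, the strict bound $\mu(E_{0,1})+\frac23=13$ from Remark~\ref{rem: proving stability}. (Also, $N_D|_R$ has degree $18$, so semistable would mean $\OO(6)^{\oplus 3}$, not $\OO(6)^{\oplus 2}\oplus\OO(5)$.)

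The paper uses neither cubics nor sextics here. It takes an elliptic quintic $C$ with $g-1$ general bisecant lines (three, four, five lines respectively) and $E=N_C$. Since $n+m=g-1\ge 3$, Remark~\ref{rem: proving stability} only asks for the relaxed bounds on $E_{n,m}$ with $3\le n+m\le 5$, and these have real slack. For each $(n,m)$ it specializes several of the $x_i$ and $z_i$ to one point $p\in C$ and projects from $p$.
\begin{itemize}
\item The subbundle is an explicit twist of $N_{C\to p}\cong\OO_C(1)(2p)$.
\item The quotient is a modification of $N_{\overline C}$, where $\overline C\subset\PP^3$ is an elliptic quartic, a complete intersection of two quadrics. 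Its (semi)stability is supplied by Lemma~\ref{lem: deg 4 elliptic curve modifications} or by an explicit splitting into line bundles.
\end{itemize}
Comparing degrees then gives the required bounds.
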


\begin{proof}
Consider the nodal union $D$ of an elliptic curve $C \subset \PP^4$ of degree $5$ together with $g - 1$ general bisecant lines.  We will show $N_D$ is stable by using Remark \ref{rem: proving stability} and studying specializations of $D$ where various bisecant lines collide to meet at a point $p \in C$.  To fix notation, let $E = N_C$ and 
$$E_{n,m} = E[x_1 \biposmod  y_1][x_1 \posmod  2y_1][y_1 \posmod 2x_1]\cdots [x_n \biposmod  y_n][x_n \posmod  2y_n][y_n \posmod 2x_n][z_1 \biposmod  w_1]\cdots [z_m \biposmod  w_m].$$
    
By Remark \ref{rem: proving stability}, to show $N_D$ is stable it suffices to prove that subbundles of $E_{n,m}$ satisfy the slope bounds required by Remark \ref{rem: proving stability} for $3 \leq n + m \leq 5$.  Note that $\mu(E_{n,m}) = 8 \frac{1}{3} + 2n + \frac{2}{3}m$.  We analyze these bundles according to the value of $n$.

First, suppose $n = 0$.  Specialize the $z_i$ to $p$ one at a time. The resulting specialization $E_{0, m}'$ of $E_{0, m}$ fits into an exact sequence
$$0 \rightarrow N_{C\rightarrow p}(p + w_1 + \cdots + w_m) \rightarrow E'_{0,m} \rightarrow N_{\overline{C}}(2p)[p \posmod w_4 + \cdots + w_m] \rightarrow 0.$$
The subbundle has slope $8 + m$. If $m \neq 4$, then by Lemma \ref{lem: deg 4 elliptic curve modifications}, the quotient is semistable; otherwise, if \(m = 4\), then the quotient is the direct sum of line bundles of degrees $10$ and $11$.  Hence, subbundles of $E_{0,m}$ satisfy the slope bounds of Remark \ref{rem: proving stability}.

Similarly, suppose $n = 1$, and specialize \(x_1, z_1, z_2, \ldots, z_m\) to \(p\) one at a time in this order. The resulting specializations \(E_{1, m}'\) of \(E_{1, m}\) fit into the exact sequences
\begin{align*}
0 \to N_{C \to p}(p + 2y_1 + w_1 + w_2) \to E_{1, 2}' &\to N_{\overline{C}}(3p)[y_1 \posmod p] \to 0;\\
0 \to N_{C \to p}(2p + 2y_1 + w_1 + w_2 + w_3) \to E_{1, 3}' &\to N_{\overline{C}}(3p)[y_1 \posmod p] \to 0;\\
0 \to N_{C \to p}(2p + 2y_1 + w_1 + w_2 + w_3 + w_4) \to E_{1, 4}' &\to N_{\overline{C}}(3p)[y_1 \posmod p][p \posmod w_4] \to 0.
\end{align*}    
The subbundles above have slopes \(12\), \(14\), and \(15\) respectively. Moreover, the quotients are direct sums of line bundles of degree at most \(12\).  More precisely, \(N_{\overline{C}}(3p)[y_1 \posmod p] \simeq \OO_{\overline{C}}(2)(3p) \oplus \OO_{\overline{C}}(2)(3p + y_1)\) and \(N_{\overline{C}}(3p)[y_1 \posmod p][p \posmod w_4] \simeq \OO_{\overline{C}}(2)(3p + y_1) \oplus \OO_{\overline{C}}(2)(4p)\). Hence subbundles of \(E_{1, m}\) satisfy the slope bounds of Remark \ref{rem: proving stability}.

Suppose $n = 2$.  Specialize $x_1, x_2$ to $p$, and then specialize $z_1,z_2$ to $p$ if $m \geq 2$.  The resulting specializations $E_{2,m}'$ of $E_{2,m}$ fit into exact sequences
\begin{align*}
0 \to N_{C \to p}(2p + 2y_1 + 2y_2) \to E_{2, 1}' &\to N_{\overline{C}}(3p)[y_1 + y _2 \posmod p][z_1 \biposmod w_1] \to 0;\\
0 \to N_{C \to p}(2p + 2y_1 + 2y_2 + w_1 + w_2) \to E_{2, 2}' &\to N_{\overline{C}}(4p)[y_1 + y_2 \posmod p] \to 0;\\
0 \to N_{C \to p}(2p + 2y_1 + 2y_2 + w_1 + w_2) \to E_{2, 3}' &\to N_{\overline{C}}(4p)[y_1 + y_2 \posmod p][z_3 \biposmod w_3] \to 0.
\end{align*}
The subbundles above have slopes $13$, $15$, and $15$, respectively, while the quotients are semistable by Lemma \ref{lem: deg 4 elliptic curve modifications}.  Hence subbundles of \(E_{2, m}\) satisfy the slope bounds of Remark \ref{rem: proving stability}.

Suppose $n = 3$.  Specialize $x_1,x_2,x_3$ to a common point $p$.  The resulting specializations $E_{3,m}'$ of $E_{3,m}$ fit into exact sequences
\begin{align*}
0 \to N_{C \to p}(2p + 2y_1 + 2y_2 + 2y_3) \to E_{3,0}' &\to N_{\overline{C}}(4p)[y_1 + y_2 + y_3 \posmod p][p \posmod y_3] \to 0;\\
0 \to N_{C \to p}(2p + 2y_1 + 2y_2 + 2y_3) \to E_{3,1}' &\to N_{\overline{C}}(4p)[y_1 + y_2 + y_3 \posmod p][z_1 \biposmod w_1][p \posmod y_3] \to 0;\\
0 \to N_{C \to p}(2p + 2y_1 + 2y_2 + 2y_3) \to E_{3,2}' &\to N_{\overline{C}}(4p)[y_1 + y_2 + y_3 \posmod p][z_1 \biposmod w_1][z_2 \biposmod w_2][p \posmod y_3] \to 0.
\end{align*}
The subbundles above have slope \(15\).  By Lemma \ref{lem: deg 4 elliptic curve modifications}, the quotients are the modification by $[p \posmod y_3]$ of a stable bundle, and are thus semistable.  Hence subbundles of \(E_{3, m}\) satisfy the slope bounds of Remark \ref{rem: proving stability}. 

Suppose $n = 4$.  Specialize $x_1, x_2, x_3$ to a common point $p$ and, if $m = 1$, specialize $z_1$ to $p$ as well.  The resulting specializations $E_{4,m}'$ of $E_{4,m}$ fit into exact sequences
\begin{align*}
0 \to N_{C \to p}(2p + 2y_1 + 2y_2 + 2y_3) \to E_{4,0}' &\to N_{\overline{C}}(4p + x_4 + y_4)[y_1 + y_2 + y_3 \posmod p][x_4 \biposmod y_4][p \posmod y_3] \to 0;\\
0 \to N_{C \to p}(3p + 2y_1 + 2y_2 + 2y_3 + w_1) \to E_{4,1}' &\to N_{\overline{C}}(4p + x_4 + y_4)[y_1 + y_2 + y_3 \posmod p][x_4 \biposmod y_4][p \posmod y_3] \to 0.
\end{align*}
The subbundles have slopes 15 and 17.  As before, Lemma \ref{lem: deg 4 elliptic curve modifications} shows the quotients are semistable.  Hence subbundles of \(E_{4, m}\) satisfy the slope bounds of Remark \ref{rem: proving stability}

Lastly, suppose $n = 5$.  Specialize $x_1, \ldots , x_4$ to a common point $p$.  The resulting specialization $E_{5,0}'$ of $E_{5,0}$ fits into an exact sequence
\[0 \to N_{C \to p}(4p + 2y_1 + 2y_2 + 2y_3 + 2y_4) \to E_{5,0}' \to N_{\overline{C}}(5p + x_5 + y_5)[y_1 + y_2 + y_3 + y_4 \posmod p][x_5 \biposmod y_5] \to 0.\]
The subbundle has slope \(19\) while the quotient is semistable by Lemma \ref{lem: deg 4 elliptic curve modifications}.  Hence subbundles of $E_{5,0}$ satisfy the slope bounds of Remark \ref{rem: proving stability}.  This finishes our proof. 
\end{proof}

\begin{lemma}\label{lem: last lem}
The normal bundle of a general BN-curve of genus $8$ and degree $11$ in $\PP^4$ is stable.
\end{lemma}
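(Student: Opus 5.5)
Genus $8$, degree $11$: $\mu(N_C) = (55+14)/3 = 23$, an integer. That makes stability delicate: Lemma \ref{lem:black magic}-type divisibility arguments cannot rule out slope-equal subbundles.

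\textbf{Degeneration.} I would mirror the approach used for $(10,12)$ and $(11,13)$. Let $C\subset\PP^4$ be a general elliptic normal curve of degree $5$. Degenerate to $D = C\cup R_1\cup R_2$ with two general $5$-secant twisted cubics; this has genus $1+4+4=9$ and degree $11$, one genus too many. So instead take a single $5$-secant twisted cubic $R$ together with $3$ general bisecant lines: degree $5+3+3 = 11$ and genus $1+4+3 = 8$. This is a BN-curve by Lemma \ref{lem: reducible BN-curves} and \cite[Lemma~5.4]{lv22}. On $R$ the restricted normal bundle is $\OO(5)^{\oplus 3}$ up to the modifications, which is semistable. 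By Lemma \ref{lem: reducible stability} and Remark \ref{rem: proving stability}, it then suffices to check the slope bounds on $E_{n,m}$ for $n+m=3$, where $E = N_C[\posmodalong R]$.

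\textbf{Specialization of $R$.} For each $(n,m)$, use Lemma \ref{lem: onion degeneration} to specialize $R$ to three bisecant lines $\overline{pq_j}$ through a point $p\in C$, together with a general $2$-dimensional modification $[p\posmod V]$. Where convenient, also specialize some of the $x_i$ or $z_i$ to $p$, exactly as in the $(4,8),(5,9),(6,10)$ lemma. This gives pointing-bundle sequences
\[0\to N_{C\to p}(\cdots)\to E'_{n,m}\to N_{\overline C}(\cdots)[\ldots]\to 0,\]
where $\overline C\subset\PP^3$ is an elliptic quartic. The quotients are handled by Lemma \ref{lem: deg 4 elliptic curve modifications}, or by explicit splittings into $\OO_{\overline C}(2)(\cdot)$ via the pencil of quadrics. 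In each case I would compare the degree of the line subbundle and the slope of the semistable quotient against $\mu(E_{n,m})+\tfrac13 n+\tfrac23 m$ and $\mu(E_{n,m})+\tfrac13 n+\tfrac16 m$. The cases to run through are
\[
(n,m)\in\{(0,3),(1,2),(2,1),(3,0)\}.
\]

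\textbf{Expected obstacle.} The main difficulty is that $\mu$ is integral, so at some $(n,m)$, probably $(0,3)$ or $(3,0)$, the pointing-bundle subbundle or a quotient summand will sit exactly on the bound, and stability fails in the limit. I would then argue as in Lemma \ref{lem: genus 10 degree 12}. Since $R$ varies in a rational family, the first Chern class of any destabilizing subbundle of the general $E_{n,m}$ cannot depend on $R$. Varying $p$, or using a second specialization (for instance $p\to x_1$), shows that the limiting equal-slope subbundles have incompatible Chern classes, for example $\OO_C(2)(2x+2y)$ versus a class involving $p$. That incompatibility excludes a destabilizing subbundle for general data.

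If this approach fails, the fallback is to degenerate instead via Theorem \ref{thm:LV base cases}-style data: $C$ of genus $3$ and degree $7$ with a $5$-secant twisted cubic and a bisecant line, giving degree $11$ and genus $8$. There $N_C\cong\bigoplus\OO_C(2)(-s_i)$, and I would argue as in the $(15,16)$ lemma, using the generality of the modification directions together with a dimension count on $\Hom$ spaces to rule out equal-slope subbundles absorbing all the modifications.
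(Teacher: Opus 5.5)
Your degeneration and reduction are exactly the paper's. You take an elliptic quintic $C$ with one $5$-secant twisted cubic $R$ and three bisecant lines, set $E=N_C[\posmodalong R]$ with $\mu(E_{n,m})=10+2n+\frac{2}{3}m$, and reduce to the four cases $n+m=3$ of Remark~\ref{rem: proving stability}. For $n\le 2$ you use the degeneration of Lemma~\ref{lem: onion degeneration} and a pointing-bundle sequence whose quotient is controlled by Lemma~\ref{lem: deg 4 elliptic curve modifications}. (A small slip: $R$ spans a $\PP^3$, so $N_R\cong\OO(3)\oplus\OO(5)^{\oplus 2}$, not $\OO(5)^{\oplus 3}$.)

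What the proposal leaves out is the part that actually proves the lemma: which points to collide with $p$. That choice is not cosmetic.

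\begin{itemize}
\item For $(0,3)$ the bounds are $<14$ for line subbundles and $<12\frac{1}{2}$ for rank-two subbundles. With the onion degeneration alone, the pointing subbundle $N_{C\to p}(q_1+q_2+q_3)$ has degree $10$ and the quotient is semistable of slope $13$. That cannot exclude a rank-two subbundle of slope $13$. The paper also sends $z_1\to p$. The subbundle becomes $N_{C\to p}(q_1+q_2+q_3+p+w_1)$ of degree $12$, and the quotient $N_{\overline{C}}(2p)[z_2\biposmod w_2][z_3\biposmod w_3]$ is semistable of slope $12$.
\item For $(1,2)$ it sends $x_1\to p$. The subbundle has degree $13$ and the quotient is stable of slope $13\frac{1}{2}$, against bounds $15$ and $14$.
\item For $(2,1)$ it sends $x_1,z_1\to p$. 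The subbundle has degree $15$ and the quotient splits into line bundles of degrees $14$ and $15$, against bounds $16$ and $15\frac{1}{2}$.
\end{itemize}

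All bounds hold strictly, so the obstacle you anticipate never occurs. The integrality of $\mu(N_D)=23$ is already absorbed by the slack terms $\frac13 n+\frac23 m$ and $\frac13 n+\frac16 m$ in Remark~\ref{rem: proving stability}. Neither the Chern-class rigidity argument nor the genus-$3$ fallback is needed; both are only gestured at in any case.

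For $(3,0)$ we have $\mu(E_{3,0})=16$ and both bounds are $<17$. The paper uses a different and shorter argument. $E$ is a $5$-cyclic modification of the stable bundle $N_C$ with integral slope $10$, so it is semistable by Lemma~\ref{lem:black magic}. Colliding $y_1\to x_2$, $y_2\to x_3$, $y_3\to x_1$ specializes $E_{3,0}$ to $E(2x_1+2x_2+2x_3)$, which is semistable of slope $16$.

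Your uniform onion strategy can also be made to work here, but again only with the right collision. Sending $x_1,x_2\to p$, as in the $n=2$ case of the proof for $(g,d)\in\{(11,13),(12,14),(13,15)\}$, gives a pointing subbundle of degree $16$. The quotient is $N_{\overline{C}}(4p+x_3+y_3)[y_1+y_2\posmod p][x_3\biposmod y_3]$, semistable of slope $16$ by Lemma~\ref{lem: deg 4 elliptic curve modifications}. By contrast, sending only $x_1\to p$ leaves a quotient of slope $17\frac{1}{2}$, which does not exclude line subbundles of degree $17$.
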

\begin{proof}
Consider the nodal union $D$ of an elliptic curve $C$ of degree $5$ with three general bisecant lines and one 5-secant twisted cubic $R$.  
We will show $N_D$ is stable by using Remark \ref{rem: proving stability} and studying specializations of $D$ where various bisecants collide to meet at a point $p \in C$.  To fix notation, let $E = N_C[\posmodalong R]$ and 
$$E_{n,m} = E[x_1 \biposmod  y_1][x_1 \posmod  2y_1][y_1 \posmod 2x_1]\cdots [x_n \biposmod  y_n][x_n \posmod  2y_n][y_n \posmod 2x_n][z_1 \biposmod  w_1]\ldots [z_m \biposmod  w_m].$$

By Remark \ref{rem: proving stability}, to show $N_D$ is stable it suffices to prove that subbundles of $E_{n,m}$ satisfy the slope bounds required by Remark \ref{rem: proving stability} for $n + m = 3$.  Note that $\mu(E_{n,m}) = 10 + 2n + \frac{2}{3}m$.  We analyze these bundles according to the value of $n$.

First we handle the case $n = 3$.  Observe that \(N_C\) is stable of slope \(8 \frac{1}{3}\), while $E$ is a $5$-cyclic modification of $N_C$ of slope \(10\).  Thus, by Lemma \ref{lem:black magic} it is semistable.  Specializing $y_1$ to $x_2$, and $y_2$ to $x_3$, and $y_3$ to $x_1$ induces a specialization of \(E_{3,0}\) to \(E(2x_1 + 2x_2 + 2x_3)\), so \(E_{3, 0}\) is semistable as well.  Thus the slope bounds of Remark \ref{rem: proving stability} are trivially satisfied.

It remains to consider the cases \(n = 0, 1, 2\). For the remainder of the proof, we use Lemma \ref{lem: onion degeneration} to specialize \(R\) to a union of three bisecant lines $\overline{pq_i}$ through $p, q_i \in C$.
    
When $n = 0$, we additionally specialize \(z_1\) to \(p\). The resulting specialization \(E_{0, 3}'\) of \(E_{0, 3}\) fits into an exact sequence
$$0 \to N_{C\to p}(q_1 +  q_2 + q_3 + p + w_1) \to E_{0,3}' \to N_{\overline{C}}(2p)[z_2 \biposmod w_2][z_3 \biposmod w_3] \to 0.$$
The subbundle has slope 12 and the quotient is semistable by Lemma \ref{lem: deg 4 elliptic curve modifications}.  This proves our claim for $n = 0$.
    
Similarly, when $n = 1$, we specialize $x_1$ to $p$.  The resulting specialization \(E_{1,2}'\) of \(E_{1,2}\) fits into an exact sequence
$$0 \to N_{C\to p}(q_1 + q_2 + q_3 + p + 2y_1) \to E_{1,2}' \to N_{\overline{C}}(3p)[y_1 \posmod p][z_1 \biposmod w_1][z_2 \biposmod w_2] \to 0.$$
The subbundle has slope 13 and the quotient is stable by Lemma \ref{lem: deg 4 elliptic curve modifications}.  This proves our claim for $n = 1$.

Lastly, suppose $n = 2$.  Specialize $x_1$ and $z_1$ to $p$.  The resulting specialization $E_{2,1}'$ of $E_{2,1}$ fits into an exact sequence
$$0 \rightarrow N_{C\rightarrow p}(q_1 + q_2 + q_3 + 2p + 2y_1 + w_1) \rightarrow E_{2,1}' \rightarrow N_{\overline{C}}(3p + x_2 + y_2)[y_1 \posmod p][x_2 \biposmod y_2] \rightarrow 0.$$
The subbundle has slope $15$ while the quotient is a direct sum of two line bundles of degree $14$ and $15$.  This proves our claim for $n = 2$.  We conclude that $N_D$ is stable.
\end{proof}

\begin{proof}[Proof of Theorem \ref{thm-mainstab}]  The cases that are not stable were discussed at length in \S \ref{sec-unstablecases}. Thus, in order to prove Theorem \ref{thm-mainstab} we need to verify the stability of the finitely many $(g,d)$ listed in Theorem \ref{thm: basecases}. Corollary \ref{cor: most cases} and Lemmas \ref{lem: extend to genus 8}--\ref{lem: last lem} verify these cases. This concludes the proof.
\end{proof}

Finally we consider a general BN-curve $D \subset \PP^4$ of genus $7$ and degree $10$. We address the stability of the normal bundle in the unique quadric $Q$ containing $D$, as promised in Section~\ref{ss:710}.

\begin{lemma}\label{lem: stability in quadric}
Let $D \subset \PP^4$ be a general BN-curve of genus $7$ and degree $10$. Let $Q$ be the (unique) smooth quadric containing $D$. Then \(N_{D/Q}\) is stable.
\end{lemma}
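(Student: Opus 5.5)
The plan is to degenerate $D$ inside the smooth quadric $Q$ to a reducible curve and then check stability of the limit with adjusted slopes, using Proposition~\ref{prop: openness of adjusted stability}. The bundle $N_{D/Q}$ has rank $2$ and degree $(5\cdot 10 + 12) - 20 = 42$, so its slope is $21$. Stability therefore means that no line subbundle has (adjusted) degree $\geq 21$.

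\textbf{The degeneration.} Take $C \subset \PP^4$ a general elliptic quintic and $R$ a general $5$-secant twisted cubic. As shown in the proof of Lemma~\ref{lem: genus 10 degree 12}, $C \cup R$ is a complete intersection of three quadrics. Choose $Q$ general in this net; it is smooth. Next take a general conic $T \subset Q$ meeting $C$ quasi-transversely at three points $a_1, a_2, a_3$ and disjoint from $R$. Then $D_0 = C \cup R \cup T$ has degree $10$ and arithmetic genus $1 + 4 + 2 = 7$. By Lemma~\ref{lem: reducible BN-curves} it is a BN-curve, since $|\Gamma| = 3 = e+1$ for $T$. I still need to check that $D_0$ smooths inside $Q$ to a general curve of type $(7,10)$. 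The plan is to argue this as in Lemma~\ref{lem:smoothing}, via $H^1$-vanishing of the subsheaf of $N_{D_0/Q}$ that does not smooth the nodes. Since the general $(7,10)$ curve lies on a unique smooth quadric, a general smoothing inside a general $Q$ is then a general such curve. It therefore suffices to prove that $N_{D_0/Q}$ is stable in the adjusted sense.

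\textbf{The restrictions to each component.} Lemma~\ref{lem:hartshornehirschowitz} applied inside $Q$ gives the following.
\begin{itemize}
\item On $R$: $N_{D_0/Q}|_R \cong N_{C\cup R/Q}|_R \cong \OO(6)^{\oplus 2}$, which is semistable.
\item On $C$: $N_{D_0/Q}|_C = \OO_C(2)^{\oplus 2}[a_1 + a_2 + a_3 \posmodalong T]$, of degree $23$. This uses $N_{C\cup R/Q}|_C \cong \OO_C(2)^{\oplus 2}$ from the complete-intersection structure.
\item On $T$: $N_{D_0/Q}|_T = N_{T/Q}[\posmodalong C]$, whose degree is $42 - 23 - 12 = 7$.
\end{itemize}
The line subbundles of $\OO_C(2)^{\oplus 2}$ of degree $10$ form a $\PP^1$, namely the summands, each corresponding to a quadric of the net other than $Q$. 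At a general point such a summand absorbs the modification direction of $T$ only if $T$ is tangent to that quadric there. Since $T$ is general, no single summand absorbs all three modifications. A summand absorbs at most two of them, and does so in only finitely many cases, each of which gives a line subbundle of degree $12$ on $C$. Every other line subbundle has degree $\leq 11$.

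\textbf{The adjusted-slope count (the main obstacle).} For a line subbundle $F \subset \nu^*N_{D_0/Q}$, the adjusted degree is
\[
\deg F_C + \deg F_R + \deg F_T - \#\{\text{nodes where the fibers disagree}\}.
\]
Here $\deg F_R \leq 6$, and $\deg F_C \leq 12$ with equality only for the finitely many special summands. I also need to bound $\deg F_T$ by about $3$ or $4$, which should follow from computing the splitting type of $N_{T/Q}[\posmodalong C]$. This restriction is rational of degree $7$, so I expect $\OO(3) \oplus \OO(4)$.

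The delicate case is $\deg F_C = 12$. Such an $F_C$ is a summand through which the modifications pass at two of the $a_i$ but not the third. I plan to show that at that third node $F_C$ and $F_T$ must disagree, or else that $F_T$ is forced to have degree $\leq 3$. This costs $1$ and brings the total below $21$.

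Matching at the five nodes of $C \cap R$ is automatic when $F_C$ is a summand, since it corresponds to a quadric containing $C \cup R$. If necessary, I would handle this genericity, together with the dependence of the directions on the choice of $T$, using Lemma~\ref{lem: JHF images} and the fact that $T$ varies in a rational family while the finitely many degree-$12$ candidates cannot move. Keeping this bookkeeping consistent across the three components is where I expect the difficulty to lie.
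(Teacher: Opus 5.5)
\textbf{Your route is viable, but the slope count is off by one and does not close.} The conic is forced to be $T=\Pi\cap Q$, where $\Pi=\langle a_1,a_2,a_3\rangle$. So $N_{T/Q}\cong N_{\Pi}|_T\cong\OO(2)^{\oplus 2}$ with constant directions. For general $a_i$ the three directions coming from $C$ are distinct; otherwise some hyperplane would cut $C$ in $2a_i+2a_j+a_k$. Hence $N_{D_0/Q}|_T\cong\OO(4)\oplus\OO(3)$, and $\deg F_T=4$ really occurs.

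Your ``delicate case'' $\deg F_C=12$ never occurs. Suppose a quadric of the net, other than $Q$, were tangent to $T$ at two of the $a_i$. It passes through the third, so it restricts to a conic in $\Pi$ meeting $T$ with total multiplicity at least $5$. By B\'ezout, after subtracting a multiple of $Q$ it would contain $\Pi$. No quadric of the net contains a general trisecant plane.

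So $\deg F_C\le 11$ always. But then $11+6+4=21=\mu(N_{D_0/Q})$, so your inequalities give only semistability. Even in your $\deg F_C=12$ case, a single mismatch would leave $12+6+4-1=21$, which is not below $21$. The missing step is to rule out $(\deg F_C,\deg F_R,\deg F_T)=(11,6,4)$ with agreement at all eight nodes.

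\textbf{How to close it.} A degree-$11$ line subbundle of $\OO_C(2)^{\oplus 2}[a_1+a_2+a_3\posmodalong T]$ is of one of two kinds.
\begin{enumerate}
\item It is a summand absorbing the modification at exactly one $a_i$. At the other two nodes $a_j$, its fiber is the common image of $N_{C/Q}|_{a_j}$ and $N_{T/Q}|_{a_j}$ in the fiber of $N_{D_0/Q}$. The unique $\OO(4)\subset N_{D_0/Q}|_T$ absorbs all three directions, so its fibers there are different lines. This gives at least two mismatches.
\item It is the saturation of a degree-$8$ subsheaf $(s_1,s_2)$, with $s_i\in H^0(M)$ and $\deg M=2$, passing through all three directions. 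A degree-$6$ subbundle $F_R\subset\OO(6)^{\oplus 2}$ is a constant direction. Matching it at the five points of $C\cap R$ would force a nonzero section of $M$ to vanish at five points, which is impossible.
\end{enumerate}
With this, every adjusted degree is at most $20$.

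\textbf{Smoothing step.} You still need to fill this in. $H^1$ of the non-node-smoothing subsheaf vanishes, because $N_{D_0/Q}|_C(-\Gamma)$ has degree $7$ and no line subbundle of degree $\ge 7$, while $N_{R/Q}$ and $N_{T/Q}$ have no $H^1$. You also need $Q$ smooth and $h^0(\mathcal{I}_{D_0}(2))=1$, so that the smoothing is a general pair $(D,Q)$.

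\textbf{Comparison with the paper.} The paper avoids working inside $Q$ altogether. Since $N_{D/Q}\subset N_D$, it suffices to show $N_D$ has no line subbundle of degree $\ge 21$. That is an open condition, so it can be checked on a degeneration in $\PP^4$ without tracking $Q$. The paper degenerates to $C\cup R$ plus two bisecant lines and verifies the bounds of Remark~\ref{rem:iterative extra juice} for $E_{0,2}$, $E_{1,1}$, $E_{2,0}$, using onion degenerations of $R$ and pointing-bundle sequences. Your approach is more direct once the count is fixed, but it needs the extra smoothing-in-$Q$ argument above.
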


\begin{proof}
Let $C \subset \PP^4$ be a general elliptic curve of degree 5 and $R$ be a five-secant twisted cubic to $C$.  To prove $N_{D/Q}$ is stable, we will show that $N_D$ does not contain any line subbundles of degree $21$.  We will do so by specializing $D$ to the nodal union of $C$ with $R$ and two general bisecant lines to $C$.  Let $E = N_{C\cup R}|_C$.  By Remark \ref{rem:iterative extra juice}, it is sufficient to show that
\begin{itemize}
\item $E_{0,2}$ has no line subbundles of degree $13$ or greater;
\item $E_{1,1}$ has no line subbundles of degree $14$ or greater;
\item $E_{2,0}$ has no line subbundles of degree $15$ or greater.
\end{itemize}
We will prove these bounds by specializing $R$ and the bisecant lines.

First, consider $E_{0,2}$.  Specialize $R$ to a union of three bisecant lines $\overline{pq_i}$ through a common point $p \in C$.  Then specialize $z_1$ to $p$ as well.  The resulting specialization $E_{0,2}'$ of $E_{0,2}$ fits into an exact sequence
$$0 \to N_{C\to p}(p + q_1 + q_2 + q_3 + w_1) \to E_{0,2}' \to N_{\overline{C}}(2p)[z_2 \biposmod w_2] \to 0.$$
The subbundle has degree 12, and the quotient $N_{\overline{C}}(2p)[z_2 \biposmod w_2] \cong \OO_{\overline{C}}(2)(2p) \oplus \OO_{\overline{C}}(2)(2p+z_2 + w_2)$ is a direct sum of line bundles of degree 10 and 12, respectively.  This proves $E_{0,2}$ has no line subbundles of degree $13$ or greater.

Next, consider $E_{1,1}$.   Specialize $R$ to a union of three bisecant lines $\overline{pq_i}$ through a common point $p \in C$.  Then specialize $x_1$ to $p$ as well.  The resulting specialization $E_{1,1}'$ of $E_{1,1}$ fits into an exact sequence
$$0 \to N_{C\to p}(p + q_1 + q_2 + q_3 + 2y_1) \to E_{1,1}' \to N_{\overline{C}}(3p)[z_1 \biposmod w_1][y_1 \posmod p] \to 0.$$
The subbundle has degree 13, while $N_{\overline{C}}(3p)[z_1 \biposmod w_1][y_1 \posmod p] \cong \OO_{\overline{C}}(2)(3p + y_1) \oplus \OO_{\overline{C}}(2)(3p+z_2 + w_2)$ is a direct sum of line bundles of degree 12 and 13, respectively.  This proves $E_{1,1}$ has no line subbundles of degree $14$ or greater.

Lastly, consider $E_{2,0}$.  Let $p, q_1, \ldots , q_4 \in C$ be the points in a general hyperplane section of $C$.  Specialize $R$ to the union of a bisecant line $\overline{pq_1}$ and a conic $T$ passing through $\overline{pq_1}$ that meets $C$ at the three points $q_2, q_3, q_4$.  Specialize $x_1$ and then $x_2$ to $p$.  The resulting specialization $E_{2,0}'$ of $E_{2,0}$ fits into an exact sequence
$$0 \to N_{C\to p}(2p + 2y_1 + 2y_2 + q_1) \to E_{2,0}' \to N_{\overline{C}}(3p)[y_1 + y_2 \posmod p][q_2 + q_3 + q_4 \posmodalong \overline{T}][p \posmod y_2] \to 0$$
where $\overline{T} \subset \PP^3$ is the image of $T$ under projection from $p$.  
The subbundle has degree $14$ so it suffices to prove that the quotient has no line subbundles of degree $14$ or greater. For this we further specialize \(y_1\) to \(q_2\), and then \(\overline{T}\) to the union of lines \(\overline{q_1 q_2}\) and \(\overline{q_3 q_4}\).
This induces a specialization of the quotient to \(N_{\overline{C}}(3p + q_2)[y_2 \biposmod p][q_3 \biposmod q_4]\), which is semistable by Lemma \ref{lem: deg 4 elliptic curve modifications} as desired.
\end{proof}

\end{document}